\patchcmd{\@citex}{\if@filesw}{\getcitekey\@citeb \if@filesw}%
    {\typeout{*** SUCCESS ***}}{\typeout{*** FAIL ***}}
\patchcmd{\nocite}{\if@filesw}{\getcitekey\@citeb \if@filesw}%
    {\typeout{*** SUCCESS ***}}{\typeout{*** FAIL ***}}
\newenvironment{aenumerate}{%
	\begin{enumerate}[label=(\alph{*}), ref=(\alph{*})]
}{%
	\end{enumerate}%
}
\tikzset{commutative diagrams/arrow style=Latin Modern}
\newcommand{\Dmod}{\mathscr{D}}
\newcommand{\shT}{\mathscr{T}}
\newcommand{\derR}{\mathbf{R}}
\newcommand{\decal}[1]{\lbrack #1 \rbrack}
\newcommand{\shH}{\mathcal{H}}
\newcommand{\abs}[1]{\lvert #1 \rvert}
\newcommand{\eps}{\varepsilon}
\newcommand{\tensor}{\otimes}
\newcommand{\dbar}{\bar{\partial}}
\newcommand{\dz}{\mathit{dz}}
\newcommand{\dzb}{d\bar{z}}
\newcommand{\shHom}{\mathcal{H}\hspace{-1pt}\mathit{om}}
\newcommand{\NN}{\mathbb{N}}
\newcommand{\ZZ}{\mathbb{Z}}
\newcommand{\QQ}{\mathbb{Q}}
\newcommand{\CC}{\mathbb{C}}
\newcommand{\PP}{\mathbb{P}}
\DeclareMathOperator{\coker}{coker}
\DeclareMathOperator{\ad}{ad}
\DeclareMathOperator{\sgn}{sgn}
\DeclareMathOperator{\id}{id}
\DeclareMathOperator{\Supp}{Supp}
\DeclareMathOperator{\codim}{codim}
\DeclareMathOperator{\Cone}{Cone}
\DeclareMathOperator{\Sym}{Sym}
\DeclareMathOperator{\gr}{gr}
\DeclareMathOperator{\DR}{DR}
\DeclareMathOperator{\Hom}{Hom}
\DeclareMathOperator{\SL}{SL}
\newcommand{\sltwo}{\mathfrak{sl}_2(\CC)}
\newcommand{\shf}[1]{\mathscr{#1}}
\newcommand{\shV}{\shf{V}}
\newcommand{\argbl}{-}
\def\overbar#1#2#3{{%
	\setbox0=\hbox{\displaystyle{#1}}%
	\dimen0=\wd0
	\advance\dimen0 by -#2 
	\vbox {\nointerlineskip \moveright #3 \vbox{\hrule height 0.3pt width \dimen0}%
		\nointerlineskip \vskip 1.5pt \box0}%
}}
\newcommand{\pil}{\pi_{\ast}}
\newcommand{\iu}{i^{\ast}}
\newcommand{\tl}{t_{\ast}}
\newcommand{\piu}{\pi^{\ast}}
\newcommand{\shE}{\shf{E}}
\newcommand{\shO}{\shf{O}}
\newtheorem*{thm*}{Theorem}
\newtheorem*{lem*}{Lemma}
\newtheorem*{prop*}{Proposition}
\newtheorem*{cor*}{Corollary}
\declaretheoremstyle[numbered=yes,headformat=\NAME\NOTE,numberwithin=paragraph,%
	bodyfont=\normalfont\itshape,postheadspace={ },%
	spaceabove=\topsep,spacebelow=\topsep]{par-thm}
\declaretheoremstyle[numbered=yes,headformat=\NAME\NOTE,numberwithin=paragraph,%
bodyfont=\normalfont,postheadspace={ },%
	spaceabove=\topsep,spacebelow=\topsep]{par-def}
\declaretheoremstyle[numbered=yes,headformat=\NAME\NOTE,numberwithin=paragraph,%
	headfont=\normalfont\itshape,bodyfont=\normalfont,postheadspace={ },
	spaceabove=\topsep,spacebelow=\topsep]{par-exa}
\declaretheorem[name=Theorem,style=par-thm,preheadhook={},%
	postheadhook={\leavevmode}]{pthm} 
\declaretheorem[name=Lemma,style=par-thm,preheadhook={},%
	postheadhook={\leavevmode}]{plem} 
\declaretheorem[name=Corollary,style=par-thm,preheadhook={},%
	postheadhook={\leavevmode}]{pcor} 
\declaretheorem[name=Proposition,style=par-thm,preheadhook={},%
	postheadhook={\leavevmode}]{pprop} 
\declaretheorem[name=Conjecture,style=par-thm,preheadhook={},%
	postheadhook={\leavevmode}]{pconj} 
\declaretheorem[name=Example,style=par-exa,preheadhook={},%
	postheadhook={\leavevmode}]{pexa} 
\theoremstyle{definition}
\newtheorem*{dfn*}{Definition}
\newtheorem*{conj*}{Conjecture}
\theoremstyle{remark}
\newtheorem*{exa*}{Example}
\newtheorem*{problem*}{Problem}
\theoremstyle{plain}
\newcommand{\newpar}{\paragraph{\hspace{-1em}.}}
\renewcommand\paragraph{\@startsection{paragraph}{4}{\z@}%
	{1.25ex \@plus.2ex \@minus.2ex}{-0.5em}%
	{\normalfont\normalsize\bfseries}} 
\let\old@caption\caption
\renewcommand*{\caption}[1]{%
	\setcounter{figure}{\value{equation}}%
	\stepcounter{equation}%
	\old@caption{#1}\relax%
}
\newcounter{intro}
\newtheorem{intro-conjecture}[intro]{Conjecture}
\newtheorem{intro-corollary}[intro]{Corollary}
\newtheorem{intro-theorem}[intro]{Theorem}
\newcommand{\parref}[1]{\hyperref[#1]{\S\ref*{#1}}}
\newcommand{\chapref}[1]{\hyperref[#1]{Chapter~\ref*{#1}}}
\newcommand*\if@single[3]{%
  \setbox0\hbox{${\mathaccent"0362{#1}}^H$}%
  \setbox2\hbox{${\mathaccent"0362{\kern0pt#1}}^H$}%
  \ifdim\ht0=\ht2 #3\else #2\fi
  }
\newcommand*\rel@kern[1]{\kern#1\dimexpr\macc@kerna}
\newcommand*\widebar[1]{\@ifnextchar^{{\wide@bar{#1}{0}}}{\wide@bar{#1}{1}}}
\newcommand*\wide@bar[2]{\if@single{#1}{\wide@bar@{#1}{#2}{1}}{\wide@bar@{#1}{#2}{2}}}
\newcommand*\wide@bar@[3]{%
  \begingroup
  \def\mathaccent##1##2{%
%If there's more than a single symbol, use the first character instead (see below):
    \if#32 \let\macc@nucleus\first@char \fi
%Determine the italic correction:
    \setbox\z@\hbox{$\macc@style{\macc@nucleus}_{}$}%
    \setbox\tw@\hbox{$\macc@style{\macc@nucleus}{}_{}$}%
    \dimen@\wd\tw@
    \advance\dimen@-\wd\z@
%Now \dimen@ is the italic correction of the symbol.
    \divide\dimen@ 3
    \@tempdima\wd\tw@
    \advance\@tempdima-\scriptspace
%Now \@tempdima is the width of the symbol.
    \divide\@tempdima 10
    \advance\dimen@-\@tempdima
%Now \dimen@ = (italic correction / 3) - (Breite / 10)
    \ifdim\dimen@>\z@ \dimen@0pt\fi
%The bar will be shortened in the case \dimen@<0 !
    \rel@kern{0.6}\kern-\dimen@
    \if#31
      \overline{\rel@kern{-0.6}\kern\dimen@\macc@nucleus\rel@kern{0.4}\kern\dimen@}%
      \advance\dimen@0.4\dimexpr\macc@kerna
%Place the combined final kern (-\dimen@) if it is >0 or if a superscript follows:
      \let\final@kern#2%
      \ifdim\dimen@<\z@ \let\final@kern1\fi
      \if\final@kern1 \kern-\dimen@\fi
    \else
      \overline{\rel@kern{-0.6}\kern\dimen@#1}%
    \fi
  }%
  \macc@depth\@ne
  \let\math@bgroup\@empty \let\math@egroup\macc@set@skewchar
  \mathsurround\z@ \frozen@everymath{\mathgroup\macc@group\relax}%
  \macc@set@skewchar\relax
  \let\mathaccentV\macc@nested@a
%The following initialises \macc@kerna and calls \mathaccent:
  \if#31
    \macc@nested@a\relax111{#1}%
  \else
%If the argument consists of more than one symbol, and if the first token is
%a letter, use that letter for the computations:
    \def\gobble@till@marker##1\endmarker{}%
    \futurelet\first@char\gobble@till@marker#1\endmarker
    \ifcat\noexpand\first@char A\else
      \def\first@char{}%
    \fi
    \macc@nested@a\relax111{\first@char}%
  \fi
  \endgroup
}
\newcommand{\half}{\frac{1}{2}}
\newcommand{\Pmod}{\mathcal{P}}
\newcommand{\dt}{\mathit{dt}}
\newcommand{\dtj}{\dt_j}
\newcommand{\dtJ}{\dt_J}
\newcommand{\Mt}{\tilde{M}}
\newcommand{\Hsl}{\mathsf{H}}
\newcommand{\Xsl}{\mathsf{X}}
\newcommand{\Ysl}{\mathsf{Y}}
\newcommand{\wsl}{\mathsf{w}}
\newcommand{\esl}{\mathsf{e}}
\newcommand{\fsl}{\mathsf{f}}
\newcommand{\hsl}{\mathsf{h}}
\renewcommand{\shV}{\mathcal{V}}
\newcommand{\cont}{\operatorname{\lrcorner}}
\newcommand{\algS}{\mathcal{S}_B}
\newcommand{\algE}{\Omega_B}
\newcommand{\BGGL}{\mathbf{L}_B}
\newcommand{\BGGR}{\mathbf{R}_B}
\newcommand{\Dbcoh}{D_{\mathit{coh}}^\mathit{b}}
\newcommand{\wed}{\mathop{\mathchoice{\textstyle\bigwedge}{\bigwedge}%
	{\bigwedge}{\bigwedge}}\nolimits}
\newcommand{\Mh}{\widehat{M}}
\newcommand{\Gh}{\widehat{G}}
\newcommand{\slthree}{\operatorname{\mathfrak{sl}}_3(\CC)}
\newcommand{\slfour}{\operatorname{\mathfrak{sl}}_4(\CC)}
\newcommand{\sosix}{\operatorname{\mathfrak{so}}_6(\CC)}
\newcommand{\shA}{\mathcal{A}}
\newcommand{\DB}{\mathbf{D}_B}
\newcommand{\wsigma}{\wsl_{\sigma}}
\newcommand{\Ysigma}{\Ysl_{\sigma}}
\newcommand{\womega}{\wsl_{\omega}}
\newcommand{\womegat}{\wsl_{\omega_2}}
\newcommand{\Cpi}{\mathcal{C}_{\pi}}
\newcommand{\dpi}{d_{\pi}}
\DeclareMathOperator{\Ad}{Ad}
\newcommand{\iomega}{\Theta}
\begin{document}

%========================================================
\title{Hodge theory and Lagrangian fibrations on holomorphic symplectic manifolds}

\author{%
 Christian Schnell \\
 Department of Mathematics, Stony Brook University \\
 \texttt{christian.schnell@stonybrook.edu}
}

\date{\today}
\maketitle
%========================================================

\section{Introduction}

\newpar 
The purpose of this paper is to establish several new results about the Hodge theory of
Lagrangian fibrations on (not necessarily compact) holomorphic symplectic manifolds.
In particular, we prove two beautiful recent conjectures by Maulik, Shen and Yin;
and we show, without using hyperk\"ahler metrics, that every Lagrangian
fibration gives rise to an action by the Lie algebra
$\slthree$ (in the noncompact case) or $\slfour$ (in the compact case).

\newpar
The most interesting Lagrangian fibration is arguably the Hitchin fibration on the
moduli space of stable Higgs bundles on a smooth projective curve of genus $g \geq
2$. It was famously used by Ng{\^o} in his proof of the fundamental lemma \cite{Ngo-lemme},
and is also the central object in the $P=W$ conjecture by de Cataldo,
Hausel, and Migliorini \cite{CHM}, recently proved by Maulik and Shen \cite{MS}
and Hausel, Mellit, Minets, and Schiffmann \cite{HMMS}. But Lagrangian fibrations are
also very useful for studying compact hyperk\"ahler manifolds, which are compact
holomorphic symplectic manifolds with a hyperk\"ahler metric. For example, de
Cataldo, Rapagnetta, and Sacc\`a \cite{CRS} used a pair of Lagrangian fibrations to
compute the Hodge numbers of O'Grady's $10$-dimensional sporadic compact
hyperk\"ahler manifold. 

\newpar
Let $M$ be a holomorphic symplectic manifold of dimension $2n$ that is K\"ahler but
not necessarily compact, and let $\pi \colon M \to B$ be a Lagrangian fibration over
a complex manifold $B$ of dimension $n$. The general fiber of $\pi$ is an $n$-dimensional
abelian variety, but very little is known about the singular fibers. The method we
use in this paper is to apply the decomposition theorem; this produces certain
perverse sheaves on the base of the Lagrangian fibration. In some cases, such as
Ng\^o's support theorem \cite{Ngo}, these perverse sheaves are controlled by what
happens on the smooth locus, but in general, their behavior is a mystery. This
mystery is the subject of the conjectures by Maulik, Shen, and Yin \cite{SY,MSY}.

\newpar
Our main result is that there is a very close relationship between two seemingly unrelated
objects: the $k$-th perverse sheaf $P_k$ in the decomposition theorem for $\pi$, and
the derived direct image $\derR \pil \Omega_M^{n+k}$ of the sheaf of holomorphic
$(n+k)$-forms on $M$ (see \Cref{thm:isomorphism}). This is formulated -- and proved
-- with the help of Saito's theory of Hodge modules, and the BGG correspondence
(between graded modules over the symmetric and exterior algebras).
On both sides, we need to take the associated graded with respect to a certain
filtration: in the case of $P_k$, this is the Hodge filtration of $P_k$, viewed as a
Hodge module; in the case of $\derR \pil \Omega_M^{n+k}$, it is the perverse
filtration coming from the decomposition theorem.

\newpar
Along the way, we prove a relative Hard Lefschetz theorem for the action of the
holomorphic symplectic form (in \Cref{thm:HL-sigma}), as well as the symmetry conjecture
$G_{i,k} \cong G_{k,i}$ of Shen and Yin for the complexes $G_{i,k} = \gr_{-k}^F
\DR(\Pmod_i) \decal{-i}$ (in \Cref{conj:symmetry}). The structure that one gets on
the direct sum of all the complexes $G_{i,k}$ looks somewhat like the ``Hodge
diamond'' of a compact hyperk\"ahler manifold, except that it has a hexagonal shape (see
\Cref{par:hexagon}) and comes with an action by the Lie algebra $\slthree$ (see
\Cref{par:slthree}). One interesting aspect is that all of these structures are only
visible in the derived category. Perhaps the most useful feature of the present work
is that no restrictions on the singular fibers are needed: all the results
below apply for example to the entire Hitchin fibration on the moduli space of stable
Higgs bundles (provided that the rank and the degree are coprime).

\newpar
One application of our main result is a different proof for the ``numerical perverse =
Hodge'' symmetry for irreducible compact hyperk\"ahler manifolds \cite{SY-top} that
does not rely on the existence of a hyperk\"ahler metric (see \Cref{thm:Shen-Yin}).
Another application is that the Lie algebra $\slfour \cong \sosix$ acts on the
cohomology of a compact holomorphic symplectic manifold with a Lagrangian fibration
(see \Cref{chap:slfour});
this generalizes a result by Looijenga-Lunts \cite[\S4]{LL} and Verbitsky
\cite{Verbitsky}, who proved this for irreducible compact hyperk\"ahler manifolds.
Once again, our proof does not rely on the existence of a hyperk\"ahler metric.

\subsection{Lagrangian fibrations}

\newpar
We now give a more detailed summary of the paper. Let $M$ be a holomorphic symplectic
manifold of dimension $2n$; we assume that $M$ is K\"ahler, but we allow $M$ to be
noncompact. We denote by $\sigma \in H^0(M, \Omega_M^2)$ the holomorphic symplectic
form; $\sigma$ is nondegenerate, which means that it induces an isomorphism between the
holomorphic tangent sheaf $\shT_B$ and the sheaf of holomorphic $1$-forms
$\Omega_B^1$. We need to add the assumption that $d \sigma = 0$; this would of course
be automatic in the compact case (by Hodge theory). We also fix a K\"ahler form
$\omega \in A^{1,1}(M)$; recall that $\omega$ is real and positive, and satisfies $d
\omega = 0$. In the special case where $M$ is a compact hyperk\"ahler manifold, this
might be the K\"ahler form of a hyperk\"ahler metric; but in the noncompact case, any
K\"ahler metric seems to be as good as any other.

\newpar
Further, let $\pi \colon M \to B$ be a Lagrangian fibration on $M$. This means that
$B$ is a complex manifold of dimension $n$, and that $\pi$ is a proper surjective
holomorphic mapping whose smooth fibers are Lagrangian, in the sense that $\sigma$
restricts to zero on every smooth fiber of $\pi$. For a very nice introduction to the
general theory of Lagrangian fibrations, see the recent paper by Huybrechts and Mauri
\cite{LF}. It is known that the smooth fibers are abelian varieties of dimension $n$.
Moreover, according to a theorem by Matsushita \cite[Thm.~1]{Mat-equi}, all fibers of
$\pi$ have dimension $n$, and $\sigma$ pulls back to zero on a resolution of
singularities (of the reduction) of every fiber. Let me emphasize again that $B$ is
assumed to be a complex manifold; in the special case where $M$ is an irreducible compact
hyperk\"ahler manifold, this implies that $B$ is isomorphic to $\PP^n$ \cite{Hwang}.  

\newpar \label{par:analogy}
The starting point for the work by Shen and Yin \cite{SY} is the following curious
analogy between two completely different sets of objects. On the one hand, we have the
sheaves of holomorphic forms $\Omega_M^{n+k}$ for $k = -n, \dotsc, n$. The rank of 
$\Omega_M^{n+k} \cong \wed^{n+k} \Omega_M^1$ is of course $\binom{2n}{n+k}$. In
addition, wedge product with the symplectic form induces an isomorphism
\[
	\sigma^k \colon \Omega_M^{n-k} \to \Omega_M^{n+k}.
\]
On the other
hand, we have a collection of perverse sheaves $P_i$ on the base $B$ of the Lagragian
fibration, where $P_i$ is defined as the $i$-th perverse cohomology sheaf of the
complex $\derR \pil \QQ_M \decal{2n}$. The fact that all fibers of $\pi$ have
dimension $n$ implies that $P_i$ is nonzero only for $i = -n, \dotsc, n$. The
restriction of $P_i$ to the smooth locus of $\pi$ is just the local system on the
$(n+i)$-th cohomology of the fibers, which are $n$-dimensional abelian varieties.
As it happens, $H^{n+i}(A, \CC) \cong \wed^{n+i} H^1(A, \CC)$, which means that the
generic rank of $P_i$ is also $\binom{2n}{n+i}$. The most striking part of the
analogy is that wedge product with the K\"ahler form induces isomorphisms
\[
	\omega^i \colon P_{-i} \to P_i.
\]
This result, called the relative Hard Lefschetz theorem, used to be 
known only for projective morphisms, but recent work by Mochizuki \cite{Mochizuki}
shows that it also holds for proper holomorphic mappings from K\"ahler
manifolds.\footnote{Locally on the base, Lagrangian fibrations are actually
	projective \cite{Campana}.}
The same is true for the decomposition theorem, which guarantees the existence of a
decomposition
\[
	\derR \pil \QQ_M \decal{2n} \cong \bigoplus_{i=-n}^n P_i \decal{-i}
\]
in the derived category. Since we have chosen a K\"ahler form, there is a preferred
choice of decomposition, constructed by Deligne; this is described in
\Cref{par:Deligne}.

\subsection{The symmetry conjecture of Shen and Yin}

\newpar
Are the two objects $\Omega_M^{n+k}$ and $P_k$ actually related in some way?
In \cite{SY}, Shen and Yin proposed a conjectural symmetry that would relate at least
certain complexes of coherent sheaves derived from the two objects. Their conjecture
is formulated using Saito's theory of Hodge modules \cite{HM,MHM}. Recall that $\QQ_M
\decal{2n}$ is actually a Hodge module of weight $2n$. We can adjust the weight by a
Tate twist, making $\QQ_M(n) \decal{2n}$ a Hodge module of weight $0$. Saito's
version of the decomposition theorem
\[
	\derR \pil \QQ_M(n) \decal{2n} \cong \bigoplus_{i=-n}^n P_i \decal{-i}
\]
then shows that each $P_i$ is a Hodge module of weight $i$ on $B$. We denote
by $\Pmod_i$ the underlying (regular holonomic) right $\Dmod_B$-module, and by
$F_{\bullet} \Pmod_i$ its Hodge filtration, which is an increasing filtration by
coherent $\shO_B$-modules. The perverse sheaf $P_i$ and the $\Dmod_B$-module
$\Pmod_i$ are related by the Riemann-Hilbert correspondence:
\[
	P_i \tensor_{\QQ} \CC \cong \DR(\Pmod_i)
\]
Here $\DR(\Pmod_i)$ is the de Rham complex; since $\Pmod_i$ is a right
$\Dmod_B$-module, the de Rham complex (which is usually called the ``Spencer
complex'' in the $\Dmod$-module literature) is the complex
\[
	\DR(\Pmod_i) = \Bigl\lbrack 
	\Pmod_i \tensor \wed^n \shT_B \to \dotsb \to 
		\Pmod_i \tensor \shT_B \to \Pmod_i
	\Bigr\rbrack.
\]
It lives in cohomological degrees $-n, \dotsc, 0$, and the differential is
induced by the multiplication map $\Pmod_i \tensor \shT_B \to \Pmod_i$. The de Rham
complex is filtered by the subcomplexes
\[
	F_k \DR(\Pmod_i) = \Bigl\lbrack 
	F_{k-n} \Pmod_i \tensor \wed^n \shT_B \to \dotsb \to 
		F_{k-1} \Pmod_i \tensor \shT_B \to F_k \Pmod_i 
	\Bigr\rbrack,
\]
and the graded pieces of this filtration give us several complexes of coherent
$\shO_B$-modules
\[
	\gr_k^F \DR(\Pmod_i) = \Bigl\lbrack 
	\gr_{k-n}^F \Pmod_i \tensor \wed^n \shT_B \to \dotsb \to 
		\gr_{k-1}^F \Pmod_i \tensor \shT_B \to \gr_k^F \Pmod_i 
	\Bigr\rbrack.
\]
One consequence of Saito's theory is that one has an isomorphism (in the derived
category)
\begin{equation} \label{eq:Saito-Omega}
	\derR \pil \Omega_M^{n+k} \decal{n-k} \cong \bigoplus_{i=-n}^n \gr_{-k}^F
	\DR(\Pmod_i) \decal{-i};
\end{equation}
here the decomposition is induced by the one in the decomposition theorem.

\newpar
In \cite{SY}, Shen and Yin introduced what they call the ``perverse-Hodge complexes''
\[
	G_{i,k} = \gr_{-k}^F \DR(\Pmod_i) \decal{-i},
\]
although with a different choice of indexing.\footnote{In the notation of
\cite[0.2]{SY}, one has $\mathcal{G}_{i,k} = G_{i-n,k-n}$.} 
These are complexes of coherent $\shO_B$-modules on the base manifold $B$ of the
Lagrangian fibration. The two indices $i$ and $k$ have the following meaning:
\begin{enumerate}
	\item The first index $i$ records the cohomological degree, in the sense that
	$G_{i,k}$ is associated (over the smooth locus of $\pi$) with the $(n+i)$-th
	cohomology groups of the fibers.
	\item The second index $k$ records the holomorphic degree, in the sense that $G_{i,k}$
	is associated with the sheaf $\Omega_M^{n+k}$ of holomorphic forms of degre
	$(n+k)$.
\end{enumerate}

\newpar
Over the smooth locus of $\pi$, the complexes $G_{i,k}$ can be described fairly
explicitly using the Hodge bundles $\shV^{p,q}$ in the variation of Hodge structure:
in fact, the restriction of $G_{i,k}$ to the smooth locus is the complex
\[
	\Bigl\lbrack
		\Omega_B^k \tensor \shV^{n,i} \to \Omega_B^{k+1} \tensor \shV^{n-1,i+1} 
		\to \dotsb \to \Omega_B^n \tensor \shV^{k, n+i-k} 
	\Bigr\rbrack.
\]
Note again that the cohomological degree of each term is $n+i$, whereas the
holomorphic degree is $n+k$. But the behavior of these complexes on the singular
locus of $\pi$ is quite
mysterious. Shen and Yin proved that the two complexes $G_{i,k}$ and $G_{k,i}$ are 
isomorphic over the smooth locus of $\pi$, and motivated by this, they made the
following bold conjecture.

\begin{pconj} \label{conj:symmetry}
	In the derived category of coherent $\shO_B$-modules, one has $G_{i,k} \cong G_{k,i}$.
\end{pconj}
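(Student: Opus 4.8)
The plan is to read \Cref{conj:symmetry} off the symmetry of the whole bigraded object $\bigoplus_{i,k} G_{i,k}$. By Saito's decomposition \eqref{eq:Saito-Omega} this object is nothing but $\bigoplus_k \derR\pil\Omega_M^{n+k}\decal{n-k}$, with the index $i$ recording the perverse degree. Wedge product with the K\"ahler form $\omega$ and with the holomorphic symplectic form $\sigma$ both act on $\bigoplus_p\derR\pil\Omega_M^{p}$, and by the relative Hard Lefschetz theorem for $\omega$ and by \Cref{thm:HL-sigma}, each of them together with its adjoint generates an $\sltwo$-triple on $\bigoplus_{i,k} G_{i,k}$ whose weight operator on associated gradeds is, up to normalization, the index $i$, respectively the index $k$. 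I expect these two triples to generate an action of $\slthree$ on $\bigoplus_{i,k} G_{i,k}$ in the derived category of $\shO_B$-modules -- this is the structure announced in \Cref{par:slthree}, and it is what produces the hexagon of \Cref{par:hexagon}. The interchange $(i,k)\mapsto(k,i)$ is the longest Weyl element of $\slthree$ followed by $\lambda\mapsto-\lambda$; since the representation $\bigoplus_{i,k} G_{i,k}$ is self-dual -- which is immediate from $\Omega_M^{2n}\cong\shO_M$ (so that $\Omega_M^{n+k}\cong(\Omega_M^{n-k})^{\vee}$) together with relative duality -- the isomorphism $G_{i,k}\cong G_{k,i}$ then drops out exactly as the isomorphism $V_{-m}\cong V_m$ between opposite weight spaces of a finite-dimensional $\sltwo$-representation.

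Concretely I would: (a) upgrade \Cref{thm:HL-sigma}, and its $\omega$-counterpart, to Hard Lefschetz for the two $\sltwo$-triples acting on $\bigoplus_{i,k} G_{i,k}$, so that the $\slthree$-action decomposes this object as a direct sum of finite-dimensional irreducible representations -- it is this that turns the Weyl-group combinatorics into genuine isomorphisms of the coherent complexes $G_{i,k}$ rather than of their hypercohomology only; (b) commute the raising and lowering operators of the two triples to produce the remaining root vectors of $\slthree$ and verify the Serre relations, all at the level of the object in the derived category; (c) pin down the weight lattice so that the summand $G_{i,k}$ genuinely sits at the weight with $\sltwo$-coordinates $(i,k)$, carefully tracking the shifts $\decal{n-k}$ and $\decal{-i}$ appearing in \eqref{eq:Saito-Omega} and in the definition of $G_{i,k}$; (d) record the self-duality; and (e) compose the Weyl isomorphism $G_{i,k}\cong G_{-k,-i}$ with the duality isomorphism $G_{-k,-i}\cong G_{k,i}$. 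I note in passing that once \Cref{thm:isomorphism} is available the conjecture is much cheaper: that theorem matches the associated graded of the perverse filtration on $\derR\pil\Omega_M^{n+k}$ -- which by \eqref{eq:Saito-Omega} is $\bigoplus_i G_{i,k}$ -- with the associated graded of the Hodge filtration on $\DR(\Pmod_k)$, which is $\bigoplus_j G_{k,j}\decal{k}$, and comparing the two gradings term by term gives $G_{i,k}\cong G_{k,i}$ directly.

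The step I expect to be the real obstacle is step (b): realizing the $\slthree$-action, and especially the lowering operators, as honest morphisms between the complexes $G_{i,k}$ over \emph{all} of $B$. Over the smooth locus of $\pi$ this is classical Hodge theory of the variation of Hodge structure on the cohomology of the abelian-variety fibers, and one can write the action down by hand; but over the discriminant there is neither a variation of Hodge structure nor a hyperk\"ahler metric to appeal to, so the adjoint Lefschetz operators and the relations among them must be produced by other means. This is precisely where \Cref{thm:HL-sigma} does the heavy lifting in the $\sigma$-direction, but assembling its isomorphisms and their $\omega$-counterparts into a single $\slthree$-action in the derived category, and establishing the Hard Lefschetz needed in step (a), is the technical heart of the argument.
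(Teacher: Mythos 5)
Your overall strategy---reading the symmetry off the $S_3$ of reflections generated by the two relative Hard Lefschetz theorems---is exactly the paper's (see \Cref{par:hexagon} and the chain of isomorphisms following the statement of \Cref{thm:equivalence}), and you correctly identify that all the real content sits in \Cref{thm:HL-sigma}, which the paper establishes by the BGG argument rather than by fiberwise Hodge theory. But your deduction of \Cref{conj:symmetry} from the two Hard Lefschetz theorems contains a concrete error. The transposition $(i,k)\mapsto(k,i)$ is \emph{not} ``the longest Weyl element followed by $\lambda\mapsto-\lambda$''; it is the third reflection $s_1s_2s_1=w_0$ itself, and hence already lies in the group generated by the two Hard Lefschetz reflections. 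Composing the Weil elements of the two $\sltwo$-representations gives
\[
	G_{i,k} \xrightarrow{\ \wsl_1\ } G_{-i,k-i}\decal{-2i}
	\xrightarrow{\ \wsl_2\ } G_{-k,i-k}\decal{-2k}
	\xrightarrow{\ \wsl_1\ } G_{k,i},
\]
with all shifts cancelling, and this alone proves the conjecture. Your steps (d)--(e) are therefore not only unnecessary but would not work as written: the map $(i,k)\mapsto(-k,-i)$ is \emph{not} realized by the Weyl group $S_3$ (it is $-w_0$ composed with $w_0$'s complement in the full dihedral group), and the ``duality isomorphism $G_{-k,-i}\cong G_{k,i}$'' does not exist, because Grothendieck duality \eqref{eq:duality} is contravariant: it identifies $G_{-k,-i}$ with $\derR \shHom_{\shO_B}\bigl( G_{k,i}, \omega_B \decal{n} \bigr)$, not with $G_{k,i}$.

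You also substantially overestimate what is needed in steps (a)--(b). Each Weil element is defined inside its own $\sltwo$-representation, which exists as soon as the corresponding Hard Lefschetz isomorphisms hold; composing the three automorphisms above requires no compatibility between the two triples. The full $\slthree$-action---in particular the nontrivial Serre relation $[\Ysl_{\omega_2},\Ysl_{\sigma_1}]=0$, which the paper proves by a separate computation with differential forms---is needed only for \Cref{thm:slthree} and for the refined corollary that the isomorphism $G_{i,k}\cong G_{k,i}$ interchanges $\omega_2$ and $\sigma_1$; it plays no role in \Cref{conj:symmetry} itself. So the ``technical heart'' you worry about in (b) is not an obstacle here. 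The only genuine obstacle is the one you mention in passing: proving \Cref{thm:HL-sigma} over the discriminant locus, which the paper reduces (via \Cref{thm:equivalence}) to the isomorphism of \Cref{thm:isomorphism} obtained from the BGG correspondence and the Cohen--Macaulay property of $\gr_{\bullet}^F\Pmod_i$.
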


Note that the complexes $G_{i,k}$ are in general \emph{not} determined by their
restriction to the smooth locus of $\pi$; there are also examples where $G_{i,k}$
and $G_{k,i}$ are isomorphic in the derived category, but not isomorphic as
complexes \cite[\S2.4]{SY}. In any case, if the conjecture by Shen and Yin is true,
then one can combine it with Saito's isomorphism \eqref{eq:Saito-Omega} to get
\[
	\derR \pil \Omega_M^{n+k} \decal{n} \cong \bigoplus_{i=-n}^n G_{i,k} \decal{k}
	\cong \bigoplus_{i=-n}^n G_{k,i} \decal{k}
	= \bigoplus_{i=-n}^n \gr_{-i}^F \DR(\Pmod_k),
\]
which relates $\Omega_M^{n+k}$ and $P_k$ at least in a sort of indirect way. 

\newpar
The original motivation for \Cref{conj:symmetry} is the ``numerical perverse =
Hodge'' symmetry for compact hyperk\"ahler manifolds in \cite[Thm.~0.2]{SY-top}. Its
proof by Shen and Yin makes heavy use of the hyperk\"ahler metric, and the conjecture
arose in at attempt to find a more ``local'' explanation for
this symmetry, and to extend it to Lagrangian fibrations on noncompact holomorphic
symplectic manifolds. In terms of the complexes $G_{i,k}$, the main result in
\cite{SY-top} is the statement that
\[
	H^j(B, G_{i,k}) \cong H^j(B, G_{k,i})
\]
for all $i,j,k \in \ZZ$, provided that $M$ is an irreducible compact hyperk\"ahler
manifold. In my opinion, this was the most convincing piece of evidence for the conjecture.

\newpar
Our first result explains the symmetry between $G_{i,k}$ and $G_{k,i}$ as coming from
a new relative Hard Lefschetz theorem for the symplectic form (which, unlike the 
relative Hard Lefschetz theorem for the K\"ahler form, only holds in the derived
category). The decomposition theorem gives us a decomposition
\[
	\pi_{+} (\omega_M, F_{\bullet} \omega_M) \cong
	\bigoplus_{i=-n}^n (\Pmod_i, F_{\bullet} \Pmod_i) \decal{-i}
\]
for the direct image of the $\Dmod$-module $\omega_M$ (again with the filtration for
which $\gr_{-n}^F \omega_M = \omega_M$), in the derived category of filtered
$\Dmod_B$-modules. Since the K\"ahler form $\omega$ is closed and of type $(1,1)$, it
gives rise to a morphism 
\[
	\omega \colon \bigoplus_{i=-n}^n (\Pmod_i, F_{\bullet} \Pmod_i) \decal{-i}
	\to \bigoplus_{i=-n}^n (\Pmod_i, F_{\bullet-1} \Pmod_i) \decal{2-i}.
\]
Let us denote by $\omega_j \colon (\Pmod_i, F_{\bullet} \Pmod_i) \to (\Pmod_{i+j},
F_{\bullet-1} \Pmod_{i+j}) \decal{2-j}$ the individual components of $\omega$ with
respect to this decomposition. The topmost component $\omega_2$ accounts for the
action of the K\"ahler form on the cohomology of the fibers of $\pi$. In these terms,
the relative Hard Lefschetz theorem (for proper holomorphic mappings from K\"ahler
manifolds) is saying that
\[
	\omega_2^i \colon (\Pmod_{-i}, F_{\bullet} \Pmod_{-i})
	\to (\Pmod_i, F_{\bullet-i} \Pmod_i)
\]
is an isomorphism for every $i \geq 1$. From $\omega_2$, we also get a morphism of
complexes
\[
	\omega_2 \colon G_{i,k} \to G_{i+2,k+1} \decal{2},
\]
and as a consequence of the relative Hard Lefschetz theorem, the induced morphism
\[
	\omega_2^i \colon G_{-i,k} \to G_{i,i+k} \decal{2i}
\]
is an isomorphism for every $i \geq 1$ and every $k \in \ZZ$. 

\newpar
The symplectic form $\sigma$ is closed and of type $(2,0)$, and so it gives rise to a
morphism
\[
	\sigma \colon \bigoplus_{i=-n}^n (\Pmod_i, F_{\bullet} \Pmod_i) \decal{-i}
	\to \bigoplus_{i=-n}^n (\Pmod_i, F_{\bullet-2} \Pmod_i) \decal{2-i},
\]
again in the derived category of filtered $\Dmod_B$-modules. As before, we denote the
components of $\sigma$ with respect to this decomposition by $\sigma_j \colon
(\Pmod_i, F_{\bullet} \Pmod_i) \to (\Pmod_{i+j}, F_{\bullet-2} \Pmod_{i+j})
\decal{2-j}$. This time, we get $\sigma_2 = 0$ because $\pi$ is a Lagrangian fibration and
$\sigma$ acts trivially on the cohomology of the fibers (see
\Cref{lem:sigma_2}). The first nonzero component of $\sigma$ is therefore
\[
	\sigma_1 \colon (\Pmod_i, F_{\bullet} \Pmod_i) \to (\Pmod_{i+1}, F_{\bullet-2}
		\Pmod_{i+1}) \decal{1}.
\]
From $\sigma_1$, we get another morphism (which now only exists in the derived category)
\[
	\sigma_1 \colon G_{i,k} \to G_{i+1,k+2} \decal{2},
\]
The following result might be called the ``symplectic relative Hard Lefschetz theorem''.

\begin{pthm} \label{thm:HL-sigma}
	The induced morphism
	\[
		\sigma_1^k \colon G_{i,-k} \to G_{i+k,k} \decal{2k}
	\]
	is an isomorphism for every $k \geq 1$ and every $i \in \ZZ$.
\end{pthm}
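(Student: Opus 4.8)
The plan is to deduce this from the nondegeneracy of the symplectic form, together with Saito's isomorphism \eqref{eq:Saito-Omega}. As recalled in \parref{par:analogy}, wedging with $\sigma$ gives an isomorphism of locally free $\shO_M$-modules $\sigma^k \colon \Omega_M^{n-k} \to \Omega_M^{n+k}$ (fibrewise this is just the symplectic Lefschetz isomorphism). Applying $\derR \pil$ and shifting by $\decal{n+k}$ turns it into an isomorphism
\[
	\derR \pil \Omega_M^{n-k} \decal{n+k} \xrightarrow{\ \sim\ } \derR \pil \Omega_M^{n+k} \decal{n+k}
\]
in $\Dbcoh(\shO_B)$. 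By \eqref{eq:Saito-Omega} (applied with $-k$ in place of $k$) the left-hand side is $\bigoplus_i G_{i,-k}$, and the right-hand side is $\bigl( \bigoplus_i G_{i,k} \bigr) \decal{2k}$. Because $d\sigma = 0$, wedging with $\sigma$ is a morphism of filtered de Rham complexes on $M$ (with the Hodge filtration lowered by $2$), so after applying $\gr^F \DR$ and using the Deligne decomposition $\pi_{+} \omega_M \cong \bigoplus_i \Pmod_i \decal{-i}$ it is identified with cup product by $\sigma$ on the direct image; the latter is $\sigma = \sum_{j \leq 1} \sigma_j$, where $\sigma_2 = 0$ by \Cref{lem:sigma_2}, and on $\gr^F \DR$ each $\sigma_j$ induces a morphism $G_{i,k} \to G_{i+j,k+2} \decal{2}$.

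Consequently, after these identifications, $\derR \pil(\sigma^k)$ becomes a morphism $\bigoplus_i G_{i,-k} \to \bigl( \bigoplus_i G_{i,k} \bigr) \decal{2k}$ which is triangular for the perverse grading: any $k$-fold composite of the $\sigma_j$ with $j \leq 1$ raises the perverse index by at most $k$, so the component $G_{i,-k} \to G_{i',k} \decal{2k}$ vanishes unless $i' \leq i+k$, and the extremal component $G_{i,-k} \to G_{i+k,k} \decal{2k}$ is exactly the $k$-fold composite $\sigma_1^k$. The theorem is therefore equivalent to the assertion that cup product by $\sigma$ is \emph{strict} for the perverse filtration on $\derR \pil \Omega_M^{\bullet}$ (up to the shift raising perverse degree by $1$ and cohomological degree by $2$), so that $\gr^P$ of the isomorphism $\sigma^k$ equals the isomorphism $\bigoplus_i \sigma_1^k$.

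I expect this strictness to be the main obstacle, since a morphism preserving a filtration need not induce an isomorphism on the associated graded. The ingredients I would bring to bear are: (i) over the smooth locus of $\pi$ one has $\sigma_j = 0$ for all $j \leq 0$ (in the local normal form $T^{*}B$ the symplectic form has no $\pi^{*}\Omega_B^2$-component, and its $R^0\pi_{*}$-component vanishes for Hodge-theoretic reasons), so the off-diagonal part of $\sigma^k$ is supported on the discriminant, while over the smooth locus the triangular morphism reduces to its diagonal $\bigoplus_i \sigma_1^k$, which is an isomorphism there; (ii) the leading component $\sigma_1$ commutes with the K\"ahler Lefschetz operator $\omega_2$ (both come from wedging with closed forms on $M$), so the relative Hard Lefschetz theorem for $\omega_2$ controls the $\omega_2$-primitive parts; and (iii) Grothendieck--Serre duality on $B$, which (combined with Saito's duality and relative Hard Lefschetz for $\omega$) interchanges the complexes $G_{i,k}$ and carries $\sigma_1$ to $\sigma_1$ on the dual side, allowing one to promote ``split monomorphism'' to ``isomorphism'' on each diagonal corner. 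Assembling these should show that $\sigma^k$ is strict, hence that each $\sigma_1^k \colon G_{i,-k} \to G_{i+k,k} \decal{2k}$ is an isomorphism for every $k \geq 1$ and $i \in \ZZ$.
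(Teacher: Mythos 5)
Your setup is correct and matches the paper's: the isomorphism $\derR \pil(\sigma^k)$ is triangular for the perverse grading, its extremal component is $\sigma_1^k \colon G_{i,-k} \to G_{i+k,k} \decal{2k}$, and the whole content of the theorem is that one may pass to the associated graded of this triangular isomorphism. This is exactly the content of \Cref{thm:equivalence}, whose proof uses the cancellation lemma \Cref{lem:sums-derived} to peel off summands by descending induction. The gap is in how you propose to establish the strictness. Your ingredient (i) cannot work: the paper points out explicitly that the complexes $G_{i,k}$ are \emph{not} determined by their restriction to the smooth locus, and that beyond the cases $k = n$ and $k = n-1$ (where a generic-rank count over the smooth locus does suffice, as in the proof of Matsushita's theorem) ``just knowing what happens over the smooth locus of $\pi$ is not enough.'' Knowing that the off-diagonal components of $\sigma^k$ are supported on the discriminant does not prevent the diagonal component from failing to be an isomorphism there, with the defect absorbed by the off-diagonal terms. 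Your ingredient (iii) is also not enough as stated: duality together with \Cref{lem:sums-derived} only yields splittings of the form $G_{k-d,k}\decal{2k} \cong G_{-d,-k} \oplus \Cone(\mu_{-d,-k})$ and its dual counterpart; nothing in relative Hard Lefschetz for $\omega$ or in Grothendieck--Serre duality forces the cones to vanish.

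What actually closes the argument in the paper is statement \ref{thm:equivalence-b} of \Cref{thm:equivalence}: an a priori isomorphism $\bigoplus_i G_{i,k}\decal{i+k} \cong \bigoplus_i G_{k,i}\decal{i+k}$, against which the direct sum of all the splittings above can be compared so that every $\Cone(\mu_{-d,-k})$ is forced to be zero. That input is the main theorem (\Cref{thm:isomorphism}), and its proof requires machinery entirely absent from your sketch: the BGG correspondence between graded $\Sym(\shT_B)$-modules and graded $\Omega_B$-modules, the concrete model $(M,d)$ with $M_k^i = \pil \shA_M^{n+k,n+i}$, the two Weil-element isomorphisms $G \cong G_v \cong G_f$ exchanging the actions of $\piu\beta$, $v(\beta)$, and $f(\beta)_1$, and --- crucially --- the fact that $\gr_{\bullet}^F \Pmod_i$ is an $n$-dimensional Cohen--Macaulay module over $\Sym(\shT_B)$, which is what rules out the extraneous shifted summands $F_{k,\ell}$ with $\ell \neq 0$. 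Without some substitute for this global input, the strictness you need remains unproved.
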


\newpar \label{par:hexagon}
The following picture may be helpful in understanding this result. The
complexes $G_{i,k}$ are exact unless $-n \leq i,k \leq n$ and $-n \leq i-k
\leq n$ (see \Cref{lem:amplitude}). We can therefore arrange them on a hexagonal grid 
by putting $G_{i,k}$ at the point with coordinates $i \rho + k$, where $\rho =
\frac{1}{2}(-1 + \sqrt{-3})$ is a cube root of unity:

\begin{center}
\begin{tikzpicture}[scale=1] 
	\foreach \i in {0,...,3} {
		\pgfmathsetmacro{\b}{-3+\i}
		\foreach \k in {\b,...,3} { %\k = \i+\j
			\filldraw[black] (\k - 0.5 * \i, 0.866 * \i) circle (0.1cm);
			%\draw (\k - 0.5 * \i, 0.866 * \i) node[anchor=south] {$(\i,\k)$};
		}
	}
	\foreach \i in {-3,...,-1} {
		\pgfmathsetmacro{\t}{3+\i}
		\foreach \k in {-3,...,\t} { %\k = \i+\j
			\filldraw[black] (\k - 0.5 * \i, 0.866 * \i) circle (0.1cm);
			%\draw (\k - 0.5 * \i, 0.866 * \i) node[anchor=south] {$(\i,\k)$};
		}
	}
	% Draw labels
	\draw (-3 + 0.5 * 3, -0.866 * 3) node[anchor=north] {$(-n,-n)$};
	\draw (3 - 0.5 * 3, 0.866 * 3) node[anchor=south] {$(n,n)$};
	\draw (0.5 * 3, -0.866 * 3) node[anchor=north] {$(-n,0)$};
	\draw (-0.5 * 3, 0.866 * 3) node[anchor=south] {$(n,0)$};
	\draw (-3, 0) node[anchor=south] {$(0,-n)$};
	\draw (3, 0) node[anchor=south] {$(0,n)$};
	\draw (0, 0.866 * 2) node[anchor=south] {$(i,k)$};
	\draw (3 * 0.5, 0.866) node[anchor=south] {$(k,i)$};
	% Draw lines
	\draw[dashed] (-0.5 * 3, 0.866 * 3) -- (0.5 * 3, -0.866 * 3);
	\draw[dashed] (0.5 * 3, 0.866 * 3) -- (-0.5 * 3, -0.866 * 3);
	\draw[dashed] (-3,0) -- (3,0);
	% Draw arrows
	\draw[white,line width=0.6mm,->,>=latex,cap=round] 
		(-2 + 0.5 * 2, -0.866 * 2) -- (-2 + 0.5 * 2, 0.05);
	\draw[white,line width=0.6mm,->,>=latex,cap=round] 
		(-2 + 0.5 * 2, -0.866 * 2) -- (0.54, -0.844);
	\draw[red,thick,->,>=latex,cap=round] (-2 + 0.5 * 2, -0.866 * 2) -- (-2 + 0.5 * 2, 0)
			node[midway,fill=white] {$\omega_2$};
	\draw[red,thick,->,>=latex,cap=round] (-2 + 0.5 * 2, -0.866 * 2) -- (0.5 * 1, -0.866)
			node[midway,fill=white] {$\sigma_1$};
\end{tikzpicture}
\end{center}

The symmetry coming from the relative Hard Lefschetz theorem for the K\"ahler form
$\omega$, which exchanges the two points $(-i,k)$ and $(i,i+k)$, is reflection in one
of the diagonals of this hexagon; the symmetry coming from \Cref{thm:HL-sigma},
which exchanges the two points $(i,-k)$ and $(i+k,k)$, is reflection in another one.%
\footnote{If we forget about the shifts that appear when applying $\omega_2$ and
$\sigma_1$} 
These two reflections together generate the symmetric group $S_3$, and the reflection
in the remaining diagonal that we get in this way exchanges the two points $(i,k)$ and
$(k,i)$. This gives a conceptual proof for \Cref{conj:symmetry}. 

\newpar \label{par:slthree}
Note that $S_3$ is the Weyl group of the Lie algebra $\slthree$; this suggests that
the direct sum of all the complexes $G_{i,k}$ should form a representation
of $\slthree$. The precise statement is slightly more cumbersome because both
$\omega_2$ and $\sigma_1$ involve a shift.

\begin{pthm} \label{thm:slthree}
	The two operators $\omega_2 \colon G_{i,k} \to G_{i+2,k+1} \decal{2}$ and
	$\sigma_1 \colon G_{i,k} \to G_{i+1,k+2} \decal{2}$ determine a representation of
	the Lie algebra $\slthree$ on the object 
	\[
		\bigoplus_{i,k = -n}^n G_{i,k} \Bigl[ \lfloor \tfrac{2}{3}(i+k) \rfloor \Bigr],
	\]
	in the derived category.
\end{pthm}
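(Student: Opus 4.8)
The plan is to exhibit $\slthree$ as the Lie algebra generated, inside the associative ring $R\defeq\operatorname{End}_{\Dbcoh(\shO_B)}(V)$ (where $V\defeq\bigoplus_{i,k=-n}^{n}G_{i,k}\decal{\lfloor\frac23(i+k)\rfloor}$), by two $\sltwo$-triples, one built from $\omega_2$ and one from $\sigma_1$. The role of the shifts $\lfloor\frac23(i+k)\rfloor$ is precisely to make both $\omega_2\colon G_{i,k}\to G_{i+2,k+1}\decal2$ and $\sigma_1\colon G_{i,k}\to G_{i+1,k+2}\decal2$ into degree-zero endomorphisms of $V$, hence honest elements of $R$; this works because $\lfloor\frac23(i+k)\rfloor+2=\lfloor\frac23((i{+}2)+(k{+}1))\rfloor=\lfloor\frac23((i{+}1)+(k{+}2))\rfloor$. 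I would grade $V$ by placing $G_{i,k}$ in bidegree $(i,k)$ and equip $\ZZ^2$ with the symmetric form of Gram matrix $\bigl(\begin{smallmatrix}2&-1\\-1&2\end{smallmatrix}\bigr)$; then $\alpha_1\defeq(2,1)$ and $\theta\defeq(1,2)$ have squared length $6$ and angle $60^{\circ}$, so $\alpha_1$ and $\alpha_2\defeq\theta-\alpha_1=(-1,1)$ form a system of simple roots of type $A_2$ with highest root $\theta$. Thus $\omega_2$ raises the bidegree by the simple root $\alpha_1$ and $\sigma_1$ by the highest root $\theta$, and a short computation identifies the two relative Hard Lefschetz symmetries — $(i,k)\mapsto(-i,k-i)$, from $\omega_2^i\colon G_{-i,k}\xrightarrow{\sim}G_{i,i+k}\decal{2i}$, and $(i,k)\mapsto(i-k,-k)$, from $\sigma_1^k\colon G_{i,-k}\xrightarrow{\sim}G_{i+k,k}\decal{2k}$ (\Cref{thm:HL-sigma}) — with the reflections $s_{\alpha_1}$ and $s_{\theta}$; together these generate the Weyl group $S_3$ of $A_2$, which is the combinatorial reason to expect exactly $\slthree$.

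Next I would complete $\omega_2$ and $\sigma_1$ to $\sltwo$-triples. Since $V$ is a finite direct sum of its bigraded pieces and $\Dbcoh(\shO_B)$ is $\CC$-linear and idempotent complete, the relative Hard Lefschetz isomorphism for $\omega$ produces, by the standard recipe for building an $\sltwo$-triple from a Hard Lefschetz operator and a grading (cf.\ \Cref{par:Deligne}), a unique lowering operator $\fsl_\omega\in R$ and a semisimple operator $\hsl_\omega$ acting on bidegree $\mu$ by $\langle\mu,\alpha_1^\vee\rangle$, making $(\omega_2,\hsl_\omega,\fsl_\omega)$ an $\sltwo$-triple; likewise \Cref{thm:HL-sigma} gives a triple $(\sigma_1,\hsl_\sigma,\fsl_\sigma)$. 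I would then define a map $\rho$ on the Chevalley generators of $\slthree$ by $\rho(e_1)\defeq\omega_2$, $\rho(f_1)\defeq\fsl_\omega$, $\rho(h_1)\defeq\hsl_\omega$; take $\rho(h_2)$ to be the diagonal operator acting on bidegree $\mu$ by $\langle\mu,\alpha_2^\vee\rangle$; and set $\rho(e_2)\defeq[\fsl_\omega,\sigma_1]$ (of bidegree $\alpha_2$) and $\rho(f_2)\defeq[\omega_2,\fsl_\sigma]$ (of bidegree $-\alpha_2$). Note that $[\rho(e_1),\rho(e_2)]$ equals $\sigma_1$ up to a nonzero scalar, so the image of $\rho$ is indeed generated by $\omega_2$ and $\sigma_1$.

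The heart of the proof is then to verify that $\rho$ respects the Chevalley–Serre presentation of $\slthree$. The Jacobi identity is automatic in $R$; the Cartan relations $[\rho(h_i),\rho(h_j)]=0$ and $[\rho(h_i),\rho(e_j)]=a_{ij}\rho(e_j)$, $[\rho(h_i),\rho(f_j)]=-a_{ij}\rho(f_j)$ — with $(a_{ij})=\bigl(\begin{smallmatrix}2&-1\\-1&2\end{smallmatrix}\bigr)$ the Cartan matrix of $\slthree$ — follow at once from bidegree homogeneity and the Gram matrix; and one of the $\sltwo$-relations $[\rho(e_i),\rho(f_i)]=\rho(h_i)$ is built in. What remains are the second $\sltwo$-relation $[\rho(e_2),\rho(f_2)]=\rho(h_2)$, the mixed vanishings $[\rho(e_1),\rho(f_2)]=[\rho(e_2),\rho(f_1)]=0$, and the Serre relations $(\operatorname{ad}\rho(e_i))^2\rho(e_j)=0=(\operatorname{ad}\rho(f_i))^2\rho(f_j)$ for $i\neq j$. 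Over the smooth locus of $\pi$ these hold for weight reasons, since there the $G_{i,k}$ are vector bundles and the relevant Hom-spaces vanish; but globally they are identities of morphisms in the derived category, where $\operatorname{Hom}(G_{i,k},G_{i',k'}\decal{\bullet})$ need not vanish merely because $(i'{-}i,k'{-}k)$ is not a root — so the weight argument is unavailable, and \emph{this is the step I expect to be the main obstacle}. My strategy for it would be to prove these relations one level up, on $\derR\pil\QQ_M\decal{2n}$ rather than on $V$: $\omega_2$ and $\sigma_1$ are the leading perverse components of cup product with the closed forms $\omega\in A^{1,1}(M)$ and $\sigma\in H^0(M,\Omega_M^2)$, cup product is graded-commutative, and the next component $\sigma_2$ vanishes because $\pi$ is Lagrangian (\Cref{lem:sigma_2}); the resulting quadratic identities among the components $\omega_j,\sigma_j$, together with the relative Hard Lefschetz theorems for $\omega$ and $\sigma$ — and the fact that $s_{\alpha_2}=s_\theta s_{\alpha_1}s_\theta$ expresses the $\alpha_2$-reflection through the other two, so that Hard Lefschetz for $\rho(e_2)$ is a formal consequence of the two given ones — should yield the missing relations after passing to associated graded. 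Granting this, $\rho\colon\slthree\to R$ is a Lie-algebra homomorphism, i.e.\ an $\slthree$-representation on $V$ in the derived category determined by $\omega_2$ and $\sigma_1$; moreover the reflection $s_{\alpha_2}\in S_3$, which sends $(i,k)\mapsto(k,i)$, recovers the symmetry $G_{i,k}\cong G_{k,i}$ of \Cref{conj:symmetry} as a Weyl-group element acting on this representation.
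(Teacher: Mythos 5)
Your setup --- the shift bookkeeping via $\lfloor\tfrac{2}{3}(i+k)\rfloor$, the bigrading by $(i,k)$, the identification of the $A_2$ root system, and the reduction to the Chevalley--Serre presentation --- is sound and is essentially the paper's strategy; the paper just makes the cleaner choice $\esl_2=\Ysl_{\sigma_1}$, $\fsl_2=\sigma_1$ for the second simple root vector, so that after the routine $\sltwo$ weight arguments only two relations remain: $[\omega_2,\sigma_1]=0$ (which is \Cref{lem:sigma_k}) and $[\Ysl_{\omega_2},\Ysl_{\sigma_1}]=0$. The latter is exactly the step you flag as ``the main obstacle,'' and there your proposal has a genuine gap: the inputs you list --- graded-commutativity of cup product, the vanishing of $\sigma_2$, and the two relative Hard Lefschetz theorems --- do not suffice. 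They control only the raising operators and yield $[\omega_2,\sigma_1]=0$; but two commuting Lefschetz operators, each satisfying Hard Lefschetz, need not have commuting lowering operators. This is precisely the phenomenon analyzed by Looijenga--Lunts, where the missing commutation relation is supplied by the hyperk\"ahler metric and harmonic forms. So ``passing to associated graded'' of the quadratic identities among the components $\omega_j,\sigma_j$ cannot by itself produce $[\Ysl_{\omega_2},\Ysl_{\sigma_1}]=0$; some additional geometric input is required, and your $[\rho(\esl_2),\rho(\fsl_2)]=\rho(\hsl_2)$ and the mixed vanishings all hinge on it.

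The paper's additional input is a differential-form computation absent from your proposal. One converts the K\"ahler form into a $\dbar$-closed element $i(\omega)\in A^{0,1}(M,\shT_M)$ via the symplectic isomorphism $\shT_M\cong\Omega_M^1$, and checks that conjugation by the symplectic Weil element turns wedge product with $\omega$ into contraction with $i(\omega)$, so that $\wsigma\,\omega\,\wsigma^{-1}=-[\Ysigma,\omega]$. The commutator $\bigl[\omega\wedge, i(\omega)\cont\bigr]$ is wedge product with the $\dbar$-closed $(0,2)$-form $\iomega=-i(\omega)\cont\omega$; hence $[\omega,[\omega,\Ysigma]]=\iomega$ acts by a closed $2$-form and has no component raising the perverse index by $3$. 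Combined with \Cref{lem:Ysigma} (which identifies the topmost component of $\Ysigma$ with $\Ysl_{\sigma_1}$, using \Cref{thm:HL-sigma}), the topmost component of this identity reads $[\omega_2,[\omega_2,\Ysl_{\sigma_1}]]=\iomega_3=0$, which is equivalent to $[\Ysl_{\omega_2},\Ysl_{\sigma_1}]=0$ because $\Ysl_{\sigma_1}$ has weight $-1$ for $\Hsl_1$. Without an argument of this kind (or some substitute for the harmonic-theory input of Looijenga--Lunts and Verbitsky), your presentation of $\slthree$ is not verified.
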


\newpar
The $\sltwo$-representations determined by $\omega_2$ and $\sigma_1$ give us two
more operators
\[
	\Ysl_{\omega_2} \colon G_{i,k} \to G_{i-2,k-1} \decal{-2} 
	\quad \text{and} \quad 
	\Ysl_{\sigma_1} \colon G_{i,k} \to G_{i-1,k-2} \decal{-2}. 
\]
We prove \Cref{thm:slthree} by verifying that the four operators $\omega_2$,
$\sigma_1$, $\Ysl_{\omega_2}$, and $\Ysl_{\sigma_1}$ satisfy the Serre relations for
the Lie algebra $\slthree$. (This is similar to the $\sltwo$-action on the cohomology
of a compact K\"ahler manifold, which is also described in terms of generators and
relations.) Since $\omega$ and $\sigma$ commute as $2$-forms on
$M$, it is easy to see that $[\omega_2, \sigma_1] = 0$. The nonobvious part of the
Serre relations is that $[\Ysl_{\omega_2}, \Ysl_{\sigma_1}] = 0$. This sort of
identity may be familiar from the work of Looijenga and Lunts \cite[Sec.~4]{LL},
specifically from the computation of the total Lie algebra acting on the cohomology
of an irreducible compact hyperk\"ahler manifold. That said, the proof is completely
different in our case, because we do not have the hyperk\"ahler metric (or harmonic
forms) to work with.

\newpar
The nontrivial Serre relation also explains very nicely the analogy between the
relative Hard Lefschetz theorems for $\omega_2$ and $\sigma_1$ that we had observed
in \Cref{par:analogy}.

\begin{pcor}
	The isomorphism $G_{i,k} \cong G_{k,i}$ can be chosen in such a way that it
	interchanges the action of $\omega_2 \colon G_{i,k} \to G_{i+2,k+1} \decal{2}$ and
	$\sigma_1 \colon G_{i,k} \to G_{i+1,k+2} \decal{2}$.
\end{pcor}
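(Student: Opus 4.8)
The plan is to deduce this from the $\slthree$-representation of \Cref{thm:slthree} by exhibiting the required isomorphism as (a normalization of) the action of a Weyl group element. Write $W = \bigoplus_{i,k} G_{i,k} \decal{\lfloor \tfrac23(i+k) \rfloor}$ for the object on which $\slthree$ acts, and recall that $\omega_2$ and $\sigma_1$ act as commuting raising operators. First I would set up the root-theoretic dictionary. In a simple Lie algebra of type $A_2$ the only commuting unordered pair of positive root vectors consists of the highest root vector together with a simple root vector, so $[\omega_2,\sigma_1]=0$ forces the identification $\omega_2 = e_\theta$ for the highest root $\theta$ and $\sigma_1 = e_{\alpha_1}$ for one simple root $\alpha_1$; put $\alpha_2 = \theta-\alpha_1$ for the other. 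The summands $G_{i,k}\decal{\dotsb}$ are the weight spaces of $W$ (the Cartan operators are diagonal in $(i,k)$), and $e_{\alpha_2}$ shifts the bigrading $(i,k)$ by $(1,-1)$; the reflection in the third diagonal of the hexagon of \Cref{par:hexagon} --- namely the Weyl reflection $s_{\alpha_2}$ --- therefore acts on indices by the coordinate swap $(i,k)\mapsto(k,i)$, and $s_{\alpha_2}$ interchanges the two roots $\theta$ and $\alpha_1$.

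Next I would form the $\sltwo$-triple attached to $\alpha_2$ inside the action: up to normalization, $e_{\alpha_2}=[\omega_2,\Ysl_{\sigma_1}]$, $f_{\alpha_2}=[\sigma_1,\Ysl_{\omega_2}]$, and $h_{\alpha_2}=[e_{\alpha_2},f_{\alpha_2}]$, all morphisms in the derived category of coherent $\shO_B$-modules. By \Cref{lem:amplitude} only finitely many $G_{i,k}$ are nonzero, so $e_{\alpha_2}$ and $f_{\alpha_2}$ are nilpotent, and
\[
	\dot s \;=\; \exp(e_{\alpha_2})\,\exp(-f_{\alpha_2})\,\exp(e_{\alpha_2})
\]
is a well-defined automorphism of $W$ --- a finite sum of morphisms, with inverse $\exp(-e_{\alpha_2})\exp(f_{\alpha_2})\exp(-e_{\alpha_2})$. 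This is the standard lift of $s_{\alpha_2}$, and the formal $\sltwo$-calculus (which is valid for any nilpotent $\slthree$-action, since it only involves the associative algebra generated by the operators) shows that $\dot s$ carries the weight-$\lambda$ summand isomorphically onto the weight-$(s_{\alpha_2}\lambda)$ summand, and that $\Ad(\dot s)$ acts on the root vectors as $s_{\alpha_2}$ does, up to signs.

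Since $s_{\alpha_2}$ sends $(i,k)$ to $(k,i)$ and $\lfloor\tfrac23(i+k)\rfloor = \lfloor\tfrac23(k+i)\rfloor$, the automorphism $\dot s$ restricts to isomorphisms $G_{i,k}\xrightarrow{\,\sim\,}G_{k,i}$; and since $s_{\alpha_2}$ swaps $\theta$ and $\alpha_1$, we get $\Ad(\dot s)(\omega_2)=\pm\sigma_1$ and $\Ad(\dot s)(\sigma_1)=\pm\omega_2$. To turn both signs into $+1$ I would compose $\dot s$ with a weight-preserving automorphism of $W$ --- one acting by $\pm1$ on each $G_{i,k}$, e.g.\ by $(-1)^k$; such a correction exists because the sign defect is forced by $\dot s^2$ lying in the maximal torus, and one only needs to check that it matches the bigrading shifts $(2,1)$ of $\omega_2$ and $(1,2)$ of $\sigma_1$. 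The resulting automorphism $\Phi$ then satisfies $\Phi\circ\omega_2=\sigma_1\circ\Phi$ and $\Phi\circ\sigma_1=\omega_2\circ\Phi$, and its components give the desired isomorphisms $G_{i,k}\cong G_{k,i}$ interchanging the two actions. The only part of this that needs genuine care is the dictionary in the first step: pinning down that $\omega_2$ and $\sigma_1$ are the highest and a simple root (this is exactly what $[\omega_2,\sigma_1]=0$ gives), checking that the coordinate swap really is the Weyl reflection $s_{\alpha_2}$, and keeping track of the signs of $\Ad(\dot s)$ on root vectors so that the final sign correction is the correct one; everything else is formal, once \Cref{thm:slthree} is known.
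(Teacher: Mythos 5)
Your argument is correct and is essentially the paper's: both deduce the corollary from the $\slthree$-structure by lifting the Weyl reflection in the third root direction to an automorphism of $\bigoplus G_{i,k}$ built from exponentials of the locally nilpotent operators, which then permutes the weight spaces by $(i,k)\mapsto(k,i)$ and conjugates $\omega_2$ into $\sigma_1$. The only difference is the choice of lift: the paper uses the product $\wsl_1\wsl_2\wsl_1$ of the two Weil elements already in hand (see \Cref{par:XYH} and \Cref{lem:reflection}), for which a short primitivity computation gives $\Ad(\wsl_1\wsl_2\wsl_1)(\omega_2)=\sigma_1$ on the nose, whereas your lift $\dot s$ of $s_{\alpha_2}$ differs from that one by a torus element and therefore requires the final $\pm 1$ adjustment you describe.
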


Concretely, this is saying that the reflection along the third diagonal in the
picture in \Cref{par:hexagon} also interchanges the two arrows marked $\omega_2$
and $\sigma_1$. This is a general fact about representations of the Lie
algebra $\slthree$.

\subsection{Relating holomorphic forms and perverse sheaves}

\newpar
We will deduce \Cref{thm:HL-sigma} from a much more precise relationship between
$\Omega_M^{n+i}$ and $P_i$. The main result, stated below, is a sharpening of another
conjecture by Maulik, Shen and Yin \cite{MSY}, who had proposed relating the two
objects with the help of the Fourier-Mukai transform (whose existence for arbitrary
Lagrangian fibrations is unfortunately still a conjecture). It is formulated using
the BGG correspondence
\cite{BGG,EFS}, which relates graded modules over the symmetric algebra $\algS =
\Sym(\shT_B)$ and graded modules over the algebra $\algE = \bigoplus_j \Omega_B^j$ of
holomorphic forms on $B$.\footnote{One can think of the BGG correspondence as being a
linearization of the Fourier-Mukai transform.} 
More precisely, the BGG correspondence gives an equivalence
between derived categories
\[
	\BGGR \colon \Dbcoh G(\algS) \to \Dbcoh G(\algE).
\]
From the filtered $\Dmod_B$-module $(\Pmod_i, F_{\bullet} \Pmod_i)$, we obtain
the graded $\algS$-module $\gr_{\bullet}^F \Pmod_i$, and under the BGG
correspondence, this goes to the complex of graded $\algE$-modules
\[
	\BGGR \bigl( \gr_{\bullet}^F \Pmod_i \bigr)
	= \bigoplus_{k=-n}^n \gr_{-k}^F \DR(\Pmod_i) \decal{k}
	= \bigoplus_{k=-n}^n G_{i,k} \decal{i+k}.
\]
Here the grading in the direct sum is by $k$, and the $\algE$-module structure is
induced by the natural morphism of complexes
\[
	\Omega_B^j \tensor \gr_{-k}^F \DR(\Pmod_i) \to \gr_{-k-j}^F \DR(\Pmod_i) \decal{j}
\]
that contracts forms against vector fields. The content of the BGG correspondence is
that one can recover the graded $\algS$-module $\gr_{\bullet}^F \Pmod_i$ from this object.

\newpar
It turns out that one can also construct a complex of graded $\algE$-modules from
$\Omega_M^{n+k}$. If we rewrite \eqref{eq:Saito-Omega} in terms of the complexes
$G_{i,k}$, it becomes
\[
	\derR \pil \Omega_M^{n+k} \decal{n} \cong \bigoplus_{i=-n}^n G_{i,k} \decal{k}.
\]
The filtration by increasing $i$ might be called the ``perverse filtration'', because
it is induced by the usual perverse filtration in the decomposition theorem. Now the
derived pushforward $\derR \pil \shO_M$ clearly acts on this complex; it preserves
the perverse filtration, but not the grading in the above isomorphism. According
to a theorem of Matsushita \cite{Mat-img}, $\derR \pil \shO_M$ is formal
and closely related to $\algE$, in the sense that
\[
	\derR \pil \shO_M \cong \bigoplus_{j=0}^n R^j \pil \shO_M \decal{-j}
	\cong \bigoplus_{j=0}^n \Omega_B^j \decal{-j}.
\]
From $\derR \pil \Omega_M^{n+k} \decal{n}$, we obtain a complex of graded
$\algE$-modules by the following procedure. First, we take the associated graded with
respect to the perverse filtration; this gives
\[
	\derR \pil \Omega_M^{n+k} \decal{n} \cong \bigoplus_{i=-n}^n G_{i,k} \decal{k}
\]
the structure of a graded module over $\bigoplus_j \Omega_B^j \decal{-j}$. Then we
turn it into a complex of graded modules over $\algE$ by adding suitable shifts;
in this way, we arrive at 
\[
	\bigoplus_{i=-n}^n G_{i,k} \decal{i+k}.
\]
Here the summand $G_{i,k}$ has degree $i$ with respect to the grading, and the
$\algE$-module structure is induced by the collection of morphisms
\[
	R^j \pil \shO_M \tensor \gr_{-k}^F \DR(\Pmod_i) \to \gr_{-k}^F \DR(\Pmod_{i+j}) 
\]
together with Matsushita's theorem (see \Cref{thm:Matsushita}).

\newpar
The next result, which is really the main result of the paper, is that the two
complexes of graded $\algE$-modules derived from $P_i$ and $\Omega_M^{n+i}$ are
isomorphic in the derived category.

\begin{pthm} \label{thm:isomorphism}
	The two complexes
	\[
		\bigoplus_{k=-n}^n G_{i,k} \decal{i+k} \quad \text{and} \quad
		\bigoplus_{k=-n}^n G_{k,i} \decal{i+k}
	\]
	with their respective gradings and $\algE$-module structures, are isomorphic in
	$\Dbcoh G(\algE)$.
\end{pthm}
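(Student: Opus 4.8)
The plan is to exploit the fact, spelled out in the discussion above, that the first complex $\bigoplus_{k=-n}^n G_{i,k}\decal{i+k}$ is by construction the image $\BGGR\bigl(\gr_{\bullet}^F\Pmod_i\bigr)$ of the graded $\algS$-module $\gr_{\bullet}^F\Pmod_i$ under the BGG correspondence. Since $\BGGR\colon\Dbcoh G(\algS)\to\Dbcoh G(\algE)$ is an equivalence of categories, it is equivalent to prove that
\[
	\BGGL\Bigl( \bigoplus_{k=-n}^n G_{k,i}\decal{i+k} \Bigr) \cong \gr_{\bullet}^F \Pmod_i
\]
in $\Dbcoh G(\algS)$, where on the left $G_{k,i}$ is placed in degree $k$ and the $\algE$-module structure is the geometric one, coming from the cup product action of $\derR\pil\shO_M$ on $\derR\pil\Omega_M^{n+i}\decal{n}$ together with Matsushita's formality isomorphism $\derR\pil\shO_M\cong\bigoplus_{j=0}^n\Omega_B^j\decal{-j}$. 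In other words, one has to show that the $\algE$-module built from the sheaf of holomorphic $(n+i)$-forms and the cup product with $\derR\pil\shO_M$ is Koszul-dual to the Hodge filtration of $\Pmod_i$.

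I would prove this by comparing both sides with the filtered direct image $\pi_+(\omega_M, F_{\bullet}\omega_M)\cong\bigoplus_i(\Pmod_i,F_{\bullet}\Pmod_i)\decal{-i}$. On the one hand, $\gr_{\bullet}^F\Pmod_i$ is, up to shift, the $i$-th summand of $\gr^F$ of this direct image. On the other hand, $\gr^F$ of $\pi_+(\omega_M,F)$ is computed, via the compatibility of the associated graded of a filtered $\Dmod$-module pushforward with the cotangent correspondence, as $\derR\pil$ applied to the derived pullback of $\gr^F(\omega_M,F)$ along the morphism $M\times_B T^{\ast}B\to T^{\ast}M$ induced by $d\pi$. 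Since $\gr^F\omega_M$ is supported on the zero section of $T^{\ast}M$ and this morphism has image of codimension $n$, the derived pullback is governed by a Koszul-type resolution whose terms are built from $\wed^{\bullet}\shT_{M/B}$; and here the Lagrangian condition enters, because the symplectic form identifies $\shT_{M/B}$ with $\pi^{\ast}\Omega_B^1$, turning this into $\wed^{\bullet}\pi^{\ast}\Omega_B^1$. Pushing forward, invoking the projection formula and Matsushita's theorem $R^j\pil\shO_M\cong\Omega_B^j$, and keeping track of the holomorphic degree $n+i$, rewrites the outcome in terms of $\derR\pil\Omega_M^{n+i}$ with the cup product action. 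The upshot is that, under the identification $\sigma\colon\shT_B\xrightarrow{\sim}\Omega_B^1$, the cup product action of $R^1\pil\shO_M$ corresponds to the operator $\sigma_1$ from the decomposition theorem; the vanishing $\sigma_2=0$ (\Cref{lem:sigma_2}) together with the strictness results in Saito's theory and the relative Hard Lefschetz theorem for the K\"ahler form ensures that the relevant spectral sequences degenerate and all the maps are strict, so that the comparison descends to $\Dbcoh G(\algE)$.

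The final step is the Koszul duality computation itself. Applying $\BGGL$ to $\bigoplus_k G_{k,i}\decal{i+k}$ gives the total complex of $\algS\tensor_{\shO_B}\bigoplus_k\gr_{-i}^F\DR(\Pmod_k)\decal{i}$, with differential the sum of the de Rham differential and the $\sigma_1$-maps twisted by a local frame of $\shT_B$. Filtering by the number of symmetric-algebra factors and using that $\algS\tensor\wed^{\bullet}\shT_B$ with its Koszul differential resolves $\shO_B$, this complex collapses to the single graded $\algS$-module $\gr_{\bullet}^F\Pmod_i$, with module structure coming from the action of $\shT_B=\gr_1^F\Dmod_B$ on the Hodge filtration. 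Combined with $\BGGL\circ\BGGR\cong\id$, this yields the asserted isomorphism of the two complexes.

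The hard part is the middle step: proving that the two $\algE$-module structures agree. On the first complex the structure is the intrinsic BGG one, given by contracting holomorphic forms on $B$ against vector fields; on the second it is the geometric one, cup product with $\derR\pil\shO_M$ transported through Matsushita's formality. Showing they coincide is exactly the assertion that the Fourier--Mukai-type relationship between $\Omega_M^{n+i}$ and $P_i$ envisioned by Maulik, Shen and Yin is linearized precisely by the BGG correspondence, and this must be verified not merely on the level of cohomology sheaves but compatibly with both the Hodge filtration and the perverse filtration, at the level of filtered complexes, so that it is an isomorphism in $\Dbcoh G(\algE)$ and not just after forgetting the module structures. This is where Saito's formula for the Hodge filtration of a direct image, the Lagrangian condition, Matsushita's theorem, and the strictness provided by Saito's theory all have to be combined with care.
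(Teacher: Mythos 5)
Your reduction via the BGG equivalence is the right first move, and you correctly locate the difficulty in matching the two $\algE$-module structures; but the argument you offer for that matching has genuine gaps. First, the Koszul-resolution computation of $\gr^F\pi_{+}(\omega_M,F_{\bullet}\omega_M)$ hinges on the identification $\shT_{M/B}\cong\piu\Omega_B^1$ provided by the symplectic form. That identification only holds over the smooth locus of $\pi$: over the singular fibers $\shT_{M/B}$ is not locally free, the excess-intersection picture degenerates, and the complexes $G_{i,k}$ are precisely \emph{not} determined by their restriction to the smooth locus. So this cannot be the engine of the proof. Second, the cup-product action of $R^1\pil\shO_M\cong\Omega_B^1$ on the associated graded of the perverse filtration is not $\sigma_1$ (which maps $G_{i,k}\to G_{i+1,k+2}\decal{2}$) but the operator $f(\beta)_1=[\omega_2,v(\beta)]\colon G_{i,k}\to G_{i+1,k}\decal{1}$, which involves the K\"ahler form as well as the symplectic form; even Matsushita's identification $R^1\pil\shO_M\cong\Omega_B^1$ is built from both. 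The degree bookkeeping in your middle step is therefore off. Third, the final ``collapse'' of $\BGGL\bigl(\bigoplus_k G_{k,i}\decal{i+k}\bigr)$ to $\gr_{\bullet}^F\Pmod_i$ presupposes that the geometric module structure already agrees with the contraction structure on $\BGGR(\gr_{\bullet}^F\Pmod_i)$ — which is the content of the theorem. For a general complex of $\algE$-modules, $\BGGL$ does not collapse; that only happens once you know what the module structure is.

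What the paper actually does, and what is missing from your proposal, is the following. It introduces three module structures on the direct sum of the $G_{i,k}$ — via $\piu\beta$, via $v(\beta)$, and via $f(\beta)_1$ — and shows $G\cong G_v$ using the Weil element $\wsigma$ acting on the $C^{\infty}$ model $\pil\shA_M^{n+k,n+i}$ (valid over all of $B$, singular fibers included), and $G_v\cong G_f$ using the Weil element of the relative Hard Lefschetz $\sltwo$. Applying $\BGGL$ to $G\cong G_f$ then produces complexes $F_k$, and the decisive step is to prove that each $F_k$ is a single graded $\algS$-module in degree $0$: this combines the amplitude bound of \Cref{lem:amplitude}, the duality statement of \Cref{thm:BGG-duality}, and — crucially — the fact that each $\gr_{\bullet}^F\Pmod_i$ is an $n$-dimensional Cohen–Macaulay module over $\Sym(\shT_B)$. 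That Cohen–Macaulay input from Saito's theory is what substitutes for any knowledge of the local structure over the singular fibers, and nothing in your proposal plays its role. The explicit chain $\wsl_1\wsl_2\wsl_1$ of Weil-element conjugations, which intertwines $\piu\beta$ with $f(\beta)_1$ and realizes the isomorphism $G_{i,k}\cong G_{k,i}$ compatibly with the gradings, is then what finishes the proof; your sketch stops short of supplying any replacement for it.
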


According to this theorem, $P_i$ and $\Omega_M^{n+i}$ are related in the following
way. Starting from the filtered $\Dmod_B$-module $(\Pmod_i, F_{\bullet}
\Pmod_i)$, we first take the associated graded with respect to the Hodge filtration,
and then we apply the BGG correspondence to produce a complex of graded
$\algE$-modules. This complex is isomorphic to the complex that we get from $\derR
\pil \Omega_M^{n+i} \decal{n}$ by taking the associated graded with respect to the
perverse filtration, and then using Matsushita's theorem to convert the action by
$\derR \pil \shO_M$ into an action by $\algE$. 

\newpar
Note the very striking symmetry in this procedure: on one side, we start from a
perverse sheaf and take the associated graded with respect to the Hodge filtration
(which involves holomorphic forms); on the other side, we start from the sheaf of
holomorphic forms and take the associated graded with respect to the perverse
filtration. In fact, the perverse sheaf $P_i$ is one of the graded pieces of the
complex $\derR \pil \QQ_M(n) \decal{2n}$ with respect to the perverse filtration, and
the sheaf $\Omega_M^{n+i}$ is one of the graded pieces of the de Rham
complex $\DR(\omega_M)$ with respect to the Hodge filtration. The main result is
therefore saying that
\[
	\gr^F \circ \gr^P \cong \gr^P \circ \gr^F;
\]
in words, taking the associated graded with respect to the Hodge filtration
commutes with taking the associated graded with respect to the perverse filtration.

\subsection{An outline of the proof}

\newpar
Let us a give a very brief outline of the proof, with references to the relevant
sections of the paper. From the fact that $\sigma^k \colon \Omega_M^{n-k} \to
\Omega_M^{n+k}$ is an isomorphism for every $k \geq 1$, we first deduce the following
theorem (which is \Cref{thm:equivalence} below).

\begin{pthm} 
	The following three statements are equivalent:
	\begin{aenumerate}
	\item The complexes $G_{i,k}$ and $G_{k,i}$ are isomorphic in the derived category
		(for all $i,k$).
	\item For every $k = -n, \dotsc, n$, there is an isomorphism (in the derived category)
		\[
			\bigoplus_{i=-n}^n G_{i,k} \decal{i+k} \cong \bigoplus_{i=-n}^n G_{k,i}
			\decal{i+k}.
		\]
	\item For every $k \geq 1$, the morphism $\sigma_1^k \colon G_{i,-k} \to G_{i,k}
		\decal{2k}$ is an isomorphism.
	\end{aenumerate}
\end{pthm}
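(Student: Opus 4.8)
The plan is to prove the three equivalences via the cycle (a) $\Rightarrow$ (b) $\Rightarrow$ (c) $\Rightarrow$ (a). The implication (a) $\Rightarrow$ (b) is immediate: for each fixed $k$, take the direct sum over $i$ of the isomorphisms $G_{i,k} \cong G_{k,i}$ supplied by (a). The content is in the other two steps.

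For (c) $\Rightarrow$ (a), I would run the ``hexagon'' argument of \parref{par:hexagon}. Statement (c) is a relative Hard Lefschetz theorem for $\sigma_1$, and, by the standard Lefschetz $\sltwo$-formalism applied along the strings on which $\sigma_1$ acts (these are symmetric about the center, as one checks directly from the amplitude bounds of \Cref{lem:amplitude}), it provides isomorphisms $r_\sigma \colon G_{p,q} \xrightarrow{\sim} G_{p-q,-q}\decal{-2q}$ for all $p,q$. The unconditional relative Hard Lefschetz theorem for the K\"ahler form (available by Mochizuki's work in the non-projective case) similarly provides isomorphisms $r_\omega \colon G_{p,q} \xrightarrow{\sim} G_{-p,q-p}\decal{-2p}$. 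Each of $r_\sigma, r_\omega$ is an involution of the index pattern $(p,q)$, and their composite $r_\sigma r_\omega r_\sigma$ realizes the transposition $(p,q) \mapsto (q,p)$; applied to $G_{i,k}$ it reads
\[
	G_{i,k} \xrightarrow{\sim} G_{i-k,-k}\decal{-2k} \xrightarrow{\sim} G_{k-i,-i}\decal{-2i} \xrightarrow{\sim} G_{k,i},
\]
with all accumulated shifts cancelling, so $G_{i,k} \cong G_{k,i}$. When $q = 0$ (resp.\ $p = 0$) the operation $r_\sigma$ (resp.\ $r_\omega$) is the identity, so no case distinction is needed.

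The heart of the matter is (b) $\Rightarrow$ (c). The input is the elementary isomorphism $\sigma^k \colon \Omega_M^{n-k} \xrightarrow{\sim} \Omega_M^{n+k}$ on $M$, which holds for every $k$ because $\sigma$ is a nondegenerate holomorphic $2$-form, and which is unconditional. Applying $\derR\pil$ and Saito's isomorphism \eqref{eq:Saito-Omega} turns it into an isomorphism $\derR\pil\Omega_M^{n-k} \xrightarrow{\sim} \derR\pil\Omega_M^{n+k}$ that is compatible with the perverse filtration, has the $G_{i,\pm k}$ as its graded pieces, and---because $\sigma_2 = 0$ by \Cref{lem:sigma_2}---raises perverse degree by at most $k$, its top graded component being exactly $\sigma_1^k \colon G_{i,-k} \to G_{i+k,k}\decal{2k}$. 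Hence, for fixed $k \geq 1$, statement (c) is equivalent to the assertion that this particular isomorphism is \emph{strict} for the perverse filtration, up to the shift by $k$. To extract strictness I would feed in (b): combining (b) for the indices $k$ and $-k$ with the $\omega_2$-relative Hard Lefschetz isomorphism $\gr_{\bullet}^F \Pmod_{-k} \cong \gr_{\bullet-k}^F \Pmod_k$ determines the graded pieces on both sides up to the expected shift, so that source and target ``match shape'', the extreme perverse steps being forced to correspond by \Cref{lem:amplitude}. Using Deligne's splitting of the perverse filtration, the pushforward $\derR\pil(\sigma^k)$ is then genuinely a block upper-triangular matrix of morphisms whose diagonal blocks are the $\sigma_1^k$, now maps between objects abstractly isomorphic in the correct degrees; a downward induction on perverse degree, peeling off the top graded piece at each stage, would force each diagonal block to be an isomorphism.

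The hard part will be exactly this last step. A filtered quasi-isomorphism does not in general induce a quasi-isomorphism on associated gradeds, so passing from ``$\derR\pil(\sigma^k)$ is a filtered isomorphism'' to ``its top graded component $\sigma_1^k$ is an isomorphism'' cannot be done by abstract nonsense: the argument must really exploit that the perverse filtration comes from a $t$-structure and splits---so that $\derR\pil(\sigma^k)$ is a genuine triangular matrix, not merely a filtered map---and that (b) controls the sizes of the graded pieces, so as to rule out the pathology in which an off-diagonal entry absorbs the cone of a non-invertible diagonal block. I would also want to double-check that the operator $\sigma_1^k$ obtained this way agrees, up to the $\sltwo$-machinery, with the $r_\sigma$ used in (c) $\Rightarrow$ (a), so that the three statements are genuinely equivalent and not merely linked in a cycle.
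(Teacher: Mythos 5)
Your treatment of (a)~$\Rightarrow$~(b) and of (c)~$\Rightarrow$~(a) is fine; for the latter you compose the reflections in the order $r_\sigma r_\omega r_\sigma$ where the paper uses $r_\omega r_\sigma r_\omega$, but both realize the transposition $(i,k)\mapsto(k,i)$ in the $S_3$ of \Cref{par:hexagon}, and the shifts cancel either way. The problem is (b)~$\Rightarrow$~(c), which is the entire content of the theorem, and there your argument has a genuine gap that you have located but not closed. Knowing (via (b) for $\pm k$ and the $\omega_2$-Hard Lefschetz isomorphism) that the \emph{total} direct sums $\bigoplus_i G_{i,-k}$ and $\bigoplus_i G_{i+k,k}\decal{2k}$ are abstractly isomorphic does not make the source and target ``match shape'' summand by summand, and even if it did, an abstract isomorphism between the source and target of $\sigma_1^k$ would not make $\sigma_1^k$ itself an isomorphism. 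As you correctly observe, a triangular isomorphism $\left(\begin{smallmatrix} f & g \\ 0 & h\end{smallmatrix}\right)\colon A\oplus B \to C\oplus D$ only yields $C\cong A\oplus\Cone(f)$ and $B\cong\Cone(f)\oplus D$ (this is \Cref{lem:sums-derived}); it does not force $f$ to be invertible, and your ``downward induction on perverse degree for fixed $k$'' has no mechanism to kill the cones.

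The missing idea is that the cones must be killed for \emph{all $k$ simultaneously}, not one $k$ at a time. The paper runs a descending induction on a single perverse index $d$: having settled the outermost rows, it applies \Cref{lem:sums-derived} to the isomorphism $\sigma^k\colon\bigoplus_i G_{i,-k}\to\bigoplus_i G_{i,k}\decal{2k}$ for \emph{every} $k=0,\dotsc,n$ to extract splittings $G_{k-d,k}\decal{2k}\cong G_{-d,-k}\oplus\Cone(\mu_{-d,-k})$ and $G_{d-k,-k}\cong G_{d,k}\decal{2k}\oplus\Cone(\mu_{d-k,-k})$, then rewrites everything using $\omega_2$-Hard Lefschetz and assembles the results over all $k$ into a single identity of the form
\[
	\bigoplus_{k=-n}^n G_{k,d}\decal{k+d}\;\cong\;\bigoplus_{k=-n}^n G_{d,k}\decal{k+d}
	\;\oplus\;\bigoplus_k \Cone(\mu_{-d,-k})\decal{-k}\;\oplus\;\bigoplus_{k>d}\Cone(\mu_{d-k,-k})\decal{d-k}.
\]
Only now can hypothesis (b) be invoked: the first two terms are isomorphic, so by cancellation every cone vanishes, which is exactly the statement that $\sigma_1^k$ is an isomorphism on the $d$-th row, completing the inductive step. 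Note that (b) for a single $k$ compares a ``row'' $\bigoplus_i G_{i,k}$ with a ``column'' $\bigoplus_i G_{k,i}$ of the hexagon, which is why the cones arising from different values of $k$ along a fixed row must be collected before (b) can say anything; this cross-$k$ bookkeeping is the step your sketch does not contain, and without it the proof does not go through.
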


Along the way, we give a short proof for Matsushita's theorem (in
\Cref{chap:Matsushita}), because it serves as a nice introduction to the general
method. This reduces the proof of the symplectic relative Hard Lefschetz theorem to
establishing the isomorphism in (b). 

\newpar
The heart of the matter is the proof of \Cref{thm:isomorphism}. This is a result
about the relationship between $\gr_{\bullet}^F \Pmod_i$ and $\derR \pil \Omega_M^{n+i}
\decal{n}$ for each $i = -n, \dotsc n$ separately -- but it turns out that by combining
all these objects into one, we get some additional structure that we can exploit. 
With that idea in mind, we consider the direct sum
\begin{equation} \label{eq:intro-object}
	\bigoplus_{i=-n}^n \gr_{\bullet}^F \Pmod_i \decal{-i},
\end{equation}
which lives in the derived category of graded modules over $\algS = \Sym(\shT_B$).
Under the BGG correspondence (see \Cref{chap:BGG}), this goes to the complex
\[
	G = \bigoplus_{i,k=-n}^n G_{i,k} \decal{k} = 
	\BGGR \left( \bigoplus_{i=-n}^n \gr_{\bullet}^F \Pmod_i \decal{-i} \right).
\]
Our starting point is a concrete description of $G$ in terms of smooth
differential forms. Saito's version of the decomposition theorem gives us an
isomorphism between \eqref{eq:intro-object} and a certain complex of graded
$\algS$-modules $\gr_{\bullet}^F \Cpi$ (see \Cref{par:Laumon}). If we apply the BGG
correspondence to this isomorphism, we obtain an isomorphism (in the derived
category)
\[
	G \cong (M,d),
\]
where $(M,d)$ is the complex of graded $\algE$-modules with
\[
	M_k^i = \pil \shA_M^{n+k,n+i} \quad \text{and} \quad d = (-1)^k \dbar.
\]
The module structure on $(M,d)$ is the obvious one: a local section $\beta$ of
$\Omega_B^j$ acts as wedge product with the pulllback $\piu \beta$. 

\newpar
The next idea is to transform this module structure into a different one with the
help of the symplectic form $\sigma$ and the K\"ahler form $\omega$. This is based on
the following simple construction (which is also behind Matsushita's theorem). Suppose
that $\beta \in H^0(U, \Omega_B^1)$ is a holomorphic $1$-form, defined on an open
subset $U \subseteq B$. Using the symplectic form, we can transform the holomorphic
$1$-form $\piu \beta$ into a holomorphic vector field
\[
	v(\beta) \in H^0 \bigl( \pi^{-1}(U), \shT_M \bigr), \quad
	\piu \beta = v(\beta) \cont \sigma,
\]
where $\cont$ means contraction with a vector field. Using the K\"ahler form, we can
further transform the vector field $v(\beta)$ into a $\dbar$-closed $(0,1)$-form
\[
	f(\beta) \in A^{0,1} \bigl( \pi^{-1}(U) \bigr), \quad
	f(\beta) = -v(\beta) \cont \omega.
\]
All three of these objects act on the complex $(M,d)$, either by wedge product or by
contraction. We use the reflection operator (or Weil element) coming from the
symplectic form $\sigma$ to show that $G$ is isomorphic to the auxiliary object
\[
	G_v = \bigoplus_{i,k=-n}^n G_{i,-k} \decal{-k},
\]
in a way that exchanges the action by $\beta \in \Omega_B^1$ and the action of the
vector field $v(\beta)$. We then use the Weil operator coming from the relative Hard
Lefschetz theorem for the K\"ahler form $\omega$ to show that $G_v$ is in turn
isomorphic to
\[
	G_f = \bigoplus_{i,k=-n}^n G_{i,k} \decal{i+k},
\]
in a way that exchanges the action by the vector field $v(\beta)$ and the action of
the $(0,1$)-form $f(\beta)$. \Cref{thm:isomorphism} follows from the isomorphism $G
\cong G_f$ by a careful analysis of the BGG correspondence and some general facts
about Hodge modules.

\subsection{Some applications}

\newpar
One application is a new proof for the ``numerical perverse = Hodge'' symmetry
\cite{SY-top} for compact holomorphic symplectic manifolds. Our proof does
\emph{not} use the existence of a hyperk\"ahler metric on $M$; in return, we need to
assume that $B$ is a complex manifold.

\begin{pthm}[Shen-Yin] \label{thm:Shen-Yin}
	Let $M$ be a holomorphic symplectic manifold that is compact and K\"ahler. If $\pi
	\colon M \to B$ is a Lagrangian fibration whose base $B$ is a complex manifold,
	then
	\[
		H^j(B, G_{i,k}) \cong H^j(B, G_{k,i}) \quad \text{for all $i,j,k \in
		\ZZ$.}
	\]
\end{pthm}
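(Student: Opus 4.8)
The plan is to derive \Cref{thm:Shen-Yin} directly from the symmetry $G_{i,k} \cong G_{k,i}$ in $\Dbcoh(\shO_B)$, which is \Cref{conj:symmetry} — equivalently, the component in grading degree $k$ of the isomorphism of graded $\algE$-modules in \Cref{thm:isomorphism}, obtained by applying the exact functor that forgets the $\algE$-action and keeps the summand of degree $k$. Since the hypercohomology functor $H^j(B,-)$ factors through $\Dbcoh(\shO_B)$, an isomorphism $G_{i,k} \cong G_{k,i}$ there at once induces isomorphisms $H^j(B, G_{i,k}) \cong H^j(B, G_{k,i})$ for all $i,j,k \in \ZZ$, which is everything the theorem asserts. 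Observe that this argument uses neither the compactness of $M$ nor any Hodge theory on it; all it needs is that $B$ be a complex manifold, so that \Cref{thm:isomorphism}, and hence \Cref{conj:symmetry}, is available.

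The role of the compactness hypothesis is to let one recognize these abstract isomorphisms as the ``numerical perverse $=$ Hodge'' symmetry of \cite{SY-top}. If $M$ is compact then $B = \pi(M)$ is compact, so all the groups in sight are finite-dimensional. Moreover, applying $H^\bullet(B,-)$ to Saito's decomposition \eqref{eq:Saito-Omega} and using that the perverse Leray spectral sequence for the proper map $\pi$ degenerates (which is part of the decomposition theorem), one obtains for every $k$ an isomorphism
\[
	\bigoplus_{i=-n}^n H^j(B, G_{i,k}) \;\cong\; H^{j+n-k}\bigl(M, \Omega_M^{n+k}\bigr),
\]
in which the summand with index $i$ is the $i$-th graded piece for the perverse filtration; since $M$ is compact and K\"ahler, the right-hand side is a Dolbeault cohomology group of $M$. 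Unwinding the definition $G_{k,i} = \gr_{-i}^F \DR(\Pmod_k) \decal{-k}$ in the same way identifies $H^j(B, G_{k,i})$ with the piece of the $k$-th perverse cohomology of $H^\bullet(M,\CC)$ sitting in the appropriate Hodge degree. Thus the isomorphism $G_{i,k} \cong G_{k,i}$ says exactly that the perverse and Hodge gradings on the cohomology of $M$ have symmetric dimensions, recovering \cite[Thm.~0.2]{SY-top}.

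At this point there is no genuine obstacle: all of the difficulty has already been absorbed into \Cref{thm:isomorphism} and its consequences \Cref{thm:HL-sigma} and \Cref{conj:symmetry}. The only points demanding care are bookkeeping ones — matching the various shifts ($\decal{i+k}$ in \Cref{thm:isomorphism}, $\decal{n-k}$ in \eqref{eq:Saito-Omega}, and the $\decal{-i}$ built into $G_{i,k}$) to the normalization of perverse and Hodge numbers used in \cite{SY-top} — together with invoking the standard compatibility, part of Saito's theory, between the Hodge filtration on $H^\bullet(M,\CC)$ and the perverse filtration induced by $\pi$, if one wishes to phrase the result intrinsically in terms of invariants of $M$.
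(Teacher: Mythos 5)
Your proposal is correct and takes essentially the same route as the paper: \Cref{thm:Shen-Yin} is treated there as an immediate corollary of the derived-category isomorphism $G_{i,k} \cong G_{k,i}$ (\Cref{conj:symmetry}, established via \Cref{thm:isomorphism} and \Cref{thm:HL-sigma}), obtained by applying the hypercohomology functor $H^j(B,-)$. Your second paragraph, identifying the groups with graded pieces of the Hodge and perverse filtrations on $H^{\bullet}(M,\CC)$ in the compact case, is exactly the interpretation the paper gives when relating the statement to \cite{SY-top}.
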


\newpar
Looijenga-Lunts \cite[\S4]{LL} and Verbitsky \cite{Verbitsky} showed that cohomology
of a compact hyperk\"ahler manifold with a Lagrangian fibration carries an action by
the Lie algebra $\sosix \cong \slfour$. We prove a generalization of this result to
Lagrangian fibrations on compact holomorphic symplectic manifolds.

\begin{pthm}
	Let $M$ be a holomorphic symplectic compact K\"ahler manifold, and let 
	$\pi \colon M \to B$ be a Lagrangian fibration over a compact K\"ahler manifold
	$B$. In this situation, the cohomology of $M$ is a representation of the Lie
	algebra $\slfour$.
\end{pthm}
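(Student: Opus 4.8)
The plan is to enlarge the $\slthree$-action from \Cref{thm:slthree} to an $\slfour$-action by incorporating the two Hard Lefschetz operators that are only available in the compact setting: the classical Hard Lefschetz operator $L_\omega$ for the Kähler class $\omega$ on the total cohomology $H^\bullet(M,\CC)$, and its companion coming from $\sigma$. First I would observe that by \Cref{thm:Shen-Yin} the hypercohomology $\bigoplus_{j} H^j(B, G_{i,k})$ computes $H^{n+?}(M,\CC)$ via the perverse Leray spectral sequence, which degenerates (by the decomposition theorem). More precisely, taking hypercohomology of the isomorphism \eqref{eq:Saito-Omega} gives a bigrading of $H^\bullet(M,\CC)$ indexed by the two parameters $i$ (perverse degree) and $k$ (Hodge degree, via $\Omega_M^{n+k}$), refining the Hodge decomposition; the operators $\omega_2$ and $\sigma_1$ of \Cref{thm:slthree} act on this bigraded vector space (the shifts $\lfloor \tfrac23(i+k)\rfloor$ disappear after passing to hypercohomology, since a shift of a complex just renumbers its hypercohomology). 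So $\slthree$ acts on $H^\bullet(M,\CC)$ via $\omega_2, \sigma_1$ and their $\Ysl$-counterparts.

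**Adding the two missing operators.** The extra operators are: $L_\omega \colon H^m(M) \to H^{m+2}(M)$, cup product with the Kähler class; and $L_\sigma + L_{\bar\sigma}$ or rather the real/imaginary parts of cup product with $[\sigma] \in H^2(M,\CC)$ (note $M$ compact holomorphic symplectic means $\sigma$ is a closed holomorphic $2$-form, hence a genuine cohomology class). Actually the cleanest packaging, following Looijenga–Lunts and Verbitsky, is: the span of all $L_\eta$ for $\eta$ in the three-dimensional space spanned by $\omega$, $\Re\sigma$, $\Im\sigma$ inside $H^2(M,\RR)$, together with their Lefschetz-dual lowering operators $\Lambda_\eta$ and the semisimple grading operator $H$, generates a copy of $\mathfrak{so}(3,2)$ or its complexification $\sosix \cong \slfour$ — this is the "total Lie algebra" mechanism from \cite{LL}. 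The key structural input is that for a compact Kähler manifold each individual $L_\eta$ (for $\eta$ with suitable positivity, or after complexification for all of them) satisfies Hard Lefschetz, so by Looijenga–Lunts the operators $L_\eta$ and $\Lambda_\eta$ fit into an $\sltwo$-triple, and the Jacobson–Morozov/Lefschetz-module machinery assembles the total span into a reductive Lie algebra. One then identifies the $L_\omega$-triple with the "vertical" Lefschetz, and checks that $L_\sigma$ in the holomorphic direction interacts with $\omega_2, \sigma_1$ exactly as the root vectors of $\slfour$ dictate.

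**The key compatibility.** The main work is to show that this "top-level" $\slfour$ (built from cup products on $H^\bullet(M)$) is compatible with — in fact contains — the "fiberwise" $\slthree$ of \Cref{thm:slthree}. Concretely: $\omega_2$ is precisely the associated-graded of $L_\omega$ with respect to the perverse filtration (the relative Hard Lefschetz operator is the leading term of $L_\omega$ under Deligne's decomposition), and $\sigma_1$ should be the associated-graded of $L_\sigma$ with respect to the perverse filtration (this is exactly \Cref{thm:HL-sigma} re-read: $\sigma_1$ is the first nonzero component of cup product with $\sigma$). So on the total cohomology we have the full operators $L_\omega, L_\sigma$, whose perverse-graded pieces recover $\omega_2, \sigma_1$; the extra lowering operators $\Lambda_\omega, \Lambda_\sigma$ are genuinely new and only exist because $B$ is compact. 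I would verify the Serre relations / root structure for $\slfour$ directly, leaning on: (i) $[\omega, \sigma] = 0$ as $2$-forms, giving commutativity of the two raising operators; (ii) the two relative Hard Lefschetz theorems (for $\omega_2$ from \cite{Mochizuki}, for $\sigma_1$ from \Cref{thm:HL-sigma}) to get the $\sltwo$-triples; (iii) the commutation $[\Ysl_{\omega_2}, \Ysl_{\sigma_1}] = 0$ already established in the proof of \Cref{thm:slthree}; and (iv) the global Hard Lefschetz on $H^\bullet(M)$ for $\omega$ to produce the genuinely new lowering operator $\Lambda_\omega$, using Looijenga–Lunts to see it commutes appropriately with the fiberwise operators.

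**Main obstacle.** The hard part will be controlling the \emph{lowering} operator $\Lambda_\sigma$ associated with the holomorphic $2$-form — unlike $\Lambda_\omega$, which comes from classical Hard Lefschetz for a Kähler class, there is no metric reason for cup product with $[\sigma]$ to satisfy Hard Lefschetz on $H^\bullet(M,\CC)$ without invoking the hyperkähler metric. The resolution I would pursue is to \emph{not} ask $L_\sigma$ alone to satisfy Hard Lefschetz, but instead to deduce the full $\slfour = \sosix$ structure from the $\slthree$ of \Cref{thm:slthree} (which lives on the perverse-graded pieces and needs no metric) together with the single classical Hard Lefschetz for $\omega$: the perverse filtration on $H^\bullet(M)$ together with the $\omega$-Hard Lefschetz and the fiberwise $\slthree$ should rigidify the situation enough that Looijenga–Lunts' criterion (total span of Lefschetz operators is a semisimple Lie algebra) applies, and a dimension/root count identifies the algebra as $\sosix$. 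Alternatively — and this may be cleaner — I would derive Hard Lefschetz for $[\sigma]$ directly from \Cref{thm:HL-sigma} plus \Cref{thm:Shen-Yin} by a bootstrapping argument on hypercohomology, using that $\sigma_1^k$ is an isomorphism on every $G_{i,-k}$; combined with the already-known relative Hard Lefschetz for $\sigma_1$ in the derived category, taking hypercohomology and summing over $i$ would upgrade this to a global statement on $H^\bullet(M)$, at which point the Looijenga–Lunts machinery produces $\slfour$ without any appeal to harmonic forms.
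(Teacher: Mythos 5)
Your proposal heads in a genuinely different direction from the paper's proof, and it has a gap that I do not think can be repaired along the lines you suggest. The paper's third $\sltwo$-triple is \emph{not} the global Hard Lefschetz for $\omega$ or for $\sigma$ on $H^{\bullet}(M,\CC)$; it is the Hard Lefschetz action of a K\"ahler class $\lambda \in A^{1,1}(B)$ on the \emph{base}, acting on the hypercohomology groups $H^j(B,P_i)$ of the Hodge modules $P_i$. This operator raises the base-cohomological index $j$ in the triple grading $H^{i,j,k} = H^j\bigl(B, \gr_{-k}^F \DR(\Pmod_i)\bigr)$ and supplies the third node of the $\mathsf{A}_3$ Dynkin diagram; the only genuinely new relation beyond the $\slthree$-case is $[\Ysl_{\lambda}, \Ysl_{\sigma_1}] = 0$, which is proved by the differential-form identity $i(\piu\lambda) \cont (\piu\lambda) = 0$ (the vector fields obtained from pulled-back forms via $\sigma$ are tangent to the fibers). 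Your proposal never introduces a K\"ahler class on $B$, so the operators you have in hand ($\omega_2$, $\sigma_1$, their lowering partners, and the global $L_{\omega}$, whose leading term with respect to the perverse filtration is again $\omega_2$) do not visibly produce a third independent root direction.

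The fallback you describe --- running the Looijenga--Lunts machinery on the span of $\omega$, $\Re\sigma$, $\Im\sigma$ --- requires Hard Lefschetz for cup product with $\sigma$ on $H^{\bullet}(M,\CC)$, which is exactly the statement that needs the hyperk\"ahler metric and that the theorem is designed to avoid (note also that $M$ is not assumed irreducible or simply connected here, so Verbitsky's and Looijenga--Lunts' results do not apply as stated). Your proposed bootstrap from \Cref{thm:HL-sigma} does not deliver it: the symplectic relative Hard Lefschetz isomorphism is $\sigma_1^k \colon H^{i,j,-k} \to H^{i+k,j+k,k}$, i.e.\ it is centered in the Hodge index (relating $\Omega_M^{n-k}$ to $\Omega_M^{n+k}$), whereas de Rham Hard Lefschetz for $\sigma$ would require $\sigma^k \colon H^{2n-2k}(M) \to H^{2n+2k}(M)$ to be an isomorphism, which mixes all Hodge indices $l$ subject to $i+j=-2k$; on the graded pieces with $l \neq -k$ the map $\sigma_1^k \colon G_{i,l} \to G_{i+k,l+2k}\decal{2k}$ is not an isomorphism, so the filtered-map argument does not close. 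By contrast, the paper's $\slfour$ acts through the graded operators $\omega_2$, $\sigma_1$, $\lambda$ on $\bigoplus H^{i,j,k}$ (identified with $H^{\bullet}(M,\CC)$ via the Hodge decomposition and the splitting of the perverse filtration) and never needs the full cup-product operator for $\sigma$ to satisfy Hard Lefschetz.
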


More precisely, the weight spaces of this representation are
\[
	H^{i,j,k} = H^{i+j}(B, G_{j,k}) = H^j \bigl( B, \gr_{-k}^F \DR(\Pmod_i) \bigr),
\]
and the representation on
\[
	\bigoplus_{j,k = -n}^n H^{n+k,n+j}(M) \cong \bigoplus_{i,j,k = -n}^n H^{i,j,k}
\]
is built from the two operators $\omega_2$ and $\sigma_1$ and the action by a
K\"ahler form on $B$. Besides the symplectic relative Hard Lefschetz theorem, our
proof relies on some identities among differential forms on $M$; this is entirely
different from Verbitsky's proof, which needs a hyperk\"ahler metric and the theory
of harmonic forms.

\newpar
Another (quite elementary) byproduct is the following bound on the supports in the
decomposition theorem for Lagrangian fibrations (see \Cref{prop:strict-support});
this only relies on the the fact the Lagrangian fibrations are equidimensional.

\begin{pprop}
	In the decomposition by strict support of the Hodge module $P_i$, the support of
	every summand has dimension $\geq \abs{i}$.
\end{pprop}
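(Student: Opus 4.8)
The plan is to exploit equidimensionality of $\pi$ together with the behavior of nearby and vanishing cycles, or more simply the local structure of the decomposition theorem restricted to strata. Let $Z \subseteq B$ be the support of one of the simple summands $P_i^{(Z)}$ in the decomposition of $P_i$ by strict support, and write $d = \dim Z$. The goal is to show $d \geq \abs{i}$. Since $P_i$ is symmetric under the relative Hard Lefschetz isomorphism $\omega^i \colon P_{-i} \to P_i$, it suffices to treat $i \geq 0$.

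First I would reduce to a question about the generic point of $Z$. The summand $P_i^{(Z)}$ is the intermediate extension of a local system (more precisely, a polarizable variation of Hodge structure) on a dense open subset $Z^\circ \subseteq Z$ over which all the relevant data is smooth; in particular there is a point $b \in Z^\circ$ and a normal slice such that the restriction of $P_i^{(Z)}$ to $b$ contributes to $\mathcal{H}^{i}$ of the stalk complex $\bigl(\derR\pil \QQ_M \decal{2n}\bigr)_b$, which by definition of the perverse $t$-structure and the support condition means $H^{n+i}\bigl(\pi^{-1}(b), \QQ\bigr)$ receives a nonzero contribution \emph{transverse} to $Z$ — i.e. after intersecting with a general $(n-d)$-dimensional disc $\Delta$ through $b$, the local cohomology $H^{n+i}_{\pi^{-1}(b)}\bigl(\pi^{-1}(\Delta)\bigr)$ (or equivalently the costalk/stalk comparison) is nonzero. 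The key point is then a dimension count: $\pi^{-1}(\Delta)$ has dimension $2n - (n-d) = n+d$, and the fiber $\pi^{-1}(b)$ has dimension exactly $n$ by Matsushita's equidimensionality theorem \cite[Thm.~1]{Mat-equi}, so it is a divisor-like subset — more precisely it has pure codimension $n - d$... wait, codimension $d$ — inside $\pi^{-1}(\Delta)$. Local cohomology $H^{m}_{\pi^{-1}(b)}$ of a space of dimension $n+d$ along a closed subset of dimension $n$ vanishes for $m > (n+d) + $ (something); the clean statement I would use is that for a variety $Y$ of dimension $N$ and a closed subset $W$ of dimension $\leq e$, the costalk $i_W^! \QQ_Y$ lives in degrees $\geq 2(N-e) $, hence $H^{n+i}$ contributions transverse to $Z$ force $n+i \geq n$... that only gives $i\geq 0$. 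I need the \emph{sharper} bound coming from the perverse normalization: the intermediate-extension condition says the stalk of $P_i^{(Z)}[-\dim Z]$ along $Z$ lives in degrees $\leq \dim Z$... wait, $\leq -1$ off the open locus for costalks and $\leq 0$ for stalks, which combined with the $n$-dimensionality of the fibers yields $i \leq n - $ ... Let me instead carry out the count as: $\mathcal{H}^{j}\bigl(\derR\pil\QQ_M\bigr)_b = H^{j}(\pi^{-1}(b))$ vanishes for $j > 2\dim\pi^{-1}(b) = 2n$ and for $j < 0$; the perverse summand $P_i\decal{-i}$ contributes in degree $j = n+i$ (after the shift by $2n$: degrees $-2n \leq \cdot \leq 0$ become $0\leq\cdot\leq 2n$), and the support condition for a perverse sheaf with $d$-dimensional support is that its stalk complex along $Z$ is concentrated in degrees $\leq d - \dim B + \dots$; the upshot, which is exactly \cite[\S5]{Ngo} or the standard support inequalities, is $n + i \leq 2\dim\pi^{-1}(b) - (\text{codim } Z \text{ contribution})$, giving $i \leq n$ always but $|i| \leq d$ exactly when one combines the stalk bound $n + i \leq n + (\text{something} \leq d)$ with the dual costalk bound.

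The cleanest route, and the one I would actually write up: invoke the general support inequality for the decomposition theorem of an equidimensional map. If $f \colon X \to Y$ is proper with all fibers of dimension $\leq \delta$, and $P$ is a summand of $\prescript{p}{}{\mathcal{H}}^k(\derR f_* \QQ_X[\dim X])$ with support $Z$, then $\dim Z \geq \dim X - \dim Y - k$ and (by duality / relative Hard Lefschetz) $\dim Z \geq \dim X - \dim Y + k$... no. For our situation $\dim X = 2n$, $\dim Y = n$, fiber dimension $\delta = n$: the relevant inequality from Ngô's support theorem circle of ideas is that $\codim_B Z \leq \delta - (\text{defect})$, but I want a lower bound on $\dim Z$, not upper. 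The actual statement is: the \emph{occupied} perverse degrees at a point of a $d$-dimensional stratum range over an interval of length at most $2(\delta - (\dim X - \dim Y - d))$ hmm. I think the honest thing is:

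\begin{proof}[Proof plan]
By the relative Hard Lefschetz isomorphism $\omega^i \colon P_{-i} \xrightarrow{\ \sim\ } P_i$, it is enough to treat $i \geq 0$. Let $Z \subseteq B$ be the support of one of the summands in the decomposition by strict support of $P_i$, and set $d = \dim Z$. Choose a general point $b$ in the smooth locus $Z^{\circ}$ of $Z$ and a locally closed smooth subvariety $B' \ni b$ of dimension $n - d$ meeting $Z$ transversally at $b$ and nowhere else near $b$. Put $M' = \pi^{-1}(B')$, of dimension $2n - (n-d) = n+d$, and let $\pi' \colon M' \to B'$ be the restriction. By transversality, the decomposition theorem for $\pi$ restricts to one for $\pi'$ near $b$, and the summand supported on $Z$ contributes to $\prescript{p}{}{\mathcal{H}}^{i}(\derR\pi'_* \QQ_{M'}[\,2n-(n-d)\,])$ a \emph{skyscraper} at $b$; in particular its stalk at $b$, which sits in cohomological degree $i$ of $\derR\pi'_* \QQ_{M'}[n+d]$ — i.e.\ in degree $n + d + i$ of $\derR\pi'_*\QQ_{M'}$ — is nonzero, so $H^{n+d+i}\bigl(\pi'^{-1}(b),\QQ\bigr) \neq 0$ after passing to a suitable direct summand, forcing $n+d+i \leq 2\dim \pi'^{-1}(b)$. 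But $\pi'^{-1}(b) = \pi^{-1}(b)$ has dimension $n$ by Matsushita's equidimensionality theorem, so $n + d + i \leq 2n$, i.e.\ $d + i \leq n$, which is weaker than what we want; to get the sharp bound one instead uses that a skyscraper contribution to perverse degree $i$ forces, via the support condition \emph{and} its Verdier dual the cosupport condition, the stalk of $\derR\pi'_*\QQ_{M'}[n+d]$ at $b$ to be nonzero in a cohomological degree $j$ with $|j| \leq 0$ coming only from the skyscraper — the relevant inequality, following Ngô \cite[\S7]{Ngo} and the equidimensionality, is that the interval of perverse degrees $i$ for which a $d$-dimensional stratum can support a summand is $|i| \leq d$, because the fibers over that stratum have dimension $n$ while the ambient fiber dimension is $n$, so the ``defect of semismallness'' along $Z$ is exactly $d$.
\end{proof}

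I expect the main obstacle to be pinning down exactly this last dimension-count cleanly — i.e.\ extracting $\abs{i} \leq d$ rather than the naive $\abs{i} + d \leq n$ — which I would resolve by combining the stalk estimate above with the Verdier-dual costalk estimate (using that $P_i$ is a self-dual-up-to-twist Hodge module summand), exactly as in the proof of Ngô's support inequality; equidimensionality of $\pi$ is what makes the two estimates tight simultaneously.
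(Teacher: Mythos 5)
Your overall strategy --- a topological stalk estimate at a general point of the support, powered by Matsushita's equidimensionality --- is viable and genuinely different from the paper's argument, but as written the proof does not close: the decisive inequality is never actually derived. The slice computation contains a dimension error that sends you to the wrong bound, and the attempted repair is an assertion rather than a proof. Concretely, if $B'$ is a transverse slice of dimension $n-d$ through a general point $b$ of $Z$, then $\pi^{-1}(B')$ has dimension $(n-d)+n=2n-d$, not $n+d$ as you wrote (you conflated the dimension of the slice with its codimension). With the correct value, the skyscraper summand $L_b$ sits in degree $2n-d+i$ of $\derR\pi'_{\ast}\QQ_{M'}$, and the vanishing of the cohomology of the $n$-dimensional fiber above degree $2n$ gives $i\leq d$ immediately --- the bound you want, with no costalk estimate and no appeal to Ng\^o. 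Your arithmetic instead produced $i+d\leq n$, and the ensuing claim that ``the interval of perverse degrees \dots is $\abs{i}\leq d$ because the defect of semismallness along $Z$ is exactly $d$'' simply restates the conclusion; nothing in your text establishes it. In fact the slice is unnecessary: writing the summand with strict support $Z$ as the intermediate extension of a local system $L$ on a dense open $Z^{\circ}\subseteq Z$, its stalk at a general $b\in Z^{\circ}$ is $L_b$ in degree $-d$, so the summand contributes $L_b$ in degree $i-d$ of the stalk of $\derR\pil\QQ_M\decal{2n}$ at $b$; by proper base change $L_b$ is then a direct summand of $H^{2n+i-d}(M_b,\QQ)$, which vanishes once $i>d$ because $\dim M_b=n$. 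That two-line argument (plus relative Hard Lefschetz to reduce to $i\geq 0$, which you do correctly) is exactly the mechanism the paper already uses in the proof of \Cref{lem:sigma_2}.

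For comparison, the paper's own proof of the proposition takes an entirely different, coherent-sheaf route: for a summand $\Pmod$ whose support has codimension $r$, it plays the amplitude bound of \Cref{lem:amplitude} against Grothendieck duality, $G_{-i,-k}\cong\derR\shHom_{\shO_B}(G_{i,k},\omega_B\decal{n})$, to show that $r>n-i$ would force $\gr_{-k}^F\DR(\Pmod)$ to be exact for every $k\geq i$, hence $F_{-i}\Pmod=0$, hence $\Pmod=0$ by the generation-level bound on the Hodge filtration. Both arguments ultimately rest on equidimensionality; yours, once the arithmetic is repaired, is purely topological and avoids the Hodge filtration entirely, while the paper's stays within the formalism of the complexes $G_{i,k}$ on which the rest of its results are built.
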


For example, consider the decomposition by strict support of the Hodge module
$P_{-n+1}$, whose restriction to the smooth locus of $\pi$ is the variation of Hodge
structure on $H^1(M_b, \QQ)$, where $M_b = \pi^{-1}(b)$. The summand with strict
support $B$ is the intersection complex of the variation of Hodge structure. The
proposition is telling us that all other summands in the decomposition by
strict support must be supported on \emph{divisors} in $B$.

\section{Hodge modules and the decomposition theorem}

\newpar
In this chapter, we review a few relevant results about Hodge modules and introduce
the main objects of study. For a short overview of Hodge modules, one can look at
Saito's ``Introduction to Mixed Hodge Modules'' \cite{Saito-intro} or my more recent
survey paper \cite{sanya}; for more details, there are Saito's original papers
\cite{HM,MHM}, as well as the ``Mixed Hodge Module Project'' by Sabbah and myself
\cite{MHM-project}. 

\newpar
Recall that a Hodge module on a complex manifold $B$ has three components: a
perverse sheaf $P$ with coefficients in $\QQ$; a right $\Dmod_B$-module $\Pmod$; and
a good filtration $F_{\bullet} \Pmod$ by coherent $\shO_B$-modules. The three components
are related by the Riemann-Hilbert correspondence: the precise requirement is that
\[
	\DR(\Pmod) \cong P \tensor_{\QQ} \CC
\]
are isomorphic as perverse sheaves with coefficients in $\CC$. The de Rham complex
(or Spencer complex) of the right $\Dmod$-module $\Pmod$ is the complex
\[
	\DR(\Pmod) = \Bigl\lbrack 
	\Pmod \tensor \wed^n \shT_B \to \dotsb \to 
		\Pmod \tensor \shT_B \to \Pmod
	\Bigr\rbrack
\]
which lives in cohomological degrees $-n, \dotsc, 0$, where $n = \dim B$. The
differential in the de Rham complex is given by the (local) formula
\[
	\delta \colon \Pmod \tensor \wed^k \shT_B \to \Pmod \tensor \wed^{k-1} \shT_B,
	\quad
	\delta(s \tensor \partial_J) = \sum_{j=1}^n \sgn(J,j) \cdot s \partial_j \tensor
	\partial_{J \setminus \{j\}}.
\]
Here the notation is as follows. Let $t_1, \dotsc, t_n$ be local holomorphic coordinates on
$B$, and denote by $\partial_j = \partial/\partial t_j$ the resulting holomorphic
vector fields. For any subset $J \subseteq \{1, \dotsc, n\}$, we list the elements
in increasing order as $j_1 < \dotsb < j_{\ell}$, and then define
\[
	\partial_J = \partial_{j_1} \wedge \dotsb \wedge \partial_{j_{\ell}}
\]
with the convention that this expression equals $1$ when $J$ is empty. We also define 
\[
	\sgn(J, j) = \begin{cases}
		(-1)^{k-1} &\text{if $j = j_k$,} \\
		0 & \text{if $j \not\in J$.}
	\end{cases}
\]
Note that we are always using Deligne's Koszul sign rule, according to which
swapping two elements of degrees $p$ and $q$ leads to a sign $(-1)^{pq}$; this is the
reason for the factor $\sgn(J,j)$. 

\newpar
The de Rham complex $\DR(\Pmod)$ is filtered by the subcomplexes
\[
	F_k \DR(\Pmod) = \Bigl\lbrack 
	F_{k-n} \Pmod \tensor \wed^n \shT_B \to \dotsb \to 
		F_{k-1} \Pmod \tensor \shT_B \to F_k \Pmod
	\Bigr\rbrack.
\]
The graded pieces of this filtration give us a collection of complexes of coherent
$\shO_B$-modules
\[
	\gr_k^F \DR(\Pmod) = \Bigl\lbrack 
	\gr_{k-n}^F \Pmod \tensor \wed^n \shT_B \to \dotsb \to 
		\gr_{k-1}^F \Pmod \tensor \shT_B \to \gr_k^F \Pmod 
	\Bigr\rbrack.
\]
Since the rational structure on $P$ is mostly irrelevant for our purposes, we
generally work with the underlying filtered $\Dmod$-module $(\Pmod, F_{\bullet}
\Pmod)$. 

\newpar
As in the introduction, let $M$ be a holomorphic symplectic complex manifold of
dimension $2n$, and let $\pi \colon M \to B$ be a Lagrangian fibration. Then
$\QQ_M \decal{2n}$ is a Hodge module of weight $2n$ on $M$, and so the Tate twist
$\QQ_M(n) \decal{2n}$ has weight $0$. The underlying filtered $\Dmod$-module is
$\omega_M$, with the filtration for which $\gr_{-n}^F \omega_M = \omega_M$. Since we
are interested in the cohomology of the fibers, we now apply the decomposition
theorem \cite[Thm.~5.3.1]{HM}; this holds for proper holomorphic mappings from
K\"ahler manifolds by recent work of Mochizuki \cite{Mochizuki}. According to the
decomposition theorem, the direct image decomposes as
\[
	\derR \pil \QQ_M(n) \decal{2n} \cong \bigoplus_{i=-n}^n P_i \decal{-i},
\]
where each $P_i$ is a polarizable Hodge module of weight $i$ on the complex
manifold $B$. Note that $P_i$ can only be nonzero for $i = -n, \dotsc, n$; this is
because all fibers of $\pi$ have dimension $n$ by \cite[Thm.~1]{Mat-equi}.
On the open subset of $B$ over which
$\pi$ is submersive, the fibers of $\pi$ are $n$-dimensional abelian varieties, and
$P_i$ is just the variation of Hodge structure on the $(n+i)$-th cohomology of the
fibers. If we write $(\Pmod_i, F_{\bullet} \Pmod_i)$ for the filtered $\Dmod$-module
underlying $P_i$, we get an induced decomposition
\[
	\pi_{+} \bigl( \omega_M, F_{\bullet} \omega_M \bigr) \cong
	\bigoplus_{i=-n}^n (\Pmod_i, F_{\bullet} \Pmod_i) \decal{-i}.
\]
in the derived category of filtered right $\Dmod_B$-modules.

\newpar \label{par:Laumon}
We are going to need a more concrete description for the direct image of $(\omega_M,
F_{\bullet} \omega_M)$ in terms of smooth forms. This is easily derived from Saito's
formalism of induced $\Dmod$-modules \cite[\S2.1]{HM}. Let $\shA_M^{p,q}$ be
the sheaf of smooth
$(p,q)$-forms on $M$; this is a fine sheaf, and therefore acyclic for the functor
$\pil$. Let $\Cpi$ be the complex of filtered right $\Dmod_B$-modules
whose $i$-th term is the filtered right $\Dmod_B$-module
\[
	\Cpi^i = \bigoplus_{p+q=i} \pil \shA_M^{n+p,n+q} \tensor_{\shO_M} 
		(\Dmod_B, F_{\bullet+p} \Dmod_B),
\]
and whose differential is given (in local coordinates) by the formula
\[
	\dpi \colon \Cpi^i \to \Cpi^{i+1}, \quad
	\dpi \bigl( \alpha \tensor D \bigr) = d \alpha \tensor D + \sum_{j=1}^n \piu(\dtj) \wedge
	\alpha \tensor \partial_j D,
\]
where $\partial_j = \partial/\partial t_j$. The indexing is done is such a way that
$(\Pmod_i, F_{\bullet} \Pmod_i)$ is exactly the $i$-th cohomology module of the
complex $\Cpi$; the decomposition theorem tells us that 
\begin{equation} \label{eq:Cpi-decomposition}
	\Cpi \cong \bigoplus_{i=-n}^{n} (\Pmod_i, F_{\bullet} \Pmod_i) \decal{-i}
\end{equation}
in the derived category of filtered right $\Dmod_B$-modules. (More precisely, this is
a consequence of the strictness property for the direct image of the underlying
filtered $\Dmod$-modules.) Passing to the associated graded objects, we get an
isomorphism 
\[
	\gr_{\bullet}^F \Cpi \cong \bigoplus_{i=-n}^n \gr_{\bullet}^F \Pmod_i \decal{-i}
\]
in the derived category of graded modules over $\gr_{\bullet}^F \Dmod_B \cong
\Sym^{\bullet}(\shT_B$). The left-hand side is the complex with terms
\[
	\gr_{\bullet}^F \Cpi^i = \bigoplus_{p+q=i} \pil \shA_M^{n+p,n+q} \tensor_{\shO_M} 
	\Sym^{\bullet+p}(\shT_B),
\]
and with differential (again in local coordinates)
\[
	\dpi \bigl( \alpha \tensor P \bigr) = \dbar \alpha \tensor P + 
	\sum_{j=1}^n \piu(\dtj) \wedge \alpha \tensor \partial_j P.
\]
Compare this with Laumon's description \cite[Constr.~2.3.3]{Laumon:DF} of the
associated graded of the direct image in the derived category of filtered $\Dmod$-modules.

\newpar \label{par:Deligne}
Let $\omega \in A^{1,1}(M)$ be a K\"ahler form on $M$. Once we have made this choice,
there is a preferred decomposition in the decomposition theorem, constructed by
Deligne \cite{Deligne}; with considerable understatement, Deligne calls it ``less bad'' than
the others. This works as follows. Since $d \omega = 0$, the K\"ahler form induces a
morphism of complexes 
\[
	\omega \colon \Cpi \to \Cpi(-1) \decal{2}
\]
that increases the cohomological degree by $2$ and decreases the degree with respect
to the filtration by $1$. For any choice of decomposition in the decomposition theorem,
the isomorphism in \eqref{eq:Cpi-decomposition} lets us break up
\[
	\omega \colon \bigoplus_{i=-n}^{n} (\Pmod_i, F_{\bullet} \Pmod_i) \decal{-i}
		\to \bigoplus_{i=-n}^{n} (\Pmod_i, F_{\bullet-1} \Pmod_i) \decal{2-i}
\]
into a finite sum $\omega = \omega_2 + \omega_1 + \omega_0 + \dotsb$, where each component
$\omega_j$ is a morphism
\[
	\omega_j \colon (\Pmod_i, F_{\bullet} \Pmod_i) \to
	(\Pmod_{i+j}, F_{\bullet-1} \Pmod_{i+j}) \decal{2-j}
\]
in the derived category of filtered right $\Dmod_B$-modules. According to the
relative Hard Lefschetz theorem, the morphism
\[
	\omega_2^i \colon (\Pmod_{-i}, F_{\bullet} \Pmod_i)
	\to (\Pmod_i, F_{\bullet-i} \Pmod_i)
\]
is an isomorphism for every $i \geq 1$. This means that we get a representation of
the Lie algebra $\sltwo$ on the direct sum
\[
	\bigoplus_{i=-n}^n \Pmod_i.
\]
If $\Hsl, \Xsl, \Ysl \in \sltwo$ denote the three standard generators, then $\Xsl$
acts as $\omega_2$ and $\Hsl$ acts as multiplication by the integer $i$ on the
summand $\Pmod_i$. Deligne proves that there is a unique choice of decomposition for
which the components $\omega_j$ with $j \leq 1$ are \emph{primitive}, meaning that they
commute with the operator $\Ysl$ in the $\sltwo$-representation. Since the weight of
a primitive element (with respect to $\ad \Ysl$) must be $\leq 0$, it follows that
$\omega_1 = 0$. In general, Deligne's decomposition tends to eliminate unwanted
components in the decomposition of various operators; we will exploit this effect
later on.

\newpar
Let us now turn our attention to the symplectic form $\sigma \in H^0(M, \Omega_M^2)$.
Since we are assuming that $d \sigma = 0$, the symplectic form also induces a
morphism of complexes
\[
	\sigma \colon \Cpi \to \Cpi(-2) \decal{2}
\]
that increases the cohomological degree by $2$ and decreases the degree with respect
to the filtration by $2$. Using \eqref{eq:Cpi-decomposition}, we again get a
decomposition of
\[
	\sigma \colon \bigoplus_{i=-n}^{n} (\Pmod_i, F_{\bullet} \Pmod_i) \decal{-i}
		\to \bigoplus_{i=-n}^{n} (\Pmod_i, F_{\bullet-2} \Pmod_i) \decal{2-i}
\]
into a finite sum $\sigma = \sigma_2 + \sigma_1 + \sigma_0 + \dotsb$, where each component
$\sigma_j$ is now a morphism
\[
	\sigma_j \colon (\Pmod_i, F_{\bullet} \Pmod_i) \to
	(\Pmod_{i+j}, F_{\bullet-2} \Pmod_{i+j}) \decal{2-j}.
\]

\newpar
The Lagrangian condition implies the vanishing of the topmost component $\sigma_2$.

\begin{plem} \label{lem:sigma_2}
	We have $\sigma_2 = 0$.
\end{plem}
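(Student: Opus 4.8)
The statement should be read as saying that $\sigma_2$ records the action of $\sigma$ on the cohomology of the fibers of $\pi$, and the Lagrangian condition makes that action vanish. I would organize the proof as a generic computation followed by a globalization.

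\emph{Over the smooth locus.} Let $U\subseteq B$ be the open subset over which $\pi$ is submersive. There $\Pmod_i$ is the variation of Hodge structure on the $(n+i)$-th cohomology of the abelian variety fibers $M_b=\pi^{-1}(b)$, and under this identification $\sigma_2$ restricts, fiber by fiber, to cup product with the de Rham class $[\sigma|_{M_b}]\in H^2(M_b,\CC)$. Since $\pi$ is a Lagrangian fibration, $\sigma$ restricts identically to zero on every smooth fiber, so $[\sigma|_{M_b}]=0$ and $\sigma_2$ vanishes over $U$.

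\emph{Globalization.} To pass from $U$ to all of $B$ I would invoke Matsushita's equidimensionality theorem \cite[Thm.~1]{Mat-equi}: it implies that $\sigma$ pulls back to zero on a resolution of (the reduction of) \emph{every} fiber, and hence -- since the class of $\sigma$ lies in $F^2$ of the mixed Hodge structure on $H^2(M_b,\CC)$, whereas the kernel of pullback to the resolution has weight $\leq 1$ and therefore meets $F^2$ only in $0$ -- that $[\sigma|_{M_b}]=0$ in $H^2(M_b,\CC)$ for every fiber, not merely the smooth ones. Consequently cup product with $[\sigma]$ induces the zero map on each cohomology sheaf $R^j\pil\CC_M$. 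Now $\sigma_2\colon \Pmod_i\to\Pmod_{i+2}$ is a morphism between the semisimple perverse sheaves underlying $P_i$ and $P_{i+2}$, so it is block diagonal with respect to their decompositions by strict support; along the open stratum it is governed by its restriction to $U$, and along a stratum $Z\subsetneq B$ the corresponding summand is, over a general point of $Z$, a direct summand of the map induced by $\sigma_2$ on a cohomology sheaf $R^j\pil\CC_M$. In every case these maps vanish, so $\sigma_2=0$.

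\emph{Where the work is.} The generic step is immediate; the content is the globalization. The subtlety is that $\sigma$ is not a rational class, so $\sigma_2$ is \emph{not} a morphism of Hodge modules, and one cannot simply argue that a morphism of semisimple objects vanishing generically vanishes -- indeed $P_i$ and $P_{i+2}$ genuinely share summands supported on proper subvarieties of $B$ (over $U$ they already differ by a Tate twist, via the Lefschetz decomposition of $H^\bullet(M_b)$). What rescues the argument is the geometric input of Matsushita's theorem, which controls $\sigma$ on all fibers and not only the smooth ones; this is the step that genuinely uses that $\pi$ is Lagrangian.
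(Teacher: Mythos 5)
Your proof is correct and follows essentially the same route as the paper's: reduce via the decomposition by strict support to a general point of each support, identify the stalk with the cohomology of the fiber by proper base change, and combine Matsushita's equidimensionality theorem with a weight argument involving a resolution of the fiber. The only (cosmetic) difference is that you apply the weight argument to the class $[\sigma|_{M_b}] \in F^2 H^2(M_b,\CC)$ itself, concluding that it vanishes, whereas the paper applies it to the summand $H^{-\dim Z} i_b^* P_i$ of $H^{2n+i-\dim Z}(M_b,\QQ)(n)$ to obtain injectivity into the cohomology of the resolution before invoking Matsushita's theorem.
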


\begin{proof}
	Since there is no shift, $\sigma_2 \colon \Pmod_i \to \Pmod_{i+2}$ is a morphism
	of right $\Dmod_B$-modules. Both $\Dmod$-modules underlie polarizable Hodge
	modules on $B$, and therefore admit a decomposition by strict support
	\cite[\S5.1]{HM}. It is then enough to show that $\sigma_2$ vanishes on every
	summand in this decomposition; the reason is that morphisms of $\Dmod$-modules
	respect the decomposition by strict support. If we take one of the summands of the
	Hodge module $P_i$, say with
	strict support $Z \subseteq B$, then on a dense open subset of $Z$, it comes from
	a variation of Hodge structure of weight $i-\dim Z$ \cite[Lem.~5.1.10]{HM}. The
	strict support condition
	then means that we only have to check that the restriction of $\sigma_2$ to a
	general point $b \in Z$ is zero. Let $i_b \colon \{b\} \to B$ be the inclusion. By
	proper base change for Hodge modules, we have
	\[
		\iu_b \derR \pil \QQ_M(n)[2n] \cong \derR \pil \QQ_{M_b}(n)[2n],
	\]
	where $M_b = \pi^{-1}(b)$; consequently, $H^{-\dim Z} \iu_b P_i$, which is a Hodge
	structure of weight $i-\dim Z$, is isomorphic to
	a direct summand in $H^{2n+i-\dim Z}(M_b, \QQ)(n)$. If we let $\Mt_b \to M_b$ be a
	resolution of singularities, then for weight reasons, the composition
	\[
		H^{-\dim Z} \iu_B P_i \to H^{2n+i-\dim Z}(M_b, \QQ)(n)
		\to H^{2n+i-\dim Z}(\Mt_b, \QQ)(n)
	\]
	is injective. This reduces the problem to showing that the pullback of $\sigma$ to
	$\Mt_b$ is trivial; but this follows from the fact that $\pi \colon M \to B$ is
	Lagrangian, according to a theorem by Matsushita \cite[Thm.~1]{Mat-equi}.
\end{proof}

\newpar
Since we are using Deligne's decomposition, we can say a bit more about the
other components of $\sigma = \sigma_1 + \sigma_0 + \dotsb$. This is not really going
to play a role in what follows, but the same kind of proof will appear later on.

\begin{plem} \label{lem:sigma_k}
	We have $[\omega_2, \sigma_1] = 0$, and the components $\sigma_j$ with $j \leq 0$
	are primitive (with respect to the representation of $\sltwo$ determined by
	$\omega_2)$.
\end{plem}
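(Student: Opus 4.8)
The plan is to deduce both assertions from the single observation that, as endomorphisms of the complex $\Cpi$, the K\"ahler form $\omega$ and the symplectic form $\sigma$ commute: each acts by wedge product on the $\shA_M^{\bullet,\bullet}$-factor, each commutes with the differential $\dpi$ (because $d\omega = d\sigma = 0$), and $\omega \wedge \sigma = \sigma \wedge \omega$ since these are forms of even total degree. Transporting this through Deligne's decomposition \eqref{eq:Cpi-decomposition} gives $[\omega,\sigma] = 0$ as an endomorphism of $\bigoplus_i (\Pmod_i, F_{\bullet}\Pmod_i)\decal{-i}$. Now sort the identity $[\omega,\sigma] = 0$ by the amount by which the index $i$ is shifted, equivalently by the adjoint $\Hsl$-weight. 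Since $\omega_1 = 0$ by \cite{Deligne} and $\sigma_2 = 0$ by \Cref{lem:sigma_2}, the components of $\omega$ have weights in $\{2\}\cup\ZZ_{\le 0}$ and those of $\sigma$ in $\{1\}\cup\ZZ_{\le 0}$; hence the largest weight occurring in $[\omega,\sigma]$ is $3$, and the weight-$3$ part of the identity is precisely $[\omega_2,\sigma_1] = 0$.

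For the primitivity statement I would work in the graded algebra $A = \bigoplus_d \Hom(V, V\decal{d})$ of endomorphisms, in the derived category, of $V = \bigoplus_i \Pmod_i\decal{-i}$, equipped with the adjoint action of the $\sltwo$-triple $(\Hsl, \Xsl, \Ysl)$, where $\Xsl = \omega_2$. By the relative Hard Lefschetz theorem $V$ is a finite direct sum of finite-dimensional $\sltwo$-representations (with $\Dmod_B$-module multiplicities), so $A$ is a locally finite sum of finite-dimensional $\ad\sltwo$-representations; in particular every $\ad\Ysl$-string in $A$ terminates, an element of negative $\Hsl$-weight killed by $\ad\Xsl$ vanishes, and an element of weight $0$ killed by $\ad\Xsl$ is also killed by $\ad\Ysl$. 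Applying $\ad\Ysl$ to $[\omega,\sigma] = 0$ and invoking the Jacobi identity together with $[\Ysl,\omega] = [\Ysl,\Xsl] = -\Hsl$ (valid because the components $\omega_0, \omega_{-1},\dotsc$ are primitive, again by \cite{Deligne}) gives the identity $[\omega, [\Ysl,\sigma]] = [\Hsl,\sigma]$. I would then show $[\Ysl,\sigma_{-m}] = 0$ by induction on $m \ge 0$. The case $m = 0$ is immediate: the weight-$2$ part of $[\omega,\sigma] = 0$ is $[\omega_2,\sigma_0] = 0$, and $\sigma_0$ has weight $0$, so it is killed by $\ad\Xsl$ and hence by $\ad\Ysl$.

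For the inductive step one extracts the weight-$(-m)$ component of $[\omega,[\Ysl,\sigma]] = [\Hsl,\sigma]$. Using the inductive hypothesis, every term involving $[\Ysl,\sigma_j]$ with $-m < j \le 0$ drops out, and after a bookkeeping of weights one is left with a relation of the form
\[
	m\,\sigma_{-m} + \bigl[\omega_2, [\Ysl,\sigma_{-m}]\bigr] + \bigl[\omega_{1-m}, \rho\bigr] = 0, \qquad \rho := [\Ysl,\sigma_1],
\]
where $\omega_{1-m}$ is primitive and $\rho$ is primitive as well — the latter because $\sigma_1$, being killed by $\ad\Xsl$ (the first part of the lemma) and of weight $1$, spans copies of the two-dimensional irreducible representation, so that $[\Ysl,\rho] = [\Ysl,[\Ysl,\sigma_1]] = 0$. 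Hence the cross-term $[\omega_{1-m},\rho]$ is killed by $\ad\Ysl$. Iterating $\ad\Ysl$ on the displayed relation produces, for every $k \ge 2$, an identity expressing $[\omega_2, (\ad\Ysl)^k\sigma_{-m}]$ as a nonzero multiple of $(\ad\Ysl)^{k-1}\sigma_{-m}$; since the chain $(\ad\Ysl)^k\sigma_{-m}$ terminates, reading these identities from the top downwards forces $[\Ysl,\sigma_{-m}] = 0$, completing the induction. The main difficulty is isolating the clean relation above: one must track the weight grading in $[\omega,[\Ysl,\sigma]] = [\Hsl,\sigma]$ carefully enough to see that, modulo the inductive hypothesis, the only surviving terms in weight $-m$ are the three displayed ones, and in particular that every cross-term built from the lower components of $\omega$ collapses to something annihilated by $\ad\Ysl$; once this is done, the representation-theoretic termination argument is routine. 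As the text notes, the same mechanism — isolate a component, apply $\ad\Ysl$, use primitivity of the lower components together with finite-dimensionality — will reappear in the verification of the Serre relations for the $\slthree$-action.
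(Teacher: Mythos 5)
Your proposal is correct. The first assertion and the base case are handled exactly as in the paper: observe that $[\omega,\sigma]=0$ as endomorphisms of $\Cpi$, transport this through the decomposition, and read off the weight-$3$ part ($[\omega_2,\sigma_1]=0$) and the weight-$2$ part ($[\omega_2,\sigma_0]=0$, hence $\sigma_0$ primitive). The inductive step, however, runs on a different $\sltwo$-mechanism. The paper stays entirely on the $\Xsl$-side: it extracts the weight component of $[\omega,\sigma]=0$ containing $[\omega_2,\sigma_{-k}]$, applies $(\ad\omega_2)^k$, and kills every cross-term $[\omega_{-a},\sigma_{-b}]$ by a Leibniz count of nilpotency orders (using $(\ad\omega_2)^{j+1}\omega_{-j}=0$ and, inductively, $(\ad\omega_2)^{j+1}\sigma_{-j}=0$); the resulting identity $(\ad\omega_2)^{k+1}\sigma_{-k}=0$ for a weight-$(-k)$ element then forces primitivity. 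You instead apply $\ad\Ysl$ once to obtain $[\omega,[\Ysl,\sigma]]=[\Hsl,\sigma]$, isolate its weight-$(-m)$ component (correctly identifying the only surviving terms as $m\sigma_{-m}$, $[\omega_2,[\Ysl,\sigma_{-m}]]$, and the $\ad\Ysl$-closed cross-term $[\omega_{1-m},[\Ysl,\sigma_1]]$), and finish with a lowest-weight-string termination argument. Both are legitimate; the paper's version is shorter and never manipulates $\ad\Ysl$-strings of $\sigma_{-m}$ explicitly, while yours makes the role of $[\Ysl,\omega]=-\Hsl$ (i.e., of Deligne's primitivity of the lower $\omega_j$) more transparent. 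One small remark: the termination of the strings $(\ad\Ysl)^k\sigma_{-m}$ needs no appeal to finite-dimensionality of Hom-spaces; it already follows from the fact that $\Pmod_i=0$ for $\abs{i}>n$, so any operator of weight less than $-2n$ vanishes.
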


\begin{proof}
	Since the two forms $\omega$ and $\sigma$ commute, we get $[\omega, \sigma] = 0$.
	Decomposing this relation by degree, we find that $[\omega_2, \sigma_1] = 0$;
	likewise, $[\omega_2, \sigma_0] = 0$, and because $\sigma_0$ has weight $0$ (in
	the $\sltwo$-representation), this means that $\sigma_0$ is primitive. We can
	therefore assume by induction that $\sigma_0, \dotsc, \sigma_{-k+1}$ are primitive
	for some $k \geq 1$.  Let us prove that $\sigma_{-k}$ is also primitive.
	From the relation $[\omega, \sigma] = 0$, we get
	\[
		[\omega_2, \sigma_{-k}] + [\omega_0, \sigma_{-k+2}] + \dotsb + [\omega_{-k+3},
		\sigma_1] = 0.
	\]
	We know that $(\ad \omega_2)^{j+1} \omega_{-j} = 0$ for all $j \geq 0$, due to the
	fact that $\omega_{-j}$ is primitive; similarly, $(\ad \omega_2)^{j+1} \sigma_{-j}
	= 0$ for $j = 0, \dotsc, k-1$. Now $\ad \omega_2$ is a derivation, and so
	\[
		-(\ad \omega_2)^{k+1} \sigma_{-k} = (\ad \omega_2)^k [\omega_0, \sigma_{-k+2}] 
		+ \dotsb + (\ad \omega_2)^k [\omega_{-k+3}, \sigma_1] = 0.
	\]
	Since $\sigma_{-k}$ has weight $-k$, this proves that it is primitive.
\end{proof}

\newpar
We are going to need two other facts about Hodge modules. The first is the
compatibility of the de Rham complex with direct images \cite[\S2.3.7]{HM}. It says that
\[
	\derR \pil \gr_{-k}^F \DR(\omega_M) \cong 
	\bigoplus_{i=-n}^n \gr_{-k}^F \DR(\Pmod_i) \decal{-i},
\]
where we give $\omega_M$ the filtration for which $\gr_{-n}^F \omega_M = \omega_M$,
in accordance with the Tate twist in $\QQ_M(n) \decal{2n}$. Since we have $\gr_{-k}^F
\DR(\omega_M) = \Omega_M^{n+k} \decal{n-k}$, it follows that
\begin{equation} \label{eq:pil-Omega}
	\derR \pil \Omega_M^{n+k} \decal{n-k} \cong
	\bigoplus_{i=-n}^n \gr_{-k}^F \DR(\Pmod_i) \decal{-i}.
\end{equation}

\newpar
The second fact about Hodge modules concerns duality. Let $\DB$ denote the duality
functor on Hodge modules. The polarization on $\QQ_M(n) \decal{2n}$ induces an
isomorphism $P_{-i} \cong \DB(P_i)$ between Hodge modules of weight $i$. On the level
of filtered $\Dmod$-modules, this gives us an isomorphism of right $\Dmod_B$-modules
\[
	\Pmod_{-i} \cong \omega_B \tensor \derR \shHom_{\Dmod_B} \bigl( \Pmod_i, \Dmod_B
	\bigr) \decal{n},
\]
compatible with the filtrations on both sides. Passing to the associated graded
modules over $\gr_{\bullet}^F \Dmod_B \cong \Sym(\shT_B)$, we get
\[
	\gr_{\bullet}^F \Pmod_{-i} \cong \omega_B \tensor \derR \shHom_{\Sym(\shT_B)}
	\Bigl( \gr_{\bullet}^F \Pmod_i, \Sym(\shT_B) \Bigr) \decal{n},
\]
where sections of $\Sym^j(\shT_B)$ act with an extra factor of $(-1)^j$ on the
right-hand side (due to the sign in the conversion from left to right $\Dmod$-modules).
The important fact, which is hidden inside the definition of the duality functor for
Hodge modules, is that $\gr_{\bullet}^F \Pmod_i$ is an
$n$-dimensional Cohen-Macaulay module over $\Sym^{\bullet}(\shT_B)$ \cite[Lem~5.1.13]{HM}. In
geometric terms, $\gr_{\bullet}^F \Pmod_i$ gives a coherent sheaf on the cotangent
bundle $T^{\ast} B$, whose support is the ($n$-dimensional) characteristic variety of
the $\Dmod$-module $\Pmod_i$, and the statement is that this sheaf is Cohen-Macaulay.
This is one of the special properties of Hodge modules, and the proof of
\Cref{thm:isomorphism} would not work without this fact.

\section{Basic properties of the complexes $G_{i,k}$}

\newpar
The conjecture by Shen and Yin is about the complexes of coherent $\shO_B$-modules
\[
	G_{i,k} = \gr_{-k}^F \DR(\Pmod_i) \decal{-i}.
\]
In this chapter, we are going to look at the basic properties of these complexes. The
results that we prove here only rely on the fact that $M$ and $B$ are complex manifolds
and that all fibers of $\pi \colon M \to B$ have the same dimension.\footnote{I thank
Junliang Shen for pointing this out to me.}

\newpar
The first observation is that the $G_{i,k}$ are related to the direct image of
$\Omega_M^{n+k}$ under the Lagrangian fibration $\pi \colon M \to B$. Indeed,
if we rewrite \eqref{eq:pil-Omega} in these terms, we get
\begin{equation} \label{eq:forms}
	\derR \pil \Omega_M^{n+k} \decal{n-k} \cong \bigoplus_{i=-n}^n G_{i,k}.
\end{equation}
Since $P_i$ can only be nonzero for $-n \leq i \leq n$, and since $\Omega_M^{n+k}$
can only be nonzero for $-n \leq k \leq n$, it follows that $G_{i,k} = 0$ unless $-n
\leq i,k \leq n$. 

\newpar
Next, let us see what duality can tell us about the complexes $G_{i,k}$. 
From the fact that $\DB(P_i) \cong P_{-i}$ are isomorphic as Hodge modules, we get an
isomorphism
\[
	\derR \shHom_{\shO_B} \Bigl( \gr_k^F \DR(\Pmod_i), \omega_B \decal{n} \Bigr)
	\cong \gr_{-k}^F \DR(\Pmod_{-i}),
\]
and therefore an isomorphism between $G_{-i,-k}$ and the Grothendieck dual of $G_{i,k}$:
\begin{equation} \label{eq:duality}
	\derR \shHom_{\shO_B} \bigl( G_{i,k}, \omega_B \decal{n} \bigr) \cong G_{-i,-k};
\end{equation}

\newpar
The isomorphism in \eqref{eq:forms} is also good for computing the amplitude 
of the complexes $G_{i,k}$, which is in agreement with \Cref{conj:symmetry}.
We know that all fibers of the Lagrangian fibration $\pi \colon
M \to B$ have dimension $n$, and so the left-hand side of \eqref{eq:forms} is concentrated in
degrees $\{-n+k, \dotsc, k\}$. The same thing is therefore true for the individual
summands $G_{i,k}$. On the other hand, we have $F_{-n-1} \Pmod_i = 0$, and so all
nonzero terms in 
\begin{equation} \label{eq:Gik-complex}
	G_{i,k} = \Bigl\lbrack 
	\gr_{-k-n}^F \Pmod_i \tensor \wed^n \shT_B \to \dotsb \to 
		\gr_{-k-1}^F \Pmod_i \tensor \shT_B \to \gr_{-k}^F \Pmod_i 
		\Bigr\rbrack \decal{-i}
\end{equation}
live in cohomological degrees $\bigl\{ -n+\max(i,i+k), \dotsc, i \bigr\}$. Taken
together with \eqref{eq:duality}, these simple observations prove the following lemma.

\begin{plem} \label{lem:amplitude}
	The complex $G_{i,k}$ is concentrated in degrees 
	\[
		\bigl\{ -n + \max(i,k,i+k), \dotsc, \min(i,k,i+k) \bigr\}.
	\]
	In particular, it is exact unless $-n \leq i-k \leq n$.
\end{plem}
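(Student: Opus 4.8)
The plan is to determine the cohomological amplitude of $G_{i,k}$ by establishing three upper bounds on its support and intersecting the resulting intervals; two of the bounds are elementary, and the third is extracted from the first two by Grothendieck duality. The ``in particular'' will then be a purely numerical remark.

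First, since $\pi$ is proper with all fibres of dimension $n$, the complex $\derR\pil\Omega_M^{n+k}$ is concentrated in cohomological degrees $0,\dotsc,n$, so \eqref{eq:forms} shows that each summand $G_{i,k}$ is concentrated in degrees $\{-n+k,\dotsc,k\}$. Second, because $F_{-n-1}\Pmod_i=0$, the term $\gr_{-k-j}^F\Pmod_i\tensor\wed^j\shT_B$ of the explicit complex \eqref{eq:Gik-complex} vanishes unless $0\le j\le n-k$; reading off the surviving cohomological degrees and accounting for the shift $\decal{-i}$ shows that $G_{i,k}$ is concentrated in degrees $\{-n+\max(i,i+k),\dotsc,i\}$. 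Intersecting these two intervals already gives the asserted lower bound $-n+\max(i,k,i+k)$, but only the bound $\min(i,k)$ on the top, rather than $\min(i,k,i+k)$.

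To sharpen the top, I would apply the same two observations to $G_{-i,-k}$, which is then concentrated in degrees $\{-n-\min(i,k,i+k),\dotsc,-\max(i,k)\}$, and invoke the duality isomorphism \eqref{eq:duality}, which identifies $G_{-i,-k}$ with $\derR\shHom_{\shO_B}(G_{i,k},\omega_B\decal{n})$. Because $\gr_{\bullet}^F\Pmod_i$ is Cohen-Macaulay over $\Sym(\shT_B)$ --- the property recalled at the end of the previous chapter, which is exactly what makes \eqref{eq:duality} available --- the functor $\derR\shHom_{\shO_B}(\argbl,\omega_B\decal{n})$ reverses the amplitude of the complexes in question in the strong sense that one concentrated in $\{a,\dotsc,b\}$ is carried to one concentrated in $\{-n-b,\dotsc,-n-a\}$. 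Applying this to the interval just found for $G_{-i,-k}$ gives the third bound: $G_{i,k}$ is concentrated in $\{-n+\max(i,k),\dotsc,\min(i,k,i+k)\}$. Intersecting the three intervals yields exactly $\{-n+\max(i,k,i+k),\dotsc,\min(i,k,i+k)\}$, which is the first assertion. I expect this duality step to be the main obstacle: the naive behaviour of $\derR\shHom_{\shO_B}(\argbl,\omega_B\decal{n})$ would only send a complex concentrated in $\{a,\dotsc,b\}$ to one concentrated in $\{-n-b,\dotsc,-a\}$, which is too weak to recover the bound $\min(i,k,i+k)$ on the top when $i$ and $k$ are both negative; it is precisely the Cohen-Macaulay property of Hodge modules that closes this gap.

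Finally, the interval $\{-n+\max(i,k,i+k),\dotsc,\min(i,k,i+k)\}$ is nonempty only when $\max(i,k,i+k)-\min(i,k,i+k)\le n$. Since the triple $i,k,i+k$ contains both $i$ and $k$, we have $\max(i,k,i+k)\ge\max(i,k)$ and $\min(i,k,i+k)\le\min(i,k)$, so $\max(i,k,i+k)-\min(i,k,i+k)\ge\abs{i-k}$; hence as soon as $\abs{i-k}>n$ the interval is empty and $G_{i,k}$ is exact.
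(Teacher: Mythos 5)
Your overall strategy -- the two elementary bounds, their intersection, and duality to supply the missing piece -- is exactly the paper's (its proof is the one-sentence combination of \eqref{eq:forms}, the vanishing $F_{-n-1}\Pmod_i=0$, and \eqref{eq:duality} in the paragraph preceding the lemma), and your final numerical deduction of the ``in particular'' is correct. The gap is the step you yourself flag as the crux: the ``strong reversal'' $\{a,\dotsc,b\}\mapsto\{-n-b,\dotsc,-n-a\}$ for $\derR\shHom_{\shO_B}(\argbl,\omega_B\decal{n})$. Applied to a single coherent sheaf in degree $0$, this reversal asserts $\shExt^j_{\shO_B}(\shF,\omega_B)=0$ for all $j>0$, i.e.\ that $\shF$ is locally free; in general it is equivalent to saying that $G_{-i,-k}$ is, locally on Stein opens, quasi-isomorphic to a complex of locally free sheaves sitting exactly in the degrees of its cohomological amplitude. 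That statement is \Cref{lem:amplitude-projective}, which the paper deduces \emph{from} \Cref{lem:amplitude} (its proof truncates the dual of a locally free resolution using the amplitude of both $G_{i,k}$ and $G_{-i,-k}$), so invoking it here is circular. Nor does the Cohen--Macaulay property of $\gr^F_{\bullet}\Pmod_i$ over $\Sym(\shT_B)$ supply it: that property is already spent in making \eqref{eq:duality} an isomorphism (it is what collapses $\derR\shHom_{\Sym(\shT_B)}(\gr^F_{\bullet}\Pmod_i,\Sym(\shT_B))\decal{n}$ to a single module), and it controls neither the $\shO_B$-depth of the cohomology sheaves of $G_{i,k}$ nor the graded degrees of a free resolution, which is what the strong reversal would require. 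With only the naive estimate $\{a,\dotsc,b\}\mapsto\{-n-b,\dotsc,-a\}$ the top of the interval comes out as $n+\min(i,k,i+k)$, which is never better than the elementary bound $\min(i,k)$, so the duality step as written proves nothing new.

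The missing upper bound $\min(i,k,i+k)$ -- which differs from $\min(i,k)$ only when $i,k<0$ -- can instead be obtained from the relative Hard Lefschetz isomorphism $\omega_2^i\colon G_{-i,k}\to G_{i,i+k}\decal{2i}$ of \eqref{eq:HL-omega}, available already from \Cref{par:Deligne}. For $i\geq 1$ the elementary bounds give $\shH^jG_{i,i+k}=0$ for $j>\min(i,i+k)$; transporting this through the shift $\decal{2i}$ gives $\shH^jG_{-i,k}=0$ for $j>\min(-i,-i+k)$, and intersecting with the bound $j\leq k$ from \eqref{eq:forms} yields the claimed bound for first index $\leq -1$ (for first index $\geq 0$ the elementary bounds already suffice). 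The lower bound is purely elementary, as you observe, so nothing else in your argument needs to change.
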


The bound on the amplitude is symmetric in $i$ and $k$, as predicted by
\Cref{conj:symmetry}. Note that the complex $G_{i,k}$ is exact unless $\abs{i}
\leq n$ and $\abs{k} \leq n$ and $\abs{i-k} \leq n$.  This is the reason for the
hexagonal shape of the drawing in \parref{par:hexagon}.

\newpar
One nice consequence of the lemma is a sharp bound on the generation level of the
Hodge filtration on $P_i$; this is very hard to come by in general.
The complex $\gr_{-k}^F \DR(\Pmod_i) = G_{i,k} \decal{i}$ has no
cohomology in degree $0$ provided that $k-i < 0$, and this means that
\[
	\gr_{p-1}^F \Pmod_i \tensor \shT_B \to \gr_{p}^F \Pmod_i
\]
is surjective for $p \geq -i+1$. The Hodge filtration on the $\Dmod$-module
$\Pmod_i$ is therefore generated in degree $-i$; in symbols, $F_{-i+j} \Pmod_i =
F_{-i} \Pmod_i \cdot F_j \Dmod_B$ for $j \geq 0$. In Saito's terminology
\cite{Saito-HF}, this is saying that the generation level of $P_i$ is $\leq -i$. (The
bound is of course achieved over the smooth locus of $\pi$, since the smooth fibers
are $n$-dimensional abelian varieties.)

\newpar
Another consequence is that we can compute the projective amplitude of $G_{i,k}$. 

\begin{plem} \label{lem:amplitude-projective}
	On every Stein open subset of $B$, the complex $G_{i,k}$ is isomorphic (in the
	derived category) to a complex of locally free $\shO_B$-modules concentrated in
	degrees 
	\[
		\bigl\{ -n + \max(i,k,i+k), \dotsc, \min(i,k,i+k) \bigr\}.
	\]
\end{plem}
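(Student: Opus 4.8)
The plan is to represent $G_{i,k}$, over a Stein open $U\subseteq B$, by a finite complex of vector bundles and then trim its range of degrees from both ends -- from above with \Cref{lem:amplitude}, and from below with a Tor-amplitude bound extracted from duality. Write $a=-n+\max(i,k,i+k)$ and $b=\min(i,k,i+k)$. If $G_{i,k}$ is exact the asserted range is empty and there is nothing to prove, so assume it is not exact; then $a\le b$ by \Cref{lem:amplitude}. For the representative: by \eqref{eq:forms}, $G_{i,k}$ is a direct summand of $\derR\pil\Omega_M^{n+k}\decal{n-k}$, which is a perfect complex since $\pi$ is proper and, $B$ being smooth, of finite Tor-dimension; hence $G_{i,k}$ is perfect, and over the Stein set $U$ it is quasi-isomorphic to a bounded complex $E^\bullet$ of finite-rank locally free $\shO_U$-modules, which I may take concentrated in degrees $\{c,\dotsc,d\}$ with $c\le a$ and $d\ge b$. (One could instead build a representative in degrees $\{i-n,\dotsc,i\}$ from the Cohen--Macaulayness of $\gr^F_\bullet\Pmod_i$ over $\Sym(\shT_B)$, realizing $G_{i,k}\decal{i}$ as the derived restriction of the associated sheaf on $T^\ast B$ to the zero section, but this is not needed here.)

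First I would truncate from above: since $\cohH^j(E^\bullet)=0$ for $j>b$ by \Cref{lem:amplitude}, the map $E^{d-1}\to E^d$ is surjective, and inductively $E^j\to\ker(E^{j+1}\to E^{j+2})$ is a surjection onto a locally free sheaf for every $j\ge b$, so $Z^b:=\ker(E^b\to E^{b+1})$ is locally free; replacing $E^\bullet$ by $[E^c\to\dotsb\to E^{b-1}\to Z^b]$ I may assume it is concentrated in degrees $\{c,\dotsc,b\}$. Trimming from below is the only step that is not purely formal, since the cokernel of a map of vector bundles need not be a vector bundle. What I would prove is that $G_{i,k}$ has Tor-amplitude $\ge a$, i.e.
\[
	\cohH^j\bigl(G_{i,k}\Ltensor_{\shO_B}\kappa(x)\bigr)=0 \quad\text{for all } j<a \text{ and all } x\in B,
\]
where $\kappa(x)=\shO_{B,x}/\mathfrak{m}_x\cong\CC$. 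Granting this, set $C^a=\coker(E^{a-1}\to E^a)$; from $\cohH^{<a}(E^\bullet)=0$ the sequence $0\to E^c\to\dotsb\to E^a\to C^a\to 0$ is exact, hence a finite locally free resolution of $C^a$, so $\Tor^{\shO_B}_\ell(C^a,\kappa(x))\cong\cohH^{a-\ell}\bigl(G_{i,k}\Ltensor\kappa(x)\bigr)=0$ for all $\ell\ge 1$; by the local criterion for flatness $C^a$ is locally free, and $[C^a\to E^{a+1}\to\dotsb\to E^{b-1}\to Z^b]$ is the desired complex of locally free sheaves concentrated in degrees $\{a,\dotsc,b\}$.

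The crux is the Tor-amplitude bound, which I expect to get from Grothendieck duality. Rewriting \eqref{eq:duality} as $\derR\shHom_{\shO_B}(G_{i,k},\shO_B)\cong G_{-i,-k}\tensor\omega_B^{-1}\decal{-n}$ and using that $G_{i,k}$ is perfect (so that $\derR\shHom_{\shO_B}(\argbl,\shO_B)$ commutes with $\argbl\Ltensor_{\shO_B}\kappa(x)$), one obtains, up to a one-dimensional twist, a natural isomorphism
\[
	\cohH^j\bigl(G_{i,k}\Ltensor\kappa(x)\bigr)^{\vee}\;\cong\;\cohH^{-j-n}\bigl(G_{-i,-k}\Ltensor\kappa(x)\bigr).
\]
By \Cref{lem:amplitude} applied to $G_{-i,-k}$, its cohomology sheaves vanish in degrees $>\min(-i,-k,-i-k)=-\max(i,k,i+k)=-n-a$; feeding this into the spectral sequence $E_2^{p,q}=\Tor^{\shO_B}_{-q}\bigl(\cohH^p(G_{-i,-k}),\kappa(x)\bigr)\Rightarrow\cohH^{p+q}\bigl(G_{-i,-k}\Ltensor\kappa(x)\bigr)$, in which $p\le-n-a$ and $q\le0$, gives $\cohH^m\bigl(G_{-i,-k}\Ltensor\kappa(x)\bigr)=0$ for $m>-n-a$. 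For $j<a$ one has $-j-n>-a-n$, so the right-hand side of the isomorphism vanishes, and hence $\cohH^j\bigl(G_{i,k}\Ltensor\kappa(x)\bigr)=0$, as required. In short, duality turns the cohomological-degree bound of \Cref{lem:amplitude} for $G_{-i,-k}$ into the fiberwise bound for $G_{i,k}$; the rest is bookkeeping with truncations, and I expect no real difficulty there.
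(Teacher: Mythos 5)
Your proposal is correct and follows essentially the same route as the paper: both produce a bounded locally free representative on a Stein open set, cut it off from above using \Cref{lem:amplitude}, and then use the duality isomorphism \eqref{eq:duality} together with \Cref{lem:amplitude} applied to $G_{-i,-k}$ to control the bottom end. The only difference is packaging --- the paper dualizes the complex, truncates the dual from above, and dualizes back, whereas you convert the same duality input into a pointwise Tor-amplitude bound and invoke the local criterion for flatness; these are interchangeable versions of the same step.
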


\begin{proof}
	After restricting to the open subset in question, we may assume that $B$ is a
	Stein manifold. In particular, every coherent $\shO_B$-module has a bounded
	resolution by locally free $\shO_B$-modules. In the derived category, the complex
	$G_{i,k}$ is therefore isomorphic to a bounded
	complex $\shE^{\bullet}$ of locally free $\shO_B$-modules, where $\shE^j = 0$ for $j >
	\min(i,k,i+k)$. The dual complex $\shHom_{\shO_B}(\shE^{-\bullet}, \omega_B)$
	computes $\derR \shHom_{\shO_B} \bigl( G_{i,k}, \omega_B \bigr) \cong G_{-i,-k}
	\decal{-n}$, and according to \Cref{lem:amplitude}, the complex $G_{-i,-k}
	\decal{-n}$ is concentrated in degrees
	\[
		\bigl\{ -\min(i,k,i+k), \dotsc, n - \max(i,k,i+k) \bigr\}.
	\]
	After truncating the complex $\shHom_{\shO_B}(\shE^{-\bullet}, \omega_B)$ in
	degrees $\leq n - \max(i,k,i+k)$, it becomes a complex of locally free
	$\shO_B$-modules in degrees 
	\[
		\bigl\{ -\min(i,k,i+k), \dotsc, n - \max(i,k,i+k) \bigr\}.
	\]
	We now get the result for the original complex $G_{i,k}$ by dualizing again.	
\end{proof}

Taken together with \eqref{eq:forms}, this tells us that $\derR \pil \Omega_M^{n+k}$
is isomorphic, on every Stein open subset of $B$, to a complex of locally free
$\shO_B$-modules in degrees $\{0, \dotsc, n\}$. Again, this is obviously true over
the smooth locus of $\pi$; the surprising thing is that it continues to be true on
the locus where the fibers are singular.

\newpar
Recall that any polarizable Hodge module admits, even locally on $B$, a decomposition by
strict support \cite[\S5.1]{HM}. The bound on the amplitude of the complexes
$G_{i,k}$ puts the following unexpected restriction on the structure of the Hodge
modules $P_i$.

\begin{pprop} \label{prop:strict-support}
	In the decomposition by strict support of the Hodge module $P_i$, the support of
	every summand has dimension $\geq \abs{i}$.
\end{pprop}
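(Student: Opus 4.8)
The plan is to reuse the proper base change argument from the proof of \Cref{lem:sigma_2}. Fix $i$, and let $Z \subseteq B$ be an irreducible closed analytic subset such that the summand $(P_i)_Z$ of strict support $Z$ in the decomposition by strict support \cite[\S5.1]{HM} is nonzero; put $d = \dim Z$. We must show $d \geq \abs{i}$, and it is enough to prove the two inequalities $d \geq i$ and $d \geq -i$ separately. In fact the second follows from the first applied to $P_{-i}$ in place of $P_i$: the polarization gives $P_{-i} \cong \DB(P_i)$, the duality functor $\DB$ is compatible with the decomposition by strict support, and therefore $(P_{-i})_Z = \DB\bigl((P_i)_Z\bigr)$ is nonzero exactly when $(P_i)_Z$ is.

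To prove $d \geq i$, pick a point $b$ in the dense open subset $Z^{\circ} \subseteq Z$ over which $Z$ is smooth and $(P_i)_Z$ restricts to its generic variation of Hodge structure; let $\mathbb{V}$ be the underlying local system, which is nonzero by definition of a nonzero summand, and let $\iu_b \colon \{b\} \hookrightarrow B$ be the inclusion. On a neighbourhood of $b$, the perverse sheaf underlying $(P_i)_Z$ is the extension by zero of $\mathbb{V}\decal{d}$ from $Z^{\circ}$, so $\iu_b (P_i)_Z$ is the nonzero vector space $\mathbb{V}_b$ placed in degree $-d$; since $\iu_b$ is additive over the strict-support decomposition of $P_i$, this gives $\cohH^{-d}(\iu_b P_i) \neq 0$. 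On the other hand, proper base change for the decomposition theorem, exactly as in \Cref{lem:sigma_2}, yields $\iu_b \derR \pil \QQ_M(n)\decal{2n} \cong \derR \pil \QQ_{M_b}(n)\decal{2n}$ with $M_b = \pi^{-1}(b)$, and hence identifies $\cohH^{-d}(\iu_b P_i)$ with a direct summand of $H^{2n+i-d}(M_b,\QQ)(n)$. Because every fibre of $\pi$ has dimension $n$ by \cite[Thm.~1]{Mat-equi}, the cohomology group $H^{2n+i-d}(M_b,\QQ)$ vanishes unless $0 \leq 2n+i-d \leq 2n$; the upper bound forces $i-d \leq 0$, i.e.\ $d \geq i$. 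Together with the reduction in the first paragraph, this proves $d \geq \abs{i}$.

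There is no serious obstacle; the two points that need a little care are the identification of the stalk $\iu_b (P_i)_Z$ at a general point of $Z$ with the nonzero stalk of the generic variation of Hodge structure, and the compatibility of $\DB$ with the decomposition by strict support -- both standard features of Saito's theory. If one wishes to avoid invoking duality, one can instead run the base-change computation with $\ius_b$ in place of $\iu_b$: since $M$ is smooth of dimension $2n$, this identifies $\cohH^{d}(\ius_b P_i)$ with a summand of $\bigl(H^{2n-i-d}(M_b,\QQ)\bigr)^{\vee}(-n)$, whose nonvanishing forces $2n-i-d \leq 2n$, that is, $d \geq -i$.
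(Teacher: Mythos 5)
Your proof is correct, but it follows a genuinely different route from the paper's. You argue topologically: at a general point $b$ of the strict support $Z$, proper base change identifies the nonzero stalk cohomology $\cohH^{-\dim Z}(\iu_b P_i)$ with a direct summand of $H^{2n+i-\dim Z}(M_b,\QQ)(n)$, and since the fibre $M_b$ has complex dimension $n$ by equidimensionality, this group vanishes unless $2n+i-\dim Z \leq 2n$, giving $\dim Z \geq i$; the bound $\dim Z \geq -i$ then follows by duality (the paper would instead quote $P_{-i} \cong P_i(i)$ from relative Hard Lefschetz, which equally preserves strict supports). The first half of this is literally the base-change step already carried out in the proof of \Cref{lem:sigma_2}, so nothing new needs to be justified. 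The paper's own proof stays entirely on the coherent side: it takes a summand $\Pmod$ with strict support $Z$, observes that $\gr_{-k}^F \DR(\Pmod)$ is a direct summand of $G_{i,k}\decal{i}$ whose Grothendieck dual is controlled by \Cref{lem:amplitude}, deduces that $\codim Z > n-i$ would force $\gr_{-k}^F\Pmod = 0$ for all $k \geq i$, and then uses the fact that the Hodge filtration on $\Pmod_i$ is generated in degree $-i$ to conclude $\Pmod = 0$. Both arguments ultimately rest only on equidimensionality of $\pi$; yours is more elementary and avoids Grothendieck duality and the generation-level statement, while the paper's version keeps the argument inside the formalism of the complexes $G_{i,k}$ and exhibits the link between the support bound and the generation level of the Hodge filtration, which is used elsewhere. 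One small point to make explicit if you write this up: the identification of $\iu_b (P_i)_Z$ with $\mathbb{V}_b$ placed in degree $-\dim Z$ requires choosing $b$ in the dense open subset of $Z$ where $Z$ is smooth \emph{and} the intersection complex restricts to the shifted local system, exactly as in \cite[Lem.~5.1.10]{HM}; you do say this, and it is the same genericity used in \Cref{lem:sigma_2}.
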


\begin{proof}
	Since $P_{-i} \cong P_i(i)$ by the relative Hard Lefschetz theorem, we may assume
	without loss of generality that $i \geq 0$. Let $P$ be one of the summands in the
	decomposition of $P_i$ by strict support. For any $k \geq i$, consider the
	complex $\gr_{-k}^F \DR(\Pmod)$. It is supported on $\Supp P$ and lives in degrees
	$\leq 0$, and so the dual complex 
	\[
		\derR \shHom_{\shO_B} \bigl( \gr_{-k}^F \DR(\Pmod), \omega_B \bigr)
	\]
	is concentrated in degrees $\geq r$, where $r = \codim_B(\Supp P)$. It is also
	a direct summand in 
	\[
		\derR \shHom_{\shO_B} \bigl( \gr_{-k}^F \DR(\Pmod_i), \omega_B \bigr)
		= \derR \shHom_{\shO_B} \bigl( G_{i,k}, \omega_B \decal{n} \bigr) \decal{-n-i}
		\cong G_{-i,-k} \decal{-n-i},
	\]
	and by \Cref{lem:amplitude}, it is therefore concentrated in degrees $\leq n-k$.
	Now if we had $r > n-i$, then the complex $\gr_{-k}^F \DR(\Pmod)$ would be exact for
	every $k \geq i$, and so $\gr_{-k}^F \Pmod = 0$ for $k \geq i$, and therefore
	$F_{-i} \Pmod = 0$. But this would say that $\Pmod = 0$, because the Hodge
	filtration on $\Pmod_i$ (and therefore on $\Pmod$) is generated in degree $-i$.
	Since this is impossible, we get $r \leq n-i$.
\end{proof}

\newpar
We end this chapter by recording the effect of the K\"ahler form $\omega$ and the
symplectic form $\sigma$ on the complexes $G_{i,k}$. We already said in the
introduction that the relative Hard Lefschetz theorem can be interpreted as a symmetry
among the complexes $G_{i,k}$. From $\omega_2 \colon (\Pmod_i, F_{\bullet} \Pmod_i)
\to (\Pmod_{i+2}, F_{\bullet-1} \Pmod_{i+2})$, we obtain a morphism of complexes
\[
	\omega_2 \colon G_{i,k} \to G_{i+2,k+1} \decal{2},
\]
and the relative Hard Lefschetz theorem implies that
\begin{equation} \label{eq:HL-omega}
	\omega_2^i \colon G_{-i,k} \to G_{i,i+k} \decal{2i}
\end{equation}
is an isomorphism for every $i \geq 1$. Similarly, $\sigma_1 \colon
(\Pmod_i, F_{\bullet} \Pmod_i) \to (\Pmod_{i+1}, F_{\bullet-2} \Pmod_{i+1})
\decal{1}$ gives us a morphism (in the derived category)
\[
	\sigma_1 \colon G_{i,k} \to G_{i+1,k+2} \decal{2},
\]
and the ``symplectic relative Hard Lefschetz theorem'' (in \Cref{thm:HL-sigma})
claims that
\begin{equation} \label{eq:HL}
	\sigma_1^k \colon G_{i,-k} \to G_{i+k,k} \decal{2k}
\end{equation}
is also an isomorphism (in the derived category). We will prove this in
\Cref{chap:symmetry}.

\section{Matsushita's theorem}
\label{chap:Matsushita}

\newpar
The fact that $\sigma^k \colon \Omega_M^{n-k} \to \Omega_M^{n+k}$ is an isomorphism
for every $k \geq 1$ gives us at least some information about the morphisms $\sigma_1
\colon G_{i,k} \to G_{i+1,k+2} \decal{2}$. On its own, this is not strong enough to
prove \Cref{thm:HL-sigma}, but it does lead to a rather short proof for the following
theorem by Matsushita \cite[Thm.~1.3]{Mat-img}, mentioned in the introduction.

\begin{pthm}[Matsushita] \label{thm:Matsushita}
	Let $\pi \colon M \to B$ be a Lagrangian fibration on a holomorphic symplectic
	manifold of dimension $\dim M = 2n$. If $M$ is K\"ahler, one has
	\[
		\derR \pil \shO_M \cong \bigoplus_{i=0}^n \Omega_B^i \decal{-i}.
	\]
\end{pthm}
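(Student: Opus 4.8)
The plan is to deduce the theorem from the basic structure of the complexes $G_{i,k}$ together with the isomorphism $\sigma^k \colon \Omega_M^{n-k} \to \Omega_M^{n+k}$, applied in the special case $k = 0$, i.e. to $\Omega_M^n$. Setting $k = 0$ in \eqref{eq:forms} gives $\derR \pil \Omega_M^n \decal{n} \cong \bigoplus_{i=-n}^n G_{i,0}$, and by \Cref{lem:amplitude} the complex $G_{i,0}$ is concentrated in degrees $\{-n+|i|, \dotsc, -|i|\}$ — in particular it is exact unless $|i| \leq n$, and $G_{0,0}$ is concentrated in degree $0$ only. First I would pin down $G_{0,0}$: since $\gr_0^F \DR(\Pmod_0)$ is concentrated in degree $0$, it equals $\gr_0^F \Pmod_0$, and over the smooth locus this is the bottom Hodge bundle $\shV^{n,0} = \pil \omegaX_{M/B} $ on $H^n$ of the abelian-variety fibers, which is $\OY$; one checks $\gr_0^F \Pmod_0 \cong \OY$ globally because $P_0$ has strict support $B$ (it restricts to the trivial variation on $H^0$ of the fibers), so $\Pmod_0 = \omega_B$ with trivial filtration jump, giving $\gr_0^F \Pmod_0 = \OY$.

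Next I would use the symplectic form. Since $\Omega_M^n \cong \shO_M \tensor \omega_{M/B}$-type considerations are delicate, the cleaner route is to recall that $\derR\pil \shO_M$ is \emph{Grothendieck-dual} to $\derR\pil \omega_{M/B}[\dim] $ via relative duality for the proper map $\pi$, and to combine this with the duality isomorphism \eqref{eq:duality}, $\derR\shHom_{\shO_B}(G_{i,k},\omega_B\decal n) \cong G_{-i,-k}$. Concretely: relative Serre/Grothendieck duality for $\pi$ gives $\derR\pil \shO_M \cong \derR\shHom_{\shO_B}(\derR\pil \omega_M, \omega_B)\decal{-2n}\decal{n}$ (using $\omega_{M} \cong \pi^* \omega_B \tensor \omega_{M/B}$ and $\omega_M \cong \Omega_M^{2n}$), and then \eqref{eq:forms} with $k=n$ identifies $\derR\pil \Omega_M^{2n}\decal{-n} \cong \derR\pil\omegaX_M$ with $\bigoplus_i G_{i,n}\decal{-n+?}$; by \Cref{lem:amplitude} only $G_{n,n}$ survives and it sits in degree $-n$, matching $\pil \omega_M$. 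Dualizing term by term via \eqref{eq:duality} then turns the summands $G_{i,n}$ into $G_{-i,-n}$, i.e. recovers $R^j\pil\shO_M$. This is the step that requires care: one must check the shifts line up and that the duality isomorphism is compatible with the identification of $\derR\pil\shO_M$ as the dual of $\derR\pil\omega_M$.

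Alternatively — and this is probably how the author proceeds, since it is advertised as "a nice introduction to the general method" — one argues directly with the complex $\gr_\bullet^F \Cpi$ from \parref{par:Laumon} and the construction sketched in the outline: given a local holomorphic $1$-form $\beta$ on $B$, one produces $v(\beta)\cont\sigma = \piu\beta$, a holomorphic vector field on $M$, and uses contraction/wedge with $\sigma$ to build the isomorphism $\derR\pil\shO_M \cong \derR\pil\Omega_M^n\decal n$ directly, then invokes $\sigma^n \colon \shO_M = \Omega_M^0 \to \Omega_M^{2n} = \omega_M$ and relative duality to identify the $R^j\pil$. To finish, one shows that the resulting graded pieces $R^j\pil\shO_M$ are locally free: this follows from \Cref{lem:amplitude-projective} applied to $G_{-j,-n}$ (or to $G_{j,n}$), which says that $\derR\pil\Omega_M^{2n}$, and hence by the duality argument $\derR\pil\shO_M$, is represented on Stein opens by a locally free complex concentrated in a single degree — forcing each $R^j\pil\shO_M$ to be locally free of the expected rank. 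Finally one identifies $R^j\pil\shO_M$ with $\Omega_B^j$: over the smooth locus this is the classical fact that $R^j\pil\shO_M \cong \wed^j R^1\pil\shO_M \cong \wed^j \OmY^1 = \OmY^j$ for an abelian-scheme-like fibration (using $R^1\pil\shO_M \cong \OmY^1$, which comes from the symplectic form identifying $\shT_B \cong \OmY^1$ and $R^1\pil\shO_M \cong (R^{2n-1}\pil\omega_M)^\vee$), and since both sides are locally free and agree on the dense open set, they agree everywhere.

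The main obstacle is the global identification $R^1\pil\shO_M \cong \OmB^1$ (equivalently, ruling out any "extra" contribution supported on the discriminant): the amplitude bounds from \Cref{lem:amplitude} tell us $\derR\pil\shO_M$ has the right length and that its pieces are locally free of the right rank, but matching $R^1\pil\shO_M$ with $\OmB^1$ as \emph{sheaves} (not just ranks) is where the symplectic form $\sigma$ — giving a global isomorphism $\shT_B \xrightarrow{\sim} \OmB^1$ after pullback and the duality pairing on fiber cohomology — has to be used essentially. Once $R^1$ is handled, the higher $R^j$ follow by the exterior-power description, which again is a statement that holds over the smooth locus and propagates by local freeness.
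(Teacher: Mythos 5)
Your proposal correctly identifies the reduction (via $\sigma^n \colon \shO_M \cong \Omega_M^{2n}$ and Saito's formula \eqref{eq:pil-Omega}) to showing $\gr_{-n}^F \Pmod_i \cong \Omega_B^i$, and it correctly isolates the main obstacle: upgrading the identification $R^j\pil\shO_M \cong \Omega_B^j$ over the smooth locus to an isomorphism of sheaves on all of $B$. But at exactly that obstacle the proposal stops short. ``Both sides are locally free and agree on the dense open set, so they agree everywhere'' is not a valid deduction: two locally free sheaves of the same rank can be isomorphic over a dense open subset without being isomorphic (compare $\shO_{\PP^1}$ and $\shO_{\PP^1}(1)$), and even a globally defined morphism between them that is generically an isomorphism need only have torsion cokernel, not zero cokernel. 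The same objection applies to ``the higher $R^j$ follow by the exterior-power description \dots{} and propagate by local freeness.'' So the crucial step is missing, not merely deferred.

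The paper closes this gap with a mechanism your outline does not contain. The isomorphism $\sigma^k \colon \pil\Omega_M^{n-k} \to \pil\Omega_M^{n+k}$ is upper triangular with respect to the perverse decomposition, with extreme entry $\sigma_1^k \colon \shH^{-k-n}G_{-n,-k} \to \shH^{k-n}G_{-n+k,k}$, i.e.\ a map $\Omega_B^{n-k} \to \gr_{-n}^F\Pmod_{n-k}$. The elementary splitting lemma (\Cref{lem:sums-abelian}) shows this map is a \emph{split} injection, so its cokernel is a direct summand of $\gr_{-n}^F\Pmod_{n-k} \cong R^{n-k}\pil\omega_M$, which is torsion-free by Koll\'ar; since source and target have the same generic rank $\binom{n}{k}$, the cokernel is torsion and therefore zero. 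It is this splitness, produced by the triangular structure of $\sigma^k$, combined with Koll\'ar's torsion-freeness, that does the work your ``propagation by local freeness'' cannot. Two smaller inaccuracies in your computations: for $k=n$ all summands $G_{i,n}$ with $0 \le i \le n$ survive, each a single sheaf $\gr_{-n}^F\Pmod_i$ placed in degree $i$ (not just $G_{n,n}$ in degree $-n$); and the amplitude of $G_{i,0}$ is $\bigl\{-n+\max(i,0),\dotsc,\min(i,0)\bigr\}$, not $\bigl\{-n+\abs{i},\dotsc,-\abs{i}\bigr\}$.
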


In this chapter, we explain the proof of Matsushita's result, to demonstrate in an
interesting special case how to use the symplectic form $\sigma$.

\newpar
Since $M$ is holomorphic symplectic, $\sigma^n$ gives an isomorphism
between $\shO_M$ and the canonical bundle $\Omega_M^{2n}$. From \eqref{eq:pil-Omega}
with $k=n$, we therefore get
\[
	\derR \pil \shO_M \cong  
	\derR \pil \Omega_M^{2n} \cong
	\bigoplus_{i=-n}^n \gr_{-n}^F \DR(\Pmod_i) \decal{-i}.
\]
In the special case where $M$ and $B$ are projective complex manifolds, this kind of
result was first proved by Koll\'ar \cite{KollarII}. In order to prove
\Cref{thm:Matsushita}, it is then enough to show that
$\gr_{-n}^F \DR(\Pmod_i) \cong \Omega_B^i$. (Note that $\gr_{-n}^F \DR(\Pmod_i) =
\gr_{-n}^F \Pmod_i$ is actually a sheaf, due to the fact that $F_{-n-1} \Pmod_i = 0$.) 

\newpar
We will deduce this from the isomorphism $\sigma^k \colon \Omega_M^{n-k} \to
\Omega_M^{n+k}$. Recall from \eqref{eq:forms} that
\[
	\derR \pil \Omega_M^{n+k} \decal{n-k} \cong \bigoplus_{i=-n}^{n} G_{i,k}.
\]
If we take cohomology in the lowest possible degree, this gives
\[
	\pil \Omega_M^{n+k} \cong \bigoplus_{i=0}^{n-k} \shH^{k-n} G_{-i,k}
\]
for every $k \geq 0$, due to the bound on the amplitude of the complex $G_{i,k}$ in
\Cref{lem:amplitude}. In a similar manner, we find (still for $k \geq 0$) that 
\[
	\pil \Omega_M^{n-k} \cong 
	\bigoplus_{i=k}^n \shH^{-k-n} G_{-i,-k}.
\]
From the fact that $\sigma^k \colon \Omega_M^{n-k} \to \Omega_M^{n+k}$ is an
isomorphism, we now deduce that
\begin{equation} \label{eq:sigma-k}
	\sigma^k \colon 
	\bigoplus_{i=k}^n \shH^{-k-n} G_{-i,-k}
	\to \bigoplus_{i=0}^{n-k} \shH^{k-n} G_{-i,k}
\end{equation}
must be an isomorphism for every $k \geq 1$. 

\newpar
Let us look at how this isomorphism acts on the summand with $i=n$.
Recall from \Cref{lem:sigma_2} that $\sigma = \sigma_1 + \sigma_0 + \dotsb$,
where the individual components are morphisms 
\[
	\sigma_j \colon G_{i,k} \to G_{i+j,k+2} \decal{2-j}
\]
in the derived category. By expanding $\sigma^k = (\sigma_1 + \sigma_0 + \dotsb)^k$,
we find that the component of highest degree is exactly the morphism
\begin{equation} \label{eq:sigma_1^k}
	\sigma_1^k \colon \shH^{-k-n} G_{-n,-k} \to \shH^{k-n} G_{-n+k,k}.
\end{equation}
All the other components of $\sigma^k$ go into the remaining summands (with $i <
n-k$) of the second sum. This puts us into the situation of the following abstract
lemma.

\begin{plem} \label{lem:sums-abelian}
	Let $A,B,C,D$ be objects in an abelian category. Suppose that we have morphisms
	$f \colon A \to C$, $g \colon B \to C$, and $h \colon B \to D$, such that
	\[
		\begin{tikzcd}[ampersand replacement=\&]
			A \oplus B \rar{\left( \begin{smallmatrix} f & g \\ 0 & h \end{smallmatrix}
			\right)} \& C \oplus D
		\end{tikzcd}
	\]
	is an isomorphism. Then $f$ is injective, $C \cong A \oplus \coker(f)$ and $B
	\cong \coker(f) \oplus D$.
\end{plem}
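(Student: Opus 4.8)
The plan is to extract all three conclusions directly from the fact that $M = \left(\begin{smallmatrix} f & g \\ 0 & h\end{smallmatrix}\right)$, viewed as a morphism $A \oplus B \to C \oplus D$ between biproducts, is an isomorphism, together with what its inverse $M^{-1} = \left(\begin{smallmatrix} a & b \\ c & d\end{smallmatrix}\right)$ must look like. Writing $\iota_A \colon A \to A\oplus B$, $\iota_C \colon C \to C \oplus D$, $\iota_D\colon D \to C \oplus D$ and $p_A, p_B, \dots$ for the canonical inclusions and projections, the shape of $M$ says precisely that its first ``column'' is $M \circ \iota_A = \iota_C \circ f$. Since $\iota_A$ is a split monomorphism and $M$ an isomorphism, $M\circ\iota_A = \iota_C\circ f$ is a monomorphism, and as $\iota_C$ is also a monomorphism it follows that $f$ is a monomorphism.

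Next I would upgrade this to: $f$ is a \emph{split} monomorphism. Reading off the top-left corner of the identity $M^{-1}M = \id_{A\oplus B}$ gives $af = \id_A$, so $a \colon C \to A$ is a retraction of $f$. By the standard splitting of idempotents in an abelian category, the idempotent $fa \colon C \to C$ satisfies $\im(fa) = \im(f) \cong A$ and $\ker(fa) \cong \coker(fa) \cong \coker(f)$, hence $C \cong \im(fa)\oplus\ker(fa) \cong A \oplus \coker(f)$, which is the second assertion.

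For the third assertion, $B \cong \coker(f)\oplus D$, I would use that the cokernel of the canonical inclusion $\iota_A \colon A \to A \oplus B$ is the projection $p_B$, so $B \cong \coker(\iota_A)$. Because $M$ is an isomorphism, it induces an isomorphism $\coker(\iota_A) \cong \coker(M\circ\iota_A) = \coker(\iota_C\circ f)$. The latter cokernel is easy to identify from its universal property: a morphism $t \colon C\oplus D \to Z$ kills $\iota_C\circ f$ if and only if $t\circ\iota_C$ factors through $\coker(f)$, with $t\circ\iota_D$ unconstrained; hence $\coker(\iota_C\circ f) \cong \coker(f)\oplus D$. Combining, $B \cong \coker(f)\oplus D$.

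There is no genuine obstacle here: the argument is a short diagram chase valid in any abelian category, and the only nontrivial inputs are the biproduct identities and idempotent splitting. The one point that must be handled with care is that one should \emph{not} try to obtain $B \cong \coker(f)\oplus D$ by cancelling the common summands $A$ and $D$ from $A\oplus B \cong C\oplus D \cong A\oplus\coker(f)\oplus D$, since cancellation of direct summands can fail in a general abelian category; the cokernel computation above produces the identification directly and sidesteps this issue.
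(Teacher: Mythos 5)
Your proof is correct and takes essentially the same elementary diagram chase as the paper; in particular, your identification $B \cong \coker(\iota_A) \cong \coker(\iota_C \circ f) \cong \coker(f) \oplus D$ is exactly how the paper obtains its isomorphism $B \to \coker(f) \oplus D$. The only (harmless) divergence is in splitting $C$: you read off a retraction $a$ of $f$ from the inverse matrix and split the idempotent $fa$, whereas the paper observes that the composite $\coker(f) \to B \xrightarrow{g} C \to \coker(f)$ is the identity and splits the quotient map $C \to \coker(f)$ directly.
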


\begin{proof}
	It is easy to see that $f$ is injective and that the composition
	\[
		B \to C \oplus D \to \coker(f) \oplus D
	\]
	is an isomorphism. Now the composition
	\[
		\begin{tikzcd}[column sep=small]
			\coker(f) \rar & B \rar{g} & C \rar & \coker(f)
		\end{tikzcd}
	\]
	is the identity, and this gives us the desired splitting $C \cong A \oplus
	\coker(f)$.
\end{proof}

\newpar
We apply this lemma to the isomorphism in \eqref{eq:sigma-k}, letting $A$ be the
$n$-th summand in the first sum and $C$ the $(n-k)$-th summand in the second
sum. The result is that the morphism in \eqref{eq:sigma_1^k} is injective, and that
we have a direct sum decomposition
\begin{equation} \label{eq:Matsushita-coker}
	\shH^{k-n} G_{-n+k,k} \cong 
	\shH^{-k-n} G_{-n,-k} \oplus \coker(\sigma_1^k).
\end{equation}
Now $P_n$ is just the Hodge module $\QQ_B \decal{n}$, and so the underlying right
$\Dmod$-module is $\Pmod_n \cong \omega_B$, with the filtration for which $\gr_{-n}^F
\Pmod_n \cong \omega_B$. Together with the relative Hard Lefschetz isomorphism in
\eqref{eq:HL-omega}, this gives
\[
	G_{-n,-k} \cong G_{n,n-k} \decal{2n} = \gr_{k-n}^F \DR(\Pmod_n) \decal{n}
	\cong \Omega_B^{n-k} \decal{n+k},
\]
and therefore $\shH^{-k-n} G_{-n,-k} \cong \Omega_B^{n-k}$. For the same reason, we
have
\[
	G_{-n+k,k} \cong G_{n-k,n} \decal{2n-2k} 
\]
and so $\shH^{k-n} G_{-n+k,k} \cong \gr_{-n}^F \Pmod_{n-k} \cong R^{n-k} \pil
\omega_B$. This is a torsion-free coherent sheaf \cite[Thm.~2.1]{Kollar} of generic
rank $\binom{n}{n-k}$.
The reason is that over the smooth locus of the Lagrangian fibration, $P_i$ comes from the
variation of Hodge structure (of weight $i-n$) on the cohomology groups $H^{n+i}(M_b,
\QQ)(n)$, and $\gr_{-n}^F \Pmod_i$ is the tensor product of $\omega_B$ with the Hodge
bundle whose fibers are the subspaces $H^{n,i}(M_b)$ in the Hodge decomposition.
Since $M_b$ is a compact complex torus (and in fact an abelian variety) of dimension
$n$, this subspace has dimension $\binom{n}{i}$.

\newpar
Now the first and the second term in \eqref{eq:Matsushita-coker} have the same
generic rank, and because the left-hand side is torsion-free, it follows that
$\coker(\sigma_1^k) = 0$, which means that $\gr_{-n}^F \Pmod_{n-k} \cong
\Omega_B^{n-k}$, as required for the proof of \Cref{thm:Matsushita}. So we see that
Matsushita's theorem holds mostly for formal reasons: the only geometric facts that we
used are that $\pi$ is equidimensional and that the generic fiber of $\pi$ is a
compact complex torus of dimension $n$.  (Along the way,
we also proved the case $k=n$ of \Cref{thm:HL-sigma}.)

\newpar
By a similar method, one can still do the next case of \Cref{thm:HL-sigma}, namely that
\[
	\sigma_1^{n-1} \colon G_{i,-n+1} \to G_{i+n-1,n-1} \decal{2n-2}
\]
is an isomorphism for every $i \in \ZZ$. As before, it turns out that the two
complexes can differ only by the addition of a single locally free sheaf on $B$; and from the
geometric fact that the general fiber of $\pi$ is a compact complex torus of
dimension $n$, one can deduce that the rank of this locally free sheaf must be zero.
But in all other cases, just knowing what happens over the smooth locus of $\pi$ is
not enough.

\section{Proof of the symmetry conjecture}
\label{chap:symmetry}

\newpar
In this chapter, we investigate the relationship between \Cref{conj:symmetry} (which
predicts that $G_{i,k} \cong G_{k,i}$) and the ``symplectic relative Hard Lefschetz
theorem'' in \Cref{thm:HL-sigma}. Perhaps surprisingly, it turns out that the two
statements are basically equivalent. The proof below relies a more careful analysis
of the isomorphism between $\derR \pil \Omega_M^{n-k}$ and $\derR \pil
\Omega_M^{n+k}$. We are going to need the following small lemma, which
generalizes \Cref{lem:sums-abelian} to decompositions in the derived category.

\begin{plem} \label{lem:sums-derived}
	Let $A,B,C,D$ be objects in the derived category of an abelian category. Suppose
	that we have morphisms $f \colon A \to C$, $g \colon B \to C$, and $h \colon B \to
	D$, such that
	\[
		\begin{tikzcd}[ampersand replacement=\&]
			A \oplus B \rar{\left( \begin{smallmatrix} f & g \\ 0 & h \end{smallmatrix}
			\right)} \& C \oplus D
		\end{tikzcd}
	\]
	is an isomorphism. Then $C \cong A \oplus \Cone(f)$ and $B \cong \Cone(f) \oplus D$.
\end{plem}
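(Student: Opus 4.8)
The plan is to reduce the statement to two elementary observations — that $f$ is a split monomorphism and $h$ a split epimorphism — and then to argue with distinguished triangles, using nothing heavier than the fact that in a morphism of distinguished triangles, if two of the three vertical maps are isomorphisms then so is the third. First I would invert the matrix $\Phi = \left( \begin{smallmatrix} f & g \\ 0 & h \end{smallmatrix} \right)$ and write $\Phi^{-1} = \left( \begin{smallmatrix} a & b \\ c & d \end{smallmatrix} \right)$, with $a \colon C \to A$ and $d \colon D \to B$ (and $b,c$ the off-diagonal entries). Reading off the $(1,1)$-entry of $\Phi^{-1} \Phi = \id_{A \oplus B}$ gives $af = \id_A$ — this is where it matters that the lower-left block of $\Phi$ vanishes — so $a$ is a retraction of $f$; dually, the $(2,2)$-entry of $\Phi \Phi^{-1} = \id_{C \oplus D}$ gives $hd = \id_D$, so $d$ is a section of $h$.

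The second ingredient is the (standard) fact that a split monomorphism in a triangulated category sits in a split distinguished triangle. Complete $f$ to a distinguished triangle $A \xrightarrow{f} C \xrightarrow{u} \Cone(f) \xrightarrow{w} A \decal{1}$. Applying $\Hom(-,A)$ produces a long exact sequence in which $f^{\ast} \colon \Hom(C,A) \to \Hom(A,A)$ is surjective, since $f^{\ast}(a) = af = \id_A$; by exactness the next map $\Hom(A,A) \to \Hom(\Cone(f), A \decal{1})$ is zero, and its value on $\id_A$ is (up to sign) $w$, so $w = 0$. A distinguished triangle whose third map vanishes splits, so we get an isomorphism $\theta \colon C \xrightarrow{\ \sim\ } A \oplus \Cone(f)$ under which $f$ corresponds to the inclusion of the first summand. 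This is already the first assertion $C \cong A \oplus \Cone(f)$.

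For the second assertion I would transport $\Phi$ across $\theta$. Because the lower-left block of $\Phi$ is zero, $\Phi$ carries the summand inclusion $\iota_A \colon A \to A \oplus B$ to $\iota_C \circ f$, where $\iota_C \colon C \to C \oplus D$ is the summand inclusion; hence $\Phi' \defeq (\theta \oplus \id_D) \circ \Phi$ is an isomorphism $A \oplus B \xrightarrow{\ \sim\ } A \oplus \Cone(f) \oplus D$ carrying $\iota_A$ to the inclusion $j$ of the first summand. Both $\iota_A$ and $j$ sit in the evident split distinguished triangles, with third terms $B$ and $\Cone(f) \oplus D$ respectively, and the identity $\Phi' \circ \iota_A = j \circ \id_A$ gives a commutative square that extends to a morphism of distinguished triangles. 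Its left vertical map is $\id_A$ and its middle vertical map is $\Phi'$, both isomorphisms, so the induced right vertical map $B \to \Cone(f) \oplus D$ is an isomorphism too. This proves $B \cong \Cone(f) \oplus D$.

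I do not expect a real obstacle here: the only slightly non-formal point is the assertion in the second paragraph that a split monomorphism has its cone as a direct summand of the target, and that is handled by the short $\Hom$-exact-sequence computation indicated there; everything else is bookkeeping with distinguished triangles. (One can also phrase the last step via the octahedral axiom — applying it to $A \xrightarrow{f} C \to C \oplus D$ and using that $\Phi$ is an isomorphism to identify the relevant cones — but the route above avoids even that.) As a consistency check, taking $\mathcal{A}$ abelian and $A,B,C,D$ objects concentrated in degree zero makes $\Cone(f)$ quasi-isomorphic to $\coker(f)$ and recovers \Cref{lem:sums-abelian}.
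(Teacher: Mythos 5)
Your proof is correct, but it takes a genuinely different route from the one in the paper. The paper's argument is cohomological and specific to the derived category: it observes that $f$ is injective on cohomology, splices the resulting short exact sequences $0 \to H^i(A) \to H^i(C) \to H^i(\Cone(f)) \to 0$ against the isomorphism $H^i(A) \oplus H^i(B) \cong H^i(C) \oplus H^i(D)$ to show that the composite $B \to C \oplus D \to \Cone(f) \oplus D$ is a quasi-isomorphism, invokes the fact that a morphism in the derived category inducing isomorphisms on cohomology is an isomorphism, and only then deduces the splitting of $C$ from the resulting section of $C \to \Cone(f)$. You instead never look at cohomology: you extract $af = \id_A$ directly from the inverse matrix, show that a split monomorphism has vanishing connecting map $w \colon \Cone(f) \to A \decal{1}$ via the $\Hom(-,A)$ long exact sequence, split the triangle to get $C \cong A \oplus \Cone(f)$ first, and then obtain $B \cong \Cone(f) \oplus D$ by completing the square $\Phi' \circ \iota_A = j$ to a morphism of distinguished triangles and applying two-out-of-three. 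What your approach buys is generality and cleanliness: it works verbatim in any triangulated category, whereas the paper's proof leans on the conservativity of the cohomology functors, which is special to derived categories (though that is all the paper needs). The only cosmetic redundancy is that you announce the split epimorphism $hd = \id_D$ as an ingredient but never actually use it; everything else checks out.
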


\begin{proof}
	Recall that the mapping cone $\Cone(f)$ sits in a distinguished triangle
	\[
		\begin{tikzcd}[column sep=small] 
			A \rar{f} & C \rar & \Cone(f) \rar & A \decal{1}.
		\end{tikzcd}
	\]
	The assumptions imply that $f \colon A \to C$ is injective on cohomology, and so
	\[
		0 \to H^i(A) \to H^i(C) \to H^i \bigl( \Cone(f) \bigr) \to 0
	\]
	is short exact for every $i \in \ZZ$. Now consider the composition
	\[
		B \to C \oplus D \to \Cone(f) \oplus D.
	\]
	It is easy to see that this induces isomorphisms on cohomology, using the short
	exact sequence and the fact that $H^i(A) \oplus H^i(B) \cong H^i(C) \oplus
	H^i(D)$. Therefore $B \cong \Cone(f) \oplus D$, because we are in the derived
	category. The composition
	\[
		\begin{tikzcd}[column sep=small]
			\Cone(f) \rar & B \rar{g} & C \rar & \Cone(f)
		\end{tikzcd}
	\]
	is the identity, and this again gives us the desired splitting $C \cong A \oplus
	\Cone(f)$.
\end{proof}

\newpar
The following result relates \Cref{conj:symmetry} and \Cref{thm:HL-sigma}. Since
\[
	\derR \pil \Omega_M^{n+k} \decal{n} \cong \bigoplus_{i=-n}^n G_{i,k} \decal{k},
\]
this will also be useful later on, when we prove \Cref{thm:isomorphism}.

\begin{pthm} \label{thm:equivalence}
	The following three statements are equivalent:
	\begin{aenumerate}
	\item \label{thm:equivalence-a}
		The complexes $G_{i,k}$ and $G_{k,i}$ are isomorphic in the derived
		category (for all $i,k$).
	\item \label{thm:equivalence-b}
		For every $k = -n, \dotsc, n$, there is an isomorphism (in the derived category)
		\[
			\bigoplus_{i=-n}^n G_{i,k} \decal{i+k} \cong \bigoplus_{i=-n}^n G_{k,i}
			\decal{i+k}.
		\]
	\item \label{thm:equivalence-c}
		For every $k \geq 1$, the morphism $\sigma_1^k \colon G_{i,-k} \to G_{i+k,k}
		\decal{2k}$ is an isomorphism.
	\end{aenumerate}
\end{pthm}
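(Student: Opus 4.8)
The plan is to prove the cycle of implications $\ref{thm:equivalence-a}\Rightarrow\ref{thm:equivalence-b}\Rightarrow\ref{thm:equivalence-c}\Rightarrow\ref{thm:equivalence-a}$. The first of these is immediate: take the direct sum over $i$ of the isomorphisms $G_{i,k}\cong G_{k,i}$, shifted by $\decal{i+k}$.

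For $\ref{thm:equivalence-c}\Rightarrow\ref{thm:equivalence-a}$ I would use the two ``reflections'' acting on the family $\{G_{i,k}\}$. The relative Hard Lefschetz theorem for the K\"ahler form (available by Mochizuki, cf.\ \eqref{eq:HL-omega}) gives, for every $j\geq 1$, an isomorphism $G_{-j,\ell}\cong G_{j,\,j+\ell}\decal{2j}$; inverting it when the indices lie on the wrong side, one obtains for \emph{every} pair $(i,k)$ an isomorphism $r_1\colon G_{i,k}\cong G_{-i,\,k-i}\decal{-2i}$. Hypothesis $\ref{thm:equivalence-c}$ gives, for every $j\geq 1$, an isomorphism $G_{m,-j}\cong G_{m+j,\,j}\decal{2j}$, and likewise yields for every $(i,k)$ an isomorphism $r_2\colon G_{i,k}\cong G_{i-k,\,-k}\decal{-2k}$. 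On the hexagonal grid of \parref{par:hexagon}, $r_1$ and $r_2$ are the reflections in two of the three diagonals, and the composite $r_1\circ r_2\circ r_1$ is the reflection in the third, which exchanges $(i,k)$ with $(k,i)$. Applying the three reflections in turn to $(i,k)$ and tracking the accumulated shifts $\decal{-2i}$, then $\decal{2i-2k}$, then $\decal{2k}$, one sees that the net shift is $0$; hence $r_1\circ r_2\circ r_1$ is an isomorphism $G_{i,k}\cong G_{k,i}$, which is $\ref{thm:equivalence-a}$.

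The substance of the theorem is $\ref{thm:equivalence-b}\Rightarrow\ref{thm:equivalence-c}$. The input is that $\sigma^k\colon\Omega_M^{n-k}\to\Omega_M^{n+k}$ is an isomorphism of sheaves on $M$; applying $\derR\pil$ and using \eqref{eq:forms}, it becomes an isomorphism $\Phi\colon\bigoplus_i G_{i,-k}\to\bigoplus_i G_{i,k}\decal{2k}$. Writing $\sigma=\sigma_1+\sigma_0+\dotsb$ as in \Cref{lem:sigma_2}, the matrix of $\Phi$ with respect to the index $i$ is block upper triangular: the component $G_{i,-k}\to G_{i',k}\decal{2k}$ vanishes unless $i'\leq i+k$, and the diagonal component $G_{i,-k}\to G_{i+k,k}\decal{2k}$ is exactly $\sigma_1^k$. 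I would then peel off summands from the top using \Cref{lem:sums-derived}, starting from the extreme index $i=n-k$: because the last row of the matrix is $(0,\dotsc,0,\sigma_1^k)$, the lemma gives $G_{n-k,-k}\cong G_{n,k}\decal{2k}\oplus\Cone(\phi')$ and $\bigoplus_{i'<n}G_{i',k}\decal{2k}\cong\bigl(\bigoplus_{i<n-k}G_{i,-k}\bigr)\oplus\Cone(\phi')$, with $\phi'$ the residual block. The key point is that the extreme complexes are explicit: by Matsushita's theorem (\Cref{thm:Matsushita}) together with the duality \eqref{eq:duality} and relative Hard Lefschetz \eqref{eq:HL-omega}, both $G_{n-k,-k}$ and $G_{n,k}\decal{2k}$ are isomorphic to $\Omega_B^k\decal{k}$, a single locally free sheaf placed in one cohomological degree. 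Taking cohomology in that degree, the first decomposition forces $\Cone(\phi')$ to be a torsion-free coherent sheaf of generic rank $0$, hence $\Cone(\phi')=0$; this shows that $\sigma_1^k$ is an isomorphism in this extreme case and, simultaneously, that the residual map $\phi'$ is an isomorphism, so the peeling can be iterated. Hypothesis $\ref{thm:equivalence-b}$ enters at the later stages to pin down the isomorphism type of the residual direct sums on the two sides, so that \Cref{lem:sums-derived} can be reapplied and each new extreme diagonal map identified; the amplitude bounds of \Cref{lem:amplitude} keep everything finite.

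The step I expect to be the real obstacle is precisely the propagation of this induction past the first one or two cases: a sub-block of an isomorphism of direct sums need not be an isomorphism, and beyond the extreme corners of the hexagon the complexes $G_{i,k}$ are no longer single sheaves, so the torsion-freeness trick of \Cref{chap:Matsushita} is not enough by itself. Making it work needs $\ref{thm:equivalence-b}$ fed in at each step in the right form --- comparing the abstract isomorphism type of the residual pieces rather than their restriction to the smooth locus of $\pi$ --- together with a well-chosen induction parameter (e.g.\ the total ``size'' of the two direct sums as measured by \Cref{lem:amplitude}) that decreases when a corner summand is removed. Once that induction closes, $\ref{thm:equivalence-c}$ follows, and with the cycle complete so does the full equivalence.
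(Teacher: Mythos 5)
Your treatment of the two easy implications matches the paper's: (a) $\Rightarrow$ (b) by summing the isomorphisms, and (c) $\Rightarrow$ (a) by composing the two Hard Lefschetz reflections so that the accumulated shifts cancel. Your setup for (b) $\Rightarrow$ (c) is also the right one --- the triangular matrix of $\sigma^k = (\sigma_1 + \sigma_0 + \dotsb)^k$ with $\sigma_1^k$ as the extreme component, and \Cref{lem:sums-derived} to split off corner summands --- and your analysis of the first corner via the torsion-freeness argument of \Cref{chap:Matsushita} is sound. But the induction you describe does not close, and you say so yourself; this is a genuine gap rather than a routine verification, because after one peeling step the residual blocks are no longer single sheaves, and feeding in (b) ``at each step'' in the form you propose (comparing residual direct sums for a single fixed $k$) gives no way to isolate the individual diagonal components of an isomorphism between direct sums.

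The missing idea is to organize the induction by the level $d = \abs{i}$ rather than by $k$, and to combine \emph{all} values of $k$ at a given level before invoking (b) a single time. Concretely: by descending induction on $d$, assuming the claim for $\abs{i} > d$, one first deletes those summands with \Cref{lem:sums-derived} and then applies the lemma once more to each pruned isomorphism, obtaining the splittings $G_{k-d,k}\decal{2k} \cong G_{-d,-k} \oplus \Cone(\mu_{-d,-k})$ and $G_{d-k,-k} \cong G_{d,k}\decal{2k} \oplus \Cone(\mu_{d-k,-k})$. Rewriting via the relative Hard Lefschetz isomorphism \eqref{eq:HL-omega} and summing over all $k$ (exact complexes and the inductive hypothesis supply the missing terms) produces a single isomorphism
\[
	\bigoplus_{k=-n}^n G_{k,d} \decal{k+d} \cong \bigoplus_{k=-n}^n G_{d,k} \decal{k+d}
	\oplus \bigoplus_{k=0}^n \Cone(\mu_{-d,-k}) \decal{-k}
	\oplus \bigoplus_{k=d+1}^n \Cone(\mu_{d-k,-k}) \decal{d-k},
\]
and now hypothesis (b), applied once for the index $d$, says that the first two terms are abstractly isomorphic, which forces every cone at level $d$ to vanish (by cancellation on coherent cohomology sheaves). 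This simultaneous use of all $k$ at a fixed $d$ is what lets the induction advance; a per-$k$ peeling of the kind you outline cannot recover it.
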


Note that we are not making any assumptions about the isomorphisms in
\ref*{thm:equivalence-a} or \ref*{thm:equivalence-b},
only that they exist (in the derived category). Obviously, the statement in
\ref*{thm:equivalence-a} implies the statement in \ref*{thm:equivalence-b}.  As
explained in \Cref{par:hexagon}, the statement in
\ref*{thm:equivalence-c}, together with the relative Hard Lefschetz isomorphism
$G_{-i,k} \cong G_{i,i+k} \decal{2i}$, implies the statement in
\ref*{thm:equivalence-a}; in more detail,
\[
	G_{i,k} \cong G_{-i,k-i} \decal{-2i} \cong G_{-k,i-k} \decal{-2k} \cong G_{k,i}
\]
where the first and third isomorphism come from the relative Hard Lefschetz theorem,
and the second isomorphism from \ref*{thm:equivalence-c}. Now we only need to show
that the statement in \ref*{thm:equivalence-b} is strong enough to prove the one in
\ref*{thm:equivalence-c}. 

\newpar
We are going to prove, by descending induction on $0 \leq i \leq n$, that 
\[
	\sigma_1^k \colon G_{-i,-k} \to G_{-i+k,k} \decal{2k}
	\quad \text{and} \quad
	\sigma_1^k \colon G_{i-k,-k} \to G_{i,k} \decal{2k}
\]
are isomorphisms for every $1 \leq k \leq n$. (The two statements are equivalent to
each other by duality.) Our starting point is the isomorphism between the two complexes
\[
	\derR \pil \Omega_M^{n-k} \decal{n+k} \cong \bigoplus_{i=-n}^{n-k} G_{i,-k} 
	\quad \text{and} \quad
	\derR \pil \Omega_M^{n+k} \decal{n+k} \cong \bigoplus_{i=k-n}^{n} G_{i,k} \decal{2k}
\]
induced by $\sigma^k \colon \Omega_M^{n-k} \to \Omega_M^{n+k}$, for $1 \leq k \leq
n$. Consequently,
\begin{equation} \label{eq:isomorphism-n}
	\sigma^k \colon \bigoplus_{i=-n}^{n-k} G_{i,-k} \to 
	\bigoplus_{i=-n+k}^{n} G_{i,k} \decal{2k}
\end{equation}
is an isomorphism (in the derived category); the range of the summation is controlled
by \Cref{lem:amplitude}. As in the proof of Matsushita's theorem (in
\Cref{chap:Matsushita}), we can expand $\sigma^k = (\sigma_1 + \sigma_0 + \dotsb)^k$
into components; the component of highest degree is exactly the morphism
$\sigma_1^k \colon G_{i,-k} \to G_{i+k,k} \decal{2k}$, and the components of lower degree
map $G_{i,-k}$ into those summands in the second sum whose first index is $\leq i+k-1$. 

\newpar
For the sake of clarity, let me write $\mu_{i,-k} \colon
G_{i,-k} \to G_{i+k,k} \decal{2k}$ for the action by $\sigma_1^k$ on the $i$-th
summand in the first sum. Suppose by induction that we have already proved the
statement we want for all $i \in \{d+1, \dotsc, n\}$. Using \Cref{lem:sums-derived},
we can delete those terms from the isomorphism in \eqref{eq:isomorphism-n}, and still
get an isomorphism
\[
	\bigoplus_{i=-d}^{d-k} G_{i,-k} \to \bigoplus_{i=k-d}^d G_{i,k} \decal{2k};
\]
here the morphism is still basically $\sigma^k$, except that we have to remove those
components that go into summands with $i < k-d$ in the second sum. We now apply
\Cref{lem:sums-derived} again, taking $A = G_{-d,-k}$ and $B$ the rest of
the first sum, $C = G_{k-d,k} \decal{2k}$ and $D$ the rest of the second sum.
The conclusion is that we have (for every $k = 0, \dotsc, n$)
\begin{equation} \label{eq:mu-splitting-1}
	G_{k-d,k} \decal{2k} \cong G_{-d,-k} \oplus \Cone(\mu_{-d,-k}).
\end{equation}
By a similar argument, applied to the term with $i=d-k$, we also get
\begin{equation} \label{eq:mu-splitting-2}
	G_{d-k,-k} \cong G_{d,k} \decal{2k} \oplus \Cone(\mu_{d-k,-k}).
\end{equation}
Here is a schematic picture of the relevant morphisms:
\begin{center}
\begin{tikzpicture}[scale=0.7] 
	\draw[help lines] (-3,-3) grid (4,4);
	\draw[thick] (-3,-3) -- (-3,1) -- (-2,1) -- (-2,2) -- (-1,2) -- (-1,3) -- (0,3) -- (0,4) 
		-- (4,4) -- (4, 0) -- (3,0) -- (3,-1) -- (2,-1) -- (2,-2) -- (1,-2) -- (1,-3)
		-- cycle;
	\draw[thick,->] (-2.5,-2.5) -- (3.5, 0.5);
	\draw[thick,->] (-1.5,-2.5) -- (2.5, -0.5);
	\draw[thick,->] (-0.5,-2.5) -- (1.5, -1.5);
	\draw[thick,->] (-2.5,0.5) -- (3.5,3.5);
	\draw[thick,->] (-1.5,1.5) -- (2.5,3.5);
	\draw[thick,->] (-0.5,2.5) -- (1.5,3.5);
 	\draw (-3,-2.5) node[anchor=east] {$i=-d$};
 	\draw (-3,0.5) node[anchor=east] {$i=0$};
 	\draw (-3,3.5) node[anchor=east] {$i=d$};
 	\draw (3.5,-3) node[anchor=north] {$k$};
\end{tikzpicture}
\end{center}

\newpar
Using the relative Hard Lefschetz isomorphism, we can rewrite \eqref{eq:mu-splitting-1} as
\[
	G_{d-k,d} \decal{d-k} \cong G_{d,d-k} \decal{d-k} \oplus \Cone(\mu_{-d,-k})
	\decal{-d-k}.
\]
Now we take the sum over all $0 \leq k \leq n$ to produce an isomorphism
\begin{equation} \label{eq:iso-sum-1}
	\bigoplus_{k=d-n}^d G_{k,d} \decal{k} \cong \bigoplus_{k=d-n}^d G_{d,k} \decal{k}
	\oplus \bigoplus_{k=0}^n \Cone(\mu_{-d,-k}) \decal{-d-k}.
\end{equation}
Before we can apply the statement in \ref{thm:equivalence-b}, we have to find a way to add the
missing terms. When $k < d-n$, both complexes $G_{k,d}$ and $G_{d,k}$ are exact (by
\Cref{lem:amplitude}). When $k > d$, the inductive hypothesis, the relative Hard
Lefschetz theorem, and the isomorphism in \eqref{eq:mu-splitting-2} combine to give
us a chain of isomorphisms
\[
	G_{k,d} \cong G_{k-d,-d} \decal{-2d} \cong G_{d-k,-k} \decal{-2k}
	\cong G_{d,k} \oplus \Cone(\mu_{d-k,-k}) \decal{-2k}.
\]
If we add these terms to the sum in \eqref{eq:iso-sum-1}, and shift both sides by
$d$, we finally arrive at an isomorphism (in the derived category)
\[
	\bigoplus_{k=-n}^n G_{k,d} \decal{k+d} \cong \bigoplus_{k=-n}^n G_{d,k} \decal{k+d}
	\oplus \bigoplus_{k=0}^n \Cone(\mu_{-d,-k}) \decal{-k}
	\oplus \bigoplus_{k=d+1}^n \Cone(\mu_{d-k,-k}) \decal{d-k}.
\]
Since the first and second term are isomorphic by hypothesis, this is only possible
if $\Cone(\mu_{-d,-k}) = 0$ for every $0 \leq k \leq n$, which means exactly that
\[
	\sigma_1^k \colon G_{-d,-k} \to G_{k-d,k} \decal{2k}
\]
is an isomorphism for every $k = 0, \dotsc, n$. By duality, this shows that
$G_{d-k,-k} \cong G_{d,k} \decal{2k}$; but then \eqref{eq:mu-splitting-2} gives
$\Cone(\mu_{d-k,-k}) \cong 0$, and consequently, 
\[
	\sigma_1^k \colon G_{d-k,-k} \to G_{d,k} \decal{2k}
\]
is an isomorphism as well. This completes the induction, and hence the proof that
the statement in part (b) of \Cref{thm:equivalence} implies the one in (c).

\section{Lagrangian fibrations and differential forms}
\label{chap:differential-forms}

\newpar
This chapter contains some general considerations about $1$-forms and vector fields
on Lagrangian fibrations, related to Matsushita's isomorphism $R^1 \pil \shO_M
\cong \Omega_B^1$. The idea is that the symplectic form and the K\"ahler form
together allow us to transform holomorphic $1$-forms on the base $B$ of the
Lagrangian fibration into $\dbar$-closed $(0,1)$-forms on the holomorphic symplectic
manifold $M$. This construction is needed for the proof of \Cref{thm:isomorphism}.

\newpar
The symplectic form $\sigma \in H^0(M, \Omega_M^2)$ gives 
\[
	\bigoplus_{k=-n}^n \Omega_M^{n+k}
\]
the structure of a module over the Lie algebra $\sltwo$. Following one of several
competing conventions, we denote the three standard generators of $\sltwo$ by the
letters
\[
	\Xsl = \begin{pmatrix} 0 & 1 \\ 0 & 0 \end{pmatrix}, \quad
	\Ysl = \begin{pmatrix} 0 & 0 \\ 1 & 0 \end{pmatrix}, \quad
	\Hsl = \begin{pmatrix} 1 & 0 \\ 0 & -1 \end{pmatrix}.
\]
The relations are $[\Hsl, \Xsl] = 2 \Xsl$, $[\Hsl, \Ysl] = -2 \Ysl$, and
$[\Xsl, \Ysl] = \Hsl$. In our specific representation, the semisimple element $\Hsl$
acts as multiplication by the integer $k$ on the summand $\Omega_M^{n+k}$, and the
nilpotent element $\Xsl$ acts as wedge product with the holomorphic $2$-form
$\sigma$. By general theory, this can be lifted to a representation of the Lie group
$\SL_2(\CC)$, and the symmetry between $\Omega_M^{n+k}$ and $\Omega_M^{n-k}$ is best
expressed with the help of the Weil element
\[
	\wsl = e^{\Xsl} e^{-\Ysl} e^{\Xsl} 
	= \begin{pmatrix} 0 & 1 \\ -1 & 0 \end{pmatrix} \in \SL_2(\CC).
\]
The Weil element functions as a sort of linear algebra version of the Hodge
$\ast$-operator (from the Hodge theory of compact K\"ahler manifolds). It has the
property, easily checked by a small computation with $2 \times 2$-matrices, that
\[
	\wsl \Hsl \wsl^{-1} = -\Hsl, \quad \wsl \Xsl \wsl^{-1} = -\Ysl, \quad
	\wsl \Ysl \wsl^{-1} = -\Xsl.
\]
The resulting isomorphism between $\Omega_M^{n-k}$ and $\Omega_M^{n+k}$ works as follows.
Suppose that $\alpha \in \Omega_M^{n-k}$ is primitive, meaning that $\sigma^{k+1}
\wedge \alpha = 0$. Then for every $0 \leq j \leq k$, one has
\[
	\wsl \left( \frac{1}{j!} \sigma^j \wedge \alpha \right)
	= \frac{(-1)^j}{(k-j)!} \sigma^{k-j} \wedge \alpha.
\]
In particular, $\wsl$ acts on the primitive summand of $\Omega_M^{n-k}$ as
multiplication by $\frac{1}{k!} \sigma^k$; on the other summands in the Lefschetz
decomposition, it still acts basically as $\sigma^k$, but with
certain rational coefficients that make the whole operator behave better than just
$\sigma^k$ by itself. We will see right away why this is useful.

\newpar
Let $\beta \in H^0(U, \Omega_B^1)$ be a holomorphic $1$-form, defined on some open
subset $U \subseteq B$.
In order to keep the notation from getting complicated, let us agree (for the purposes of this
chapter) to replace the Lagrangian fibration $\pi \colon M \to B$ by its restriction
$\pi \colon \pi^{-1}(U) \to U$, so that we can work with $\beta \in H^0(B,
\Omega_B^1)$ and do not have to say ``restricted to $U$'' all the time. We may also
consider the
pullback $\piu \beta$ as a smooth $(1,0)$-form that satisfies $\dbar(\piu \beta) =
0$. Because the symplectic form $\sigma$ is nondegenerate, we get an associated
holomorphic vector field $\xi \in H^0(M, \shT_M)$, defined by the formula 
$\piu \beta = \xi \cont \sigma = \sigma(\xi, \argbl)$, where the symbol $\cont$ means
contraction by a vector field. 

\newpar
The Weil element induces an isomorphism
\[
	\wsigma \colon A^{n-k,q}(M) \to A^{n+k,q}(M)
\]
between the two spaces of smooth forms, and the action by $\wsigma$ exchanges wedge
product with $\piu \beta$ and contraction with the associated vector field $\xi$.

\begin{plem} \label{lem:contraction}
	For every $k,q \in \ZZ$, the following diagram commutes:
	\[
		\begin{tikzcd}
			A^{n-k,q}(M) \dar{\piu \beta \wedge} \rar{\wsigma} & A^{n+k,q}(M) \dar{\xi \cont} \\
			A^{n-k+1,q}(M) \rar{\wsigma} & A^{n+k-1,q}(M)
		\end{tikzcd}
	\]
\end{plem}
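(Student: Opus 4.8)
The plan is to reduce the commutativity of the diagram to a pointwise identity in linear algebra, and then verify that identity using the Lefschetz decomposition with respect to $\sigma$. Fix a point $x \in M$ and work in the fiber of the cotangent bundle, so that $\sigma$ becomes a fixed symplectic form on the $2n$-dimensional space $V = \Omega_M^1 \otimes \CC(x)$ (extended by zero in the antiholomorphic directions so that the $(0,q)$-part plays no role). The operators $\wsigma$, wedge product with $\piu\beta$, and contraction with $\xi$ all act $A^{0,q}$-linearly, so it suffices to treat the case $q = 0$: we must show that on $\wed^{n-k} V$ one has $\xi \cont \bigl( \wsigma(\alpha) \bigr) = \wsigma(\piu\beta \wedge \alpha)$ for every $\alpha$, where $\piu\beta = \xi \cont \sigma$.

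The key step is to decompose an arbitrary $\alpha \in \wed^{n-k} V$ into its Lefschetz components $\alpha = \sum_{j \geq 0} \tfrac{1}{j!}\sigma^j \wedge \alpha_j$ with each $\alpha_j$ primitive of degree $n-k-2j$, and then to check the identity on each summand. For this one needs two ingredients. First, the explicit formula for $\wsl$ already recorded in the text: $\wsl\bigl(\tfrac{1}{j!}\sigma^j \wedge \alpha'\bigr) = \tfrac{(-1)^j}{(m-j)!}\sigma^{m-j}\wedge\alpha'$ for a primitive form $\alpha'$ of degree $n-m$. Second, one needs to understand how wedging with $\piu\beta$ and contracting with $\xi$ interact with the $\sltwo$-action generated by $\Xsl = \sigma\wedge(-)$ and its partner $\Ysl$. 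The crucial commutation relation is $[\Ysl, \piu\beta\wedge(-)] = \pm\, \xi\cont(-)$ (up to a sign and possibly a scalar), which follows from $\piu\beta = \xi\cont\sigma$ together with the standard fact that $\Ysl$ on $\wed^\bullet V$ is, up to normalization, the bivector $\sigma^{-1}$ acting by double contraction; one computes the bracket by moving the contraction past the wedge. Equivalently, both $\piu\beta\wedge(-)$ and $\xi\cont(-)$ are, respectively, a "raising-by-one" and "lowering-by-one" operator for the grading $\Hsl$, and the Weil element conjugates one into the other exactly because $\wsl\Xsl\wsl^{-1} = -\Ysl$ and because $\piu\beta\wedge(-)$ is "half" of $\Xsl$ in a precise sense while $\xi\cont(-)$ is "half" of $\Ysl$.

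The main obstacle I expect is bookkeeping the signs and the rational coefficients: one must track the factors $(-1)^j/(m-j)!$ coming from $\wsl$, the binomial-type constants produced when $\piu\beta\wedge(-)$ and $\xi\cont(-)$ are expanded against the Lefschetz decomposition (since $\piu\beta \wedge \alpha$ is generally not itself in primitive-times-power-of-$\sigma$ form and must be re-decomposed), and the Koszul signs inherent in contraction. A clean way to organize this is to avoid choosing a decomposition at all: instead, observe that $\wsl = e^{\Xsl}e^{-\Ysl}e^{\Xsl}$ as an operator identity, and that $\piu\beta\wedge(-)$ commutes with $\Xsl = \sigma\wedge(-)$ (both are wedge operators with closed holomorphic forms) while $\xi\cont(-)$ commutes with $\Ysl$ (both are contraction operators with the same bivector $\sigma^{-1}$, since $\xi$ is the vector field dual to $\piu\beta$); together with the single mixed relation $[\Ysl,\, \piu\beta\wedge(-)] = c\cdot \xi\cont(-)$ for the appropriate constant $c$ (and its transpose $[\Xsl,\, \xi\cont(-)] = c'\cdot \piu\beta\wedge(-)$), one can push $\piu\beta\wedge(-)$ through the three exponential factors $e^{\Xsl}e^{-\Ysl}e^{\Xsl}$ and read off that it becomes $\xi\cont(-)$ on the other side. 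This turns the whole lemma into a short formal manipulation with the $\sltwo$-triple, and the only genuine input is the computation of the constant $c$ and the verification that $\xi\cont(-)$ really does commute with $\Ysl$, which is where the identity $\piu\beta = \xi\cont\sigma$ (and hence the nondegeneracy of $\sigma$) enters.
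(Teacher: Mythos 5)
Your first paragraph is essentially the paper's own proof: decompose $\alpha$ into Lefschetz components $\frac{1}{j!}\sigma^j \wedge \alpha_j$ with $\alpha_j$ primitive, use the explicit formula for $\wsigma$ on such terms, and compare the two sides summand by summand. The paper carries this out by writing $\piu\beta \wedge \alpha = \gamma_0 + \sigma \wedge \gamma_1$ for a primitive $\alpha$ and extracting the identity $\xi \cont \alpha = -(k+1)\gamma_1$ from contracting $\sigma^{k+1}\wedge\alpha = 0$ against $\xi$; this is exactly the ``re-decomposition'' bookkeeping you anticipate. Your second, conjugation-based route is genuinely different and is sound in outline: wedging with the $1$-form $\piu\beta$ commutes with $\Xsl = \sigma\wedge(-)$ (because $\sigma$ has even degree -- closedness is irrelevant at a point), contraction with the vector $\xi$ commutes with $\Ysl$ (because $\Ysl$ is contraction with the Poisson bivector and interior products supercommute -- your phrase ``the same bivector'' is garbled, since $\xi\cont$ is contraction with a vector, but the commutation itself is correct), and the mixed relations $[\Ysl, \piu\beta\wedge] = c\,\xi\cont$, $[\Xsl, \xi\cont] = c'\,\piu\beta\wedge$ then let you push $\piu\beta\wedge$ through $\wsl = e^{\Xsl}e^{-\Ysl}e^{\Xsl}$ term by term, yielding $(1-cc')\,\piu\beta\wedge(-) - c\,\xi\cont(-)$. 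What this buys is a decomposition-free argument in which the only computation is at the level of the $\sltwo$-commutators rather than on each Lefschetz summand.

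The gap is that you never pin down $c$ and $c'$, and the lemma \emph{is} the statement that they come out to exactly the values ($cc' = 1$ and $c = -1$ in the computation above) making the conjugate equal to $\xi\cont$ on the nose, with no residual scalar or sign. The paper's remark that ``this calculation explains why we need the Weil element (instead of just $\sigma^k$)'' is precisely the point: with the wrong normalization the diagram fails, so deferring the constants defers the entire content. To complete your version you must compute $[\Ysl, \piu\beta\wedge]$ in a symplectic basis (using $\piu\beta = \xi\cont\sigma$ and the chosen normalization of $\Ysl$ as the partner of $\Xsl$ in the $\sltwo$-triple, not an arbitrary multiple of the bivector contraction) and verify $cc' = 1$, $c = -1$; this is a one-page linear-algebra check comparable in length to the paper's direct verification, so the two routes end up costing about the same.
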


\begin{proof}
	Suppose first that $\alpha \in A^{n-k,q}(M)$ is primitive, so that $\sigma^{k+1}
	\wedge \alpha = 0$. Then
	\[
		\xi \cont \wsigma(\alpha) = \frac{1}{k!} \xi \cont (\sigma^k \wedge \alpha) 
		= \frac{1}{(k-1)!} \sigma^{k-1} \wedge \piu \beta \wedge \alpha 
			+ \frac{1}{k!} \sigma^k \wedge (\xi \cont \alpha).
	\]
	We still have $\sigma^{k+1} \wedge \piu \beta \wedge \alpha = 0$, and therefore
	\[
		\piu \beta \wedge \alpha = \gamma_0 + \sigma \wedge \gamma_1,
	\]
	where $\gamma_0 \in A^{n-k+1,q}(M)$ and $\gamma_1 \in A^{n-k-1,q}(M)$ are both
	primitive. In particular, we have $\sigma^k \wedge \piu \beta \wedge \alpha = \sigma^{k+1}
	\wedge \gamma_1$. If we contract the identity $\sigma^{k+1} \wedge \alpha = 0$
	with the holomorphic vector field $\xi$, we obtain
	\[
		(k+1) \sigma^k \wedge \piu \beta \wedge \alpha + \sigma^{k+1} \wedge (\xi \cont
		\alpha) = 0,
	\]
	and since $\sigma^{k+1} \colon A^{n-k-1,q}(M) \to A^{n+k+1,q}(M)$ is an
	isomorphism, it follows that $\xi \cont \alpha = -(k+1) \gamma_1$. We can now
	substitute into the identity from above and get
	\begin{align*}
		\xi \cont \wsigma(\alpha)
	   &= \frac{1}{(k-1)!} \sigma^{k-1} \wedge \gamma_0 + \frac{1}{(k-1)!} \sigma^k
		\wedge \gamma_1 - \frac{k+1}{k!} \sigma^k \wedge \gamma_1 \\
		& \frac{1}{(k-1)!} \sigma^{k-1} \wedge \gamma_0 - \frac{1}{k!} \sigma^k \wedge \gamma_1
		= \wsigma(\piu \beta \wedge \alpha).	
	\end{align*}
	This calculation explains why we need the Weil element (instead of just $\sigma^k$).

	The general case is almost the same. Because of the Lefschetz decomposition, it is
	enough to consider forms of the shape $\frac{1}{j!} \sigma^j \wedge \alpha$ where
	$\alpha \in A^{n-k,q}(M)$ is primitive and $0 \leq j \leq k$. As before, $\piu \beta
	\wedge \alpha = \gamma_0 + \sigma \wedge \gamma_1$ and $\xi \cont \alpha = -(k+1)
	\gamma_1$. This gives us
	\begin{align*}
		\xi \cont \wsigma \left( \frac{1}{j!} \sigma^j \wedge \alpha \right) 
		&= \frac{(-1)^j}{(k-j-1)!} \sigma^{k-j-1} \wedge \piu \beta \wedge \alpha
		+ \frac{(-1)^j}{(k-j)!} \sigma^{k-j} \wedge (\xi \cont \alpha) \\
		&= \frac{(-1)^j}{(k-j-1)!} \sigma^{k-j-1} \wedge \gamma_0
		+ \frac{(-1)^{j+1}(j+1)}{(k-j)!} \sigma^{k-j} \wedge \gamma_1.
	\end{align*}
	On the other hand, the relation $\piu \beta \wedge \alpha = \gamma_0 + \sigma \wedge
	\gamma_1$ implies that
	\begin{align*}
		\wsigma \left( \piu \beta \wedge \frac{1}{j!} \sigma^j \wedge \alpha \right) 
		&= \wsigma \left( \frac{1}{j!} \sigma^j \wedge \gamma_0 
			+ (j+1) \frac{1}{(j+1)!} \sigma^{j+1} \wedge \gamma_1 \right) \\
		&= \frac{(-1)^j}{(k-1-j)!} \sigma^{k-1-j} \wedge \gamma_0 + 
			(j+1) \frac{(-1)^j}{(k-j)!} \sigma^{k-j} \wedge \gamma_1.
	\end{align*}
	The two expressions match, and so the diagram commutes.
\end{proof}

\newpar
Since $\beta$ comes from $B$, the holomorphic vector field $\xi$ is tangent to the
fibers of $\pi$. To see why, let $x \in M$ be a point, and assume that the
fiber $M_b = \pi^{-1}(b)$ is smooth at $x$, where $b = \pi(x)$.  Since $\pi$ is a
Lagrangian fibration, the tangent space $T_x M_b$ is a maximal isotropic subspace of
$T_x M$.  For any holomorphic tangent vector $\eta \in T_x M_b$, we have
\[
	\sigma(\xi, \eta) = (\piu\beta) (\eta) = 0,
\]
because $\piu \beta$ vanishes along the fibers of $\pi$. By maximality, we must have
$\xi \in T_x M_b$, and so $\xi$ is indeed tangent to the fiber at the point $x$.

\newpar
From the holomorphic vector field $\xi$, we can further construct a $(0,1)$-form on
$M$ with the help of the K\"ahler form $\omega$. We define $\theta = - \xi \cont \omega
= -\omega(\xi, \argbl)$; the minus sign is justified by \Cref{lem:contraction-omega}
below. As one would expect, the $(0,1)$-form $\theta$ is $\dbar$-closed, and
therefore defines a class in $H^1(M, \shO_M)$.

\begin{plem}
	We have $\dbar \theta = 0$.
\end{plem}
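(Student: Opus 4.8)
The plan is to use the Cartan magic formula together with a bidegree count. Since $d\omega = 0$, Cartan's formula gives
\[
	\Lie{\xi} \omega = \xi \cont d\omega + d(\xi \cont \omega) = d(\xi \cont \omega) = -d\theta,
\]
so it is enough to identify the type of the form $\Lie{\xi}\omega$. The essential input is that $\xi$ is a \emph{holomorphic} vector field (this is exactly how it was constructed, from $\piu\beta$ and the nondegenerate holomorphic form $\sigma$): its local flow consists of biholomorphic maps, and pullback along a biholomorphism respects the decomposition of forms into types $(p,q)$. Hence $\Lie{\xi}$ preserves bidegree, and since $\omega \in A^{1,1}(M)$, we get $\Lie{\xi}\omega \in A^{1,1}(M)$.

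Next I would observe that $\theta = -\xi \cont \omega$ is a form of type $(0,1)$, because contracting a $(1,0)$-vector field against a $(1,1)$-form produces a $(0,1)$-form. Therefore $d\theta = \del\theta + \dbar\theta$, with $\del\theta$ of type $(1,1)$ and $\dbar\theta$ of type $(0,2)$. Comparing with $d\theta = -\Lie{\xi}\omega$, whose $(0,2)$-component vanishes, we conclude that $\dbar\theta = 0$.

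There is essentially no obstacle here; the only points that require a moment's care are the sign conventions and the fact that $\xi$ really is holomorphic (already recorded above). As an alternative one-line argument: for a holomorphic vector field the graded commutator $[\xi\cont, \dbar]$ vanishes --- this follows by separating Cartan's formula $[\xi\cont, d] = \Lie{\xi}$ according to bidegree, since $\xi\cont$ drops the holomorphic degree by one while $\Lie{\xi}$ preserves it --- and because $\omega$ is $d$-closed of type $(1,1)$ we have $\dbar\omega = 0$, whence $\dbar(\xi\cont\omega) = -\xi \cont \dbar\omega = 0$.
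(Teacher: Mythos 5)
Your proof is correct, but it takes a different route from the paper's. The paper evaluates $\dbar\theta$ on a pair of $(0,1)$-vector fields $\lambda,\mu$ via the intrinsic formula for the exterior derivative, expands the six-term identity coming from $d\omega=0$, and then kills the surviving terms using that $\omega$ has type $(1,1)$ and that $\xi$ is holomorphic. You package the same three inputs ($d\omega=0$, the type of $\omega$, the holomorphy of $\xi$) into the operator identity $[\xi\cont,\dbar]=0$ for a holomorphic $(1,0)$-vector field, which immediately gives $\dbar(\xi\cont\omega)=-\xi\cont(\dbar\omega)=0$ since $\dbar\omega=0$. That one-line version at the end of your writeup is the cleanest form of the argument and is airtight; it is essentially the bidegree decomposition of Cartan's formula. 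The only soft spot is the justification in your first paragraph that $\Lie{\xi}$ preserves bidegree ``because the local flow consists of biholomorphisms'': a complex $(1,0)$-vector field does not literally generate a flow (only its real part does), so taken on its own that sentence is heuristic. But since you supply the algebraic proof of $[\xi\cont,\dbar]=0$ in the final paragraph, nothing is actually missing. What your approach buys is brevity and a reusable identity; what the paper's buys is a self-contained computation that does not presuppose the reader knows how the Lie derivative interacts with the $(p,q)$-decomposition.
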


\begin{proof}
	This is easiest if we use the explicit formula for the exterior derivative. According
	to this formula, for any two smooth $(0,1)$-vector fields $\lambda$ and $\mu$, one
	has
	\[
		\bigl( \dbar \theta \bigr)(\lambda, \mu) 
		= \lambda \cdot \theta(\mu) - \mu \cdot \theta(\lambda) -
		\theta \bigl( \lbrack \lambda, \mu \rbrack \bigr)
		= \mu \cdot \omega(\xi, \lambda) -\lambda \cdot \omega(\xi, \mu) 
		+ \omega \bigl( \xi, \lbrack \lambda, \mu \rbrack \bigr).
	\]
	Because $d \omega = 0$, we have
	\[
		\xi \cdot \omega(\lambda, \mu) + \lambda \cdot
		\omega(\mu, \xi) + \mu \cdot \omega(\xi, \lambda)
		+ \omega \bigl( \xi, [\lambda, \mu] \bigr)
		+ \omega \bigl( \lambda, [\mu, \xi] \bigr)
		+ \omega \bigl( \mu, [\xi, \lambda] \bigr) = 0,
	\]
	and so we can rewrite the right-hand side in the form
	\[
		\omega \bigl( \lambda, \lbrack \xi, \mu
		\rbrack \bigr) - \omega \bigl( \mu, \lbrack \xi, \lambda \rbrack \bigr)
		-\xi \cdot \omega(\lambda, \mu).
	\]
	The third term vanishes because $\omega$ has type $(1,1)$, and the other two terms
	vanish because $\xi$ is holomorphic. Therefore $\dbar \theta = 0$.
\end{proof}

\newpar
The K\"ahler form $\omega$ determines another representation of the Lie algebra
$\sltwo$, and the resulting Weil element induces an isomorphism
\[
	\womega \colon A^{n-p,n-q}(M) \to A^{n+q,n+p}(M)
\]
between the two spaces of smooth forms. This time, it exchanges contraction with the
vector field $\xi$ and wedge product with the $(0,1)$-form $\theta = -\xi \cont
\omega$. 

\begin{plem} \label{lem:contraction-omega}
	For every $p,q \in \ZZ$, the following diagram commutes:
	\[
		\begin{tikzcd}
			A^{n-p,n-q}(M) \dar{\xi \cont} \rar{\womega} & A^{n+q,n+p}(M) \dar{\theta
			\wedge} \\
			A^{n-p-1,n-q}(M) \rar{\womega} & A^{n+q,n+p+1}(M)
		\end{tikzcd}
	\]
\end{plem}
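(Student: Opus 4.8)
The plan is to run the proof of \Cref{lem:contraction} with the roles of the two forms interchanged: the Kähler form $\omega$ takes the place of $\sigma$, the contraction $\xi \cont (\argbl)$ takes the place of wedging with $\piu\beta$, and wedging with $\theta = -\xi \cont \omega$ takes the place of $\xi \cont (\argbl)$. Concretely, I would fix the Lefschetz representation of $\sltwo$ on $\bigoplus_{a,b} A^{a,b}(M)$ in which $\Xsl$ acts as $\omega \wedge (\argbl)$ and $\Hsl$ acts on $A^{a,b}(M)$ as multiplication by $a+b-2n$. Since both $\omega \wedge (\argbl)$ and the adjoint operator $\Ysl$ preserve the difference $a-b$ of the two indices, the Weil element $\womega = e^{\Xsl} e^{-\Ysl} e^{\Xsl}$ indeed carries $A^{n-p,n-q}(M)$ into $A^{n+q,n+p}(M)$; and on a primitive form $\alpha$ with $\omega^{k+1} \wedge \alpha = 0$ it acts by $\womega\bigl( \tfrac{1}{j!} \omega^j \wedge \alpha \bigr) = \tfrac{(-1)^j}{(k-j)!}\, \omega^{k-j} \wedge \alpha$ for $0 \leq j \leq k$, exactly as in the proof of \Cref{lem:contraction}.

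By the Lefschetz decomposition, it then suffices to verify the identity $\theta \wedge \womega(\gamma) = \womega(\xi \cont \gamma)$ for $\gamma = \tfrac{1}{j!} \omega^j \wedge \alpha$ with $\alpha$ primitive and $0 \leq j \leq k$. The one new input, replacing the relation $\piu\beta = \xi \cont \sigma$ used in \Cref{lem:contraction}, is the commutator identity $\omega \wedge (\xi \cont \alpha) - \xi \cont (\omega \wedge \alpha) = -(\xi \cont \omega) \wedge \alpha = \theta \wedge \alpha$, which is immediate because $\xi \cont (\argbl)$ is an antiderivation; iterating it gives $\xi \cont (\omega^{k+1} \wedge \alpha) = \omega^{k+1} \wedge (\xi \cont \alpha) - (k+1)\, \omega^k \wedge \theta \wedge \alpha$, so the relation $\omega^{k+1} \wedge \alpha = 0$ yields $\omega^{k+1} \wedge (\xi \cont \alpha) = (k+1)\, \omega^k \wedge \theta \wedge \alpha$. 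Writing $\theta \wedge \alpha = \gamma_0 + \omega \wedge \gamma_1$ with $\gamma_0, \gamma_1$ primitive, one has $\omega^k \wedge \gamma_0 = 0$ for degree reasons, and the hard Lefschetz isomorphism $\omega^{k+1}$ on the relevant primitive degree forces $\xi \cont \alpha = (k+1) \gamma_1$; in particular $\xi \cont \alpha$ is again primitive. Substituting this together with the displayed formula for $\womega$ into both sides of the claimed identity reduces it to a matching identity among powers of $\omega$ with rational coefficients — this is precisely where the minus sign in the definition $\theta = -\xi \cont \omega$ is pinned down, so that no residual sign appears. The general case $0 < j \leq k$ is handled by the same substitution plus the bookkeeping already carried out in \Cref{lem:contraction}.

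The only genuine obstacle is the combinatorics of the Lefschetz decompositions: correctly identifying which primitive components $\theta \wedge \alpha$ and $\xi \cont (\omega^j \wedge \alpha)$ split into, and tracking the factorials and signs produced by $\womega$. Nothing conceptual is at stake — this is fiberwise linear algebra in the $\sltwo$-module of forms, and beyond the identity $\theta = -\xi \cont \omega$ it uses nothing about $M$; in particular neither $d\omega = 0$ nor the holomorphy of $\xi$ is needed here (those enter the neighbouring lemmas only, to make $\theta$ be $\dbar$-closed and to make $\omega \wedge (\argbl)$ and $\sigma \wedge (\argbl)$ morphisms of complexes). So the argument is strictly parallel to that of \Cref{lem:contraction}, with the single substitution described above.
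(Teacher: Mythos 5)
Your proposal is correct and follows essentially the same route as the paper's proof: the commutator identity $\theta\wedge = [\omega\wedge,\, \xi\cont]$, the contraction of $\omega^{k+1}\wedge\alpha = 0$ to get $\omega^{k+1}\wedge(\xi\cont\alpha) = (k+1)\,\omega^k\wedge\theta\wedge\alpha$, the Lefschetz decomposition $\theta\wedge\alpha = \gamma_0 + \omega\wedge\gamma_1$ forcing $\xi\cont\alpha = (k+1)\gamma_1$, and the explicit action of $\womega$ on primitive pieces are exactly the steps the paper carries out. The final coefficient check that you defer to ``bookkeeping'' is precisely the short computation written out in the paper, and your side remark that neither $d\omega=0$ nor the holomorphy of $\xi$ enters this fiberwise argument is accurate.
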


\begin{proof}
	Suppose first that $\alpha \in A^{n-p,n-q}(M)$ is primitive, meaning that
	$\omega^{k+1} \wedge \alpha = 0$, where $k = p+q \geq 0$. Contraction with $\xi$
	gives
	\begin{equation} \label{eq:xi-theta}
		0 = \xi \cont (\omega^{k+1} \wedge \alpha) 
		= - (k+1) \omega^k \wedge \theta \wedge \alpha 
		+ \omega^{k+1} \wedge (\xi \cont \alpha).
	\end{equation}
	It follows that $\omega^{k+2} \wedge (\xi \cont \alpha) = 0$, and therefore $\xi
	\cont \alpha \in A^{n-p-1,n-q}(M)$ is also primitive. Since we know how the Weil
	element acts on primitive forms, we then compute that
	\[
		\womega (\xi \cont \alpha) 
		= \frac{1}{(k+1)!} \omega^{k+1} \wedge (\xi \cont \alpha) 
		= \frac{1}{k!} \omega^k \wedge \theta \wedge \alpha 
		= \theta \wedge \womega(\alpha).
	\]
	The general case is only marginally harder. Because of the Lefschetz
	decomposition, it is enough to consider forms of the shape $\frac{1}{j!} \omega^j
	\wedge \alpha$ where $\alpha \in A^{n-p,n-q}(M)$ is primitive and $0 \leq j \leq k$.
	We already know that $\xi \wedge \alpha \in A^{n-p-1,n-q}(M)$ is primitive.
	At the same time, $\theta \wedge \alpha \in A^{n-p,n-q+1}(M)$ is annihilated by
	$\omega^{k+1}$, and so it has a Lefschetz decomposition
	\[
		\theta \wedge \alpha = \gamma_0 + \omega \wedge \gamma_1,
	\]
	where $\gamma_0 \in A^{n-p,n-q+1}(M)$ and $\gamma_1 \in A^{n-p-1,n-q}(M)$ are
	primitive. From \eqref{eq:xi-theta}, we get
	\[
		\omega^{k+1} \wedge (\xi \cont \alpha)
		= (k+1) \omega^k \wedge \theta \wedge \alpha
		= (k+1) \omega^{k+1} \wedge \gamma_1,
	\]
	which implies that $\xi \cont \alpha = (k+1) \gamma_1$. Consequently,
	\begin{align*}
		\xi \cont \frac{1}{j!} \omega^j \wedge \alpha
		&= -\frac{1}{(j-1)!} \omega^{j-1} \wedge \theta \wedge \alpha
		+ \frac{1}{j!} \omega^j \wedge (\xi \cont \alpha) \\
		&= -\frac{1}{(j-1)!} \omega^{j-1} \wedge \gamma_0
		+ (k+1-j) \frac{1}{j!} \omega^j \wedge \gamma_1.
	\end{align*}
	If we now apply the Weil element $\womega$, we obtain
	\begin{align*}
		\womega \left( \xi \cont \frac{1}{j!} \omega^j \wedge \alpha \right)
		&= \frac{(-1)^j}{(k-j)!} \omega^{k-j} \wedge \gamma_0
		+ \frac{(-1)^j}{(k-j)!} \omega^{k+1-j} \wedge \gamma_1 \\
		&= \frac{(-1)^j}{(k-j)!} \omega^{k-j} \wedge \theta \wedge \alpha
		= \theta \wedge \womega \left( \frac{1}{j!} \omega^j \wedge \alpha \right).
	\end{align*}
	This proves that the diagram is commutative.
\end{proof}

\newpar
We are really interested in the operation of taking wedge product with $\theta$.
Since
\begin{equation} \label{eq:theta-commutator}
	\theta \wedge \alpha = -(\xi \cont \omega) \wedge \alpha = 
	\omega \wedge (\xi \cont \alpha) -\xi \cont (\omega \wedge \alpha),
\end{equation}
we can realize this as the commutator of the Lefschetz operator $\omega \wedge$ and
the contraction operator $\xi \cont$. (This formula shows one more time why we need
the minus sign in $\theta = - \xi \cont \omega$.)

\newpar 
Let us now relate this construction to the decomposition theorem and to the
complexes $G_{i,k}$. Let $\algS^{\bullet} = \Sym^{\bullet}(\shT_B)$. Recall from
\parref{par:Laumon} that we have an isomorphism 
\[
	\gr_{\bullet}^F \Cpi \cong \bigoplus_{i=-n}^n \gr_{\bullet}^F \Pmod_i \decal{-i}
\]
in the derived category of graded $\algS$-modules, where the decomposition is induced
by the one in the decomposition theorem. Here $\gr_{\bullet}^F \Cpi$ is the complex
of graded $\algS$-modules with
\[
	\gr_{\bullet}^F \Cpi^i = \bigoplus_{p+q=i} 
	\pil \shA_M^{2n+p,q} \tensor_{\shO_M} \algS^{\bullet+n+p};
\]
the differential in the complex is given by the formula
\[
	\dpi(\alpha \tensor P) = \dbar \alpha \tensor D 
	+ \sum_{j=1}^n \piu(\dtj) \wedge \alpha \tensor \partial_j D,
\]
where $t_1, \dotsc, t_n$ are local holomorphic coordinates on $B$, and $\partial_j =
\partial/\partial t_j$. Since $\dbar \beta=0$, wedge product with the
holomorphic $1$-form $\piu \beta$ defines a morphism of complexes
\[
	\piu \beta \colon \gr_{\bullet}^F \Cpi \to \gr_{\bullet-1}^F \Cpi \decal{1}
\]
that increases the cohomological degree by $1$ and decreases the degree with respect
to the grading by $1$. This gives us a morphism (in the derived category)
\[
	\piu \beta \colon \bigoplus_{i=-n}^n \gr_{\bullet}^F \Pmod_i \decal{-i},
	\to \bigoplus_{i=-n}^n \gr_{\bullet-1}^F \Pmod_i \decal{1-i}.
\]
As usual, we break this up into components $\piu \beta = (\piu \beta)_1 + (\piu
\beta)_0 + \dotsb$, where each 
\[
	(\piu \beta)_k \colon \gr_{\bullet}^F \Pmod_i \to \gr_{\bullet-1}^F \Pmod_{i+k}
	\decal{1-k}
\]
is a morphism in the derived category of graded $\algS$-modules. Not surprisingly,
the topmost component is zero, due to the fact that $\piu \beta$ vanishes on the
fibers of $\pi$.

\begin{plem} \label{lem:piu-beta}
	We have $(\piu \beta)_1 = 0$.
\end{plem}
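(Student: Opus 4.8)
The plan is to reproduce, with $\piu\beta$ in place of $\sigma$, the argument from the proof of \Cref{lem:sigma_2}; the geometric input needed is now the (here almost trivial) fact that $\piu\beta$ vanishes along the fibers of $\pi$. Since the component $(\piu\beta)_1 \colon \gr_{\bullet}^F \Pmod_i \to \gr_{\bullet-1}^F \Pmod_{i+1}$ carries no cohomological shift, it is induced by a morphism of right $\Dmod_B$-modules $\Pmod_i \to \Pmod_{i+1}$, and it is enough to show that this morphism vanishes. Both $\Pmod_i$ and $\Pmod_{i+1}$ underlie polarizable Hodge modules, so they decompose by strict support; a morphism of $\Dmod_B$-modules respects this decomposition and vanishes between summands with distinct supports, so it suffices to treat one summand, say with strict support $Z \subseteq B$. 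On a dense open $Z^{\circ} \subseteq Z$ this summand is a variation of Hodge structure, and since it is the minimal extension of that variation, the morphism is zero as soon as its restriction to $Z^{\circ}$ is; hence it is enough to check vanishing at a general point $b \in Z$.

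For the vanishing at $b$ I would use proper base change. The restriction $\iu_b$ of $\pi_{+}(\omega_M, F_{\bullet}\omega_M)$ is computed by restricting the Dolbeault-type model $\Cpi$ to the fiber $M_b = \pi^{-1}(b)$, and under this restriction the operator ``wedge product with $\piu\beta$'' becomes wedge product with $(\pi|_{M_b})^{\ast}\beta$. But $\pi|_{M_b}$ is the constant map with value $b$, so $(\pi|_{M_b})^{\ast}\beta = 0$; hence the restriction to $b$ of this operator is the zero morphism, and so is each of its components with respect to the decomposition theorem --- in particular the one realizing $(\piu\beta)_1$. Since $\iu_b$ also respects the decomposition by strict support, and since for general $b$ the restriction $\iu_b$ of the strict-support-$Z$ summand is a shift of the fiber of the underlying variation of Hodge structure, this shows that the morphism $\Pmod_i \to \Pmod_{i+1}$ vanishes at $b$, hence on $Z^{\circ}$, hence identically; summing over $Z$ gives $(\piu\beta)_1 = 0$. (Over the smooth locus of $\pi$ this is already transparent: there $\piu\beta$ merely uses up a degree of $\Omega_B$, so it preserves the cohomological degree along the fibers and hence cannot change the perverse index $i$.)

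The step requiring care is the compatibility between the two paragraphs above. Unlike $\sigma$, the form $\piu\beta = \pi^{\ast}\beta$ is only $\dbar$-closed, not $d$-closed, so wedging with it is a morphism of complexes only after passing to $\gr_{\bullet}^F\Cpi$; the claim that $(\piu\beta)_1$ is nevertheless induced by an honest morphism of $\Dmod_B$-modules, and that such a morphism is detected faithfully by restriction to general points, must be argued on the coherent (i.e.\ $\Dmod$-module) side, not through the topological direct image $\derR\pil\QQ_M$, for which $\pi^{\ast}\beta$ carries no class. In return, and in contrast to \Cref{lem:sigma_2}, no resolution of the possibly singular fiber $M_b$ is required: $\pi^{\ast}\beta$ restricts to zero already on $M_b$ itself, simply because $\pi|_{M_b}$ is constant.
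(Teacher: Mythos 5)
Your overall strategy is the paper's: reduce to the argument of \Cref{lem:sigma_2}, using the decomposition by strict support and the fact that $\piu \beta$ restricts to zero on the fibers (with the correct observation that, unlike for $\sigma$, no resolution of $M_b$ is needed). But there is a genuine gap at the step you yourself flag and then leave unresolved: the claim that $(\piu \beta)_1$ ``is induced by a morphism of right $\Dmod_B$-modules $\Pmod_i \to \Pmod_{i+1}$.'' For a general holomorphic $1$-form $\beta$ one only has $\dbar(\piu\beta)=0$, so wedge product with $\piu\beta$ commutes with the differential $\dpi$ only on the associated graded complex $\gr_{\bullet}^F \Cpi$; it does \emph{not} act on $\Cpi$ itself unless $d\beta = 0$. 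Consequently $(\piu\beta)_1$ is a priori only a morphism of graded $\algS$-modules $\gr_{\bullet}^F \Pmod_i \to \gr_{\bullet-1}^F \Pmod_{i+1}$, and such a morphism need not respect the decomposition by strict support, nor is it detected by restriction to a general point of the support --- both of those facts are properties of morphisms of (Hodge modules and their) $\Dmod$-modules, coming from the minimal-extension property, and they have no analogue for arbitrary morphisms of graded $\Sym(\shT_B)$-modules. So the entire second paragraph of your argument rests on an unproved lifting.

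The paper closes this gap with a two-step reduction. First, if $d\beta = 0$, then wedge product with $\piu\beta$ does define a morphism $\Cpi \to \Cpi\decal{1}$ of complexes of \emph{filtered $\Dmod_B$-modules}, so its topmost component is an honest morphism $(\Pmod_i, F_{\bullet}\Pmod_i) \to (\Pmod_{i+1}, F_{\bullet-1}\Pmod_{i+1})$ of $\Dmod_B$-modules, and the argument of \Cref{lem:sigma_2} applies verbatim. Second, for general $\beta$ the statement is local on $B$ (because $(\piu\beta)_1$ is a morphism of graded $\algS$-modules) and the construction is $\shO_B$-linear in $\beta$; writing $\beta = \sum_j f_j\, \dtj$ in local coordinates gives $(\piu\beta)_1 = \sum_j f_j (\piu \dtj)_1 = 0$, since each $\dtj$ is closed. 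You should add this reduction; with it, your proof becomes correct and essentially identical to the paper's.
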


\begin{proof}
	Let us first treat the case when $d \beta = 0$. Wedge product with $\piu
	\beta$ defines a morphism
	\[
		\piu \beta \colon \Cpi \to \Cpi \decal{1}
	\]
	that increases the cohomological degree by $1$ and decreases the degree with
	respect to the filtration by $1$. This follows from the fact that 
	\[
		\Cpi^i = \bigoplus_{p+q=i} 
		\pil \shA_M^{n+p,n+q} \tensor_{\shO_M} (\Dmod_B, F_{\bullet+p} \Dmod_B),
	\]
	and that the differential in the complex is
	\[
		\dpi \colon \Cpi^i \to \Cpi^{i+1}, \quad
		\dpi \bigl( \alpha \tensor D \bigr) = d \alpha \tensor D + \sum_{j=1}^n \piu(\dtj) 
		\wedge \alpha \tensor \partial_j D.
	\]
	The topmost component is now a morphism $(\piu \beta)_1 \colon (\Pmod_i,
	F_{\bullet} \Pmod_i) \to (\Pmod_{i+1}, F_{\bullet-1} \Pmod_{i+1})$. Since $\piu
	\beta$ vanishes on the fibers of $\pi$, we can then show by exactly the same
	argument as in \Cref{lem:sigma_2} that $(\piu \beta)_1 = 0$. 

	To deal with the general case, we note that the problem is local on $B$, due
	to the fact that $(\piu \beta)_1$ is a morphism between two graded
	$\algS$-modules. Working locally, we can choose holomorphic coordinates $t_1,
	\dotsc, t_n$, and write $\beta = \sum_{j=1}^n f_j \dtj$. Since the entire
	construction is $\shO_B$-linear, we have $(\piu \beta)_1 = \sum_{j=1}^n f_j (\piu
	\dtj)_1$; but $(\piu \dtj)_1 = 0$ because $\dtj$ is closed.
\end{proof}

\newpar
We can use the properties of Deligne's decomposition to get a much better result.
(For a more direct proof, see \Cref{par:action-forms}.)

\begin{plem} \label{lem:beta_k}
	For any holomorphic form $\beta \in H^0(B, \Omega_B^1)$, one has $(\piu \beta)_k =
	0$ for all $k \neq 0$, and $(\piu \beta)_0$ commutes with $\omega_2$. 
\end{plem}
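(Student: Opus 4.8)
The plan is to run the same kind of argument as in \Cref{lem:sigma_k}, but now starting from the commutation of $\omega$ with $\piu\beta$. First I would observe that, as operators on the complex $\Cpi$, wedge product with the K\"ahler form $\omega$ (of type $(1,1)$) and wedge product with $\piu\beta$ (of type $(1,0)$) commute up to the Koszul sign $(-1)^{2\cdot 1}=1$, so $\omega\circ\piu\beta = \piu\beta\circ\omega$ at the level of complexes; applying the decomposition \eqref{eq:Cpi-decomposition}, this gives $[\omega,\piu\beta]=0$ as a relation between operators on $\bigoplus_i \gr_{\bullet}^F\Pmod_i\decal{-i}$ in the derived category. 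Recall from \parref{par:Deligne} that Deligne's decomposition is normalized so that $\omega_1=0$ and every $\omega_j$ with $j\le 0$ is primitive, and from \Cref{lem:piu-beta} that $(\piu\beta)_1=0$, so $\piu\beta = (\piu\beta)_0 + (\piu\beta)_{-1} + (\piu\beta)_{-2} + \dotsb$. Decomposing $[\omega,\piu\beta]=0$ according to the amount by which the index $i$ is shifted, the part shifting $i$ by $2$ is simply $[\omega_2,(\piu\beta)_0]=0$ — this is already the second assertion.

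Next I would prove $(\piu\beta)_{-k}=0$ for every $k\ge 1$ by induction on $k$. Assuming the components $(\piu\beta)_b$ with $-(k-1)\le b\le 1$ all vanish except possibly $(\piu\beta)_0$, the part of $[\omega,\piu\beta]=0$ shifting $i$ by $2-k$ has only two surviving terms — because $\omega_2$ is the only positive-shift component of $\omega$, and a pair $(\omega_j,(\piu\beta)_b)$ with $j\le 0$ summing to $2-k$ forces $b=0$ and $j=2-k$ — namely
\[
	[\omega_2,(\piu\beta)_{-k}] + [\omega_{2-k},(\piu\beta)_0] = 0,
\]
with the convention $\omega_1=0$ covering the base case $k=1$. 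Applying $(\ad\omega_2)^{k-1}$ and using the Leibniz rule, the facts that $(\piu\beta)_0$ commutes with $\omega_2$ and that $\omega_{2-k}$ is primitive of $\ad\Hsl$-weight $2-k$ (hence annihilated by $(\ad\omega_2)^{k-1}$) kill every term, leaving $(\ad\omega_2)^k(\piu\beta)_{-k}=0$. But $(\piu\beta)_{-k}$ has $\ad\Hsl$-weight $-k$, and in the finite $\sltwo$-module formed by these operators, $(\ad\omega_2)^k$ is injective on the weight-$(-k)$ subspace (hard Lefschetz for the adjoint representation); therefore $(\piu\beta)_{-k}=0$, completing the induction. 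Together with $(\piu\beta)_1=0$ this gives $(\piu\beta)_k=0$ for all $k\ne 0$.

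I do not expect a serious obstacle here: once $(\piu\beta)_1=0$ (\Cref{lem:piu-beta}) and the primitivity of the $\omega_j$ with $j\le 0$ (Deligne) are granted, the rest is formal $\sltwo$-bookkeeping. The points that need care are the elementary representation-theoretic inputs — a primitive operator of $\ad\Hsl$-weight $-m$ is killed by $(\ad\omega_2)^{m+1}$, and $(\ad\omega_2)^k$ is injective from weight $-k$ to weight $k$ — together with the Koszul signs coming from the fact that $\piu\beta$ is an odd operator whereas $\omega$ and the $\omega_j$ are even, so that the commutators and $\ad$-powers are the ungraded ones and the Jacobi and Leibniz manipulations go through without extra signs. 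Since the Deligne decomposition is a genuine isomorphism in the derived category, all the operators $\omega_j$ and $(\piu\beta)_k$ are honest morphisms there and these manipulations are legitimate.
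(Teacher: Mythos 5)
Your proposal is correct and follows essentially the same route as the paper's proof: both extract $[\omega_2,(\piu\beta)_0]=0$ from the top-degree part of $[\omega,\piu\beta]=0$, and then run the same descending induction using the primitivity of the $\omega_j$ ($j\le 0$) in Deligne's decomposition, the identity $(\ad\omega_2)^k(\piu\beta)_{-k}=0$, and the injectivity of $(\ad\omega_2)^k$ on the weight-$(-k)$ part. The only cosmetic difference is that you fold the case $k=1$ into the induction via the convention $\omega_1=0$, whereas the paper treats it separately.
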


\begin{proof}
	We already know that $\piu \beta = (\piu \beta)_0 + (\piu \beta)_{-1} + \dotsb$.
	Recall from \Cref{par:Deligne} that Deligne's decomposition has the property that
	$\omega = \omega_2 + \omega_0 + \omega_{-1} + \dotsb$, and that the individual
	components $\omega_k$ with $k \leq 0$ are primitive (with respect to the
	$\sltwo$-representation determined by $\omega_2$.) Since $\piu \beta$ and $\omega$
	commute (as forms on $M$), the corresponding operators satisfy the relation
	$\lbrack \omega, \piu \beta \rbrack = 0$. If we expand this, we get
	\[
		\bigl[ \omega_2, (\piu \beta)_0 \bigr] = 0, \quad
		\bigl[ \omega_2, (\piu \beta)_{-1} \bigr] = 0, \quad
		\bigl[ \omega_2, (\piu \beta)_{-2} \bigr] 
		+ \bigl[ \omega_0, (\piu \beta)_0 \bigr] = 0,
	\]
	and so on. The first relation shows that $(\piu \beta)_0 \in \ker(\ad \omega_2)$;
	since $(\piu \beta)_0$ has weight $0$ (with respect to the
	$\sltwo$-representation), it must be primitive. The second relation
	gives $(\piu \beta)_{-1} \in \ker(\ad \omega_2)$, and because $(\piu \beta)_{-1}$ has
	weight $-1$, it follows that $(\piu \beta)_{-1} = 0$. 

	By induction, we can assume that we already have $(\piu \beta)_{-1} = \dotsb = (\piu
	\beta)_{-k+1} = 0$ for some $k \geq 2$. Let us prove that $(\piu \beta)_{-k} = 0$.
	From the relation $[\omega, \piu \beta] = 0$, we get
	\[
		\bigl[ \omega_2, (\piu \beta)_{-k} \bigr] 
		+ \bigl[ \omega_{2-k}, (\piu \beta)_0 \bigr] = 0.
	\]
	Since $\omega_{2-k}$ is primitive of weight $-k+2$, it satisfies $(\ad
	\omega_2)^{k-1} \omega_{2-k} = 0$. This gives
	\[
		(\ad \omega_2)^k (\piu \beta)_{-k} 
		= -(\ad \omega_2)^{k-1} \bigl[ \omega_{2-k}, (\piu \beta)_0 \bigr] 
		= - \bigl[ (\ad \omega_2)^{k-1} \omega_{2-k}, (\piu \beta)_0 \bigr] = 0,
	\]
	as $(\piu \beta)_0 \in \ker(\ad \omega_2)$. Since $(\piu
	\beta)_{-k}$ has weight $-k$, it follows that $(\piu \beta)_{-k} = 0$.
\end{proof}

\newpar \label{par:claim-beta}
This result means concretely that the action by $\piu \beta$ on 
\[
	\gr_{\bullet}^F \Cpi \cong \bigoplus_{i=-n}^n \gr_{\bullet}^F \Pmod_i \decal{-i}
\]
is diagonal (provided that we use Deligne's decomposition). The individual morphisms
\[
	(\piu \beta)_0 \colon \gr_{\bullet}^F \DR(\Pmod_i) \to \gr_{\bullet-1}^F \DR(\Pmod_i)
	\decal{1}
\]
are of course just given by the action of $\beta \in H^0(B, \Omega_B^1)$ on the
graded pieces of the de Rham complex. Indeed, for any filtered $\Dmod$-module
$(\Pmod, F_{\bullet} \Pmod)$, we have a morphism of complexes
\[
	\Omega_B^1 \tensor \gr_k^F \DR(\Pmod) \to \gr_{k-1}^F \DR(\Pmod) \decal{1},
\]
which is defined (in local coordinates) by the formula
\[
	\Omega_B^1 \tensor \gr_{k+i}^F \Pmod \tensor \wed^{-i} \shT_B
	\to \gr_{k+i}^F \Pmod \tensor \wed^{-i-1} \shT_B, \quad
	\dt_j \tensor s \tensor \partial_J \mapsto \sgn(J,j) s \tensor \partial_{J
	\setminus \{j\}}.
\]
We will prove this claim in \Cref{par:claim-beta} below. 

\newpar
We can analyze the action of the holomorphic vector field $\xi$ in much the same way.
First, we observe that contraction with $\xi$ induces a morphism 
\[
	\xi \colon \gr_{\bullet}^F \Cpi \to \gr_{\bullet+1}^F \Cpi \decal{-1}.
\]
To see why this is the case, let $t_1, \dotsc, t_n$ be local holomorphic coordinates
on $B$, and let $\eta_j$ denote the holomorphic vector field
associated to the form $\piu(\dtj)$, so that $\piu(\dtj) = \eta_j \cont \sigma$.
Then the compatibility with the differential in the complex comes down to the
identity
\[
	\xi \cont \piu(\dtj) = \sigma(\eta_j, \xi) = 0,
\]
which holds because both $\eta_j$ and $\xi$ are tangent to the fibers of $\pi$. We
therefore get another morphism (in the derived category) that we denote by the symbol
\[
	\xi \colon \bigoplus_{i=-n}^n \gr_{\bullet}^F \Pmod_i \decal{-i},
	\to \bigoplus_{i=-n}^n \gr_{\bullet+1}^F \Pmod_i \decal{-i-1}.
\]
As before, we decompose this into components $\xi = \xi_{-1} + \xi_0 + \dotsb$, where
each component is a morphism $\xi_k \colon \gr_{\bullet}^F \Pmod_i \to
\gr_{\bullet+1}^F \Pmod_{i+k} \decal{-1-k}$. This time, it is much less obvious that
all the components with $k \leq 0$ have to vanish.

\begin{plem} \label{lem:xi_k}
	We have $\xi_k = 0$ for every $k \neq -1$, and $(\ad \omega_2)^2(\xi_{-1}) = 0$.
\end{plem}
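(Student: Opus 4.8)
The plan is to follow the pattern of Lemma~\ref{lem:beta_k}: the operator $\iota_\xi$ and all the operators built from differential forms on $M$ define morphisms of the complex $\gr_\bullet^F \Cpi$, so the pointwise identity \eqref{eq:theta-commutator} becomes a relation among morphisms that can be decomposed with respect to Deligne's decomposition and then analyzed by weights for the $\sltwo$-action of $\omega_2$.

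First I would check that wedge product with $\sigma$, $\omega$, $\piu\beta$ and $\theta = -\,\xi\cont\omega$, as well as contraction $\iota_\xi$, all define morphisms of the complex $\gr_\bullet^F \Cpi$ of graded $\algS$-modules: the wedge operators because the corresponding forms are $\dbar$-closed and (anti)commute with every $\piu(\dtj)\wedge$, and $\iota_\xi$ because $\xi$ is holomorphic (so $\{\iota_\xi,\dbar\}=0$ for type reasons) and tangent to the fibers of $\pi$ (so $\iota_\xi\piu(\dtj)=0$). In particular $\xi$ genuinely induces a morphism in the derived category, and \eqref{eq:theta-commutator} reads $[\omega,\xi]=\theta$. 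Since $\omega$ and $\theta$ are forms of pure (commuting) degrees $2$ and $1$, wedge product with them commutes, so $[\omega,\theta]=0$; hence $(\ad\omega)^2\xi = (\ad\omega)\theta = [\omega,\theta] = 0$, and therefore $(\ad\omega)^m\xi = 0$ for all $m\geq 2$.

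Now work inside the (finite-dimensional) representation of $\sltwo$ on the space of such morphisms determined by $\ad\omega_2$, $\ad\Hsl$ and $\ad\Ysl$, where $\Hsl$ acts by multiplication by $i$ on the summand $\Pmod_i$. Each $\xi_k$ is a weight vector of weight $k$; $\omega_2$ is a highest-weight vector of weight $2$, hence lies in the dimension-$3$ isotypic part; and by \parref{par:Deligne} (compare Lemma~\ref{lem:sigma_k}) one has $\omega_1=0$ and the $\omega_{-j}$ with $j\geq 0$ are primitive, so $\omega_{-j}$ lies in the dimension-$(j+1)$ isotypic part (in particular $\omega_0$ is $\sltwo$-invariant). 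The top weight component of $(\ad\omega)^2\xi=0$ is just $(\ad\omega_2)^2\xi_{-1}$, in weight $3$ — every other term has weight $\leq 2$ — so $(\ad\omega_2)^2\xi_{-1}=0$, which is the second assertion of the lemma; in particular $\xi_{-1}$ has dimension-$2$ type. For the first assertion I would prove $\xi_{-k}=0$ for all $k\geq 2$ by induction on $k$ (the components $\xi_k$ with $k\geq 0$ vanish automatically, living in $\Ext^{-1-k}$). Granting $\xi_{-2}=\dotsb=\xi_{-(k-1)}=0$, the weight-$k$ component of $(\ad\omega)^k\xi=0$ consists of $(\ad\omega_2)^k\xi_{-k}$ together with monomials $(\ad\omega_{a_1})\dotsb(\ad\omega_{a_k})\xi_{-1}$ with $\sum_i a_i = k+1$ (here I use $\omega_1=0$ and the inductive hypothesis). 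If $j_0$ of the factors in such a monomial equal $\omega_2$, then $j_0<k$ because $\sum_i a_i = k+1 < 2k$; since applying $\ad\omega_2$ does not enlarge the isotypic dimension while bracketing with an element of dimension-$d$ type enlarges it by at most $d-1$, and since $\xi_{-1}$ has dimension-$2$ type, the monomial lies in isotypic parts of dimension $\leq 2 + (2j_0-k-1) = 2j_0-k+1 < k+1$, so its weight-$k$ component vanishes. Hence $(\ad\omega_2)^k\xi_{-k}=0$, and since a weight-$(-k)$ vector annihilated by $(\ad\omega_2)^k$ is necessarily zero (elementary $\sltwo$ representation theory), $\xi_{-k}=0$.

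The only delicate point is the bookkeeping in the inductive step: one must be sure that, after imposing the vanishings $\omega_1=0$, $\xi_{-2}=\dotsb=\xi_{-(k-1)}=0$ and $(\ad\omega_2)^2\xi_{-1}=0$, there really is no weight-$k$ contribution beyond $(\ad\omega_2)^k\xi_{-k}$ — this is exactly the dimension count just given. The reason one needs all the higher powers $(\ad\omega)^m\xi=0$, rather than a single relation as in Lemma~\ref{lem:beta_k}, is that $[\omega,\xi]=\theta$ is not zero: its leading component $\theta_1=[\omega_2,\xi_{-1}]$ is already nonzero over the smooth locus of $\pi$.
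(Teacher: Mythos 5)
Your proof is correct and follows essentially the same route as the paper: both rest on the identity $\bigl[\omega,[\omega,\xi]\bigr]=0$ (coming from $[\omega,\xi]=\theta$ and $[\omega,\theta]=0$), on the primitivity of the components $\omega_j$ in Deligne's decomposition, and on an induction showing $(\ad \omega_2)^k(\xi_{-k})=0$, which forces the weight-$(-k)$ vector $\xi_{-k}$ to vanish. The only divergence is in the inductive bookkeeping --- the paper applies $(\ad\omega_2)^{k-2}$ to the relevant graded component of $\bigl[\omega,[\omega,\xi]\bigr]=0$ and uses the derivation property of $\ad\omega_2$, while you extract the weight-$k$ component of $(\ad\omega)^k\xi=0$ and kill the cross terms by a Clebsch--Gordan count of isotypic dimensions --- and both versions go through.
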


\begin{proof}
	By \eqref{eq:theta-commutator}, the commutator of the Lefschetz operator and
	contraction with $\xi$ is equal to wedge product with the $(0,1)$-form
	$\theta$. Since $\omega$ and $\theta$ commute (as forms on $M$), it follows that
	$\bigl\lbrack \omega, \lbrack \omega, \xi \rbrack \bigr\rbrack = 0$. After
	decomposing this relation by degree, we first get
	\[
		(\ad \omega_2)^2(\xi_{-1}) = 0,
	\]
	which means that $\xi_{-1}$ is primitive (with respect to the
	representation of $\sltwo$ determined by $\omega_2$). Next, we get $(\ad
	\omega_2)^2(\xi_{-2}) = 0$, and since $\xi_{-2}$ has weight $-2$, it follows that
	$\xi_{-2} = 0$. 

	By induction, we may again assume that $\xi_{-2} = \dotsb = \xi_{-k+1} = 0$ for
	some $k \geq 3$. Let us prove that $\xi_{-k} = 0$. From the relation $\bigl\lbrack
	\omega, \lbrack \omega, \xi \rbrack \bigr\rbrack = 0$, we obtain
	\[
		(\ad \omega_2)^2(\xi_{-k}) + \ad \omega_2 \ad \omega_{3-k} (\xi_{-1})  
		+ \ad \omega_{3-k} \ad \omega_2(\xi_{-1}) = 0.
	\]
	Recall that $\omega_{3-k}$ is primitive of weight $3-k$, which means that $(\ad
	\omega_2)^{k-2} \omega_{3-k} = 0$. If we apply the operator $(\ad \omega_2)^{k-2}$
	to the identity above, it becomes
	\begin{align*}
		(\ad \omega_2)^k(\xi_{-k}) &= - (\ad \omega_2)^{k-1} \ad \omega_{3-k} (\xi_{-1})  
		- (\ad \omega_2)^{k-2} \ad \omega_{3-k} \ad \omega_2(\xi_{-1}) \\
		&= - (\ad \omega_2)^{k-1} \bigl[ \omega_{3-k}, \xi_{-1} \bigr] 
		- (\ad \omega_2)^{k-2} \bigl[ \omega_{3-k}, \ad \omega_2(\xi_{-1}) \bigr] = 0.
	\end{align*}
	In the last line, we used the fact that $\ad \omega_2$ is a derivation, and that
	$\omega_{3-k} \in \ker(\ad \omega_2)^{k-2}$ and $\xi_{-1} \in \ker(\ad
	\omega_2)^2$. Because $\xi_{-k}$ has weight $-k$, this is enough to conclude that
	$\xi_{-k} = 0$.
\end{proof}

\newpar
Both results illustrate why Deligne's decomposition is ``less bad'' than the other
possible choices in the decomposition theorem. In our setting,  it has the nice
effect of making a holomorphic $1$-form on $B$ act entirely in the ``horizontal''
direction on the summands in the
decomposition theorem, whereas the associated holomorphic vector field acts entirely
in the ``vertical'' direction (meaning along the fibers), just as one would expect
from the geometry of a Lagrangian fibration. I doubt that any other choice of
decomposition has this property.

\newpar
Let us conclude this chapter by recording the effect of the various operators on the
complexes $G_{i,k}$. We are going to make a slight change in the notation and assume
from now on that $\beta \in H^0(U, \Omega_B^1)$ is a holomorphic $1$-form, defined on
an open subset $U \subseteq B$. As pointed out at the beginning of the chapter, this
causes no problems, because all the arguments can be used locally on $B$. With the
help of the symplectic form $\sigma$ and the K\"ahler form $\omega$, the holomorphic
$1$-form $\beta$ gives us on the one hand a holomorphic vector field
\[
	v(\beta) \in H^0 \bigl( \pi^{-1}(U), \shT_M \bigr), \quad
	\piu \beta = v(\beta) \cont \sigma,
\]
and on the other hand a $\dbar$-closed $(0,1)$-form (with a minus sign)
\[
	f(\beta) \in A^{0,1} \bigl( \pi^{-1}(U) \bigr), \quad
	f(\beta) = -v(\beta) \cont \omega.
\]
We use the same symbols for the morphisms
\[
	\piu \beta \colon \gr_{\bullet}^F \Cpi \to \gr_{\bullet-1}^F \Cpi \decal{1}
	\quad \text{and} \quad
	f(\beta) \colon \gr_{\bullet}^F \Cpi \to \gr_{\bullet}^F \Cpi \decal{1}
\]
induced by wedge product with the forms $\piu \beta$ and $f(\beta)$, and for the
morphism
\[
	v(\beta) \colon \gr_{\bullet}^F \Cpi \to \gr_{\bullet+1}^F \Cpi \decal{-1}
\]
induced by contraction with the vector field $v(\beta)$. All these morphisms are of
course defined only on the open set $U \subseteq B$. We saw in
\eqref{eq:theta-commutator} that $f(\beta) = \lbrack \omega, v(\beta)
\rbrack$. Because of \Cref{lem:xi_k}, the only nonzero component of $v(\beta)$ is
$v(\beta)_{-1}$, which means that the components of $f(\beta)$ can be computed by the
simple formula
\[
	f(\beta)_k = \lbrack \omega_{k+1}, v(\beta) \rbrack.
\]
In particular, the topmost component is $f(\beta)_1 = \lbrack \omega_2, v(\beta) 
\rbrack$. 

\newpar
From $\piu \beta = (\piu \beta)_0 \colon \gr_{\bullet}^F \Pmod_i \to \gr_{\bullet-1}^F
\Pmod_i \decal{1}$, we get a morphism of complexes
\[
	\piu \beta \colon G_{i,k} \to G_{i,k+1} \decal{1},
\]
which is given by contracting the holomorphic vector fields in the de Rham complex
against the holomorphic form $\beta$.  Similarly, from $v(\beta) = v(\beta)_{-1}
\colon \gr_{\bullet}^F \Pmod_i \to \gr_{\bullet+1}^F \Pmod_{i-1}$, we get a second
morphism of complexes
\[
	v(\beta) \colon G_{i,k} \to G_{i-1,k-1} \decal{-1}.
\]
Lastly, we have the morphism of complexes
\[
	f(\beta)_1 = \lbrack \omega_2, v(\beta) \rbrack \colon G_{i,k} \to G_{i+1,k} \decal{1}.
\]
All three morphisms will play an important role in the proof of \Cref{thm:isomorphism}.

\newpar
The relation with Matsushita's theorem (in \Cref{thm:Matsushita}) is the following.
The construction above produces a morphism of sheaves
\[
	\Omega_B^1 \to R^1 \pil \shO_M,
\]
by assigning to a holomorphic $1$-form $\beta \in H^0(U, \Omega_B^1)$ the image of
the cohomology class of the $\dbar$-closed $(0,1)$-form $f(\beta)$ under the morphism
$H^1 \bigl( \pi^{-1}(U), \shO_M \bigr) \to H^0(U, R^1 \pil \shO_M)$. Matsushita's
theorem is then saying concretely that this morphism is an isomorphism.

\section{The BGG correspondence}
\label{chap:BGG}

\newpar
This chapter contains a brief review of the BGG correspondence \cite{BGG,EFS}, in a way
that is convenient for our purposes. We will define everything carefully, with the
correct signs, but only sketch the proofs, which mostly amount to checking that the
signs in front of various terms match up correctly; the details can in any case be
found in \cite{EFS}.

\newpar
Let $B$ be a complex manifold of dimension $n$, and denote by $\shT_B$ the
holomorphic tangent sheaf. The BGG correspondence relates, on the level of the
derived category, graded modules over two graded $\shO_B$-algebras. The first is the
symmetric algebra
\[
	\algS = \Sym(\shT_B) = \bigoplus_{j \in \NN} \Sym^j(\shT_B),
\]
with the obvious multiplication and the natural grading in which $\Sym^j(\shT_B)$ has
degree $j$. The second is the algebra of holomorphic forms
\[
	\algE = \bigoplus_{j=0}^n \Omega_B^j,
\]
with multiplication given by wedge product. Note that, unlike \cite{EFS}, we give
the algebra $\algE$ the naive grading in which $\Omega_B^j$ lives in degree
$j$. If $M = M_{\bullet}$ is a graded module over either of these algebras, we denote
by $M(d)$ the same module with the grading shifted according to the rule $M(d)_k =
M_{d+k}$. In the case of $\algE$, we only work with \emph{left} modules. 

\newpar
The central object in the BGG correspondence is the Koszul complex
\begin{equation} \label{eq:Koszul}
\begin{tikzcd}[column sep=small]
	\algS(-n) \tensor \wed^n \shT_B \rar{\delta} & \dotsb
	\rar{\delta} & 
	%\algS(-2) \tensor \bigwedge^2 \shT_B \rar{\delta} & 
	\algS(-1) \tensor \shT_B \rar{\delta} & \algS(0) \tensor \shO_B,
\end{tikzcd}
\end{equation}
which lives in cohomological degrees $-n, \dotsc, 0$, and gives a free resolution of the
trivial graded $\algS$-module $\shO_B$. The differential is defined by the following
compact formula:
\[
	%\delta \colon \algS(-k) \tensor \bigwedge^k \shT_B \to \algS(-k+1) \tensor
	%\bigwedge^{k-1} \shT_B, \qquad
	\delta(s \tensor \partial_J) = \sum_{j=1}^n \sgn(J,j) \cdot \partial_j s \tensor
	\partial_{J \setminus \{j\}}.
\]
Here the notation is as follows. Let $t_1, \dotsc, t_n$ be local holomorphic coordinates on
$B$, and denote by $\partial_j = \partial/\partial t_j$ the resulting holomorphic
vector fields. For any subset $J \subseteq \{1, \dotsc, n\}$, we list the elements
in increasing order as $j_1 < \dotsb < j_{\ell}$, and then define
\[
	\partial_J = \partial_{j_1} \wedge \dotsb \wedge \partial_{j_{\ell}}
	\quad \text{and} \quad
	\dtJ = \dt_{j_1} \wedge \dotsb \wedge \dt_{j_{\ell}},
\]
with the convention that both expressions equal $1$ when $J$ is empty. We also define 
\[
	\sgn(J, j) = \begin{cases}
		(-1)^{k-1} &\text{if $j = j_k$,} \\
		0 & \text{if $j \not\in J$.}
	\end{cases}
\]
Note that we are always using Deligne's Koszul sign rule, according to which
swapping two elements of degrees $p$ and $q$ leads to a sign $(-1)^{pq}$; this is the
reason for the factor $\sgn(J,j)$.

\newpar
Let us start by defining a functor $\BGGL$ from complexes of graded $\algE$-modules
to complexes of graded $\algS$-modules. Let $(M, d)$ be a complex of graded left
$\algE$-modules. We can think of this concretely as a bigraded sheaf of
$\shO_B$-modules
\[
	M = \bigoplus_{i,k} M_k^i,
\]
where $i$ is the cohomological degree and $k$ the degree with respect to the grading;
the differential $d$ maps each $M_k^i$ to $M_k^{i+1}$ and is linear over $\algE$. The
idea is to send this to the induced $\algS$-module $M \otimes_{\shO_B} \algS$, but where
we put the summand $M_k^i \tensor \algS^j$ into cohomological degree $i+k$ and in
degree $j-k$ with respect to the grading.\footnote{The graded $\algS$-modules that we are
interested in come from filtered right $\Dmod_B$-modules, and so they are
naturally \emph{right} $\algS$-modules. Since $\algS$ is commutative, we do not need
to distinguish between left and right modules, but the signs work out more nicely if
we put the factor $\algS$ on the right.} More precisely, we define $\BGGL(M, d)$ to
be the complex whose $i$-th term is the graded $\algS$-module 
\[
	\BGGL(M, d)^i = \bigoplus_{p+q=i} M_p^q \tensor \algS(p),
\]
and whose differential acts on the summand $M_p^q \tensor \algS(p)$ by the
formula
\[
	\sum_{j=1}^n \dt_j \tensor \partial_j + (-1)^p d \tensor \id.
\]
This is the simple complex associated to the double complex with terms $M_p^q
\tensor \algS(p)$, with Deligne's sign rules for the differential. Since $\algS$ is
commutative, the differential is $\algS$-linear and preserves the grading. 

\begin{pexa}
For instance, $\BGGL(\algE)$ is the complex of graded $\algS$-modules
\[
	\shO_B \tensor \algS(0) \to \Omega_B^1 \tensor \algS(1) \to \dotsb \to
	\Omega_B^n \tensor \algS(n),
\]
with differential $\alpha \tensor s \mapsto \sum_j \dtj \wedge \alpha \tensor
\partial_j s$. Up to a factor of $(-1)^n$, this is just the Koszul resolution of the
trivial $\algS$-module $\omega_B$, placed in cohomological degree $n$ and with the
grading shifted by $n$ steps; in other words, $\BGGL(\algE) \cong \omega_B(n) \decal{-n}$.
\end{pexa}

\newpar
We denote by $G(\algS)$ the category of graded $\algS$-modules, and by
$\Dbcoh G(\algS)$ the derived category of cohomologically bounded and coherent
complexes of graded $\algS$-modules; we use similar notation for $\algE$, with
the understanding that modules over the noncommutative algebra $\algE$ are always
\emph{left} modules. One checks that $\BGGL$ descends to an exact functor
\[
	\BGGL \colon \Dbcoh G(\algE) \to \Dbcoh G(\algS)
\]
between the two derived categories. Indeed, a morphism $f \colon (M,d) \to (M',d')$
between two complexes of graded $\algE$-modules clearly induces a morphism of
complexes 
\[
	\BGGL(f) \colon \BGGL(M,d) \to \BGGL(M',d'). 
\]
The point is that if $f$ is a quasi-isomorphism, then $\BGGL(f)$ is also a
quasi-isomorphism: when $(M,d)$ and $(M',d')$ are cohomologically bounded and coherent,
this can easily be checked by a spectral sequence argument.

\newpar
For later use, let us see how the complex $\gr_{\bullet}^F \Cpi$ fits into this
framework.

\begin{pexa} \label{ex:grFCpi}
Recall that $\gr_{\bullet}^F \Cpi$ is the complex of graded $\algS$-modules with terms
\[
	\gr_{\bullet}^F \Cpi^i = \bigoplus_{p+q=i} \pil \shA_M^{n+p,n+q} \tensor_{\shO_M}
	\algS(p)
\]
and with differential (written in local coordinates)
\[
	\dpi \bigl( \alpha \tensor P \bigr) = \dbar \alpha \tensor P + 
	\sum_{j=1}^n \piu(\dtj) \wedge \alpha \tensor \partial_j P.
\]
If we compare this with the definition above, we see that this is exactly
$\BGGL(M,d)$, where $(M,d)$ is the complex of graded $\algE$-modules with
\[
	M_k^i = \pil \shA_M^{n+k,n+i}
\]
and with differential $d = (-1)^k \dbar$. The $\algE$-module structure is the obvious
one: a holomorphic form $\beta \in \Omega_B^j$ acts via wedge product with the
pullback $\piu \beta$. This is compatible with the differential because of the factor
$(-1)^k$.
\end{pexa}

\newpar
This is a good place to prove the claim we made in \Cref{par:claim-beta} when we
looked at holomorphic forms and the decomposition theorem: for any local section
$\beta$ of $\Omega_B^1$, the morphism
\[
	(\piu \beta)_0 \colon \gr_{\bullet}^F \DR(\Pmod_i) \to \gr_{\bullet-1}^F \DR(\Pmod_i)
	\decal{1}
\]
comes from the action of $\Omega_B^1$ on the de Rham complex. Let us 
restate the problem using the BGG correspondence. The morphism $(\piu \beta)_0 \colon
\gr_{\bullet}^F \Pmod_i \to \gr_{\bullet-1}^F \Pmod_i \decal{1}$ induces a morphism
\[
	\BGGL \bigl( \gr_{\bullet}^F \Pmod_i \bigr) \to \BGGL \bigl( \gr_{\bullet-1}^F
	\Pmod_i \decal{1} \bigr) = \BGGL \bigl( \gr_{\bullet}^F \Pmod_i \bigr)(1),
\]
and the claim is that this is just multiplication by $\beta \in \Omega_B^1$. 
We know from the previous paragraph that $\gr_{\bullet}^F \Cpi$ corresponds, under
the BGG correspondence, to the complex $(M,d)$; consequently, the decomposition
theorem gives us an isomorphism (in the derived category)
\[
	(M, d) \cong \bigoplus_{i=-n}^n \BGGL \bigl( \gr_{\bullet}^F \Pmod_i \bigr)
	\decal{-i}.
\]
As we have just seen, the $\algE$-module structure on $(M,d)$ is such that $\beta \in
\Omega_B^1$ acts via wedge product with $\piu \beta$. But this means that wedge
product with $\piu \beta$ also gives the $\algE$-module structure on each
summand $\BGGL \bigl( \gr_{\bullet}^F \Pmod_i \bigr)$, and this is exactly what we
wanted to prove.

\newpar
Next, we define a functor $\BGGR$ from complexes of graded $\algS$-modules to
complexes of graded $\algE$-modules. The general idea is to take the tensor product
with the Koszul complex in \eqref{eq:Koszul}, but to adjust both the degree and the
grading in order to get $\algE$ to act correctly. 

\begin{pexa}
	Suppose we tensor a single graded $\algS$-module $N$ by the Koszul complex. This
	produces a collection of complexes $C_k$, indexed by $k \in \ZZ$, that look like this:
	\[
		\begin{tikzcd}[column sep=small]
			\dotsb \rar & N_{k-2} \tensor \wed^2 \shT_B \rar{\delta} & 
			N_{k-1} \tensor \shT_B \rar{\delta} & N_k \tensor \shO_B
		\end{tikzcd}
	\]
	If we define the action by $\Omega_B^1$ in the obvious way as
	\[
		\Omega_B^1 \tensor \Bigl( N_k \tensor \wed^i \shT_B \Bigr)
		\to N_k \tensor \wed^{i-1} \shT_B, \quad
		\dtj \tensor (s \tensor \partial_J) \mapsto \sgn(J,j) \cdot s \tensor
		\partial_{J \setminus \{j\}},
	\]
	then a short calculation shows that we get a morphism of complexes $\Omega_B^1
	\tensor C_k \to C_{k-1} \decal{1}$, but where the differential in the second
	complex is $-\delta$. So both the grading and the differentials are wrong. To fix
	this problem, we need to work with the complexes $C_k' = C_{-k} \decal{k}$, with
	differential $(-1)^k \delta$, because this makes $\Omega_B^1 \tensor C_k' \to C_{k+1}'$
	behave as it should.
\end{pexa}

\newpar
With this in mind, let $(N, d)$ be a complex of graded $\algS$-modules. We define
$\BGGR(N, d)$ as the complex whose $i$-th term is the graded $\shO_B$-module
\[
	\BGGR(N,d)_k^i = \bigoplus_{p+q=i} N_q^p \tensor \wed^{-q-k} \shT_B,
\]
and whose differential acts on the summand $N_q^p \tensor \wed^{-q-k} \shT_B$
by the formula
\[
	d \tensor \id + (-1)^{p+k} \delta,
\]
where $\delta$ is the standard Koszul differential. This is again the simple
complex associated to the double complex with terms $N_q^p \tensor \wed^{-q-k} \shT_B$; the
extra $(-1)^k$ is justified by the example.
Each term in the complex becomes a graded module over $\algE$ through the morphism
\begin{align*}
	\Omega_B^1 \tensor \Bigl( N_q^p \tensor \wed^{-q-k} \shT_B \Bigr)
	&\to N_q^p \tensor \wed^{-q-k-1} \shT_B, \\
	\dtj \tensor (s \tensor \partial_J) &\mapsto \sgn(J,j) \cdot s \tensor
	\partial_{J \setminus \{j\}},
\end{align*}
and one checks (by the same calculation as in the example) that the differential is
indeed linear over $\algE$. Once again, $\BGGR$ descends to an exact functor
\[
	\BGGR \colon \Dbcoh G(\algS) \to \Dbcoh G(\algE)
\]
between the two derived categories of graded modules.

\begin{pexa} \label{ex:BGG-DR}
	Let $(\Pmod, F_{\bullet} \Pmod)$ be a filtered right $\Dmod_B$-module. The
	associated graded object $\gr_{\bullet}^F \Pmod$ is a graded $\algS$-module.
	Term by term, we have
	\[
		\BGGR \bigl( \gr_{\bullet}^F \Pmod \bigr)_k^i 
			= \gr_i^F \Pmod \tensor \wed^{-i-k} \shT_B,
	\]
	and since the differential is exactly $(-1)^k \delta$, we obtain
	\[
		\BGGR \bigl( \gr_{\bullet}^F \Pmod \bigr) 
			= \bigoplus_{k \in \ZZ} \gr_{-k}^F \DR(\Pmod) \decal{k},
	\]
	where the grading is by $k$, and the $\algE$-module structure is defined
	(as above) by contraction, viewed as a morphism of complexes
	\[
		\Omega_B^j \tensor \gr_{-k}^F \DR(\Pmod) \decal{k} \to \gr_{-k-j}^F \DR(\Pmod)
		\decal{k+j}.
	\]
	So the graded pieces of the de Rham complex of a filtered $\Dmod$-module naturally
	fit into the framework of the BGG correspondence.
\end{pexa}

\newpar
The first result is that $\BGGL$ and $\BGGR$ are adjoint functors. 

\begin{pthm} \label{thm:BGG-adjoint}
	We have a natural isomorphism of bifunctors
	\[
		\Hom_{\Dbcoh G(\algS)} \bigl( \BGGL \argbl, \argbl \bigr)
		\cong \Hom_{\Dbcoh G(\algE)} \bigl( \argbl, \BGGR \argbl),
	\]
	which means that $(\BGGL, \BGGR)$ is an adjoint pair of functors.
\end{pthm}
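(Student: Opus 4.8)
The plan is to establish the adjunction first at the level of complexes and then descend to the derived categories. The key observation is that, forgetting differentials, $\BGGL$ and $\BGGR$ are assembled termwise from entirely standard operations: $\BGGL(M,d)$ is the extension of scalars of $M$ (regarded merely as a bigraded $\shO_B$-module) along $\shO_B \to \algS$, while $\BGGR(N,d)$ is the coinduction $\shHom_{\shO_B}(\algE, N)$ of $N$ (again regarded merely as a bigraded $\shO_B$-module) along $\shO_B \to \algE$ -- here one uses that $\algE$ is finite locally free over $\shO_B$ and that $\wed^{\bullet}\shT_B$, equipped with its contraction action, is precisely the graded $\shO_B$-dual $\shHom_{\shO_B}(\algE,\shO_B)$. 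Since ``extension of scalars $\dashv$ restriction to $\shO_B$'' and ``restriction to $\shO_B$ $\dashv$ coinduction'', both sets of graded-module morphisms (ignoring differentials) are canonically identified with the group of bigraded $\shO_B$-linear maps $M \to N$ of the appropriate bidegree. So at the purely module-theoretic level the adjunction is automatic, and the only content is the differentials.

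Next I would upgrade this to a bijection between \emph{chain} maps, which is the one place where the sign conventions really matter. Using $\algS$-linearity, a morphism of complexes $\BGGL(M,d) \to (N,d')$ is the same thing as a family of $\shO_B$-linear maps $\phi_{p,q} \colon M_p^q \to N$ subject to a recursion roughly of the shape $d'\phi_{p,q} = \sum_{j=1}^n \partial_j \cdot \phi_{p+1,q}(\dtj \wedge \argbl) + (-1)^p\, \phi_{p,q+1}(d\,\argbl)$. Using $\algE$-colinearity, a morphism of complexes $(M,d) \to \BGGR(N,d')$ unwinds -- via the contraction formulas and Deligne's Koszul sign rule -- into a family of $\shO_B$-linear maps satisfying the \emph{same} recursion; the $(-1)^p$ in the differential of $\BGGL$, the $(-1)^{p+k}$ in the differential of $\BGGR$, and the signs $\sgn(J,j)$ hidden in the Koszul differential $\delta$ are exactly what is needed for the two conditions to coincide on the nose. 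The identical bookkeeping shows that the bijection carries null-homotopies to null-homotopies, so one obtains an adjunction between the homotopy categories $K G(\algE)$ and $K G(\algS)$ -- that is, the natural isomorphism of $\Hom$-bifunctors asserted in the statement, together with an explicit unit $M \to \BGGR\BGGL(M)$ and counit $\BGGL\BGGR(N) \to N$ built from the coevaluation $\shO_B \to \algE \tensor \wed^{\bullet}\shT_B$ attached to the Koszul complex; the triangle identities are then one more sign computation.

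Finally I would descend to $\Dbcoh$. Both $\BGGL$ and $\BGGR$ send quasi-isomorphisms of cohomologically bounded and coherent complexes to quasi-isomorphisms (the spectral-sequence argument already indicated when the two functors were introduced) and clearly preserve coherence and boundedness, so each descends to an exact functor on the derived categories. Since the unit, the counit, and the triangle identities are equalities of natural transformations, they survive the Verdier localization; hence $(\BGGL,\BGGR)$ is an adjoint pair on $\Dbcoh G(\algE)$ and $\Dbcoh G(\algS)$, as claimed. The main obstacle is the middle step: checking that the two ``commutes with the differential'' conditions match verbatim, i.e.\ that the sign conventions chosen in the definitions of $\BGGL$, $\BGGR$, and $\delta$ are mutually consistent. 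Everything else is formal; and since all the sheaves involved are locally free of finite rank, there is no sheaf-theoretic subtlety, so this final verification reduces to a calculation in local holomorphic coordinates $t_1,\dotsc,t_n$ on $B$.
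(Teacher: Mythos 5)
Your proposal is correct and follows essentially the same route as the paper: both arguments identify a chain map $\BGGL(M,d)\to(N,d)$ and a chain map $(M,d)\to\BGGR(N,d)$ with one and the same family of $\shO_B$-linear maps $f\colon M_p^q\to N_{-p}^{p+q}$ satisfying a single differential-compatibility recursion, the only content being the sign bookkeeping. The paper makes your coinduction step explicit by writing the formula $g(m)=(-1)^{ik}\sum_J(-1)^{i\abs{J}}\eps(\abs{J})\,f(\dtJ\cdot m)\tensor\partial_J$, but this is exactly the map your ``extension of scalars $\dashv$ restriction $\dashv$ coinduction'' packaging produces.
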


\begin{proof}
	Let $(M,d)$ be a complex of graded $\algE$-modules and let $(N,d)$ be a complex of
	graded $\algS$-modules. A morphism of complexes of graded $\algS$-modules
	\[
		\BGGL(M,d) \to (N,d)
	\]
	is the same as a collection of morphisms of graded $\algS$-modules
	\[
		\BGGL(M,d)^i = \bigoplus_{p+q=i} M_p^q \tensor \algS(p) \to N^i
	\]
	that are compatible with the differentials in the two complexes. This is
	equivalent to giving a collection of morphisms of $\shO_B$-modules $f \colon
	M_p^q \to N_{-p}^{p+q}$, subject to the condition that
	\[
		d f(m) = (-1)^p f(dm) + \sum_{j=1}^n \partial_j f(\dtj \cdot m)
		\quad \text{for $m \in M_p^q$.}
	\]
	From this data, we can define morphisms of $\shO_B$-modules
	\[
		g \colon M_k^i \to \BGGR(N,d)_k^i = N_{-k}^{i+k} \tensor \shO_B \oplus N_{-k-1}^{i+k+1}
		\tensor \shT_B \oplus N_{-k-2}^{i+k+2} \tensor \wed^2 \shT_B \oplus \dotsb
	\]
	by the explicit formula
	\[
		g(m) = (-1)^{ik} \sum_J (-1)^{i \abs{J}} \eps \bigl( \abs{J} \bigr) 
		\cdot f(\dtJ \cdot m) \tensor \partial_J,
	\]
	where $\eps(\ell) = (-1)^{\ell(\ell-1)/2}$ and the summation runs over all subsets
	of $\{1, \dotsc, n\}$. A straightforward calculation shows that this is compatible with
	the differentials and with the action by $\Omega_B$, and therefore defines a
	morphism of complexes of graded $\algE$-modules
	\[
		(M,d) \to \BGGR(N,d).
	\]
	This construction is reversible, and passes to the derived category.
\end{proof}

\newpar
The content of the BGG correspondence is that $\BGGL$ is an equivalence of
categories. We continue to denote by $\Dbcoh G(\algS)$ the derived category of
cohomologically bounded and coherent complexes of graded $\algS$-modules, and
similarly for $\algE$.

\begin{pthm} \label{thm:BGG-equivalence}
	The two functors 
	\[
		\BGGL \colon \Dbcoh G(\algE) \to \Dbcoh G(\algS) \quad \text{and} \quad
		\BGGR \colon \Dbcoh G(\algS) \to \Dbcoh G(\algE)
	\]
	are equivalences of categories that are inverse to each other.
\end{pthm}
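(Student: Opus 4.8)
The plan is to deduce the equivalence formally from the adjunction established in \Cref{thm:BGG-adjoint}. Since $(\BGGL, \BGGR)$ is an adjoint pair, it comes equipped with a unit $\eta \colon \id \to \BGGR \circ \BGGL$ on $\Dbcoh G(\algE)$ and a counit $\eps \colon \BGGL \circ \BGGR \to \id$ on $\Dbcoh G(\algS)$, and the two functors are mutually inverse equivalences precisely when $\eta$ and $\eps$ are both natural isomorphisms. Both $\BGGL$ and $\BGGR$ are exact functors of triangulated categories, and they are compatible with the grading shift $(d)$ and with restriction to open subsets of $B$; hence the class of objects on which $\eta$ (respectively $\eps$) is an isomorphism is closed under shifts, distinguished triangles, direct summands, and localization on $B$.

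The next step is a d\'evissage to a generating set. The assertion is local on $B$, so we may assume that $B$ is Stein; then every coherent graded $\algS$-module has a finite resolution by finite direct sums of the free graded modules $\algS(d)$, and similarly every object of $\Dbcoh G(\algE)$ is obtained from the $\algE(d)$, $d \in \ZZ$, by finitely many cones and shifts (one uses here that $\BGGL$ and $\BGGR$ are $\shO_B$-linear, so the locally free $\shO_B$-coefficients carry through without trouble). By the stability properties recorded above, it therefore suffices to show that $\eps_{\algS(d)}$ and $\eta_{\algE(d)}$ are isomorphisms for every $d$, and by applying a grading shift one reduces to the case $d = 0$.

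Everything then comes down to the fact that the Koszul complex \eqref{eq:Koszul} is a free resolution of the trivial graded $\algS$-module $\shO_B$. On the $\algE$-side this is transparent: from the identification $\BGGL(\algE) \cong \omega_B(n) \decal{-n}$ and the description of $\BGGR$ in \Cref{ex:BGG-DR}, one computes $\BGGR \BGGL(\algE) \cong \algE$ and checks that the unit $\eta_{\algE}$ realizes this isomorphism. On the $\algS$-side, $\BGGL \BGGR(\algS)$ unwinds to the total complex of a double complex assembled from \eqref{eq:Koszul} and $\algS$, and the counit $\eps_{\algS} \colon \BGGL \BGGR(\algS) \to \algS$ is a quasi-isomorphism for the same reason: computing the cohomology of that double complex in the Koszul direction leaves only $\shO_B$ in the bottom degree, tensored with $\algS$. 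I expect the genuine work — and the only place where something can go wrong — to be the bookkeeping with the various cohomological shifts, grading shifts and Koszul signs (our conventions differ slightly from those of \cite{EFS}), needed to confirm that the canonical maps $\eta$ and $\eps$ supplied by \Cref{thm:BGG-adjoint} really do agree on the generators with these evident isomorphisms; the homological input beyond that is nothing more than the exactness of \eqref{eq:Koszul}.
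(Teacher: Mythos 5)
Your reduction to generators works on the $\algS$ side but fails on the $\algE$ side, and that failure is the crux. Locally, $\algE$ is an exterior algebra over $\shO_B$ on $n$ generators, and such an algebra has infinite global dimension: the trivial graded $\algE$-module $\shO_B$ (on which $\Omega_B^j$ acts by zero for $j \geq 1$) is coherent but admits only an infinite minimal free resolution, so it does \emph{not} lie in the thick subcategory of $\Dbcoh G(\algE)$ generated by the $\algE(d)$ under cones, shifts, and direct summands --- that subcategory is exactly the perfect complexes, and the quotient by it (the stable module category of the exterior algebra) is nonzero. Hence the sentence ``similarly every object of $\Dbcoh G(\algE)$ is obtained from the $\algE(d)$ by finitely many cones and shifts'' is false, and you cannot conclude that the unit $\eta$ is an isomorphism by checking it on the free modules. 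This is precisely the subtle point of the BGG correspondence: it identifies all of $\Dbcoh G(\algS)$ with all of $\Dbcoh G(\algE)$, not perfect complexes with perfect complexes, and the mixing of internal degree with cohomological degree built into $\BGGL$ is what makes that possible.

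The remaining ingredients of your plan are sound and coincide with the paper's: adjunction plus exactness of the Koszul complex \eqref{eq:Koszul}. But the paper avoids the d\'evissage altogether. It writes $\BGGR \bigl( \BGGL(M,d) \bigr)$ for an \emph{arbitrary} cohomologically bounded coherent complex $(M,d)$ as the total complex of a triple complex with terms $M_p^q \tensor \algS^{p+r} \tensor \wed^{-r-k} \shT_B$, and runs the spectral sequence that starts with the Koszul differential; exactness of \eqref{eq:Koszul} collapses everything onto the terms with $(p,q,r) = (k,i,-k)$, showing directly that the unit is a quasi-isomorphism, and the counit is handled symmetrically. If you want to retain your structure, note that your $\algS$-side reduction does legitimately prove that the counit is an isomorphism, hence that $\BGGR$ is fully faithful; but you would then still owe a separate argument that $\BGGR$ is essentially surjective (equivalently, that $\BGGL$ is conservative), and the natural way to supply that is the direct Koszul computation you were trying to avoid.
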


\begin{proof}
	Let $(M,d)$ be a complex of graded $\algE$-modules. The adjointness of the two
	functors gives a morphism $(M,d) \to \BGGR \bigl( \BGGL(M,d) \bigr)$. Concretely,
	we have
	\[
		\BGGR \bigl( \BGGL(M,d) \bigr)_k^i 
		= \bigoplus_{p+q+r=i} M_p^q \tensor \algS^{p+r} \tensor \wed^{-r-k} \shT_B,
	\]
	which looks like the simple complex associated to a triple complex with grading
	$(p,q,r)$; and the differential is indeed given by the formula
	\[
		\sum_{j=1}^n \dt_j \tensor \partial_j \tensor \id + 
		(-1)^p d \tensor \id \tensor \id + (-1)^{p+q+k} \id \tensor \delta,
	\]
	where $\delta$ is again the standard Koszul differential. According to
	\Cref{thm:BGG-adjoint}, the morphism $M_k^i \to \BGGR \bigl( \BGGL(M,d)
	\bigr)_k^i$ is described by the formula
	\[
		m \mapsto (-1)^{ik} \sum_J (-1)^{i \abs{J}} \eps \bigl( \abs{J} \bigr) \cdot
		\dtJ m \tensor 1 \tensor \partial_J.
	\]
	The boundedness assumption ensures that the three different spectral sequences of
	the triple complex converge. The third spectral sequence starts from the
	differential $\delta$, and the fact that the Koszul complex in \eqref{eq:Koszul}
	is a resolution of the trivial $\algS$-module $\shO_B$ implies that the only
	nonzero cohomology object is $M_k^i$ for $(p,q,r) = (k,i,-k)$. The convergence of
	the spectral sequence therefore shows that $(M,d) \to \BGGR \bigl( \BGGL(M,d) \bigr)$
	is a quasi-isomorphism. The proof in the other direction is similar.
\end{proof}

\newpar
We are going to need two other facts about the BGG correspondence. The first is a
simple-minded bound on the amplitude of the complex $\BGGL(M,d)$, in terms of the
amplitude of the individual complexes of $\shO_B$-modules $(M_k,d)$.

\begin{pthm} \label{thm:BGG-amplitude}
	Let $(M,d)$ be a bounded complex of graded $\algE$-modules. If $\shH^q
	\bigl( M_p, d) = 0$ whenever $p+q > 0$, then $\shH^i \BGGL(M,d) = 0$ for $i > 0$.
\end{pthm}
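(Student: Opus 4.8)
The plan is to view $\BGGL(M,d)$ as the total complex of a double complex and run the spectral sequence in which one takes the differential of $M$ first. Writing $K^{p,q} = M_p^q \tensor_{\shO_B} \algS(p)$, the definition of $\BGGL$ gives $\BGGL(M,d)^i = \bigoplus_{p+q=i} K^{p,q}$, equipped with the two commuting differentials $d' = (-1)^p (d \tensor \id) \colon K^{p,q} \to K^{p,q+1}$ coming from the differential of $M$, and $d'' = \sum_j \dtj \tensor \partial_j \colon K^{p,q} \to K^{p+1,q}$ coming from the Koszul part; the sign $(-1)^p$ in the definition of the differential of $\BGGL$ is precisely what makes $d' d'' + d'' d' = 0$. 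Since $(M,d)$ is cohomologically bounded, for each fixed total degree $i$ the index $q$ runs over a bounded set, hence so does $p = i-q$, so $\BGGL(M,d)^i$ is a \emph{finite} direct sum of the $K^{p,q}$ and the column filtration (by the value of $p$) is finite in each total degree. This finiteness guarantees that the associated spectral sequence converges to $\shH^{\bullet}\BGGL(M,d)$.

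Next I would compute the $E_1$-page by taking cohomology of the columns with respect to $d'$, which gives $E_1^{p,q} = \shH^q\bigl(M_p \tensor_{\shO_B} \algS(p),\, d \tensor \id\bigr)$, sitting in total degree $p+q$. The crucial input is that $\algS = \Sym(\shT_B) = \bigoplus_{j \geq 0} \Sym^j \shT_B$ is a flat $\shO_B$-module: each $\Sym^j \shT_B$ is locally free of finite rank, and a direct sum of flat modules is flat. Hence $-\tensor_{\shO_B} \algS$ is exact and commutes with cohomology, so $E_1^{p,q} \cong \shH^q(M_p, d) \tensor_{\shO_B} \algS(p)$.

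Now the hypothesis finishes the job: $\shH^q(M_p,d) = 0$ whenever $p+q > 0$, so $E_1^{p,q} = 0$ whenever the total degree $p+q$ is positive. Every later page, and in particular $E_\infty$, is a subquotient of $E_1$, so the abutment vanishes in positive total degrees, i.e. $\shH^i \BGGL(M,d) = 0$ for $i > 0$, which is the claim.

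The argument is essentially formal, so there is no real obstacle; the only points needing a little care are (i) checking that the signs built into the definition of $\BGGL$ really do make $(K^{\bullet,\bullet}, d', d'')$ a double complex, so that the spectral sequence is available, and (ii) observing that although each $\BGGL(M,d)^i$ may fail to be a coherent $\shO_B$-module (since $\algS$ is infinitely generated over $\shO_B$), this plays no role, because within each total degree only finitely many pairs $(p,q)$ occur. If one prefers to avoid spectral sequences entirely, the same conclusion follows by induction on the length of the column filtration, using the long exact cohomology sequences attached to the short exact sequences of complexes $0 \to F^{a+1} \to F^a \to \gr^a \to 0$.
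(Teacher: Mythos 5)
Your proposal is correct and follows essentially the same route as the paper: both view $\BGGL(M,d)$ as a double complex and run the spectral sequence starting from the differential $d \tensor \id$, using boundedness for convergence and the vanishing of $\shH^q(M_p,d) \tensor \algS(p)$ in positive total degree on the $E_1$-page. Your added remarks on the flatness of $\algS$ over $\shO_B$ and the finiteness of the column filtration in each total degree are correct elaborations of points the paper leaves implicit.
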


\begin{proof}
	We view $\BGGL(M,d)$ as a double complex with terms $M_p^q \tensor \algS(p)$ and
	with the two commuting differentials $\sum_j \dtj \tensor \partial_j$ and $d
	\tensor \id$. The spectral sequence that starts from the differential $d \tensor
	\id$ converges because $M_p^q \tensor \algS(p) = 0$ for $\abs{q} \gg 0$. On
	the $E_1$-page, we get the graded $\algS$-modules $\shH^q(M_p,
	d) \tensor \algS(p)$, which vanish for $p+q > 0$ by assumption. The result now
	follows from the fact that the spectral sequence converges to the cohomology of
	the complex $\BGGL(M,d)$.
\end{proof}

\newpar
The second fact we need is that the BGG correspondence interacts nicely with
duality. Let $\algS'$ denote $\algS$, but with the $\algS$-module structure in which
$\Sym^k(\shT_B)$ act with an additional sign of $(-1)^k$. (Geometrically, this
amounts to pulling back by the automorphism that acts as $-1$ on the fibers of the
contangent bundle of $B$.)

\begin{pthm} \label{thm:BGG-duality}
	Let $(M,d)$ be a bounded complex of finitely generated graded $\algE$-modules.
	One has a natural isomorphism 
	\[
		\derR \shHom_{\algS} \Bigl( \BGGL(M,d), \algS \tensor \omega_B \decal{n} \Bigr)
		\cong \BGGL \Bigl( \derR \shHom_{\shO_B} \bigl( (M,d), \omega_B \decal{n}
		\bigr) \Bigr) \tensor_{\algS} \algS'.
	\]
\end{pthm}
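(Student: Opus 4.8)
The plan is to make both sides of the claimed isomorphism completely explicit as bigraded complexes and then match them term by term, the only substantive work being the bookkeeping of signs. Since all of the functors involved are local on $B$ and natural in $(M,d)$, it suffices to produce the isomorphism after restricting to an arbitrary Stein open subset, so I would first assume that $B$ is Stein. On a Stein manifold every coherent sheaf admits a finite locally free resolution, so $(M,d)$ is quasi-isomorphic, as a complex of graded $\algE$-modules, to a bounded complex whose terms have the form $\algE\tensor_{\shO_B}W$ with $W$ locally free of finite rank; in particular all the underlying $\shO_B$-modules $M_p^q$ are then locally free. Because $\BGGL$ preserves quasi-isomorphisms, I may replace $(M,d)$ by such a complex, and then $\BGGL(M,d)$ is a bounded complex of locally free graded $\algS$-modules, so $\derR\shHom_{\algS}\bigl(\BGGL(M,d),\algS\tensor\omega_B\decal{n}\bigr)$ is computed by the naive $\shHom$-complex, and $\derR\shHom_{\shO_B}\bigl((M,d),\omega_B\decal{n}\bigr)$ is computed termwise.

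Next I would unwind both sides. Using the canonical identification $\shHom_{\algS}\bigl(\algS(p),\algS\bigr)\cong\algS(-p)$, the left-hand side becomes the complex whose degree-$m$ term is $\bigoplus_{p+q=-m-n}\shHom_{\shO_B}(M_p^q,\omega_B)\tensor_{\shO_B}\algS(-p)$, with differential the transpose of the two commuting differentials $\sum_j\dtj\tensor\partial_j$ and $(-1)^p d\tensor\id$ that define $\BGGL(M,d)$ (together with the usual sign of a $\shHom$-complex and the shift by $n$). On the other side, the Grothendieck dual $N:=\derR\shHom_{\shO_B}\bigl((M,d),\omega_B\decal{n}\bigr)$ is, as a complex of graded $\algE$-modules, the one whose graded piece in cohomological degree $a$ and internal degree $k$ is $\shHom_{\shO_B}(M_{-k}^{-a-n},\omega_B)$, on which $\Omega_B^1$ acts by precomposition with the action on $M$; feeding this into the definition of $\BGGL$, the degree-$m$ term of $\BGGL(N)$ is $\bigoplus_{p'+q'=m}\shHom_{\shO_B}(M_{-p'}^{-q'-n},\omega_B)\tensor_{\shO_B}\algS(p')$, which matches the left-hand side termwise under the substitution $(p',q')=(-p,-q-n)$.

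What remains --- and this is the one real obstacle --- is to check that the differentials and the $\algS$-module structures match, which is pure sign bookkeeping with Deligne's Koszul rule. The transpose of $\sum_j\dtj\tensor\partial_j$ is ``precompose with $\dtj$'' tensored with ``multiply by $\partial_j$ on $\algS$'' (the latter being self-transpose), which is precisely the differential of $\BGGL(N)$ once one remembers that $\Omega_B^1$ acts on $N$ by precomposition; and the transpose of $(-1)^p d$ recombines with the $\shHom$-complex sign and the shift to give $(-1)^{p'}d_N$ under $(p',q')=(-p,-q-n)$. The point of the twist $\algS'$ and of the combinatorial factors $\eps(\abs{J})=(-1)^{\abs{J}(\abs{J}-1)/2}$ is exactly to turn these ``structural'' matches into literal equalities: reversing the direction of multiplication when one transposes the Koszul resolution \eqref{eq:Koszul} introduces the sign that distinguishes $\algS$ from $\algS'$, and the remaining signs are absorbed by a diagonal isomorphism built from $\eps(\abs{J})$ and powers of $-1$ indexed by $(p,q)$, entirely in the style of the adjunction isomorphism in \Cref{thm:BGG-adjoint}. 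The single structural input underneath all of this is that $\algS=\Sym(\shT_B)$ is Gorenstein over $\shO_B$ --- equivalently, that the Koszul complex \eqref{eq:Koszul} is a finite locally free resolution of $\shO_B$ whose $\algS$-linear dual, suitably shifted and twisted by $\algS'$, is again a resolution of $\shO_B$, so that $\shExt^j_{\algS}(\shO_B,\algS)=0$ for $j\neq n$ and $\shExt^n_{\algS}(\shO_B,\algS)\cong\omega_B(n)$. This self-duality is what makes $\algS\tensor\omega_B\decal{n}$ the correct object to dualize against and what forces the appearance of $\algS'$ on the right-hand side. Since every identification used is natural in $(M,d)$, the local isomorphisms glue, and I would conclude the stated natural isomorphism.
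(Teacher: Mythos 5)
Your overall strategy --- make both sides explicit as bigraded complexes, match terms via $\shHom_{\algS}\bigl(M_p^q\tensor\algS(p),\algS\tensor\omega_B\bigr)\cong\shHom_{\shO_B}(M_p^q,\omega_B)\tensor\algS(-p)$, and absorb the residual sign in the transposed Koszul differential into the twist by $\algS'$ --- is the same as the paper's, and your term-by-term bookkeeping (the substitution $(p',q')=(-p,-q-n)$) is correct. But your reduction step contains a genuine error. A bounded complex of finitely generated graded $\algE$-modules is \emph{not} in general quasi-isomorphic to a bounded complex of free graded $\algE$-modules $\algE\tensor_{\shO_B}W$: the exterior algebra $\algE$ has infinite global dimension (already the trivial module $\shO_B$ has an infinite minimal free resolution, the Koszul/Cartan resolution), so Stein-ness of $B$ does not buy you this. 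What you actually need is much weaker --- only that the terms $M_p^q$ be locally free over $\shO_B$, so that $\BGGL(M,d)$ becomes a bounded complex of projective graded $\algS$-modules and $\derR\shHom_{\shO_B}$ is computed termwise --- and that weaker statement is achievable (iterate induced-module covers $\algE\tensor_{\shO_B}M\onto M$ and truncate after $\dim B$ steps using the syzygy theorem over $\shO_B$), but it is not what you asserted, and your stated justification does not deliver it.

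The second gap is the final sentence. You construct the isomorphism only after restricting to a Stein open and choosing a resolution there; ``naturality in $(M,d)$'' does not let you glue isomorphisms in the derived category of sheaves over an open cover, because the derived category is not a stack. To localize legitimately you would need to first exhibit a globally defined morphism (or zigzag of morphisms of actual complexes) between the two sides and only then check locally that it is a quasi-isomorphism. The paper sidesteps both problems at once: it keeps $(M,d)$ as is, observes that every term of $\BGGL(M,d)$ is an \emph{induced} $\algS$-module $M_p^q\tensor_{\shO_B}\algS(p)$, and instead resolves the target $\omega_B\decal{n}$ by a global, finite complex of injective $\shO_B$-modules (finite because $\omega_B$ has injective dimension $n$). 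The double complex $\shHom_{\algS}\bigl(\BGGL(M,d),\algS\tensor I^{\bullet}\bigr)$ then literally equals $\BGGL(\Mh,d)\tensor_{\algS}\algS'$ as a complex of graded modules, with no choices to glue. Your closing remarks about $\algS$ being Gorenstein and $\shExt_{\algS}^{\bullet}(\shO_B,\algS)$ are true but are not actually used anywhere in the argument.
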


\begin{proof}
	The boundedness assumption implies that there are only finitely many value of $i$
	and $k$ for which $M_k^i \neq 0$; this is needed in order to avoid infinite direct
	sums (which do not commute with $\shHom$). Pick a resolution for $\omega_B \decal{n}$ by
	injective $\shO_B$-modules, say
	\[
		0 \to \omega_B \to I^{-n} \to \dotsb \to I^{-1} \to I^0 \to 0;
	\]
	we will abbreviate this by $(I, d)$; recall that the injective dimension of
	$\omega_B$ is $n = \dim B$ according to \cite{Golovin}. We can then represent
	$\derR \shHom_{\shO_B} \bigl( (M,d), \omega_B \decal{n} \bigr)$ by the complex of
	graded $\shO_B$-modules $(\Mh, d)$, where
	\[
		\Mh_k^i = \bigoplus_{p+q=i} \shHom_{\shO_B} \bigl( M_{-k}^{-p}, I^q \bigr),
	\]
	and where the differential is given by the formula $df = f \circ d + (-1)^p d
	\circ f$ for any local section $f \in \shHom_{\shO_B} \bigl( M_{-k}^{-p}, I^q
	\bigr)$. Since each $M^i$ is a graded left $\algE$-module, the terms in the
	complex $(\Mh, d)$ are naturally
	\emph{right} $\algE$-modules, but we can convert them back into left $\algE$-modules
	by letting $\Omega_B^j$ act with an extra factor of $(-1)^j$, meaning that
	\[
		(\dtj f)(m) = -f \bigl( \dtj m \bigr).
	\]
	With this convention, $(\Mh,d)$ is a complex of graded left $\algE$-modules, and
	so $\BGGL(\Mh,d)$ is defined. This describes the complex on the right-hand side of
	the claimed isomorphism.

	Now we turn to the complex $\derR \shHom_{\algS} \bigl( \BGGL(M,d), \algS
	\tensor \omega_B \decal{n} \bigr)$ on the left-hand side. It is realized by the
	simple complex associated to the double complex $\shHom_{\algS} \bigl( \BGGL(M,d),
	\algS \tensor I^{\bullet} \bigr)$. After some rearranging, the $i$-th term of
	the resulting simple complex comes out to be 
	\begin{align*}
		\bigoplus_{p+q=i} \shHom_{\algS} \bigl( \BGGL(M,d)^{-p}, \algS \tensor I^q \bigr) 
		%&= \bigoplus_{p+q=i} \bigoplus_{a+b=p} 
		%\shHom_{\algS} \bigl( \algS(-b) \tensor M_{-b}^{-a}, \algS \tensor I^q \bigr) \\
		%&= \bigoplus_{a+b+q=i} \shHom_{\shO_B}(M_{-b}^{-a}, I^q) \tensor \algS(b) \\
		= \bigoplus_{p+q=i} \Mh_p^q \tensor \algS(p),
	\end{align*}
	and the differential matches up with the differential in $\BGGL(\Mh,d)$, except for
	an extra $-1$ in front of the term $\sum_j \dt_j \tensor \partial_j$. This is
	corrected by tensoring with $\algS'$, whence the result.
\end{proof}

\newpar
Note that $\algS' \cong \algS$ are isomorphic as graded $\algS$-modules, and so we
can remove the tensor product with $\algS'$ from the statement if we like.

\section{Proof of the main theorem}

\newpar
In this chapter, we give the proof of \Cref{thm:isomorphism}; along the way, we
also establish \Cref{thm:HL-sigma} (and therefore \Cref{conj:symmetry}). Even though
we are interested in relating $\gr_{\bullet}^F \Pmod_k$ and $\Omega_M^{n+k}$
individually, it turns out that the necessary structure is only there if we look at
all of these objects together. The general idea is the following. There are
three different ways to make the direct sum of the complexes $G_{i,k}$ (with
appropriate shifts) into a complex of graded modules over the algebra $\Omega_B =
\bigoplus_j \Omega_B^j$. In the first, a local section $\beta$ of $\Omega_B^1$ acts
via $\piu \beta$; in the second, via the associated vector field $v(\beta)$; and in
the third, via the associated $(0,1)$-form $f(\beta)_1$ (in the notation of
\Cref{chap:differential-forms}). These three different structures are related by the
action of the symplectic form $\sigma$ and the K\"ahler form $\omega$, and together
with the BGG correspondence and basic facts about Hodge modules, this gives us enough
information to prove \Cref{thm:isomorphism}.

\newpar
From the Hodge modules $P_{-n}, \dotsc, P_n$, we get a collection of graded
$\algS$-modules $\gr_{\bullet}^F \Pmod_i$, for $i = -n, \dotsc, n$. The BGG
correspondence associates to each of these graded $\algS$-modules a complex of graded
$\algE$-modules $\BGGR(\gr_{\bullet}^F \Pmod_i)$; recall from \Cref{ex:BGG-DR} that
\[
	\BGGR(\gr_{\bullet}^F \Pmod_i) = \bigoplus_{k=-n}^n G_{i,k} \decal{i+k},
\]
and that the degree of the summand $G_{i,k} \decal{i+k}$ with respect to the grading
is $k$. Adding all of these objects together, we obtain the first object
\[
	G = \bigoplus_{i,k=-n}^n G_{i,k} \decal{k} =
	\BGGR \left( \bigoplus_{i=-n}^n \gr_{\bullet}^F \Pmod_i \decal{-i} \right).
\]
In other words, $G$ is a complex of graded $\algE$-modules, with the summand
$G_{i,k} \decal{k}$ in graded degree $k$; it is the object that corresponds, under
the BGG correspondence in \Cref{thm:BGG-equivalence}, to the complex of graded
$\algS$-modules
\[
	\bigoplus_{i=-n}^n \gr_{\bullet}^F \Pmod_i \decal{-i}.
\]
In the notation that we introduced in \Cref{chap:differential-forms}, a local section 
$\beta$ of $\Omega_B^1$ acts on the complex $G$ via the collection of graded
morphisms
\[
	\piu \beta \colon G_{i,k} \decal{k} \to G_{i,k+1} \decal{k+1};
\]
of course, this uniquely determines the module structure on $G$, because $\Omega_B$ is
generated by $\Omega_B^1$ as an $\shO_B$-algebra. We will use this idea several times
below.

\newpar
We have a concrete model for $G$ in terms of smooth differential forms. Recall from
\Cref{par:Laumon} the definition of the complex $\gr_{\bullet}^F \Cpi$. This is the
complex of graded $\algS$-modules with terms
\[
	\gr_{\bullet}^F \Cpi^i = \bigoplus_{p+q=i} \pil \shA_M^{n+p,n+q} \tensor_{\shO_M} 
	\Sym^{\bullet+p}(\shT_B),
\]
and with differential (in local coordinates)
\[
	\dpi \bigl( \alpha \tensor P \bigr) = \dbar \alpha \tensor P + 
	\sum_{j=1}^n \piu(\dtj) \wedge \alpha \tensor \partial_j P.
\]
From Saito's theory, we get an isomorphism
\[
	\gr_{\bullet}^F \Cpi \cong \bigoplus_{i=-n}^n \gr_{\bullet}^F \Pmod_i \decal{-i}
\]
in the derived category of graded $\algS$-modules; here the decomposition is induced
by the one in the decomposition theorem. The calculation in \Cref{ex:grFCpi} shows
that $\gr_{\bullet}^F \Cpi \cong \BGGL(M,d)$, where $(M,d)$ is the complex of graded
$\algE$-modules with 
\[
	M_k^i = \pil \shA_M^{n+k,n+i} \quad \text{and} \quad d = (-1)^k \dbar.
\]
The $\algE$-module structure is the obvious one: a holomorphic form $\beta \in
\Omega_B^j$ acts via wedge product with the pullback $\piu \beta$. Since the BGG
correspondence is an equivalence of categories, this means that we have an isomorphism
\[
	G \cong (M,d)
\]
in the derived category of graded $\algE$-modules. This says in particular that the
complex $(M,d)$ splits in the derived category (because of the decomposition
theorem), which is an extremely deep fact about proper morphisms between K\"ahler manifolds.

\newpar
Note that $G$ is also related to the direct sum of all the sheaves of holomorphic
forms on $M$, because Saito's formula in \eqref{eq:pil-Omega} gives us an isomorphism
\[
	G \cong \bigoplus_{k=-n}^n \derR \pil \Omega_M^{n+k} \decal{n}
\]
in the derived category of graded $\shO_B$-modules.

\newpar
Now we introduce the second object. This is the complex
\[
	G_v = \bigoplus_{i,k} G_{i,k} \decal{k},
\]
in which the summand $G_{i,k} \decal{k}$ sits in graded degree $-k$. As a complex of
$\shO_B$-modules, this is of course isomorphic to $G$, but the grading and the
$\algE$-module structure are different. We turn $G_v$ into a complex of graded
$\algE$-modules in the following way. Recall that we have the Weil element $\wsigma$,
associated to the $\sltwo$-representation on $\bigoplus_k \Omega_M^{n+k}$. It
determines an automorphism of the complex $(M,d)$ that maps $M_k^i$ isomorphically to
$M_{-k}^i$. According to \Cref{lem:contraction}, for any local section $\beta$ of
$\Omega_B^1$, it also exchanges the action by the holomorphic form $\piu \beta$
agains the action by the holomorphic vector field $v(\beta)$. In other words,
\[
	\wsigma \colon G \to G_v
\]
is an isomorphism between $G$ and $G_v$, as complexes of graded $\shO_B$-modules.
Note that $\wsigma$ respects the grading, but there is no reason why it should
preserve the individual summands $G_{i,k} \decal{k}$ in the decompositions of $G$
and $G_v$. 

\newpar
We use this isomorphism to give $G_v$ the structure of a complex of graded
$\algE$-modules. \Cref{lem:contraction} tells us that we have, for every local
section $\beta \in \Omega_B^1$, a commutative diagram
\[
	\begin{tikzcd}
		G \rar{\wsl_{\sigma}} \dar{\beta} & G_v \dar{v(\beta)} \\
		G \rar{\wsl_{\sigma}} & G_v.
	\end{tikzcd}
\]
The $\algE$-module structure on $G_v$ is therefore the unique one for which a
locally defined holomorphic form $\beta \in \Omega_B^1$ acts via the collection of
graded morphisms
\[
	v(\beta) \colon G_{i,k} \decal{k} \to G_{i-1,k-1} \decal{k-1}.
\]
Let me stress that this step of the construction works on the level of smooth forms,
meaning on the complex $(M,d)$, because the symplectic form is a holomorphic form of
type $(2,0)$. Neither Hodge theory nor the decomposition theorem are needed here.

\newpar
Let us now describe the third object. This is the complex of graded $\shO_B$-modules
\[
	G_f = \bigoplus_{i,k} G_{i,k} \decal{i+k}
\]
in which the term $G_{i,k} \decal{k}$ sits in graded degree $i-k$. The reason for
this choice of grading is the following. The topmost component of the K\"ahler form
gives us a morphism of complexes $\omega_2 \colon G_{i,k} \to G_{i+2,k+1} \decal{2}$;
from the resulting representation of the Lie algebra $\sltwo$, we get a second Weil
element $\womegat$. For every $i,k \in \ZZ$, it induces an isomorphism of complexes
\[
	\womegat \colon G_{i,k} \decal{k} \to G_{-i,k-i} \decal{k-2i}.
\]
The resulting isomorphism of complexes of $\shO_B$-modules
\[
	\womegat \colon G_v \to G_f
\]
therefore respects the grading exactly when we put the summand $G_{i,k} \decal{i+k}$
in the complex $G_f$ in graded degree $i-k$. Unlike the other isomorphism, this one
cannot be defined on the level of smooth forms, because $\womegat$ does not commute
with the operator $\dbar$; instead, we have to rely on difficult results from
Hodge theory (such as the relative Hard Lefschetz theorem) to do the work for us.

\newpar
As in the previous step, we use the isomorphism $\womegat \colon G_v \to G_f$ to turn
the object $G_f$ into a complex of graded $\algE$-modules. We can again describe the
$\algE$-module structure on $G_f$ very concretely with the
help of the results from \Cref{chap:differential-forms}. Recall from \Cref{lem:xi_k}
that the operator $v(\beta)$ is primitive of weight $-1$ with respect to the
representation of $\sltwo$ determined by $\omega_2$. Consequently, we have 
\[
	\Ad \womegat \bigl( v(\beta) \bigr) = \womegat v(\beta) \, \womegat^{-1} 
	= [\omega_2, v(\beta)] = f(\beta)_1. 
\]
For every local section $\beta$ of $\Omega_B^1$, we therefore get another commutative
diagram
\[
	\begin{tikzcd}
		G_v \rar{\womegat} \dar{v(\beta)} & G_f \dar{f(\beta)_1} \\
		G_v \rar{\womegat} & G_f.
	\end{tikzcd}
\]
It follows that the $\algE$-module structure on the complex $G_f$ is the unique one
for which a local section $\beta$ of $\Omega_B^1$ acts via the collection of graded
morphisms
\[
	f(\beta)_1 \colon G_{i,k} \decal{i+k} \to G_{i+1,k} \decal{i+k+1}.
\]
This is compatible with the grading (because the left-hand side sits in
graded degree $i-k$ and the right-hand side in graded degree $i-k+1$.) 

\newpar
The rest of the proof consists mostly in applying the BGG correspondence for the two
graded algebras $\algS = \Sym(\shT_B)$ and $\algE = \bigoplus_j \Omega_B^j$. Recall
that we have
\[
	G = \BGGR \left( \bigoplus_{i=-n}^n \gr_{\bullet}^F \Pmod_i \decal{-i} \right).
\]
By construction, $\womegat \wsl_{\sigma} \colon G \to G_f$ is an
isomorphism in the derived category of graded $\algE$-modules; the $\algE$-module
structure on $G_f$ has the property that a local section $\beta$ of $\Omega_B^1$
acts via the operator $f(\beta)_1$. Since $f(\beta)_1 \colon G_{i,k} \decal{i} \to
G_{i+1,k} \decal{i+1}$ only changes the index $i$, it is
obvious that $G_f$ decomposes, as a complex of graded $\algE$-modules, into a direct sum
\[
	G_f = \bigoplus_{k=-n}^n \left( \bigoplus_{i=-n}^n G_{i,k} \decal{i} \right) \decal{k}.
\]
Let $F_{-k}$ denote the complex of graded $\algS$-modules that the BGG correspondence
associates to the $k$-th summand in this decomposition; in symbols,
\[
	F_{-k} = \BGGL \left( \bigoplus_{i=-n}^n G_{i,k} \decal{i} \right). 
\]
Since the BGG correspondence is an equivalence of categories (by
\Cref{thm:BGG-equivalence}), we conclude from the isomorphism $G \cong G_f$ in the
derived category of graded $\algE$-modules that
\begin{equation} \label{eq:iso-Fk}
	\bigoplus_{i=-n}^n \gr_{\bullet}^F \Pmod_i \decal{-i} 
	\cong \bigoplus_{k=-n}^n F_k \decal{-k},
\end{equation}
in the derived category of graded $\algS$-modules. 

\newpar
What we need to do now is to prove that $F_k \cong \gr_{\bullet}^F \Pmod_k$ for all
$k = -n, \dotsc, n$. Since we know next to nothing about the complexes $F_k$, this may
seem impossible -- but in fact, we have just enough information to make it work.

\newpar
The left-hand side of \eqref{eq:iso-Fk} is a direct sum of graded
$\algS$-modules, each of which is an $n$-dimensional Cohen-Macaulay module. 
It follows that each complex $F_k$ also splits into a direct sum of
$n$-dimensional Cohen-Macaulay modules; consequently, we get a decomposition
\[
	F_k \cong \bigoplus_{\ell \in \ZZ} F_{k, \ell} \decal{-\ell}
\]
in the derived category of graded $\algS$-modules. Each $F_{k,\ell}$ is a graded
$\algS$-module that is $n$-dimensional and Cohen-Macaulay. Our task now becomes showing that
$F_{k,\ell} = 0$ for $\ell \neq 0$. It turns out that this is a purely formal
consequence of what we know about the complexes $G_{i,k}$. On the one hand, the bound
on the amplitude of the complex $G_{i,k}$ (in \Cref{lem:amplitude}) implies that
$F_k$ is concentrated in nonpositive degrees.

\begin{plem} \label{lem:Fkell}
	We have $F_{k, \ell} = 0$ for $\ell > 0$.
\end{plem}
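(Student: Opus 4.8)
The plan is to deduce the lemma from the amplitude estimate \Cref{thm:BGG-amplitude} together with the bound on the amplitude of the complexes $G_{i,k}$ in \Cref{lem:amplitude}. Concretely, I want to show that $\shH^i F_k = 0$ for every $i > 0$. This suffices: since $F_k$ decomposes in the derived category as $\bigoplus_{\ell} F_{k,\ell} \decal{-\ell}$ with each $F_{k,\ell}$ a genuine graded $\algS$-module, one has $\shH^{\ell} F_k \cong F_{k,\ell}$, so the vanishing of the positive cohomology of $F_k$ is exactly the assertion that $F_{k,\ell} = 0$ for $\ell > 0$.

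First I would unwind the definition of $F_{-k}$. Recall that $F_{-k} = \BGGL(M^{(k)})$, where $M^{(k)} = \bigoplus_{i=-n}^n G_{i,k} \decal{i}$ is, up to a cohomological shift, the $k$-th summand of $G_f$; it is a bounded complex of coherent graded $\algE$-modules, and the summand $G_{i,k}\decal{i}$ sits in $\algE$-degree $i-k$, since in $G_f$ the term $G_{i,k}\decal{i+k}$ sits in degree $i-k$. Hence the graded piece $M^{(k)}_p$ is just the single complex $G_{p+k,k}\decal{p+k}$, so that
\[
	\shH^q \bigl( M^{(k)}_p \bigr) \cong \shH^{q+p+k} \bigl( G_{p+k,k} \bigr).
\]
By \Cref{lem:amplitude}, the complex $G_{p+k,k}$ is concentrated in degrees $\leq \min(p+k,\,k,\,p+2k)$, and in particular in degrees $\leq k$; therefore $\shH^{q+p+k}(G_{p+k,k})$ can be nonzero only if $q+p+k \leq k$, i.e.\ only if $p+q \leq 0$. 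This is exactly the hypothesis of \Cref{thm:BGG-amplitude}, so that theorem yields $\shH^i F_{-k} = \shH^i \BGGL(M^{(k)}) = 0$ for $i > 0$. Since $k$ was arbitrary, this is the claim.

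I do not anticipate a genuine obstacle here: once the grading bookkeeping has correctly pinned down $M^{(k)}_p$ as the single shifted complex $G_{p+k,k}\decal{p+k}$, the argument is a direct combination of the two already-established facts \Cref{lem:amplitude} and \Cref{thm:BGG-amplitude}. The one point that deserves care is making sure the cohomological shift in each graded piece is recorded correctly, so that the degree bound of \Cref{lem:amplitude} translates into the sharp inequality $p+q \leq 0$ (coming from the ``$k$'' entry in the $\min$) rather than something weaker.
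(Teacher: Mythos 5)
Your proposal is correct and follows essentially the same route as the paper's own proof: translate the claim into the vanishing of $\shH^{i}F_k$ for $i>0$, use \Cref{lem:amplitude} to check that each graded piece of the relevant $\algE$-module has cohomology only in degrees $q$ with $p+q\leq 0$, and then apply \Cref{thm:BGG-amplitude}. The only (harmless) differences are that you phrase everything in terms of $F_{-k}$ rather than $F_k$, and that you spell out the step $\shH^{\ell}F_k\cong F_{k,\ell}$, which the paper leaves implicit.
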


\begin{proof}
	Recall that we defined $F_k$ with the help of the BGG correspondence as
	\[
		F_k = \BGGL \left( \bigoplus_{i=-n}^n G_{i,-k} \decal{i} \right).
	\]
	According to \Cref{lem:amplitude}, we have $\shH^j G_{i,k} = 0$ for $j > k$, and
	therefore $\shH^j \bigl( G_{i,-k} \decal{i} \bigr) = \shH^{i+j} G_{i,-k} = 0$ for
	$i+j+k > 0$. Since the summand $G_{i,-k} \decal{i}$ has degree $i+k$ with respect
	to the grading on $G_f$ (and therefore on the object in parentheses), this is
	exactly what we need in order to apply \Cref{thm:BGG-amplitude}.  The conclusion
	is that $\shH^j F_k = 0$ for $j > 0$.  
\end{proof}

\newpar
We can use duality to prove the vanishing of the graded $\algS$-modules $F_{k,\ell}$ for
$\ell < 0$. For the sake of clarity, let us temporarily write
\[
	G_k = \bigoplus_{i=-n}^n G_{i,-k} \decal{i}
\]
for the complex of graded $\algE$-modules on the right-hand side; then $F_k =
\BGGL(G_k)$. Recall from \eqref{eq:duality} that $G_{-i,-k} \cong \derR
\shHom_{\shO_B} \bigl( G_{i,k}, \omega_B \decal{n} \bigr)$. Therefore
\[
	\Gh_k = \derR \shHom_{\shO_B} \Bigl( G_k, \omega_B \decal{n} \Bigr)
	\cong \bigoplus_{i=-n}^n G_{i,k} \decal{i},
\]
where the notation $\Gh_k$ comes from the proof of \Cref{thm:BGG-duality}. Note that
the summand $G_{i,k} \decal{i}$ again ends up having degree $i-k$ with respect to the
induced grading on $\Gh_k$.\footnote{In fact, it should be the case that $\Gh_k \cong
G_{-k}$, but this would be tedious to check, and fortunately it turns out to be
irrelevant for the proof.} By the same argument as in \Cref{lem:Fkell}, the complex
$\BGGL \bigl( \Gh_k \bigr)$ is concentrated in degrees $\leq 0$.
Now \Cref{thm:BGG-duality} gives 
\begin{align*}
	\BGGL(\Gh_k) &\cong
	\derR \shHom_{\algS} \Bigl( \BGGL(G_k), \algS \tensor \omega_B \decal{n} \Bigr)
	\cong \omega_B \tensor \derR \shHom_{\algS} \Bigl( F_k, \algS \decal{n} \Bigr)  \\
	&\cong \bigoplus_{\ell \leq 0} 
	\omega_B \tensor \derR \shHom_{\algS} \Bigl( F_{k, \ell}, \algS \decal{n} \Bigr)
	\decal{\ell}.
\end{align*}
Since each $F_{k,\ell}$ is an $n$-dimensional Cohen-Macaulay module, the complex on
the right-hand side is concentrated in degrees $\geq 0$. But the complex on the
left-hand side is concentrated in degrees $\leq 0$, and so it must be that
$F_{k,\ell} = 0$ for $\ell < 0$. 

\newpar
The conclusion is that each complex $F_k$ is actually a single graded $\algS$-module
(in cohomological degree $0$). Because of the isomorphism in \eqref{eq:iso-Fk}, we
then get
\[
	F_k \cong \gr_{\bullet}^F \Pmod_k.
\]
If we now use the BGG correspondence in the other direction, we find that
\begin{equation} \label{eq:iso-final}
	\bigoplus_{i=-n}^n G_{i,-k} \decal{i} \cong 
	\bigoplus_{i=-n}^n G_{k,i} \decal{i+k}
\end{equation}
are isomorphic in the derived category of graded $\algE$-modules. We will now take a
short break from proving \Cref{thm:isomorphism} and turn to the
symplectic relative Hard Lefschetz theorem (in \Cref{thm:HL-sigma}) and the symmetry
conjecture of Shen and Yin (in \Cref{conj:symmetry}).

\newpar
If we forget the grading and the $\algE$-module structure, \eqref{eq:iso-final}
becomes an isomorphism in the derived category of $\shO_B$-modules. After replacing
$k$ by $-k$ and using the relative Hard Lefschetz isomorphism in \eqref{eq:HL-omega},
we can put it into the form
\[
	\bigoplus_{i=-n}^n G_{i,k} \decal{i+k} \cong 
	\bigoplus_{i=-n}^n G_{-k,i} \decal{i} \cong
	\bigoplus_{i=-n}^n G_{k,i+k} \decal{i+2k} \cong
	\bigoplus_{i=-n}^n G_{k,i} \decal{i+k}.
\]
This is true for every $k = -n, \dotsc, n$, and so we are in a position where we can
apply \Cref{thm:equivalence}. The conclusion is that the morphism
\[
	\sigma_1^k \colon G_{i,-k} \to G_{i,k} \decal{2k}
\]
is an isomorphism for every $k \geq 1$; this establishes the ``symplectic relative
Hard Lefschetz theorem''. It also follows that the complexes $G_{i,k}$ and $G_{k,i}$
are isomorphic in the derived category, as predicted by the symmetry conjecture of
Shen and Yin. 

\newpar \label{par:XYH}
We can now go back and finish the proof of \Cref{thm:isomorphism}. The relative Hard
Lefschetz theorem gives us a representation of the Lie algebra $\sltwo$ on the direct
sum of all the $G_{i,k}$ (with appropriate shifts). To simplify the notation, let us
denote the three operators by the symbols $\Xsl_1, \Ysl_1, \Hsl_1$, and the Weil
element by the symbol $\wsl_1$. Concretely,
\[
	\Xsl_1 = \omega_2 \colon G_{i,k} \to G_{i+2,k+1} \decal{2} \quad \text{and} \quad
	\Ysl_1 = \Ysl_{\omega_2} \colon G_{i,k} \to G_{i-2,k-1} \decal{-2},
\]
$\Hsl_1$ acts on $G_{i,k}$ as multiplication by the integer $i$, and the
Weil element is the isomorphism
\[
	\wsl_1 = \wsl_{\omega_2} \colon G_{i,k} \to G_{-i,k-i} \decal{-2i}.
\]
Likewise, the symplectic relative Hard Lefschetz theorem gives us a second
representation of $\sltwo$, for which we use the symbols $\Xsl_2, \Ysl_2,
\Hsl_2$ and $\wsl_2$. Once again,
\[
	\Xsl_2 = \sigma_1 \colon G_{i,k} \to G_{i+1,k+2} \decal{2} \quad \text{and} \quad
	\Ysl_2 = \Ysl_{\sigma_1} \colon G_{i,k} \to G_{i-1,k-2} \decal{-2},
\]
$\Hsl_2$ acts on $G_{i,k}$ as multiplication by the integer $k$, and the
Weil element is the isomorphism
\[
	\wsl_2 = \wsl_{\sigma_1} \colon G_{i,k} \to G_{i-k,-k} \decal{-2k}.
\]
We know from \Cref{lem:sigma_k} that $[\omega_2, \sigma_1] = 0$, which translates
into the relation $[X_1, X_2] = 0$.

\newpar
We can now establish a direct relationship between $\gr_{\bullet}^F
\Pmod_i$ and $\derR \pil \Omega_M^{n+i} \decal{n}$ with the help of the two
$\sltwo$-representations. Recall from \Cref{ex:BGG-DR} that
\[
	\BGGR(\gr_{\bullet}^F \Pmod_i) = \bigoplus_{k=-n}^n G_{i,k} \decal{i+k}.
\]
Consider the following chain of isomorphisms
\[
	\begin{tikzcd}
		G_{i,k} \rar{\wsl_1} &
		G_{-i,k-i} \decal{-2i} \rar{\wsl_2} &
		G_{-k,i-k} \decal{-2k} \rar{\wsl_1} &
		G_{k,i}
		%G_{i,k} \decal{i+k} \rar{\wsl_1} &
		%G_{-i,k-i} \decal{k-i} \rar{\wsl_2} &
		%G_{-k,i-k} \decal{i-k} \rar{\wsl_1} &
		%G_{k,i} \decal{i+k}
	\end{tikzcd}
\]
that, very concretely, realizes the symmetry between $G_{i,k}$ and $G_{k,i}$. (In the
drawing in \Cref{par:hexagon}, the composition $\wsl_1 \wsl_2 \wsl_1$ is exactly
reflection in the third diagonal of the hexagon.) 

\newpar
Let $\beta \in \Omega_B^1$ be a locally defined holomorphic $1$-form. From the
construction in \Cref{chap:differential-forms}, we get three operators (that are
defined on the same open set as $\beta$, of course):
\begin{align*}
	\piu \beta = (\piu \beta)_0 &\colon G_{i,k} \to G_{i,k+1} \decal{1} \\
	v(\beta) = v(\beta)_{-1} &\colon G_{i,k} \to G_{i-1,k-1} \decal{-1} \\
	f(\beta)_1 = [X_1, v(\beta)] &\colon G_{i,k} \to G_{i,k+1} \decal{1}
\end{align*}
Let us see how these three operators interact with the two Weil elements $\wsl_1$ and
$\wsl_2$. We know that $\ad X_1(\piu \beta) = 0$ (from \Cref{lem:beta_k}), which implies that
\[
	\Ad \wsl_1 \bigl( \piu \beta \bigr) 
	= \wsl_1 \bigl( \piu \beta \bigr) \wsl_1^{-1} = \piu \beta.
\]
We also showed (in \Cref{lem:xi_k}) that $(\ad X_1)^2 v(\beta) = 0$; because $v(\beta)$
has weight $-1$ with respect to $\Hsl_1$, it follows that
\[
	\Ad \wsl_1 \bigl( v(\beta) \bigr) = [X_1, v(\beta)] = f(\beta)_1.
\]
Lastly, we know from \Cref{lem:contraction} that $[\sigma, v(\beta)] = \piu \beta$,
and because $v(\beta)$ and $\piu \beta$ have weight $-1$ and $0$ with respect
to $\Hsl_1$, we get $[X_2, v(\beta)] = [\sigma_1, v(\beta)] = \piu \beta$. At the
same time, $\piu \beta$ has weight $1$ with respect to $\Hsl_2$ and commutes with
$X_2$, and therefore
\[
	\Ad \wsl_2 \bigl( \piu \beta \bigr) = -[\Ysl_2, \piu \beta] = v(\beta).
\]

\newpar
The conclusion of all these computations is that we get a commutative diagram
\[
	\begin{tikzcd}
		G_{i,k} \rar{\wsl_1} \dar{\piu \beta} &
		G_{-i,k-i} \decal{-2i} \rar{\wsl_2} \dar{\piu \beta} &
		G_{-k,i-k} \decal{-2k} \rar{\wsl_1} \dar{v(\beta)} &
		G_{k,i} \dar{f(\beta)_1} \\
		G_{i,k+1} \decal{1} \rar{\wsl_1} &
		G_{-i,k-i+1} \decal{-2i+1} \rar{\wsl_2} &
		G_{-k-1,i-k-1} \decal{-2k-1} \rar{\wsl_1} &
		G_{k+1,i} \decal{1}
		%G_{i,k} \decal{i+k} \rar{\wsl_1} \dar{\piu \beta} &
		%G_{-i,k-i} \decal{k-i} \rar{\wsl_2} \dar{\piu \beta} &
		%G_{-k,i-k} \decal{i-k} \rar{\wsl_1} \dar{v(\beta)} &
		%G_{k,i} \decal{i+k} \dar{f(\beta)_1} \\
		%G_{i,k+1} \decal{i+k+1} \rar{\wsl_1} &
		%G_{-i,k-i+1} \decal{k-i+1} \rar{\wsl_2} &
		%G_{-k-1,i-k-1} \decal{i-k-1} \rar{\wsl_1} &
		%G_{k+1,i} \decal{i+k+1}
	\end{tikzcd}
\]
in which all the horizontal morphisms are isomorphisms. After shifting everything by
$i+k$ and taking the direct sum over all $k \in \ZZ$, this gives us an isomorphism
between
\[
	\BGGR(\gr_{\bullet}^F \Pmod_i) = \bigoplus_{k=-n}^n G_{i,k} \decal{i+k}
\]
and the complex of graded $\algE$-modules
\begin{equation} \label{eq:new-complex}
	\bigoplus_{k=-n}^n G_{k,i} \decal{i+k},
\end{equation}
in the derived category of graded $\algE$-modules. Here the $\algE$-module structure
on the second complex is the unique one where $\beta \in \Omega_B^1$ acts through the
collection of graded morphisms
\[
	f(\beta)_1 \colon G_{k,i} \decal{i+k} \to G_{k+1,i} \decal{i+k+1},
\]
and the grading has the summand $G_{k,i} \decal{i+k}$ in graded degree $k$. The
argument above shows that this object is isomorphic, via $\wsl_1 \wsl_2 \wsl_1$, to
the object
\[
	\BGGR(\gr_{\bullet}^F \Pmod_i) = \bigoplus_{k=-n}^n G_{i,k} \decal{i+k},
\]
which is the image of $\gr_{\bullet}^F \Pmod_i$ under the BGG correspondence.

\newpar
To conclude the proof, we only need to describe how the object in
\eqref{eq:new-complex} is related to $\derR \pil \Omega_M^{n+i} \decal{n}$. In the
derived category of $\shO_B$-modules, we do have an isomorphism
\begin{equation} \label{eq:Omega-grading}
	\derR \pil \Omega_M^{n+i} \decal{n} \cong \bigoplus_{k=-n}^n G_{k,i} \decal{i}.
\end{equation} 
The object on the left-hand side is naturally a module over the algebra
\[
	\derR \pil \shO_M \cong \bigoplus_{j=0}^n \Omega_B^j \decal{-j},
\]
where the isomorphism comes from Matsushita's theorem (in \Cref{thm:Matsushita}). If
we take the associated
graded of this action with respect to the perverse filtration, which is just the
filtration by increasing values of $k$ in the above decomposition, 
\eqref{eq:Omega-grading} becomes an isomorphism of graded modules
\[
	\gr_{\bullet}^P \Bigl( \derR \pil \Omega_M^{n+i} \decal{n} \Bigr) 
		\cong \bigoplus_{k=-n}^n G_{k,i} \decal{i}.
\]
Moreover, $\Omega_B^1 \decal{-1}$ now acts on the object on the right-hand side 
exactly through the collection of morphisms $f(\beta)_1 \colon G_{i,k} \to
G_{i+1,k} \decal{1}$. We can turn this into an honest action by $\algE$ by adding a
shift by $k$ to the $k$-th term in the sum; in this way, we arrive at the object
\[
	\bigoplus_{k=-n}^n G_{k,i} \decal{i+k}
\]
with the $\algE$-module structure and the grading that appeared in the proof above. 

\newpar
In other words, we need to take the associated graded of $\derR \pil \Omega_M^{n+i}
\decal{n}$ with respect to the perverse filtration, in order to extract from the
action by $f(\beta)$ its topmost component $f(\beta)_1$. After adding appropriate
shifts (determined by the degree with respect to the grading), we then obtain a
complex of graded $\algE$-modules, and under the BGG correspondence, this goes to the
graded $\algS$-module $\gr_{\bullet}^F \Pmod_i$.  With this, we have proved all the
claims that were made in the introduction.

\newpar
The interesting point is that the process we have described does relate $\derR \pil
\Omega_M^{n+i} \decal{n}$ to the filtered $\Dmod$-module $(\Pmod_i, F_{\bullet}
\Pmod_i)$, but only after we take the associated graded on both sides: with respect
to the perverse filtration on one side, and with respect to the Hodge filtration on
the other.  I do not know whether one can expect to relate the two sides without
going to the associated graded objects.

\section{Symmetries and the Lie algebra $\slthree$}

\newpar
In this chapter, we explain why the Lie algebra $\slthree$ acts on the direct
sum of the complexes $G_{i,k}$ (with suitable shifts). As in
\Cref{chap:differential-forms}, this relies on computations with 
differential forms on $M$; the main point is to understand how the Weil element
$\wsigma$ for the holomorphic symplectic form $\sigma$ interacts with the K\"ahler
form $\omega$.

\newpar
Recall that the holomorphic symplectic form $\sigma$ determines an isomorphism
$\shT_M \cong \Omega_M^1$. If we think of the K\"ahler form $\omega$ as a
$\dbar$-closed $(0,1)$-form with coefficients in $\Omega_M^1$, hence as an element
$\omega \in A^{0,1}(M, \Omega_M^1)$, the isomorphism provides us with another element
\[
	i(\omega) \in A^{0,1}(M, \shT_M),
\]
which is again $\dbar$-closed. We can also express the relation between $\omega$ and
$i(\omega)$ as
\[
	i(\omega) \cont \sigma = \omega.
\]
More generally, contraction with $i(\omega)$ is an operator
\[
	i(\omega) \cont \colon A^{p,q}(M) \to A^{p-1,q+1}(M),
\]
that acts as follows. Let $z_1, \dotsc, z_{2n}$ be local holomorphic
coordinates on $M$. Then
\[
	i(\omega) = \sum_{j,k} f_{j,k} \frac{\partial}{\partial z_j} \tensor \dzb_k,
\]
and we define the contraction against $(p,q)$-forms as
\[
	i(\omega) \cont \dz_J \wedge \dzb_K = (-1)^p \sum_{j,k} f_{j,k} \sgn(J,j) 
	\dz_{J \setminus \{j\}} \tensor \dzb_k \wedge \dzb_K,
\]
where $\abs{J} = p$ and $\abs{K} = q$. The sign $(-1)^p$ is caused by swapping the
order of $\dzb_k$ and $\dz_J$.

\newpar
The following lemma is proved in exactly the same way as \Cref{lem:contraction}.

\begin{plem}
	For every $k,q \in \ZZ$, the following diagram commutes:
	\[
		\begin{tikzcd}
			A^{n-k,q}(M) \dar{\omega \wedge} \rar{\wsigma} & A^{n+k,q}(M) \dar{i(\omega) \cont} \\
			A^{n-k+1,q+1}(M) \rar{\wsigma} & A^{n+k-1,q+1}(M)
		\end{tikzcd}
	\]
\end{plem}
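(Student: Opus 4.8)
The plan is to imitate the proof of \Cref{lem:contraction} almost word for word, with the K\"ahler form $\omega$ playing the role of the holomorphic $1$-form $\piu\beta$ and the vector-field-valued $(0,1)$-form $i(\omega)$ playing the role of the holomorphic vector field $\xi$; the defining relations $\piu\beta = \xi\cont\sigma$ and $\omega = i(\omega)\cont\sigma$ are exact analogues of one another. The only new feature is that $i(\omega)\cont$ shifts the bidegree by $(-1,+1)$ rather than $(-1,0)$, so one must carry along an extra antiholomorphic factor; but since $\sigma$ has type $(2,0)$, and in particular even total degree and antiholomorphic degree $0$, the operator $i(\omega)\cont$ still obeys the Leibniz rule
\[
	i(\omega)\cont\bigl(\sigma^m\wedge\beta\bigr) = m\,\sigma^{m-1}\wedge\bigl(i(\omega)\cont\sigma\bigr)\wedge\beta + \sigma^m\wedge\bigl(i(\omega)\cont\beta\bigr) = m\,\sigma^{m-1}\wedge\omega\wedge\beta + \sigma^m\wedge\bigl(i(\omega)\cont\beta\bigr),
\]
just as $\xi\cont$ did in \Cref{lem:contraction}.

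By the Lefschetz decomposition for the $\sltwo$-action of $\sigma$ on $\bigoplus_p A^{p,q}(M)$, it suffices to check the diagram on forms of the shape $\tfrac{1}{j!}\sigma^j\wedge\alpha$ with $\alpha\in A^{n-k,q}(M)$ primitive (so that $\sigma^{k+1}\wedge\alpha = 0$) and $0\le j\le k$; I would write out the case $j=0$ in detail and then indicate the general case, exactly as in the model lemma. For primitive $\alpha$, the form $\omega\wedge\alpha\in A^{n-k+1,q+1}(M)$ is again annihilated by $\sigma^{k+1}$ (because $\omega$ and $\sigma$ commute as even-degree forms), and hence has a Lefschetz decomposition $\omega\wedge\alpha = \gamma_0 + \sigma\wedge\gamma_1$ with $\gamma_0\in A^{n-k+1,q+1}(M)$ and $\gamma_1\in A^{n-k-1,q+1}(M)$ primitive.

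The key identity comes from contracting $\sigma^{k+1}\wedge\alpha = 0$ against $i(\omega)$: the Leibniz rule gives $(k+1)\,\sigma^k\wedge\omega\wedge\alpha + \sigma^{k+1}\wedge\bigl(i(\omega)\cont\alpha\bigr) = 0$, and since $\sigma^k\wedge\gamma_0 = 0$ by primitivity of $\gamma_0$ one has $\sigma^k\wedge\omega\wedge\alpha = \sigma^{k+1}\wedge\gamma_1$; as $\sigma^{k+1}\colon A^{n-k-1,q+1}(M)\to A^{n+k+1,q+1}(M)$ is a pointwise isomorphism (linear-algebra Hard Lefschetz for the symplectic form $\sigma$, which acts only on the holomorphic factor), this forces $i(\omega)\cont\alpha = -(k+1)\gamma_1$. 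Then I would expand
\[
	i(\omega)\cont\wsigma(\alpha) = \frac{1}{k!}\,i(\omega)\cont\bigl(\sigma^k\wedge\alpha\bigr) = \frac{1}{(k-1)!}\,\sigma^{k-1}\wedge\omega\wedge\alpha + \frac{1}{k!}\,\sigma^k\wedge\bigl(i(\omega)\cont\alpha\bigr),
\]
substitute the two identities above, and compare with $\wsigma(\omega\wedge\alpha) = \wsigma(\gamma_0) + \wsigma(\sigma\wedge\gamma_1)$, computed from the explicit action of the Weil element $\wsigma$ on primitive forms and on products of $\sigma^j$ with primitive forms recalled in \Cref{chap:differential-forms}. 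Exactly as in \Cref{lem:contraction}, the two contributions proportional to $\sigma^k\wedge\gamma_1$ combine with coefficient $\tfrac{k}{k!}-\tfrac{k+1}{k!} = -\tfrac{1}{k!}$, and both sides equal $\tfrac{1}{(k-1)!}\sigma^{k-1}\wedge\gamma_0 - \tfrac{1}{k!}\sigma^k\wedge\gamma_1$; the general case $\tfrac{1}{j!}\sigma^j\wedge\alpha$ follows from the same computation with the rational coefficients dictated by the Lefschetz decomposition.

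The main (and essentially the only) obstacle I anticipate is the sign bookkeeping introduced by the factor $d\bar z_k$ in the definition of $i(\omega)\cont$, which is the source of the $(-1)^p$ in the formula for contracting against a $(p,q)$-form. One has to check that this factor does not disturb the cancellations above; it does not, precisely because $\sigma$ has antiholomorphic degree $0$, so every monomial occurring in the computation picks up the same such sign. Once this is observed, the argument is formally identical to the proof of \Cref{lem:contraction}.
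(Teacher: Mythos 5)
Your proposal is correct and matches the paper exactly: the paper gives no separate argument for this lemma, stating only that it ``is proved in exactly the same way as the lemma on $\piu\beta$ and $\xi$,'' which is precisely the substitution $\piu\beta \rightsquigarrow \omega$, $\xi \rightsquigarrow i(\omega)$ that you carry out. Your check that $i(\omega)\cont$ is still a degree-zero derivation (so the Leibniz rule and the sign bookkeeping go through unchanged because $\sigma$ has type $(2,0)$) is exactly the point the paper leaves implicit.
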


\newpar
Now let us try to understand the commutator of the two operators $\alpha \mapsto
\omega \wedge \alpha$ and $\alpha \mapsto i(\omega) \cont \alpha$. We notice that the
contraction
\[
	\iomega = - i(\omega) \cont \omega \in A^{0,2}(M)
\]
is a $\dbar$-closed $(0,2)$-form. The following lemma shows that the commutator in
question is nothing but wedge product with $\iomega$.

\begin{plem}
	We have $\bigl[ \omega \wedge, i(\omega) \cont \bigr] = \iomega \wedge$.
\end{plem}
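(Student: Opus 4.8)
The identity $\bigl[\omega\wedge,\,i(\omega)\cont\bigr] = \iomega\wedge$ is $\CC^{\infty}(M)$-linear and involves no derivatives, so it is a pointwise statement about operators on the exterior algebra; the plan is to fix a point of $M$, choose local holomorphic coordinates $z_1,\dots,z_{2n}$, and check it there. Both $\omega\wedge$ (bidegree $(1,1)$) and $i(\omega)\cont$ (bidegree $(-1,1)$) have even total degree, so the bracket is the ordinary commutator, and $\ad(\omega\wedge)$ is a derivation of the algebra of operators. The whole point is to reduce to the ``scalar'' Cartan-type identity already used in \eqref{eq:theta-commutator}, namely $[\omega\wedge,\,\xi\cont]=(-\xi\cont\omega)\wedge$ for a $(1,0)$-tangent vector $\xi$: this is just the Leibniz formula $\xi\cont(\omega\wedge\alpha)=(\xi\cont\omega)\wedge\alpha+\omega\wedge(\xi\cont\alpha)$, which is purely algebraic and so is valid for \emph{any} smooth $\xi$, not only holomorphic ones.

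Concretely, I would write $i(\omega)=\sum_{k} v_k\tensor\dzb_k$ with $v_k=\sum_j f_{jk}\,\partial/\partial z_j$ a smooth section of $\shT_M$, and unwind the coordinate formula for $i(\omega)\cont$ given in the paragraph above: the Koszul sign $(-1)^p$ occurring there (the sign attached to $i(\omega)$ being a $\shT_M$-valued $(0,1)$-form) cancels the sign one picks up when moving $\dzb_k$ past the contracted holomorphic factors, leaving the clean expression
\[
	i(\omega)\cont \;=\; -\sum_k (\dzb_k\wedge)\circ(v_k\cont)
\]
as an operator on all of $A^{\bullet,\bullet}(M)$. In particular $i(\omega)\cont\omega=-\sum_k \dzb_k\wedge(v_k\cont\omega)$, so $\iomega=-i(\omega)\cont\omega=\sum_k \dzb_k\wedge(v_k\cont\omega)$, a sum of $\dbar$-closed $(0,2)$-forms. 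Since $\omega\wedge\dzb_k=\dzb_k\wedge\omega$ (moving an even-degree form costs no sign), the operators $\omega\wedge$ and $\dzb_k\wedge$ commute, and the derivation property of $\ad(\omega\wedge)$ then gives
\begin{align*}
	\bigl[\omega\wedge,\,i(\omega)\cont\bigr]
	&= -\sum_k (\dzb_k\wedge)\circ\bigl[\omega\wedge,\,v_k\cont\bigr] \\
	&= -\sum_k (\dzb_k\wedge)\circ\bigl((-v_k\cont\omega)\wedge\bigr)
	 = \Bigl(\sum_k \dzb_k\wedge(v_k\cont\omega)\Bigr)\wedge
	 = \iomega\wedge,
\end{align*}
where the middle equality applies \eqref{eq:theta-commutator} with $\xi=v_k$.

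The only real work here is sign bookkeeping, exactly as in \Cref{lem:contraction} and \Cref{lem:contraction-omega}: pinning down that $i(\omega)\cont=-\sum_k(\dzb_k\wedge)(v_k\cont)$ with a \emph{uniform} sign, and noting that the same sign convention is used in defining $\iomega=-i(\omega)\cont\omega$, so that the stated identity is in fact insensitive to the overall normalization of $i(\omega)\cont$. There is no deeper obstacle — once the normalization is fixed, the computation above is the whole proof, and it is the $\shT_M$-valued analogue of the commutator identity \eqref{eq:theta-commutator} for an ordinary vector field.
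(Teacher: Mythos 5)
Your proof is correct and follows essentially the same route as the paper: the paper reduces the lemma to the Leibniz-type identity $i(\omega)\cont(\omega\wedge\alpha)=(i(\omega)\cont\omega)\wedge\alpha+\omega\wedge(i(\omega)\cont\alpha)$, "easily proved by a computation in local coordinates," and your factorization $i(\omega)\cont=-\sum_k(\dzb_k\wedge)\circ(v_k\cont)$ together with the single-vector-field Cartan rule is precisely that local computation, with the signs checked correctly against the paper's coordinate formula.
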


\begin{proof}
	This is due to the identity 
	\[
		i(\omega) \cont \bigl( \omega \wedge \alpha \bigr) 
		= \bigl( i(\omega) \cont \omega \bigr)
		\wedge \alpha + \omega \wedge \bigl( i(\omega) \cont \alpha \bigr)
		= -\iomega \wedge \alpha + \omega \wedge \bigl( i(\omega) \cont \alpha \bigr),
	\]
	which is easily proved by a computation in local coordinates.
\end{proof}

\newpar
Because $\iomega = -i(\omega) \cont \omega$ is a $\dbar$-closed $(0,2)$-form, it 
acts on the complex $\gr_{\bullet}^F \Cpi$. Using the decomposition theorem, it
therefore determines a morphism (in the derived category)
\[
	\iomega \colon \bigoplus_{i=-n}^n \gr_{\bullet}^F \Pmod_i \decal{-i}
	\to \bigoplus_{i=-n}^n \gr_{\bullet}^F \Pmod_i \decal{2-i}.
\]
As in earlier chapters, this morphism breaks up into a finite sum $\iomega = \iomega_2
+ \iomega_1 + \iomega_0 + \dotsb$; each component $\iomega_j$ is a morphism
\[
	\iomega_j \colon \gr_{\bullet}^F \Pmod_i \to \gr_{\bullet}^F \Pmod_{i+j}
	\decal{2-j}
\]
in the derived category of graded $\Sym \shT_B$-modules. We get induced morphisms
\[
	\iomega_j \colon G_{i,k} \to G_{i+j, k} \decal{2}.
\]
What matters for our computation is that $\iomega_j = 0$ for $j \geq 3$. This holds
because $\iomega$ is a $\dbar$-closed $(0,2)$-form and
therefore commutes with the differential in the complex $\gr_{\bullet}^F \Cpi$. 

\newpar
We now consider how $i(\omega)$ and $\iomega$ act on the complex $(M, d)$, where $M_k^i =
\pil \shA_M^{n+k,n+i}$ and $d = (-1)^k \dbar$. Recall that this complex is isomorphic
(in the derived category) to
\[
	G = \bigoplus_{i,k=-n}^n G_{i,k} \decal{k} \cong
	\bigoplus_{k = -n}^n \derR \pil \Omega_M^{n+k} \decal{n},
\]
but now we purposely forget the action by $\algE$, because contraction with
$i(\omega)$ does not preserve it. Contraction against $i(\omega)$ and wedge
product with $\iomega$ define two morphisms
\[
	i(\omega) \colon G \to G(-1) \decal{1} \quad \text{and} \quad
	\iomega \colon G \to G \decal{2}
\]
in the derived category; the first morphism changes the grading, but the second
one preserves it. The analysis in the previous two paragraphs shows that 
\[
	\iomega = -\bigl[ i(\omega), \omega \bigr] 
	= - \bigl[ \wsigma \cdot \omega \cdot \wsigma^{-1}, \omega \bigr],
\]
where $\wsigma$ is the Weil element for the representation of $\sltwo$ on
\[
	\bigoplus_{k=-n}^n \Omega_M^{n+k}
\]
coming from the action of the symplectic form $\sigma$. For the sake of clarity, let
us denote the third generator of this representation by the symbol $\Ysigma$.

\newpar
As $[\sigma, \omega] = 0$, the operator $\omega$ is primitive with respect to this
representation, and so
\[
	\wsigma \cdot \omega \cdot \wsigma^{-1} = -[\Ysigma, \omega].
\]
After combining this with the formula for $\iomega$, we get
\[
	\iomega = - \bigl[ \wsigma \cdot \omega \cdot \wsigma^{-1}, \omega \bigr]
	= \bigl[ [\Ysigma, \omega], \omega \bigr] 
		= \bigl[ \omega, [\omega, \Ysigma] \bigr].
\]
Now we look at the topmost component on each side, with respect to the filtration by
the first index $i$. On the right-hand side, by \Cref{lem:Ysigma}, this is the
iterated commutator
\[
	\Bigl[ \omega_2, \bigl[ \omega_2, \Ysl_{\sigma_1} \bigr] \Bigr],
\]
which is a morphism $G_{i,k} \to G_{i+3,k} \decal{2}$. The corresponding term on the
left-hand side is $\iomega_3$, and we already know that $\iomega_3 = 0$. This observation
proves the following lemma.

\begin{plem} \label{lem:relation}
	We have $\bigl[ \omega_2, [\omega_2, \Ysl_{\sigma_1}] \bigr] = 0$, and therefore
	$[\Ysl_{\omega_2}, \Ysl_{\sigma_1}] = 0$.
\end{plem}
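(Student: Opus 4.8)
The first assertion is already almost entirely contained in the paragraph preceding the statement, and I would simply make that precise. On the complex $(M,d)$ one has the operator identity $\iomega=\bigl[\omega,[\omega,\Ysigma]\bigr]$, so it is enough to compare, on the two sides, the component that raises the first index $i$ by $3$. On the left that component is $\iomega_3$. On the right, by \Cref{lem:Ysigma} (the top components with respect to the $i$-grading being $\omega_2$ for $\omega$ and $\Ysl_{\sigma_1}$ for $\Ysigma$), it is the iterated commutator $\bigl[\omega_2,[\omega_2,\Ysl_{\sigma_1}]\bigr]$. Finally, $\iomega_3=0$: since $\iomega$ is a $\dbar$-closed $(0,2)$-form, wedging with it is an honest morphism of complexes $\gr_\bullet^F\Cpi\to\gr_\bullet^F\Cpi\decal{2}$ that preserves the grading, so under the decomposition theorem its component from the $i$-th summand $\gr_\bullet^F\Pmod_i$ to the $(i+j)$-th summand $\gr_\bullet^F\Pmod_{i+j}$ lies in $\Ext^{2-j}$ between these graded $\Sym(\shT_B)$-modules, which vanishes for $j\geq3$. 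Hence $\bigl[\omega_2,[\omega_2,\Ysl_{\sigma_1}]\bigr]=0$.

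To deduce the second assertion I would run the standard $\sltwo$ argument for the Hard Lefschetz $\sltwo$ attached to the K\"ahler class. Write $\Xsl_1=\omega_2$, $\Ysl_1=\Ysl_{\omega_2}$, $\Hsl_1$ for its generators, so that $[\Xsl_1,\Ysl_1]=\Hsl_1$ and $\Hsl_1$ acts on $G_{i,k}$ as multiplication by $i$. Since $\Ysl_{\sigma_1}$ sends $G_{i,k}$ to $G_{i-1,k-2}\decal{-2}$, it has $\ad\Hsl_1$-weight $-1$, and the first assertion reads $(\ad\Xsl_1)^2\Ysl_{\sigma_1}=0$. A short computation with the $\sltwo$-relations (equivalently, decomposing into irreducibles the $\sltwo$-submodule that $\Ysl_{\sigma_1}$ generates under $\ad$) shows that any weight-$(-1)$ vector killed by $(\ad\Xsl_1)^2$ lies in a sum of copies of the $2$-dimensional irreducible representation, where it occupies the lowest-weight line and is therefore killed by $\ad\Ysl_1$. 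Applying this gives $[\Ysl_{\omega_2},\Ysl_{\sigma_1}]=(\ad\Ysl_1)\Ysl_{\sigma_1}=0$, as claimed.

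Two small points deserve care. First, to invoke the structure theory of $\sltwo$-modules one should note that the space of operators on which $\ad\sltwo$ acts is a weight module whose $\Hsl_1$-weights are bounded (as $i$ runs over the finite set $\{-n,\dots,n\}$) and on which $\ad\Xsl_1$ and $\ad\Ysl_1$ are locally nilpotent; it is therefore semisimple, and every element generates a finite-dimensional submodule, so the structure-theory step above applies. Second, one must be certain that the relevant top component on the right-hand side really is $\bigl[\omega_2,[\omega_2,\Ysl_{\sigma_1}]\bigr]$ --- i.e.\ that $\Ysl_{\sigma_1}$ is the top component of $\Ysigma$ after transport to $\bigoplus_{i,k}G_{i,k}$ --- which is precisely the content of \Cref{lem:Ysigma}.

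I do not expect anything in this lemma itself to be the main obstacle: the genuinely delicate work --- building $\iomega=-i(\omega)\cont\omega$, identifying it with $\bigl[\omega,[\omega,\Ysigma]\bigr]$, and showing $\iomega_j=0$ for $j\geq3$ --- has already been carried out above. Within the present argument the only step one should not skip is the verification that the ambient $\sltwo$-module of operators is semisimple, since without it the passage $(\ad\omega_2)^2\Ysl_{\sigma_1}=0\ \Rightarrow\ [\Ysl_{\omega_2},\Ysl_{\sigma_1}]=0$ is not purely formal.
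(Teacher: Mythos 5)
Your proposal is correct and follows exactly the paper's route: the first identity is read off from the preceding computation $\iomega=\bigl[\omega,[\omega,\Ysigma]\bigr]$ together with $\iomega_3=0$ and \Cref{lem:Ysigma}, and the second follows because a weight $-1$ operator killed by $(\ad\omega_2)^2$ is primitive (lies in copies of the two-dimensional irreducible), hence is killed by $\ad\Ysl_{\omega_2}$. The paper states this more tersely, but the extra details you supply (the $\Ext^{2-j}$ vanishing for $j\geq 3$ and the local finiteness of the $\ad\,\sltwo$-action needed to invoke Weyl's theorem) are the correct justifications for the steps it leaves implicit.
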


\begin{proof}
	The first assertion is clear. Since $\Ysl_{\sigma_1} \colon G_{i,k} \to
	G_{i-1,k-2} \decal{-2}$, this is saying that $\Ysl_{\sigma_1}$ is primitive of
	weight $-1$ (with respect to the $\sltwo$-representation determined by
	$\omega_2$), which gives the second assertion.
\end{proof}

\newpar
We can now show that the two $\sltwo$-representations determined by $\omega_2$ and
$\sigma_1$ can be combined into a single representation of the larger Lie algebra
$\slthree$. Let us briefly review its structure. The Lie algebra $\slthree$ is
associated to the Dynkin diagram of type $\mathsf{A}_2$, and so it has two simple
roots, and the resulting Cartan matrix is
\[
	\begin{pmatrix} a_{1,1} & a_{1,2} \\ a_{2,1} & a_{2,2} \end{pmatrix}
	= \begin{pmatrix} 2 & -1 \\ -1 & 2 \end{pmatrix}.
\]
According to a theorem by Serre, $\slthree$ is therefore generated as a Lie algebra
by six elements $\esl_1, \fsl_1, \hsl_1$, $\esl_2, \fsl_2, \hsl_2$, subject to the
following relations:
\begin{enumerate}
	\item $[\hsl_i,\hsl_j] = 0$, $[\hsl_i, \esl_j] = a_{i,j} \esl_j$, and $[\hsl_i, \fsl_j] = -a_{i,j} \fsl_j$
	\item $[\esl_i, \fsl_j] = \delta_{i,j} \hsl_j$, where $\delta_{i,j} = 1$ if $i=j$, and $0$
		otherwise
	\item $(\ad \esl_i)^{1-a_{i,j}} \esl_j = 0$ and $(\ad \fsl_i)^{1-a_{i,j}} \fsl_j = 0$.
\end{enumerate}

\newpar
The point is that the operators $\omega_2$ and $\sigma_1$ satisfy the Serre relations
for $\slthree$. 

\begin{pprop}
	If we define $\esl_1 = \omega_2$, $\fsl_1 = \Ysl_{\omega_2}$, $\esl_2 = \Ysl_{\sigma_1}$,
	and $\fsl_2 = \sigma_1$, and let $\hsl_1$ and $\hsl_2$ act on
	the complex $G_{i,k}$ as multiplication by $i$ respectively $-k$, then these six
	operators satisfy the Serre relations for the Lie algebra $\slthree$.
\end{pprop}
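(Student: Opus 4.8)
The plan is to check the six families of Serre relations directly, leaning on two honest $\sltwo$-triples that we already have in hand. The first comes from the relative Hard Lefschetz theorem for $\omega$: write $\Xsl_1 = \omega_2$, $\Ysl_1 = \Ysl_{\omega_2}$, and let $\Hsl_1$ act on $G_{i,k}$ as multiplication by $i$. The second comes from the symplectic relative Hard Lefschetz theorem of \Cref{thm:HL-sigma}: write $\Xsl_2 = \sigma_1$, $\Ysl_2 = \Ysl_{\sigma_1}$, and let $\Hsl_2$ act on $G_{i,k}$ as multiplication by $k$. Under the identifications in the statement, $\esl_1 = \Xsl_1$, $\fsl_1 = \Ysl_1$, $\hsl_1 = \Hsl_1$, whereas $\esl_2 = \Ysl_2$, $\fsl_2 = \Xsl_2$, $\hsl_2 = -\Hsl_2$. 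So the subalgebra $\langle \esl_1,\fsl_1,\hsl_1\rangle$ is literally the first $\sltwo$, and $\langle \esl_2,\fsl_2,\hsl_2\rangle$ is the second $\sltwo$ with its two Chevalley generators interchanged and $\Hsl_2$ negated; in particular the three single-index relations $[\hsl_a,\esl_a] = 2\esl_a$, $[\hsl_a,\fsl_a] = -2\fsl_a$, $[\esl_a,\fsl_a] = \hsl_a$ hold by construction (for $a=2$ one uses $[\Ysl_2,\Xsl_2] = -\Hsl_2 = \hsl_2$).

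Next I would dispose of everything that is pure bookkeeping. Each of $\esl_1,\fsl_1,\esl_2,\fsl_2$ shifts the bidegree $(i,k)$ by a fixed amount, and $\hsl_1,\hsl_2$ are the semisimple operators reading off $i$ and $-k$; hence $[\hsl_1,\hsl_2]=0$ trivially, and each of the eight brackets $[\hsl_i,\esl_j]$, $[\hsl_i,\fsl_j]$ equals multiplication by the relevant bidegree shift, which one checks is $a_{i,j}\esl_j$ resp.\ $-a_{i,j}\fsl_j$ in every case. The two off-diagonal relations of type (2) are $[\esl_1,\fsl_2] = [\omega_2,\sigma_1] = 0$ by \Cref{lem:sigma_k} and $[\esl_2,\fsl_1] = [\Ysl_{\sigma_1},\Ysl_{\omega_2}] = 0$ by \Cref{lem:relation}; since $\delta_{1,2}=\delta_{2,1}=0$, these are exactly what is needed. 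Finally, the relations of type (3) with $i=j$ are vacuous, since $1-a_{i,i}=-1$.

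The substance is the four degree-two relations of type (3) with $i\neq j$. The key remark is that, for each $a\in\{1,2\}$, the adjoint action of the $a$-th $\sltwo$ on the finite-dimensional space of all morphisms $\bigoplus_{i,k} G_{i,k}\to\bigoplus_{i,k} G_{i,k}$ (with the shifts that make every operator homogeneous of degree zero) is a finite-dimensional, hence completely reducible, representation of $\sltwo$. Therefore a morphism annihilated by the raising operator and of weight $m\geq 0$ is killed by the $(m{+}1)$-st power of the lowering operator, and symmetrically for lowest-weight morphisms. Now: $(\ad\esl_1)^2\esl_2 = (\ad\omega_2)^2\Ysl_{\sigma_1} = 0$ is precisely \Cref{lem:relation}. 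For the remaining three: (i) $\fsl_2=\sigma_1$ has $\ad\hsl_1$-weight $+1$ and $[\esl_1,\fsl_2]=[\omega_2,\sigma_1]=0$, so it is highest weight of weight $1$ for the first $\sltwo$, hence $(\ad\fsl_1)^2\fsl_2=0$; (ii) $\esl_1=\omega_2$ has $\ad\hsl_2$-weight $-1$ and $[\fsl_2,\esl_1]=-[\omega_2,\sigma_1]=0$, so it is lowest weight of weight $-1$ for the second $\sltwo$, whose raising operator is $\esl_2$, hence $(\ad\esl_2)^2\esl_1=0$; (iii) $\fsl_1=\Ysl_{\omega_2}$ has $\ad\hsl_2$-weight $+1$ and $[\esl_2,\fsl_1]=[\Ysl_{\sigma_1},\Ysl_{\omega_2}]=0$ by \Cref{lem:relation}, so it is highest weight of weight $1$ for the second $\sltwo$, whose lowering operator is $\fsl_2$, hence $(\ad\fsl_2)^2\fsl_1=0$. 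This exhausts the Serre relations.

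The only genuine mathematical input beyond elementary $\sltwo$-representation theory is the vanishing of $[\omega_2,\sigma_1]$ and of $[\Ysl_{\omega_2},\Ysl_{\sigma_1}]$, and both are already in hand (\Cref{lem:sigma_k} and \Cref{lem:relation}); so the ``hard part'' here is not conceptual but organizational — keeping careful track of which Chevalley generator of $\slthree$ is realized by which geometric operator, of the sign $\hsl_2=-\Hsl_2$, and of the finite-dimensionality of the adjoint representations (which is what licenses passing between ``annihilated by the raising operator, nonnegative weight'' and ``bounded lowering-string''). As a cross-check, one could instead derive (iii) from \Cref{lem:relation} — and (ii) from the first relation — via the Grothendieck-duality symmetry of \eqref{eq:duality}, which realizes the Cartan involution $\esl_a\leftrightarrow\fsl_a$, $\hsl_a\mapsto -\hsl_a$; but the direct weight argument above is shorter.
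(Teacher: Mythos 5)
Your proposal is correct and follows essentially the same route as the paper: identify the two $\sltwo$-triples coming from $\omega_2$ and $\sigma_1$, feed in the two nontrivial commutator vanishings $[\omega_2,\sigma_1]=0$ (\Cref{lem:sigma_k}) and $[\Ysl_{\omega_2},\Ysl_{\sigma_1}]=0$ (\Cref{lem:relation}), and deduce the degree-two Serre relations from the finite-dimensional representation theory of $\sltwo$ via primitivity of weight $\pm 1$ elements. You spell out the relabeling $\esl_2=\Ysl_2$, $\fsl_2=\Xsl_2$, $\hsl_2=-\Hsl_2$ and all four cubic relations explicitly, where the paper only does one as an example, but there is no substantive difference.
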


\begin{proof}
	The relations on the first line hold because $\omega_2$ and $\sigma_1$ each
	determine a representation of the Lie algebra $\sltwo$, and because $\omega_2
	\colon G_{i,k} \to G_{i+2,k+1} \decal{2}$ and $\sigma_1 \colon G_{i,k} \to
	G_{i+1,k+2} \decal{2}$. The relations on the second line hold because $[\esl_1, \fsl_2]
	= [\omega_2, \sigma_1] = 0$, and because $[\fsl_1, \esl_2] = [\Ysl_{\omega_2},
	\Ysl_{\sigma_1}] = 0$ by \Cref{lem:relation}. The relations on the third line
	now follow from the finite-dimensional representation theory of $\sltwo$: for
	example, $[\hsl_1, \esl_2] = -\esl_2$ and $[\fsl_1, \esl_2] = 0$ are saying that
	$\esl_2$ is primitive of weight $-1$ with respect to the $\sltwo$-representation
	$\esl_1, \fsl_1, \hsl_1$, and consequently $(\ad \esl_1)^2 \esl_2 = 0$.
\end{proof}

\newpar
Because the operators $\omega_2$ and $\sigma_1$ each act with a shift, we need to be a
little bit careful if we want to say exactly which object in the derived category the
Lie algebra $\slthree$ acts on. There are two possible answers. One possibility is to
take the direct sum
\[
	\bigoplus_{i,k,\ell \in \ZZ} G_{i,k} \decal{\ell}.
\]
All six operators $\esl_1, \fsl_1, \hsl_1$ and $\esl_2, \fsl_2, \hsl_2$ clearly act on this object; the
disadvantage is that there are infinitely many nonzero terms. The other
possibility is to look at the direct sum
\[
	\bigoplus_{i,k = -n}^n G_{i,k} \Bigl[ \lfloor \tfrac{2}{3}(i+k) \rfloor \Bigr].
\]
This only has finitely many nonzero terms, and all six operators act on it; the only
disadvantage is that the extra shift -- by the integer part of $\tfrac{2}{3}(i+k)$ --
seems somewhat artificial.

\newpar
The proof above depends on knowing the topmost component of the operator $\Ysigma$.
The following lemma, whose proof uses the symplectic relative Hard Lefschetz theorem
(in \Cref{thm:HL-sigma}), shows that this is exactly $\Ysl_{\sigma_1}$.

\begin{plem} \label{lem:Ysigma}
	The difference $\Ysigma - \Ysl_{\sigma_1}$ maps $G_{i,k}$ into the sum of the
	$G_{i+j,k-2} \decal{-2}$ with $j \leq -2$.
\end{plem}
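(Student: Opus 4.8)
The plan is to use the uniqueness of the lowering operator in an $\sltwo$-triple, combined with the fact — supplied by the symplectic relative Hard Lefschetz theorem (\Cref{thm:HL-sigma}) — that $\sigma_1$ is already the raising operator of a genuine $\sltwo$-action on $G = \bigoplus_{i,k=-n}^n G_{i,k}\decal{k}$. Since the holomorphic symplectic form $\sigma$ is $d$-closed, wedge product with $\sigma$ commutes with $\dbar$, and so does the dual contraction used to build the Weil element $\wsigma$; hence the symplectic $\sltwo$-representation on $\bigoplus_k \Omega_M^{n+k}$ descends to the complex $(M,d) \cong G$. Write $\Xsl_{\sigma} = \sigma\wedge$ for its raising operator and $\Hsl_{\sigma}$ for the semisimple element, so that $[\Xsl_{\sigma},\Ysigma] = \Hsl_{\sigma}$ and $[\Hsl_{\sigma},\Ysigma] = -2\Ysigma$. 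Under the identification $G \cong \bigoplus_k \derR\pil\Omega_M^{n+k}\decal{n}$ the element $\Hsl_{\sigma}$ acts on $G_{i,k}$ as multiplication by $k$, so $\Hsl_{\sigma}$ is exactly the semisimple generator $\Hsl_2$ of the $\sltwo$-triple attached to $\sigma_1$. Decomposing the action of $\sigma\wedge$ on $G$ with respect to the first index and using $\sigma_2 = 0$ (\Cref{lem:sigma_2}), we get $\Xsl_{\sigma} = \sigma_1 + \sigma_0 + \sigma_{-1} + \cdots$; in particular $\Xsl_{\sigma}$ has degree $\le 1$ in the first index, with degree-$1$ part precisely $\sigma_1$.

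First I would record the formal input. By \Cref{thm:HL-sigma} the triple $(\sigma_1,\Hsl_2,\Ysl_{\sigma_1})$ acts on $G$, so the endomorphisms of $G$ in the derived category form a representation of $\sltwo$ under its adjoint action; since only finitely many $G_{i,k}$ are nonzero and $\sigma_1$ is nilpotent, this representation is locally finite. From the representation theory of $\sltwo$ I extract two facts: (i) any endomorphism killed by $\ad\sigma_1$ has $\ad\Hsl_2$-weight $\ge 0$ (it is a sum of highest-weight vectors); and (ii) the lowering operator of an $\sltwo$-triple with raising part $\sigma_1$ and semisimple part $\Hsl_2$ is uniquely determined, since the difference of two candidates lies in $\ker(\ad\sigma_1)$ and has weight $-2$. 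Then I decompose $\Ysigma = \sum_j Y_j$, where $Y_j$ is the component taking $G_{i,k}$ to $G_{i+j,k-2}\decal{-2}$; because $\Hsl_{\sigma}$ preserves the first index, each $Y_j$ has $\ad\Hsl_2$-weight $-2$.

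Next I would read off $[\Xsl_{\sigma},\Ysigma] = \Hsl_{\sigma}$ degree by degree in the first index. If $\Ysigma$ had a nonzero component $Y_{m_0}$ with $m_0 \ge 0$, taken of largest such degree, then the degree-$(m_0+1)$ part of the bracket would reduce to $[\sigma_1,Y_{m_0}]$ — no other term $[\sigma_j,Y_l]$ can reach degree $m_0+1$, because every $\sigma_j$ has degree $\le 1$ and no $Y_l$ has degree $> m_0$ — whereas $\Hsl_{\sigma}$ sits entirely in degree $0 < m_0+1$. Hence $[\sigma_1,Y_{m_0}] = 0$, and by (i) a weight-$(-2)$ element of $\ker(\ad\sigma_1)$ must vanish, a contradiction. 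Therefore $\Ysigma = Y_{-1} + Y_{-2} + \cdots$. Taking the degree-$0$ part of the bracket now leaves only $[\sigma_1,Y_{-1}] = \Hsl_{\sigma} = \Hsl_2$, so $(\sigma_1,\Hsl_2,Y_{-1})$ is an $\sltwo$-triple; by (ii) this forces $Y_{-1} = \Ysl_{\sigma_1}$. Consequently $\Ysigma - \Ysl_{\sigma_1} = \sum_{j\le -2} Y_j$ maps $G_{i,k}$ into $\bigoplus_{j\le -2} G_{i+j,k-2}\decal{-2}$, which is the assertion.

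The only delicate point — which I regard as the main obstacle — is the bookkeeping: verifying that $\Xsl_{\sigma}$ and $\Ysigma$ genuinely operate on $G$ in the derived category, that endomorphisms of the finite direct sum $G$ may be split into their first-index components, and above all the degree counts in the two extractions from $[\Xsl_{\sigma},\Ysigma] = \Hsl_{\sigma}$, which hinge on $\Xsl_{\sigma}$ carrying no component of first-index degree $\ge 2$; that last fact is exactly \Cref{lem:sigma_2}. Everything after that is standard $\sltwo$-representation theory.
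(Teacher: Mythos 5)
Your proposal is correct and follows essentially the same route as the paper's proof: decompose $\Ysigma$ into components by degree in the first index, use $\sigma_2=0$ to see that the bracket $[\sigma,\Ysigma]=\Hsl_2$ has topmost component $[\sigma_1,\tau_j]$, kill any component of degree $j\geq 0$ by the weight-$(-2)$ argument, and then identify $\tau_{-1}$ with $\Ysl_{\sigma_1}$ by the uniqueness of the lowering operator in an $\sltwo$-triple. The only difference is that you spell out the degree bookkeeping and the locally finite $\sltwo$-representation facts more explicitly than the paper does.
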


\begin{proof}
	To simplify the notation, let us again denote the operators in the
	$\sltwo$-representation determined by $\sigma_1$ by the letters $\Xsl_2 =
	\sigma_1$, $\Ysl_2$, and $\Hsl_2$. Let us also set $\tau = \Ysigma$, and expand
	this according to
	its degree in $i$ as $\tau = \sum_j \tau_j$; the individual components are operators
	\[
		\tau_j \colon G_{i,k} \to G_{i+j,k-2} \decal{-2}.
	\]
	In these terms, the lemma is asserting that $\tau_j = 0$ for $j \geq 0$, and that
	$\tau_{-1} = \Ysl_2$. Let $j$ be the largest integer such that $\tau_j \neq 0$. If
	$j \geq 0$, then after expanding the relation
	\[
		\Hsl_2 = [\sigma, \tau] = [\Xsl_2 + \sigma_0 + \dotsb, \tau_j + \tau_{j-1} + 
		\dotsb]
	\]
	by degree, we would get $[\Xsl_2, \tau_j] = 0$. But this is impossible because
	$\tau_j$ has weight $-2$ with respect to $\ad \Hsl_2$. The same reasoning shows that
	$\Hsl_2 = [\Xsl_2, \tau_{-1}]$, and as $\Ysl_2$ is uniquely determined by $\Hsl_2$ and
	$\Xsl_2$, it follows that $\tau_{-1} = \Ysl_2$. Consequently, $\tau = \Ysl_2 +
	\tau_{-2} + \dotsb$.
\end{proof}

\newpar
The relations for the Lie algebra $\slthree$ can also be interpreted nicely in terms
of the reflections given by the two Weil elements $\wsl_1$ and $\wsl_2$.

\begin{plem} \label{lem:reflection}
	We have $(\Ad \wsl_1 \circ \Ad \wsl_2 \circ \Ad \wsl_1)(\omega_2) = \sigma_1$.
\end{plem}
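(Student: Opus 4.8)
The plan is to reduce the identity to a computation inside $\slthree$ itself. By the previous proposition the operators $\esl_1 = \omega_2$, $\fsl_1 = \Ysl_{\omega_2}$, $\esl_2 = \Ysl_{\sigma_1}$, $\fsl_2 = \sigma_1$ (together with $\hsl_1,\hsl_2$) satisfy the Serre relations, and hence define a representation $\rho$ of $\slthree$ on the direct sum of the complexes $G_{i,k}$ with the shifts from \Cref{thm:slthree}. Since that direct sum has only finitely many nonzero bigraded summands (by \Cref{lem:amplitude}), every operator coming from a nilpotent element of $\slthree$ acts nilpotently; in particular $e^{\pm\omega_2}$, $e^{\pm\Ysl_{\omega_2}}$, $e^{\pm\sigma_1}$, $e^{\pm\Ysl_{\sigma_1}}$ are well-defined automorphisms, $\rho$ respects products of such exponentials, and for any $X$ in the image of $\slthree$ one has $\Ad(e^X) = e^{\ad X}$.

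First I would identify the two Weil elements as images of explicit elements of $\SL_3$. Since $\wsl_1 = \wsl_{\omega_2}$ comes from the $\sltwo$ in which $\omega_2 = \esl_1$ is the raising operator, one has $\wsl_1 = \rho(n_1)$ with $n_1 = e^{\esl_1}e^{-\fsl_1}e^{\esl_1}$. The $\sltwo$ attached to $\sigma_1$ is \emph{flipped}: there the raising operator is $\sigma_1 = \fsl_2$ and the lowering operator is $\Ysl_{\sigma_1} = \esl_2$, so $\wsl_2 = \wsl_{\sigma_1} = \rho(n_2)$ with $n_2 = e^{\fsl_2}e^{-\esl_2}e^{\fsl_2}$. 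Consequently $\Ad\wsl_1\circ\Ad\wsl_2\circ\Ad\wsl_1 = \Ad\bigl(\rho(n_1 n_2 n_1)\bigr)$, and because $\rho$ is a morphism of Lie algebras the claim reduces to the purely Lie-theoretic identity $\Ad(n_1 n_2 n_1)(\esl_1) = \fsl_2$ in $\slthree$.

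This last identity I would verify directly in the defining representation of $\slthree$: taking $\esl_i = E_{i,i+1}$ and $\fsl_i = E_{i+1,i}$ one computes
\[
	n_1 = \left(\begin{smallmatrix} 0 & 1 & 0 \\ -1 & 0 & 0 \\ 0 & 0 & 1 \end{smallmatrix}\right), \qquad
	n_2 = \left(\begin{smallmatrix} 1 & 0 & 0 \\ 0 & 0 & -1 \\ 0 & 1 & 0 \end{smallmatrix}\right), \qquad
	w := n_1 n_2 n_1 = \left(\begin{smallmatrix} 0 & 0 & -1 \\ 0 & -1 & 0 \\ -1 & 0 & 0 \end{smallmatrix}\right),
\]
and $w^2 = I$, so $\Ad(n_1 n_2 n_1)(\esl_1) = w E_{12} w^{-1} = w E_{12} w = E_{32} = \fsl_2$. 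Conceptually this is no surprise: $n_1 n_2 n_1$ is a lift of the longest element of the Weyl group $S_3$, which sends the simple root $\alpha_1$ to $-\alpha_2$, so $\Ad(n_1 n_2 n_1)$ must carry the root space of $\alpha_1$ onto that of $-\alpha_2$; the only content of the computation is that the chosen normalisations of the Weil elements make the scalar equal to $+1$. Applying $\rho$ then gives $(\Ad\wsl_1\circ\Ad\wsl_2\circ\Ad\wsl_1)(\omega_2) = \rho(\fsl_2) = \sigma_1$.

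The step that requires care — and the main, if modest, obstacle — is the bookkeeping in the second paragraph: because $\sigma_1$ enters the $\slthree$ of the previous proposition as $\fsl_2$ rather than as $\esl_2$, its Weil element is $\wsl_{\sigma_1} = e^{\fsl_2}e^{-\esl_2}e^{\fsl_2}$ and \emph{not} the ``standard'' lift $e^{\esl_2}e^{-\fsl_2}e^{\esl_2}$; using the latter would replace the answer by $-\sigma_1$. Once the correct group elements are pinned down, the rest is the routine $3\times 3$ matrix calculation above.
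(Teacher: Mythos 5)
Your proof is correct, and it takes a genuinely different route from the one in the paper. The paper never exponentiates to the group $\SL_3(\CC)$: it computes $\Ad\wsl_1(\Xsl_1)=-\Ysl_1$ directly from the definition of the Weil element, then uses the relations $[\Ysl_2,\Ysl_1]=0$, $[\Hsl_2,\Ysl_1]=-\Ysl_1$ and $(\ad\Ysl_1)^2\Xsl_2=0$ to recognize the intermediate outputs as primitive vectors of weight $-1$ for the respective $\sltwo$-actions, so that each subsequent $\Ad\wsl$ is just a single bracket; the chain ends with $[\Xsl_1,[\Ysl_1,\Xsl_2]]=\Xsl_2$. You instead package the same Serre relations into a representation $\rho$ of $\slthree$, identify $\wsl_1$ and $\wsl_2$ as $\rho(n_1)$ and $\rho(n_2)$ for explicit lifts of the simple reflections, and reduce the lemma to the $3\times 3$ matrix identity $\Ad(n_1n_2n_1)(E_{12})=E_{32}$. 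Both arguments ultimately rest on the same relations (your appeal to the Serre-relations proposition is exactly where $[\Ysl_{\omega_2},\Ysl_{\sigma_1}]=0$ enters), but yours makes the Weyl-group picture explicit — $n_1n_2n_1$ lifts the longest element of $S_3$, which sends $\alpha_1$ to $-\alpha_2$ — at the cost of having to justify the passage from the Lie algebra to products of exponentials (which you do correctly: the generators act nilpotently because only finitely many $G_{i,k}$ are nonzero, and $\Ad(e^{\rho(N)})\rho = \rho\circ\Ad(e^N)$). Your observation that $\wsl_{\sigma_1}$ must be lifted as $e^{\fsl_2}e^{-\esl_2}e^{\fsl_2}$ rather than the ``standard'' $e^{\esl_2}e^{-\fsl_2}e^{\esl_2}$, on pain of producing $-\sigma_1$, is exactly the right point to flag; the matrix computations ($n_1$, $n_2$, $w$, and $wE_{12}w^{-1}=E_{32}$) all check out. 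The paper's argument is slightly more self-contained in that it stays entirely within the $\sltwo$-formalism already set up in \Cref{chap:differential-forms}; yours is more conceptual and would generalize immediately to any identity of the form $\Ad(\text{Weyl lift})(\text{root vector})=\text{root vector}$.
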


\begin{proof}
	Recall from \Cref{par:XYH} that $\Xsl_1 = \omega_2$ and that $\Xsl_2 = \sigma_1$.
	The definition of
	the Weil element shows that $\Ad \wsl_1(\Xsl_1) = \wsl_1 \Xsl_1 \wsl_1^{-1} =
	-\Ysl_1$. The relations $[\Ysl_2, \Ysl_1] = 0$ and $[\Hsl_2, \Ysl_1] = -\Ysl_1$ are
	saying that $\Ysl_1$ is primitive of
	weight $-1$ with respect to the $\sltwo$-representation $\Xsl_2, \Ysl_2,
	\Hsl_2$, and therefore $\Ad \wsl_2(-\Ysl_1) = [\Xsl_2, -\Ysl_1] = [\Ysl_1, \Xsl_2]$.
	Because $(\ad \Ysl_1)^2 \Xsl_2 = 0$, this is now primitive of weight $-1$ with
	respect to the $\sltwo$-representation $\Xsl_1, \Ysl_1, \Hsl_1$, and so finally 
	\[
		(\Ad \wsl_1 \circ \Ad \wsl_2 \circ \Ad \wsl_1)(\Xsl_1) 
		= \Ad \wsl_1 \bigl( [\Ysl_1, \Xsl_2] \bigr) = \bigl[ \Xsl_1, [\Ysl_1, \Xsl_2]
		\bigr] = \Xsl_2.
	\]
	This proves the assertion.
\end{proof}

\section{The compact case}
\label{chap:slfour}

\newpar
We end this paper with a short analysis of the compact case. Most notably, we
prove that the action by the Lie algebra $\slthree$ can be upgraded, in the case
where $M$ and $B$ are compact, to an action by the Lie algebra $\slfour$. Since
$\slfour \cong \sosix$, this gives an alternative proof (not relying on the existence
of a hyperk\"ahler metric) for a result by Looijenga-Lunts \cite[\S4]{LL} and
Verbitsky \cite{Verbitsky}: the cohomology of an irreducible compact hyperk\"ahler
manifold with a Lagrangian fibration carries an action by $\sosix$. 

\newpar
We assume from now on that $M$ and $B$ are both compact; in other words, $M$ is a
holomorphic symplectic compact K\"ahler manifold of dimension $2n$, and the base $B$
of our Lagrangian fibration $\pi \colon M \to B$ is a compact
K\"ahler manifold of dimension $n$. In this case, the holomorphic symplectic form
$\sigma \in H^0(M, \Omega_M^2)$ is automatically closed. If $M$ is simply connected,
then it has a hyperk\"ahler metric (by Yau's theorem) and $B$ is a product of
projective spaces (by Hwang's theorem), but we are not assuming that this is the case. 

\newpar
Let $\lambda \in A^{1,1}(B)$ be a K\"ahler class on $B$. Recall that we have
\[
	\derR \pil \QQ_M(n) \decal{2n} \cong \bigoplus_{i=-n}^n P_i \decal{-i},
\]
and that each $P_i$ is a polarizable Hodge module of weight $i$ on $B$. Since $B$ is
a compact K\"ahler manifold, the cohomology groups $H^j(B, P_i)$ therefore have
Hodge structures of weight $i+j$, and we have an isomorphism of Hodge
structures
\[
	H^{2n+j}(M, \QQ)(n) \cong \bigoplus_{i=-n}^n H^{j-i}(B, P_i).
\]
By the Hard Lefschetz theorem (for polarizable Hodge modules on compact K\"ahler
manifolds), cup product with $\lambda$ determines, for each $j \geq 1$, an isomorphism
\[
	\lambda^j \colon H^{-j}(B, P_i) \to H^j(B, P_i)(j)
\]
of Hodge structures of weight $i-j$.

\newpar \label{par:action-forms}
Since $\lambda$ is closed, its pullback $\piu \lambda \in A^{1,1}(M)$ acts on the
complex $\Cpi$ from \Cref{par:Laumon}. Let us check that, with respect to the isomorphism 
\[
	\Cpi \cong \bigoplus_{i=-n}^{n} (\Pmod_i, F_{\bullet} \Pmod_i) \decal{-i}
\]
from \eqref{eq:Cpi-decomposition}, the action by $\piu \lambda$ is diagonal. With our
usual notation, this amounts to saying that $\piu \lambda = (\piu \lambda)_0$, and
that the morphism
\[
	(\piu \lambda)_0 \colon (\Pmod_i, F_{\bullet} \Pmod_i) \to (\Pmod_i, F_{\bullet-1}
	\Pmod_i) \decal{2}
\]
is the one induced by the K\"ahler form $\lambda \in A^{1,1}(B)$. This is a
consequence of the projection formula: if we view $\lambda$ as a morphism $\lambda
\colon \CC_B \to \CC_B \decal{2}$ in the derived category of constructible sheaves,
then the claim is that 
\[
	\derR \pil \left( \CC_M \xrightarrow{\piu \lambda} \CC_M \decal{2} \right)
		\cong \left( \CC_B \xrightarrow{\lambda} \CC_B \decal{2} \right) 
		\tensor \derR \pil \CC_M.
\]
Using the de Rham resolutions of $\CC_M$ and $\CC_B$ by differential forms, the
projection formula in this case amounts to the following statement.

\begin{plem} \label{lem:projection}
	The morphism of complexes
	\[
		(\shA_B^{\bullet}, d) \tensor (\pil \shA_M^{\bullet}, d) \to 
		(\pil \shA_M^{\bullet}, d), \quad
		\alpha \tensor \beta \mapsto \piu \alpha \wedge \beta,
	\]
	is a quasi-isomorphism.
\end{plem}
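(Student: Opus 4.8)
As noted just above, the statement is simply the projection formula
\[
	\derR\pil\bigl(\piu\CC_B\Ltensor\CC_M\bigr)\cong\CC_B\Ltensor\derR\pil\CC_M
\]
transcribed through the de Rham resolutions $\CC_B\into\shA_B^\bullet$ and $\CC_M\into\shA_M^\bullet$ (the latter being a fine, hence $\pil$-acyclic, resolution, so that $\pil\shA_M^\bullet$ represents $\derR\pil\CC_M$). The plan is to prove it directly and elementarily. First I would point out that the tensor product in the statement is taken over the constant sheaf $\CC_B$, equivalently over $\CC$ --- this is forced, since neither $\shO_B$ nor $\shA_B^0$ acts through chain maps once one insists that the differential on $\shA_B^\bullet\tensor\pil\shA_M^\bullet$ be the total (Leibniz) de Rham differential $d$ appearing in the lemma. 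The key observation is then that the multiplication map
\[
	m\colon\shA_B^\bullet\tensor_{\CC_B}\pil\shA_M^\bullet\to\pil\shA_M^\bullet,\qquad
	\alpha\tensor\beta\mapsto\piu\alpha\wedge\beta,
\]
has an obvious section: the assignment $\iota\colon\beta\mapsto 1\tensor\beta$, with $1\in\shA_B^0$ the constant function, is a morphism of complexes because $d1=0$, and $m\circ\iota=\id$ because $\piu 1=1$. Since a chain map admitting a quasi-isomorphic section is itself a quasi-isomorphism (apply $H^\bullet$ to $m\circ\iota=\id$), it suffices to show that $\iota$ is a quasi-isomorphism.

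For that, I would observe that $\iota$ is obtained by tensoring the de Rham resolution $\CC_B\into\shA_B^\bullet$ --- a quasi-isomorphism by the Poincar\'e lemma --- with the complex $\pil\shA_M^\bullet$ over $\CC_B$, so that $\iota$ is exactly the map $\CC_B\tensor_{\CC_B}\pil\shA_M^\bullet\to\shA_B^\bullet\tensor_{\CC_B}\pil\shA_M^\bullet$. This stays a quasi-isomorphism because each $\shA_B^p$ is flat over $\CC_B$: on stalks, $\shA_{B,b}^p$ is free over the ring of germs of smooth functions at $b$, which in turn is flat over $\CC_{B,b}=\CC$. Hence the mapping cone of $\CC_B\into\shA_B^\bullet$ is a bounded acyclic complex of $\CC_B$-flat sheaves, and tensoring it with the bounded complex $\pil\shA_M^\bullet$ leaves it acyclic; equivalently, one may simply pass to stalks, where $\shA_B^\bullet$ becomes a bounded complex of $\CC$-vector spaces quasi-isomorphic to $\CC$ and the claim is the K\"unneth formula over the field $\CC$. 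Either way $\iota$, and therefore $m$, is a quasi-isomorphism.

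There is no real obstacle here: the whole content is the Poincar\'e lemma together with the triviality $\piu 1=1$. The only points requiring a little care --- none of them serious --- are the sign and compatibility checks that $\iota$ and $m$ are genuine chain maps for the conventions of \parref{par:Laumon}, the (stalk-local) flatness remark, and the boundedness of $\pil\shA_M^\bullet$, which holds because $M$ is finite-dimensional and is what makes the spectral sequence (or K\"unneth formula) converge.
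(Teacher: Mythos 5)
Your proposal is correct and takes essentially the same route as the paper: the paper's proof likewise exhibits $1 \tensor \beta$ as a preimage and then argues locally via the Poincar\'e lemma, i.e.\ via the fact that $\shA_B^{\bullet}$ resolves $\CC_B$. Your packaging through the section $\iota \colon \beta \mapsto 1 \tensor \beta$ together with flatness over $\CC$ is just a slightly more explicit write-up of the same argument.
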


\begin{proof}
	Since we are using Deligne's sign conventions, the tensor product has terms
	\[
		\bigoplus_{i+j=k} \shA_B^i \tensor \pil \shA_M^j
	\]
	and differential $d(\alpha \tensor \beta) = d \alpha \tensor \beta + (-1)^i \alpha
	\tensor d \beta$. The morphism of complexes is
	\[
		\bigoplus_{i+j=k} \shA_B^i \tensor \pil \shA_M^j \to \pil \shA_M^k,
		\quad \sum_{i+j=k} \alpha_i \tensor \beta_j \mapsto \sum_{i+j=k} \piu \alpha_i
		\wedge \beta_j.
	\]
	This is clearly surjective: if $U \subseteq B$ is open,  then the element $1
	\tensor \beta \in A^0(U) \tensor A^k \bigl( \pi^{-1}(U) \bigr)$ is a preimage
	for $\beta \in A^k \bigl( \pi^{-1}(U) \bigr)$. To show that the morphism is a
	quasi-isomorphism, one then argues locally with the help of the Poincar\'e lemma,
	which says that $\shA_B^{\bullet}$ is a resolution of the constant sheaf $\CC_B$.
\end{proof}

\newpar
The Hard Lefschetz theorem for the K\"ahler form $\lambda$ gives us a third 
action by the Lie algebra $\sltwo$. In the rest of this chapter, we shall
argue that all three actions can be combined into an action of the Lie algebra
$\slfour$. Consider the vector spaces
\begin{equation} \label{eq:Hijk}
	H^{i,j,k} = H^{i+j}(B, G_{i,k}) = H^j \bigl( B, \gr_{-k}^F \DR(\Pmod_i) \bigr) 
\end{equation}
for $i,j,k \in \ZZ$. These vector spaces refine the Hodge decomposition on $M$,
taking into account the decomposition theorem for $\pi \colon M \to B$. Indeed,
according to the direct image theorem for the cohomology of the polarizable Hodge
module $P_i$ on the compact K\"ahler manifold $B$, the $\QQ$-vector space $H^j(B,
P_i)$ carries a Hodge structure of weight $i+j$, and
\[
	\gr_{-k}^F \bigl( H^j(B, P_i) \tensor_{\QQ} \CC \bigr)
	\cong \gr_{-k}^F H^j \bigl( B, \DR(\Pmod_i) \bigr)
	\cong H^j \bigl( B, \gr_{-k}^F \DR(\Pmod_i) \bigr).
\]
This leads to the following isomorphism with the Hodge decomposition on $H^j(B,
P_i)$:
\begin{equation} \label{eq:Hijk-Hodge}
	H^{i,j,k} \cong \bigl( H^j(B, P_i) \tensor_{\QQ} \CC \bigr)^{k, i+j-k}
\end{equation}
In order to relate this to the Hodge decomposition on $M$, we use the isomorphism 
\[
	\derR \pil \Omega_M^{n+k} \decal{n-k} \cong \bigoplus_{i=-n}^n G_{i,k}
\]
from \eqref{eq:pil-Omega}. Substituting in the definition of $H^{i,j,k}$, we see 
immediately that
\begin{equation} \label{eq:Hodge-numbers}
	H^{n+k, n+j}(M) \cong H^{n+j}(M, \Omega_M^{n+k}) 
	\cong \bigoplus_{i=-n}^n H^{j+k}(B, G_{i,k})
	= \bigoplus_{i=-n}^n H^{i, j+k-i, k}.
\end{equation}
The vector spaces $H^{i,j,k}$ also interact with duality in the expected way. Indeed,
because of \eqref{eq:duality}, the Grothendieck dual of $G_{i,k}$ is isomorphic to
$G_{-i,-k}$, and so 
\begin{equation} \label{eq:Hijk-duality}
	\Hom_{\CC} \bigl( H^{i,j,k}, \CC \bigr) \cong H^{-i,-j,-k}.
\end{equation}

\newpar
The discussion in the preceding paragraph leads to the following concrete
interpretation for the three indices $i,j,k$, similar to what happens for the
complexes $G_{i,k}$:
\begin{enumerate}
	\item The first index $i$ records the cohomological degree along the fibers of
		$\pi$, in the sense that $H^{i,j,k}$ is associated with the
		$(n+i)$-th cohomology groups of the fibers.
	\item The second index $j$ records the cohomological degree along the base $B$, in
		the sense that $H^{i,j,k}$ is associated with the $(n+j)$-th cohomology groups
		on $B$.
	\item The third index $k$ records the holomorphic degree, in the sense that
		$H^{i,j,k}$ is associated with the sheaf $\Omega_M^{n+k}$ of holomorphic forms
		of degree $(n+k)$. 
\end{enumerate}

\newpar
The K\"ahler form $\omega \in A^{1,1}(M)$ induces a morphism
\[
	\omega_2 \colon H^{i,j,k} = H^{i+j}(B, G_{i,k}) \to H^{i+j+2}(B, G_{i+2,k+1}) =
	H^{i+2, j, k+1},
\]
and the relative Hard Lefschetz theorem for $\omega$ becomes an isomorphism
\begin{equation} \label{eq:HL-omega-ijk}
	\omega_2^i \colon H^{-i,j,k} \to H^{i,j,k+i} \quad \text{for $i \geq 1$.}
\end{equation}
Similarly, the holomorphic symplectic form $\sigma \in H^0(M, \Omega_M^2)$ induces a
morphism
\[
	\sigma_1 \colon H^{i,j,k} = H^{i+j}(B, G_{i,k}) \to H^{i+j+2}(B, G_{i+1,k+2})
	= H^{i+1, j+1, k+2},
\]
and the symplectic relative Hard Lefschetz theorem (\Cref{thm:HL-sigma}) becomes
an isomorphism
\begin{equation} \label{eq:HL-sigma-ijk}
	\sigma_1^k \colon H^{i,j,-k} \to H^{i+k,j+k,k} \quad \text{for $k \geq 1$.}
\end{equation}
Finally, by \eqref{eq:Hijk-Hodge}, the K\"ahler form $\lambda \in A^{1,1}(B)$
induces a morphism
\[
	\lambda \colon H^{i,j,k} \cong \bigl( H^j(B, P_i) \tensor_{\QQ} \CC
		\bigr)^{k,i+j-k} \to 
		\bigl( H^{j+2}(B, P_i) \tensor_{\QQ} \CC \bigr)^{k+1, i+j-k+1}
		\cong H^{i,j+2,k+1},
\]
and the Hard Lefschetz theorem for $\lambda$ becomes an isomorphism
\begin{equation} \label{eq:HL-lambda-ijk}
	\lambda^j \colon H^{i,-j,k} \to H^{i,j,k+j} \quad \text{for $j \geq 1$.}
\end{equation}

\newpar
From these isomorphisms and \Cref{lem:amplitude}, one can deduce the following result.

\begin{plem}
	We have $H^{i,j,k} = 0$ unless $\max \bigl( \abs{i}, \abs{j}, \abs{k}, \abs{i-k},
	\abs{j-k}, \abs{i+j-k} \bigr) \leq n$.
\end{plem}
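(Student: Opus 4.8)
The plan is to deduce the statement entirely from \Cref{lem:amplitude} together with the elementary fact that coherent cohomology on a compact complex manifold of dimension $n$ is concentrated in degrees $0,\dots,n$; no new geometry is needed, since all the real content already sits in \Cref{lem:amplitude} (and hence in Matsushita's equidimensionality theorem).

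First I would dispose of three of the six inequalities directly. By \Cref{lem:amplitude} the complex $G_{i,k}$ is exact unless $\abs{i}\le n$, $\abs{k}\le n$, and $\abs{i-k}\le n$; since $H^{i,j,k}=H^{i+j}(B,G_{i,k})$, this already forces $H^{i,j,k}=0$ outside that range, for every value of $j$.

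For the remaining three inequalities I would use the other assertion of \Cref{lem:amplitude}, namely that $G_{i,k}$ is concentrated in cohomological degrees between $-n+\max(i,k,i+k)$ and $\min(i,k,i+k)$. Because $B$ is compact of dimension $n$, the hypercohomology spectral sequence $E_2^{p,q}=H^p\bigl(B,\shH^q G_{i,k}\bigr)\Rightarrow H^{p+q}(B,G_{i,k})$, whose nonzero columns have $0\le p\le n$, shows that $H^m(B,G_{i,k})=0$ unless
\[
	\max(i,k,i+k)-n \;\le\; m \;\le\; \min(i,k,i+k)+n .
\]
Setting $m=i+j$, the two inequalities read $j\ge \max(i,k,i+k)-i-n$ and $j\le \min(i,k,i+k)-i+n$; using $\max(i,k,i+k)-i=\max(0,k-i,k)$ and $\min(i,k,i+k)-i=\min(0,k-i,k)$, a routine unwinding of the resulting bounds on $j$ yields exactly $\abs{j}\le n$, $\abs{j-k}\le n$, and $\abs{i+j-k}\le n$. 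Combined with the three inequalities from the previous step, this is precisely the claim, so there is no genuine obstacle — only the bookkeeping of matching the $\min/\max$ expressions.

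Finally I would remark that the same three bounds can also be read off from the Hard Lefschetz isomorphisms \eqref{eq:HL-omega-ijk}, \eqref{eq:HL-sigma-ijk}, and \eqref{eq:HL-lambda-ijk}: each of these moves the triple $(i,j,k)$ to another triple with one coordinate reflected, and after reflecting into the region where the amplitude bound of \Cref{lem:amplitude} applies one again obtains the stated vanishing. This is presumably the route suggested by the phrase ``from these isomorphisms and \Cref{lem:amplitude}'', but the hypercohomology degree count above is the cleanest way to organize it.
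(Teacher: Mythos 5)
Your argument is correct, and the bookkeeping checks out: from $\max(i,k,i+k)-i=\max(0,k-i,k)$ and $\min(i,k,i+k)-i=\min(0,k-i,k)$, the bounds $\max(0,k-i,k)-n\le j\le\min(0,k-i,k)+n$ unpack exactly to $\abs{j}\le n$, $\abs{j-k}\le n$, and $\abs{i+j-k}\le n$, which together with the exactness criterion of \Cref{lem:amplitude} gives all six inequalities. The route is, however, not the one the paper gestures at. The paper's one-line indication is to combine \Cref{lem:amplitude} with the three Hard Lefschetz isomorphisms \eqref{eq:HL-omega-ijk}, \eqref{eq:HL-sigma-ijk}, and \eqref{eq:HL-lambda-ijk}: these generate the $S_4$ of reflections described just below the lemma, which permutes the six quantities $i,j,k,i-k,j-k,i+j-k$ up to sign, so one only needs to verify a couple of the bounds directly (e.g.\ $\abs{i},\abs{k},\abs{i-k}\le n$ from the amplitude lemma, or $\abs{j}\le n$ from the perversity of $P_i$) and then transport them around the orbit. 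Your version trades this symmetry argument for the elementary fact that coherent cohomology on the $n$-dimensional base vanishes outside degrees $0,\dots,n$, fed into the hypercohomology spectral sequence; it is more self-contained (it does not use the symplectic relative Hard Lefschetz theorem at all, only Matsushita's equidimensionality via \Cref{lem:amplitude}), at the cost of losing the conceptual explanation of why the answer is the $S_4$-symmetric rhombic dodecahedron. Both are valid; your closing remark correctly identifies the paper's intended route as the alternative.
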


We may arrange the $H^{i,j,k}$ on a three-dimensional grid, by putting the
vector space for the multi-index $(i,j,k)$ at the point with coordinates $\half \bigl(
2k-i-j, i \sqrt{3}, j \sqrt{3} \bigr)$. In geometric terms, the conditions in the lemma 
are describing a \emph{rhombic dodecahedron}:
\begin{center}
	\includegraphics[width=5cm]{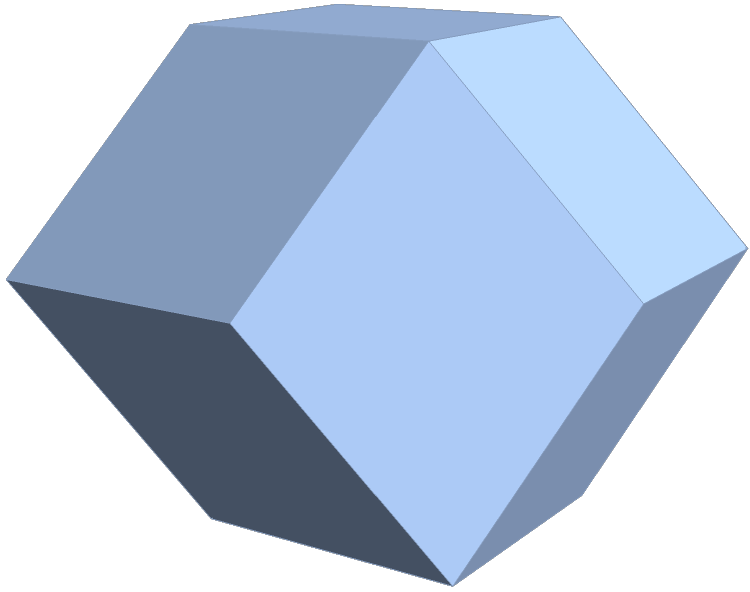}
\end{center}
The lemma is asserting that all the points corresponding to nonzero
$H^{i,j,k}$ lie inside this rhombic dodecahedron. The example of a projection $\pi
\colon A \times B \to B$ from the product of two $n$-dimensional abelian varieties
shows that this is the best one can expect in general. For irreducible compact
hyperk\"ahler manifolds, Nagai's conjecture would imply that the convex hull of the
points corresponding to nonzero $H^{i,j,k}$ is actually an octahedron
\cite[\S3.8]{LF}.

\newpar
The three isomorphisms in \eqref{eq:HL-omega-ijk}, \eqref{eq:HL-sigma-ijk}, and
\eqref{eq:HL-lambda-ijk} give us three reflections 
\[
	(i,j,k) \to (-i,j,k-i), \quad 
	(i,j,k) \to (i,-j,k-j), \quad
	(i,j,k) \to (i-k,j-k,-k),
\]
and a little bit of calculation shows that these three reflections together generate
a total of $24$ symmetries, with a group structure isomorphic to the symmetric group
$S_4$. This is not surprising, because $S_4$ is the Weyl group of $\slfour$.
One of the symmetries is
\[
	(i,j,k) \to (j,i,i+j-k),
\]
and because of \eqref{eq:Hodge-numbers}, it induces an isomorphism $H^{p,q}(M) \cong
H^{q,p}(M)$. It would be interesting if one could explain this particular
symmetry in a geometric way.

\newpar
As in the previous chapter, we need to establish one additional identity in order to
have an action by the Lie algebra $\slfour$. Once again, the isomorphism $\shT_M
\cong \Omega_M^1$ associates to the cohomology class of $\piu \lambda \in H^1(M,
\Omega_M^1)$ a new element
\[
	i(\piu \lambda) \in A^{0,1}(M, \shT_M),
\]
that is $\dbar$-closed and satisfies $i(\piu \lambda) \cont \sigma = \piu \lambda$.
The following lemma is also proved in the same way as \Cref{lem:contraction}.

\begin{plem}
	For every $k,q \in \ZZ$, the following diagram commutes:
	\[
		\begin{tikzcd}
			A^{n-k,q}(M) \dar{(\piu \lambda) \wedge} \rar{\wsigma} & A^{n+k,q}(M)
			\dar{i(\piu \lambda) \cont} \\
			A^{n-k+1,q+1}(M) \rar{\wsigma} & A^{n+k-1,q+1}(M)
		\end{tikzcd}
	\]
\end{plem}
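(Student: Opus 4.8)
The plan is to imitate the proof of \Cref{lem:contraction} (and of the analogous statement for $i(\omega)$ proved a few paragraphs above), replacing the holomorphic $1$-form $\piu\beta$ by the $(1,1)$-form $\piu\lambda$ and the holomorphic vector field $v(\beta)$ by the $(0,1)$-form-valued vector field $i(\piu\lambda) \in A^{0,1}(M,\shT_M)$; the only structural input is the defining relation $i(\piu\lambda) \cont \sigma = \piu\lambda$, exactly parallel to $v(\beta)\cont\sigma = \piu\beta$. First I would check that both legs of the square land in $A^{n+k-1,q+1}(M)$: wedging with $\piu\lambda$ raises the type by $(1,1)$, contracting with $i(\piu\lambda)$ lowers the holomorphic degree by $1$ and raises the antiholomorphic degree by $1$, and $\wsigma$ (the Weil element for the $\sltwo$-action of $\sigma$ on $\bigoplus_k \Omega_M^{n+k}$, acting coefficientwise on the $(0,q)$-part) sends $A^{n-k,q}(M)$ to $A^{n+k,q}(M)$, so the degrees match.

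Next I would record the two algebraic identities that drive the computation. Because $\sigma$ has even total degree and $i(\piu\lambda)\cont\sigma = \piu\lambda$, contraction with $i(\piu\lambda)$ obeys the Leibniz rule $i(\piu\lambda)\cont(\sigma^m\wedge\alpha) = m\,\sigma^{m-1}\wedge\piu\lambda\wedge\alpha + \sigma^m\wedge(i(\piu\lambda)\cont\alpha)$; this is the same local-coordinate identity used in the proof that $\bigl[\omega\wedge, i(\omega)\cont\bigr] = \iomega\wedge$. Then, for a primitive form $\alpha \in A^{n-k,q}(M)$ with $\sigma^{k+1}\wedge\alpha = 0$, I would write its Lefschetz decomposition $\piu\lambda\wedge\alpha = \gamma_0 + \sigma\wedge\gamma_1$ with $\gamma_0,\gamma_1$ primitive, note that $\sigma^k\wedge\gamma_0 = 0$ so that $\sigma^k\wedge\piu\lambda\wedge\alpha = \sigma^{k+1}\wedge\gamma_1$, and contract the relation $\sigma^{k+1}\wedge\alpha=0$ with $i(\piu\lambda)$ to obtain $i(\piu\lambda)\cont\alpha = -(k+1)\gamma_1$, just as in \Cref{lem:contraction}.

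With these in hand I would treat the primitive case and then the general case of a form $\tfrac{1}{j!}\sigma^j\wedge\alpha$ ($\alpha$ primitive, $0\le j\le k$), using the explicit formula $\wsigma(\tfrac{1}{j!}\sigma^j\wedge\alpha) = \tfrac{(-1)^j}{(k-j)!}\sigma^{k-j}\wedge\alpha$, computing $i(\piu\lambda)\cont\wsigma(\tfrac{1}{j!}\sigma^j\wedge\alpha)$ and $\wsigma(\piu\lambda\wedge\tfrac{1}{j!}\sigma^j\wedge\alpha)$ separately and matching them term by term; this is line for line the calculation in \Cref{lem:contraction}, and the rational coefficients produced by $\wsigma$ are precisely what makes the two sides agree. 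The only place that needs genuine care — and the closest thing to an obstacle — is bookkeeping the Koszul signs when the $\dzb$-factor carried by $i(\piu\lambda)$ is moved past the $\dz$-factors during contraction (this is the source of the $(-1)^p$ in the displayed formula for $i(\omega)\cont$), but since $i(\piu\lambda)$ and $\piu\lambda$ satisfy the same formal relations with $\sigma$ as $i(\omega)$ and $\omega$ do, no new phenomenon arises and the conclusion follows.
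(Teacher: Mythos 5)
Your proposal is correct and matches the paper exactly: the paper's entire proof of this lemma is the remark that it ``is also proved in the same way as \Cref{lem:contraction},'' and your substitution of $\piu\lambda$ for $\piu\beta$ and of $i(\piu\lambda)$ for the vector field $v(\beta)$, driven by the defining relation $i(\piu\lambda)\cont\sigma = \piu\lambda$, is precisely that argument. Your identification of the only delicate point (the Koszul sign when the $\dzb$-factor of $i(\piu\lambda)$ passes the even-degree forms $\sigma^j$, which is harmless) is also accurate.
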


\newpar
The crucial observation is that the commutator of the two operators
$\alpha \mapsto (\piu \lambda) \wedge \alpha$ and $\alpha \mapsto i(\omega) \cont
\alpha$ vanishes.

\begin{plem}
	We have $\bigl[ (\piu \lambda) \wedge, i(\piu \lambda) \cont \bigr] = 0$.
\end{plem}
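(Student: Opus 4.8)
The plan is to run the proof exactly as for the previous lemma, which computed $\bigl[\omega\wedge, i(\omega)\cont\bigr] = \iomega\wedge$ from the Leibniz-type identity $i(\omega)\cont(\omega\wedge\alpha) = (i(\omega)\cont\omega)\wedge\alpha + \omega\wedge(i(\omega)\cont\alpha)$. The same local computation, with the closed $(1,1)$-form $\omega$ replaced by the closed $(1,1)$-form $\piu\lambda$ and $i(\omega)$ replaced by $i(\piu\lambda) \in A^{0,1}(M,\shT_M)$, yields
\[
	i(\piu\lambda)\cont\bigl(\piu\lambda\wedge\alpha\bigr)
	= \bigl(i(\piu\lambda)\cont\piu\lambda\bigr)\wedge\alpha
	+ \piu\lambda\wedge\bigl(i(\piu\lambda)\cont\alpha\bigr),
\]
so that $\bigl[(\piu\lambda)\wedge, i(\piu\lambda)\cont\bigr] = -\bigl(i(\piu\lambda)\cont\piu\lambda\bigr)\wedge$. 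It therefore suffices to show that the $(0,2)$-form $i(\piu\lambda)\cont\piu\lambda$ is identically zero; this is the only place where more than formal manipulation is needed, and it is where the Lagrangian condition enters. (Contrast this with the case of $\omega$, where $i(\omega)\cont\omega = -\iomega$ is genuinely nonzero.)

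To prove the vanishing I would work locally on $B$. Pick holomorphic coordinates $t_1,\dotsc,t_n$ on an open set $U \subseteq B$, pulled back to $\pi^{-1}(U)$, and write $\lambda = \sum_{a,b}\lambda_{a\bar b}\,dt_a\wedge d\bar t_b$, so that $\piu\lambda = \sum_b \piu\beta_b \wedge \piu(d\bar t_b)$ with $\beta_b = \sum_a \lambda_{a\bar b}\,dt_a \in H^0(U,\Omega_B^1)$. Under the isomorphism $\shT_M \cong \Omega_M^1$ coming from $\sigma$, this gives $i(\piu\lambda) = \sum_b v(\beta_b) \tensor \piu(d\bar t_b)$, where $v(\beta_b)$ is the holomorphic vector field on $\pi^{-1}(U)$ with $\piu\beta_b = v(\beta_b)\cont\sigma$, in the notation of \Cref{chap:differential-forms}. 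Contracting $\piu\lambda$ against the holomorphic vector-field part of $i(\piu\lambda)$ and wedging in the $\piu(d\bar t_b)$ then gives $i(\piu\lambda)\cont\piu\lambda = \pm\sum_b\bigl(v(\beta_b)\cont\piu\lambda\bigr)\wedge\piu(d\bar t_b)$.

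Now I would invoke the two facts already established in \Cref{chap:differential-forms}: that $v(\beta_b)$ is tangent to the fibers of $\pi$ — which is precisely where the Lagrangian hypothesis is used, via the fact that each smooth fiber is maximal isotropic for $\sigma$ — and that $\piu\lambda$, being the pullback of a form on $B$, is annihilated by contraction with any vector tangent to a fiber. Together these give $v(\beta_b)\cont\piu\lambda = 0$ for every $b$, hence $i(\piu\lambda)\cont\piu\lambda = 0$, and the lemma follows. I do not expect a serious obstacle: the only care needed is tracking the Koszul signs so that the Leibniz identity comes out in the stated form, and recognizing that the entire geometric content is the pair of statements about $v(\beta_b)$ proved in \Cref{chap:differential-forms}.
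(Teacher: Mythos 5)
Your proof is correct and follows essentially the same route as the paper's: reduce via the Leibniz identity to the vanishing of the $(0,2)$-form $i(\piu\lambda)\cont\piu\lambda$, decompose $\lambda$ locally into $(1,0)\wedge(0,1)$ pieces, convert the pulled-back $(1,0)$-parts into fiber-tangent vector fields via $\sigma$, and contract against the pulled-back form. One small imprecision: your $\beta_b = \sum_a \lambda_{a\bar b}\,dt_a$ has smooth rather than holomorphic coefficients, so it does not lie in $H^0(U,\Omega_B^1)$; the paper sidesteps this by writing $\lambda = \sum_j \dt_j \wedge \theta_j$ with the smooth coefficients absorbed into $\dbar$-closed $(0,1)$-forms $\theta_j$, so that the associated vector fields $\eta_j$ are honestly holomorphic. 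This does not affect your argument, since the tangency of $v(\beta_b)$ to the fibers only uses that $\piu\beta_b$ annihilates fiber-tangent vectors, not holomorphy.
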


\begin{proof}
	As before, the commutator is the wedge product with the $\dbar$-closed $(0,2)$-form
	\[
		-i(\piu \lambda) \cont (\piu \lambda) \in A^{0,2}(M).
	\]
	In a nutshell, this vanishes because the vector fields in $i(\piu \lambda)$ are
	tangent to the fibers of $\pi$. It is enough to prove this locally, and so we may
	assume without loss of generality that
	\[
		\lambda = \sum_{j=1}^n \dt_j \wedge \theta_j,
	\]
	where $t_1, \dotsc, t_n$ are local coordinates on $B$ and $\theta_1, \dotsc,
	\theta_n$ are $\dbar$-closed $(0,1)$-forms. Let $\eta_j$ be the unique holomorphic
	vector field such that $\piu(\dt_j) = \eta_j \cont \sigma$. Then
	\[
		i(\piu \lambda) = \sum_{j=1}^n \eta_j \tensor \piu \theta_j,
	\]
	and consequently
	\[
		-i(\piu \lambda) \cont (\piu \lambda)
		= \sum_{j,k=1}^n \bigl( \eta_j \cont (\piu \dt_k) \bigr) \cdot \piu(\theta_j \wedge
		\theta_k) = 0,
	\]
	due to the fact that the vector fields $\eta_j$ are tangent to the fibers of $\pi$.
\end{proof}

\newpar
We now consider how $i(\piu \omega)$ acts on the complex $(M, d)$, where $M_k^i =
\pil \shA_M^{n+k,n+i}$ and $d = (-1)^k \dbar$. Contraction against $i(\piu \lambda)$
defines a morphism.
\[
	i(\piu \lambda) \colon G \to G(-1) \decal{1} 
\]
in the derived category; note that this morphism changes the grading. Since $\piu
\lambda$ and $\sigma$ commute (as $2$-forms on $M$), we have $i(\piu \lambda) =
\wsigma \cdot (\piu \lambda) \cdot \wsigma^{-1} = -[\Ysigma, \piu \lambda]$,
and therefore 
\[
	\bigl[ \piu \lambda, [\piu \lambda, \Ysigma] \bigr] 
	= -[i(\piu \lambda), \piu \lambda] = 0.
\]
If we now take the topmost component, and remember that the action by $(\piu
\lambda)_0$ is diagonal and equal to $\lambda$, we arrive at the following result.

\begin{plem} \label{lem:relation-slfour}
	We have $\bigl[ \lambda, [\lambda, \Ysl_{\sigma_1}] \bigr] = 0$, and therefore
	$[\Ysl_{\lambda}, \Ysl_{\sigma_1}] = 0$.
\end{plem}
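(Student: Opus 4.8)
The plan is to follow the proof of \Cref{lem:relation} essentially verbatim, now with the $\dbar$-closed $(0,2)$-form $-i(\piu\lambda)\cont(\piu\lambda)$ playing the role that $\iomega$ played there. In the paragraph preceding the statement we have already established the identity
\[
    \bigl[ \piu\lambda, [\piu\lambda, \Ysigma] \bigr] = -\bigl[ i(\piu\lambda), \piu\lambda \bigr] = 0
\]
of operators on the complex $(M,d)$; this uses only that $\piu\lambda$ is $\dbar$-closed, so that it acts on $\gr_\bullet^F \Cpi$ and hence on the decomposition, and that $\piu\lambda$ commutes with $\sigma$ as a $2$-form, so that $i(\piu\lambda) = \wsigma\cdot(\piu\lambda)\cdot\wsigma^{-1} = -[\Ysigma,\piu\lambda]$. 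To get the first assertion I would extract the topmost component of this identity with respect to the filtration by the first index $i$. By the computation in \Cref{par:action-forms} (which rests on the projection formula, \Cref{lem:projection}), the operator $\piu\lambda$ acts diagonally, so $\piu\lambda = (\piu\lambda)_0$ and its topmost component is just the action of $\lambda$ on each summand; and by \Cref{lem:Ysigma} the topmost component of $\Ysigma$ is exactly $\Ysl_{\sigma_1}$. Hence the topmost component of $\bigl[ \piu\lambda, [\piu\lambda, \Ysigma] \bigr]$ is $\bigl[ \lambda, [\lambda, \Ysl_{\sigma_1}] \bigr]$, which therefore vanishes.

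For the second assertion I would argue exactly as in \Cref{lem:relation}. Since $\sigma_1$ sends $H^{i,j,k}$ to $H^{i+1,j+1,k+2}$, the operator $\Ysl_{\sigma_1}$ sends $H^{i,j,k}$ to $H^{i-1,j-1,k-2}$, so it has $\ad$-weight $-1$ with respect to the $\sltwo$-representation determined by $\lambda$ (in which $\Hsl_\lambda$ acts on $H^{i,j,k}$ as multiplication by $j$ and $\Xsl_\lambda = \lambda$ raises $j$ by $2$). An endomorphism of $\ad$-weight $-1$ annihilated by $(\ad\lambda)^2$ is necessarily primitive: in the $\sltwo$-representation on the space of operators it can only lie in irreducible summands of highest weight $\leq 2$ that also contain the weight $-1$, hence of highest weight exactly $1$, where the weight-$(-1)$ vector is a lowest-weight vector and so is killed by $\Ysl_\lambda$. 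Therefore $[\Ysl_\lambda, \Ysl_{\sigma_1}] = 0$.

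I do not expect any genuine obstacle here; the proof is purely formal once the two inputs it relies on are in hand. The only points that need care are exactly those already settled earlier in the chapter: that $\piu\lambda$ acts diagonally on the summands of the decomposition theorem, so that its topmost component is literally $\lambda$ acting on each $H^{i,j,k}$ (this is \Cref{par:action-forms}, via \Cref{lem:projection}), and the identification of the topmost component of $\Ysigma$ with $\Ysl_{\sigma_1}$ (this is \Cref{lem:Ysigma}, which itself uses the symplectic relative Hard Lefschetz theorem, \Cref{thm:HL-sigma}). Given those, \Cref{lem:relation-slfour} is the compact-case analogue of the relation $[\Ysl_{\omega_2},\Ysl_{\sigma_1}] = 0$ from \Cref{lem:relation} and is proved the same way.
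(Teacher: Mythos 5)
Your proposal is correct and follows the paper's own argument essentially verbatim: the identity $\bigl[\piu\lambda,[\piu\lambda,\Ysigma]\bigr]=0$ from the preceding paragraph, extraction of the topmost component using the diagonality of $(\piu\lambda)_0$ from \Cref{par:action-forms} and the identification $\Ysigma = \Ysl_{\sigma_1} + (\text{lower order})$ from \Cref{lem:Ysigma}, and then the standard $\sltwo$ weight argument for the second assertion exactly as in \Cref{lem:relation}. No gaps.
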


\newpar
We can finally prove that the three $\sltwo$-representations determined by $\omega_2$,
$\sigma_1$, and $\lambda$ can be combined into a single representation of $\slfour$.
Recall that the Lie algebra $\slfour$ is associated to the Dynkin diagram of type
$\mathsf{A}_3$, and so it has three simple roots, and the resulting Cartan matrix is
\[
	\begin{pmatrix} a_{1,1} & a_{1,2} & a_{1,3} \\ a_{2,1} & a_{2,2} & a_{2,3} \\
	a_{3,1} & a_{3,2} & a_{3,3} \end{pmatrix}
	= \begin{pmatrix} 2 & -1 & 0 \\ -1 & 2 & -1 \\ 0 & -1 & 2 \end{pmatrix}.
\]
By Serre's theorem, $\slfour$ is generated as a Lie algebra
by nine elements $\esl_1, \fsl_1, \hsl_1$, $\esl_2, \fsl_2, \hsl_2$, $\esl_3,
\fsl_3, \hsl_3$, subject to the following relations (which have the same shape as
before):
\begin{enumerate}
	\item $[\hsl_i,\hsl_j] = 0$, $[\hsl_i, \esl_j] = a_{i,j} \esl_j$, and $[\hsl_i, \fsl_j] = -a_{i,j} \fsl_j$
	\item $[\esl_i, \fsl_j] = \delta_{i,j} \hsl_j$, where $\delta_{i,j} = 1$ if $i=j$, and $0$
		otherwise
	\item $(\ad \esl_i)^{1-a_{i,j}} \esl_j = 0$ and $(\ad \fsl_i)^{1-a_{i,j}} \fsl_j = 0$.
\end{enumerate}

\newpar
The proof of the Serre relations is very similar to what we did for $\slthree$, and
so we will only sketch the argument.

\begin{pprop}
	If we define $\esl_1 = \omega_2$, $\fsl_1 = \Ysl_{\omega_2}$, $\esl_2 = \Ysl_{\sigma_1}$,
	$\fsl_2 = \sigma_1$, $\esl_3 = \lambda$, and $\fsl_3 = \Ysl_{\lambda}$, and let
	$\hsl_1$, $\hsl_2$ and $\hsl_3$ act on the vector space $H^{i,j,k}$ respectively
	as multiplication by $i$, $-k$ and $j$, then these nine
	operators satisfy the Serre relations for the Lie algebra $\slfour$.
\end{pprop}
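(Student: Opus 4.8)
The plan is to verify the three families of Serre relations by reducing each to a short weight computation together with one of the commutator identities already in hand. All nine operators act on the $\ZZ^3$-graded vector space $\bigoplus_{i,j,k} H^{i,j,k}$ from \eqref{eq:Hijk}: the maps $\omega_2$ and $\sigma_1$ are the ones induced on cohomology by the morphisms of complexes studied in \Cref{chap:differential-forms}, and $\lambda$ acts by cup product on $H^{\bullet}(B, P_i)$. Since $H^{i,j,k}$ is nonzero for only finitely many $(i,j,k)$, the space of graded operators is finite dimensional and there is no derived-category subtlety; in particular we do not need the awkward shift $\lfloor \tfrac{2}{3}(i+k)\rfloor$ that was necessary for $\slthree$ in the noncompact case.

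First, the relations on line~(1) are purely a matter of reading off degrees. We have $\omega_2 \colon H^{i,j,k} \to H^{i+2,j,k+1}$, $\sigma_1 \colon H^{i,j,k} \to H^{i+1,j+1,k+2}$, and $\lambda \colon H^{i,j,k} \to H^{i,j+2,k+1}$, while $\hsl_1, \hsl_2, \hsl_3$ act as multiplication by $i$, $-k$, $j$. Comparing the induced shifts of $(i,-k,j)$ with the columns of the Cartan matrix of $\mathsf{A}_3$ shows that $[\hsl_i, \esl_j] = a_{i,j}\esl_j$ and $[\hsl_i, \fsl_j] = -a_{i,j}\fsl_j$; the relations $[\hsl_i,\hsl_j]=0$ hold because the three operators are simultaneously diagonal. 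In particular each of $\esl_j, \fsl_j$ is a weight vector for each of the three $\sltwo$-representations, which is what powers the remaining arguments.

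Next, the relations $[\esl_i, \fsl_j] = \delta_{i,j}\hsl_j$. The diagonal cases $[\esl_i, \fsl_i] = \hsl_i$ are exactly the defining relations of the three $\sltwo$-representations: the relative Hard Lefschetz theorem for $\omega$, the symplectic relative Hard Lefschetz theorem (\Cref{thm:HL-sigma}), and the Hard Lefschetz theorem for $\lambda$ on the compact K\"ahler manifold $B$. For the off-diagonal cases: $[\esl_1, \fsl_2] = [\omega_2, \sigma_1] = 0$ is \Cref{lem:sigma_k}; $[\esl_2, \fsl_1] = [\Ysl_{\sigma_1}, \Ysl_{\omega_2}] = 0$ is \Cref{lem:relation}; $[\esl_2, \fsl_3] = [\Ysl_{\sigma_1}, \Ysl_{\lambda}] = 0$ is \Cref{lem:relation-slfour}. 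The case $[\esl_3, \fsl_2] = [\lambda, \sigma_1] = 0$ follows because $\piu\lambda$ and $\sigma$ commute as forms on $M$, so $[\lambda, \sigma] = 0$ on $\Cpi$; since $\lambda = (\piu\lambda)_0$ acts diagonally by \Cref{par:action-forms} while $\sigma = \sigma_1 + \sigma_0 + \dotsb$, decomposing $[\lambda, \sigma] = 0$ by degree in the first index gives $[\lambda, \sigma_j] = 0$ for every $j$. Finally, for the pair $\{1,3\}$: $\omega_2$ and $\lambda$ commute (commuting forms on $M$), $\lambda$ preserves the index $i$ so commutes with $\hsl_1$, and $\omega_2$ preserves the index $j$ so commutes with $\hsl_3$; since a weight-zero vector of a finite-dimensional $\sltwo$-module killed by the raising (resp. lowering) operator is automatically killed by the lowering (resp. raising) operator, we obtain $[\omega_2, \Ysl_{\lambda}] = [\lambda, \Ysl_{\omega_2}] = [\Ysl_{\omega_2}, \Ysl_{\lambda}] = 0$, which handles $[\esl_1, \fsl_3]$ and $[\esl_3, \fsl_1]$ and will be reused below.

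It remains to check $(\ad \esl_i)^{1-a_{i,j}}\esl_j = 0$ and the $\fsl$-analogue. For $\{i,j\} = \{1,3\}$ this reads $[\omega_2,\lambda]=0$ and $[\Ysl_{\omega_2},\Ysl_{\lambda}]=0$, both noted above. For $|i-j|=1$ it asserts $(\ad\esl_i)^2\esl_j = 0$: by line~(1), $\esl_j$ is a weight-$(-1)$ vector for the $\sltwo$-representation attached to $\esl_i$, and the off-diagonal relation $[\esl_j,\fsl_i]=0$ just established says it is annihilated by the lowering operator $\fsl_i$; hence it spans, together with $[\esl_i,\esl_j]$, a two-dimensional irreducible $\sltwo$-submodule, so $(\ad\esl_i)^2\esl_j = 0$, and the $\fsl$-case is symmetric (using that $\fsl_j$ has $\hsl_i$-weight $+1$ and is killed by $\esl_i$). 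The only thing requiring care is bookkeeping — that the weights and the $\mathsf{A}_3$ Cartan-matrix signs line up — which is routine; the genuine content of the proposition, namely the vanishing of the three commutators of lowering operators, was already supplied by \Cref{lem:sigma_k}, \Cref{lem:relation}, and \Cref{lem:relation-slfour}, so no new geometric input is needed here.
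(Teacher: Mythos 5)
Your proof is correct and takes essentially the same route as the paper: the genuine input is the vanishing of the commutators supplied by \Cref{lem:sigma_k}, \Cref{lem:relation}, and \Cref{lem:relation-slfour}, together with the commutation of $\lambda$ with $\omega_2$ and $\sigma_1$ coming from the diagonal action of $\piu\lambda$, and everything else is weight bookkeeping plus finite-dimensional $\sltwo$ representation theory. The paper only sketches these routine verifications by analogy with the $\slthree$ case, whereas you have written them out in full; there is no substantive difference in approach.
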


\begin{proof}
	Since $\omega$, $\sigma$, and $\piu \lambda$ commute (as $2$-forms on $M$), it is
	easy to see that $[\esl_1, \esl_3] = [\omega_2, \lambda] = 0$ and $[\fsl_2,
	\esl_3] = [\sigma_1, \lambda] = 0$. Because $[\hsl_1, \esl_3] = 0$, it follows
	that $[\fsl_1, \esl_3] = 0$, and so the two Lie algebras generated by $\esl_1,
	\fsl_1, \hsl_1$ and $\esl_3, \fsl_3, \hsl_3$ commute; this gives about half of the
	necessary relations. Among the remaining new relations, the only nontrivial ones
	are $(\ad \esl_3)^2 \esl_2 = 0$ and $[\fsl_3, \esl_2] = 0$, and these are of
	course contained in \Cref{lem:relation-slfour}.
\end{proof}

% \newpar
% Suppose that $M$ is an irreducible compact hyperk\"ahler manifold. In that case, the
% holomorphic symplectic form $\sigma$ is unique (up to multiplication by a nonzero
% complex number), and so the operator $\sigma_1$ is also unique up to scaling. 
% Furthermore, it is known that the restriction map $H^2(M, \QQ) \to H^2(M_b, \QQ)$ has
% rank one, and so up to a positive real number, the operator $\omega_2$ is independent
% of the choice of K\"ahler form on $M$. Lastly, the base of a Lagrangian fibration is
% $\PP^n$ in that case, and so the cohomology class of the K\"ahler form $\lambda$ is
% again unique up to multiplication by a positive real number. All this goes to show
% that the representation of $\slfour$ on the cohomology of $M$ is essentially
% indepedent of any choices. Nevertheless, I do not know how to prove that this
% representation is the same as the one constructed by Verbitsky (using the
% hyperk\"ahler metric). 

\section{Acknowledgements}

\newpar
I got interested in Lagrangian fibrations during a workshop about hyperk\"ahler
manifolds at the Simons Center in January of 2023, and I am grateful to Ljudmila
Kamenova, Giovanni Mongardi, and Alexei Oblomkov for organizing it. During the
workshop, Junliang Shen gave a series of lectures, and I thank him for explaining the
conjectures, and also for some very useful discussions afterwards. I thank Davesh
Maulik for inviting me to MIT to talk about some details of the proof. I thank Chris
Brav for a pleasant afternoon chatting about the categorical aspects of the BGG
correspondence. Finally, I am grateful to Daniel Huybrechts and Mirko Mauri for their
detailed comments about a first draft of the paper, as well as for their survey paper
\cite{LF} about Lagrangian fibrations, from which I learned a lot of the general
theory.

%%%%%%%%%%%%%%%%%%%

%\cleardoublepage
\phantomsection
\addcontentsline{toc}{chapter}{References}

\providecommand{\bysame}{\leavevmode\hbox to3em{\hrulefill}\thinspace}
\providecommand{\MR}{\relax\ifhmode\unskip\space\fi MR }
% \MRhref is called by the amsart/book/proc definition of \MR.
\providecommand{\MRhref}[2]{%
  \href{http://www.ams.org/mathscinet-getitem?mr=#1}{#2}
}
\providecommand{\href}[2]{#2}

\end{document}